\DeclareTextSymbol{\thh}{T1}{254}
\def\th{\textnormal{\thh}}
\newtheorem{thm}{Theorem}[subsection]
\newtheorem{lemma}[thm]{Lemma}
\newtheorem{prop}[thm]{Proposition}
\newtheorem{cor}[thm]{Corollary}
\newtheorem{result}[thm]{Result}
\theoremstyle{definition}
\newtheorem{df}[thm]{Definition}
\newtheorem{rmk}[thm]{Remark}
\newtheorem{rmks}[thm]{Remarks}
\newtheorem{ex}[thm]{Example}
\newtheorem{fact}[thm]{Fact}
\newtheorem{question}[thm]{Question}
\newcommand{\BB}{\mathbb{B}}  % sort B
\newcommand{\BK}{\mathbb{K}}  % sort K
\newcommand{\BN}{\mathbb{N}}
\newcommand{\cu}[1]{\mathcal{#1}}
\newcommand{\bo}[1]{\boldsymbol{#1}}
\newcommand{\ti}[1]{\widetilde{#1}}
\newcommand{\sa}[1]{\mathsf{#1}}
\renewcommand{\hat}{\widehat}
\def\indsym#1#2{%
  \setbox0=\hbox{$\m@th#1x$}%
  \kern\wd0%
  \hbox to 0pt{\hss$\m@th#1\mid$\hbox to 0pt{$\m@th#1^{#2}$}\hss}%
  \lower.9\ht0\hbox to 0pt{\hss$\m@th#1\smile$\hss}%
  \kern\wd0}
\newcommand{\ind}[1][]{\mathop{\mathpalette\indsym{#1}}}
\def\nindsym#1#2{%
  \setbox0=\hbox{$\m@th#1x$}%
  \kern\wd0%
  \hbox to 0pt{\hss$\m@th#1\not$\kern1.4\wd0\hss}
  \hbox to 0pt{\hss$\m@th#1\mid$\hbox to 0pt{$\m@th#1^{\,#2}$}\hss}%
  \lower.9\ht0\hbox to 0pt{\hss$\m@th#1\smile$\hss}%
  \kern\wd0}
\newcommand{\nind}[1][]{\mathop{\mathpalette\nindsym{#1}}}
\def\dotminussym#1#2{%
  \setbox0=\hbox{$\m@th#1-$}%
  \kern.5\wd0%
  \hbox to 0pt{\hss\hbox{$\m@th#1-$}\hss}%
  \raise.6\ht0\hbox to 0pt{\hss$\m@th#1.$\hss}%
  \kern.5\wd0}
\newcommand{\dotminus}{\mathbin{\mathpalette\dotminussym{}}}
\def \<{\langle}
\def \>{\rangle}
\def \((  {(\!(}
\def \)) {)\!)}
\def \cl {\operatorname{cl}}
\def \tp{\operatorname{tp}}
\def \acl{\operatorname{acl}}
\def \dcl{\operatorname{dcl}}
\def \DLO{\operatorname{DLO}}
\def \APr{\operatorname{APr}}
\def \div{\operatorname{div}}
\def \thdiv{\th\operatorname{-div}}
\def \thfork{\th\operatorname{-fork}}
\def \as {\operatorname{a.s}}
\def \fo{\operatorname{fdcl}}
\numberwithin{equation}{section}
\def \l{\llbracket}
\def \rr{\rrbracket}
\def\thind{\ind[\th]}
\def\nthind{\nind[\th]}
\begin{document}

\title{Independence Relations in Randomizations}

\author{Uri Andrews, Isaac Goldbring, and H. Jerome Keisler}

\address{University of Wisconsin-Madison, Department of Mathematics, Madison,  WI 53706-1388}
\email{andrews@math.wisc.edu}
\urladdr{www.math.wisc.edu/~andrews}
\email{keisler@math.wisc.edu}
\urladdr{www.math.wisc.edu/~keisler}
\address {University of Illinois at Chicago, Department of Mathematics, Statistics, and Computer Science, Science and Engineering
Offices (M/C 249), 851 S. Morgan St., Chicago, IL 60607-7045, USA}
\email{isaac@math.uic.edu}
\urladdr{www.math.uic.edu/~isaac}

\date{September 3, 2014}

\begin{abstract}
The randomization of a complete first order theory $T$ is the complete continuous theory $T^R$
with two sorts, a sort for random elements of models of $T$, and a sort for events in an underlying probability space.  We study various notions of independence in models of $T^R$.
\end{abstract}

\maketitle

\section{Introduction}

A randomization of a first order structure $\cu M$, as introduced by Keisler [Ke1] and formalized as a metric structure by Ben Yaacov and Keisler [BK], is a continuous structure $\cu N$ with two sorts, a sort for random elements of $\cu M$, and a sort for events in an underlying atomless probability space. Given a complete first order theory $T$, the theory $T^R$ of randomizations of models of $T$ forms a complete theory in continuous logic, which is called the randomization of $T$.  In a model $\cu N$ of $T^R$, for each $n$-tuple $\vec{ a}$ of random elements and each first order
formula $\varphi(\vec v)$, the set of points in the underlying probability space where $\varphi(\vec{ a})$ is true is an event
denoted by $\l\varphi(\vec{ a})\rr$.

In general, the theories $T$ and $T^R$ share many model-theoretic features.  In particular, it was shown in [BK] that $T$ is stable if and only if $T^R$ is stable.  Unfortunately, the analogous result for simplicity in place of stability is false, as was shown in [Be2].  More precisely, either $T^R$ is dependent (in which case, if it is simple, then it is stable) or else it is not simple.

Recall that a classical (resp. continuous) theory is \emph{rosy} if $T^{\operatorname{eq}}$ possesses a strict (countable) independence relation; in this case, there is a weakest such notion of independence, namely \emph{thorn independence}, a notion first introduced by Thomas Scanlon.  (See Section 2 below for precise definitions.)  Classical rosy theories were first studied in the theses of Alf Onshuus and Clifton Ealy as a common generalization of simple theories and o-minimal theories.  Continuous rosy theories were first studied in the paper of Ealy and the second author in [EG].  In light of the previous paragraph, it is natural to ask:  is $T$ rosy if and only if $T^R$ is rosy?

Motivated by the above question, in this paper, we begin studying various notions of independence in models of $T^R$, including algebraic independence, dividing independence, and thorn independence.  We also study the ``pointwise'' version of a notion of independence, that is, the notion of independence on models of $T^R$ obtained by asking for almost everywhere independence.

%%  Sept 2014:  Added sentence about unique existence of $\ind[Ic]$.

We conclude this introduction with an outline of the rest of the paper.  In Section 2, we recall the relevant background from continuous logic as well as the general theory of abstract independence relations as exposited in [Ad2].  In Section 3, we introduce the notion of countably based independence relations.  For every ternary relation $\ind[I]$ with monotonicity, there is a unique countably based relation that agrees with $\ind[I]$ on countable sets.   This will aid us in defining, for a given ternary relation on small subsets of the big model of $T$, a corresponding pointwise notion.  In Section 4, we recall some basic facts about randomizations as well as the results from [AGK] concerning definable and algebraic closure in models of $T^R$.

In Section 5, we begin the study of notions of independence in models of $T^R$ in earnest.  We first prove some downward results, culminating in Corollary 5.1.3, which states that if definable and algebraic closure coincide in models of $T$ and $T^R$ is real rosy, then $T$ is real rosy.  (Real rosiness requires that $T$ has a strict independence relation, while rosiness requires that $T^{\operatorname{eq}}$ has such a relation.)  We then move on to studying notions of independence, first on the event sort, and then on the random variable sort.  Section 5 concludes with a study of the properties of algebraic independence in $T^R$.

%%  Sept 2014:  Paragraph re-written

Section 6 is concerned with notions of pointwise independence.  Given a ternary relation $\ind[I]$ with monotonicity on models of $T$, $\ind[I\omega] \ \ $ is the countably based relation on small subsets of the big model of $T^R$ such that for all countable  $A,B,C$, $A\ind[I\omega]_C \ B$ holds if and only if $A(\omega)\ind[I]_{C(\omega)} B(\omega)$ holds for almost all $\omega$ in the underlying probability space.  The results of Section 3 guarantee the unique existence of $\ind[I\omega] \ \ $.  We then make a detailed study of the pointwise notions of independence stemming from algebraic independence, dividing independence, and thorn independence.

%%  Sept 2014:  [Ke2] removed from list.

Continuous model theory in its current form is developed in the papers [BBHU] and [BU].  Randomizations of models are treated in [AGK], [AK], [Be], [BK], [EG], [GL], and [Ke1].

\section{Preliminaries on Continuous Logic}

We will follow the notation and terminology of [BK] and [AGK].
We assume familiarity with the basic notions about continuous model theory as developed
in [BBHU], including the notions of a theory, structure, pre-structure,  model of a theory,
elementary extension, isomorphism, and $\kappa$-saturated structure.
In particular, the universe of a pre-structure is a pseudo-metric space, the universe of a
structure is a complete metric space, and every pre-structure has a unique completion.
A \emph{tuple} is a finite sequence, and $A^{<\BN}$ is the set of all tuples of elements of $A$.
In a metric space or continuous structure, the closure of a set $C$ is denoted by $\cl(C)$.
We use the word ``countable'' to mean of cardinality at most $\aleph_0$.
We assume throughout that $L$ is a countable first order signature,
and that $T$ is a complete theory for $L$ whose models have at least two elements.
We will sometimes write $\varphi(A)$ for a first order formula with finitely many parameters in a set $A$,
and use similar notation for more than one parameter set.

\subsection{Types and Definability}

For a first order structure $\cu M$ and a set $A$ of elements of $\cu M$, $\cu M_A$ denotes the structure formed by
adding a new constant symbol to $\cu M$ for each $a\in A$.
The \emph{type realized by} a tuple $\vec b$ over the parameter set $A$ in $\cu M$ is the set $\tp^\cu M(\vec b/A)$ of formulas
$\varphi(\vec u,{\vec a})$
with $\vec a\in A^{<\BN}$ satisfied by $\vec b$ in $\cu M_A$.  We call $\tp^{\cu M}(\vec b/A)$ an \emph{$n$-type} if $n=|\vec b|$.

In the following, let $\cu N$ be a metric structure and let $ A$ be a set of elements of $\cu N$.  $\cu N_{ A}$ denotes the structure formed by
adding a new constant symbol to $\cu N$ for each $ a\in  A$.
The \emph{type} $\tp^\cu N(\vec{ b}/ A)$ \emph{realized} by $\vec { b}$ over the parameter set $ A$ in $\cu N$
is the function $p$ from formulas to $[0,1]$ such that
for each formula $\Phi(\vec{x},\vec { a})$ with $\vec{ a}\in { A}^{<\BN}$, we have
$\Phi(\vec{x},\vec { a})^p=\Phi(\vec { b},\vec { a})^\cu N$.

We now recall the notions of definable element and algebraic element from [BBHU].
An element ${ b}$ is \emph{definable over} $ A$ in $\cu N$, in symbols $ b\in\dcl^\cu N( A)$,
if there is a sequence of formulas $\langle\Phi_k(x,\vec{ a}_k)\rangle$ with $\vec{ a}_k\in{ A}^{<\BN}$ such
that the sequence of functions $\langle\Phi_k(x,\vec{ a}_k)^\cu N\rangle$ converges uniformly in $x$ to the distance function $d(x, b)^\cu N$
of the corresponding sort.

When $b$ is an element and $C$ is a set in $\cu N$, the distance $d(b,C)$ is defined by $d(b,C)=\inf_{c\in C} d(b,c)$, with the
convention that $d(b,\emptyset)=1$.
$ b$ is \emph{algebraic over $ A$} in $\cu N$, in symbols $ b\in\acl^\cu N( A)$, if there is a compact set $C$ and
a sequence of formulas $\langle\Phi_k(x,\vec { a}_k)\rangle$ with $\vec{ a}_k\in{ A}^{<\BN}$ such
that $b\in C$ and the sequence of functions $\langle\Phi_k(x,\vec{ a}_k)^\cu N\rangle$  converges uniformly in $x$ to the distance function
$d(x,C)$ of the corresponding sort.

If the structure $\cu N$ is clear from the context, we will sometimes drop the superscript and write $\tp, \dcl, \acl$ instead of $\tp^\cu N, \dcl^\cu N, \acl^\cu N$.  We will often use the following facts without explicit mention.

\begin{fact}  \label{f-definable} ([BBHU], Exercises 10.7 and 10.10)  For each element $ b$ of $\cu N$,
the following are equivalent, where $p=\tp^\cu N( b/ A)$:
\begin{enumerate}
\item $ b$ is definable over $ A$ in $\cu N$;
\item in each model $\cu N'\succ\cu N$, $ b$ is the unique element that realizes $p$ over $ A$;
\item $ b$ is definable over some countable subset of $ A$ in $\cu N$.
\end{enumerate}
\end{fact}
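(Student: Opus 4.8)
The plan is to prove the cycle $(3)\Rightarrow(1)\Rightarrow(2)\Rightarrow(1)\Rightarrow(3)$; the two implications touching $(3)$ are essentially formal, so the real content is $(1)\Leftrightarrow(2)$. For $(3)\Rightarrow(1)$ one just invokes monotonicity of $\dcl$, and for $(1)\Rightarrow(3)$ one observes that if $\langle\Phi_k(x,\vec a_k)\rangle$ witnesses $b\in\dcl^{\cu N}(A)$, then the countable subset $A_0\subseteq A$ consisting of all coordinates of all the tuples $\vec a_k$ already witnesses $b\in\dcl^{\cu N}(A_0)$.

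For $(1)\Rightarrow(2)$, fix $\cu N'\succ\cu N$ and a realization $b'$ of $p$ in $\cu N'$; it suffices to show $d(b',b)^{\cu N'}=0$, since then $b'=b$, and $b$, which of course realizes $p$, is the unique realization. Let $\langle\Phi_k(x,\vec a_k)\rangle$ witness $b\in\dcl^{\cu N}(A)$. Adjoining a constant for the element $b\in\cu N$ preserves elementarity, so $\cu N_b\prec\cu N'_b$; for each $\varepsilon>0$, uniform convergence gives that the closed condition $\sup_x|\Phi_k(x,\vec a_k)-d(x,b)|\le\varepsilon$ holds in $\cu N_b$ for all large $k$, hence in $\cu N'_b$. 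Evaluating at $x=b'$ and using $\tp^{\cu N'}(b'/A)=p=\tp^{\cu N}(b/A)$ — so $\Phi_k(b',\vec a_k)^{\cu N'}=\Phi_k(b,\vec a_k)^{\cu N}$, which is $\le\varepsilon$ for large $k$ because $\Phi_k(b,\vec a_k)^{\cu N}\to d(b,b)^{\cu N}=0$ — yields $d(b',b)^{\cu N'}\le 2\varepsilon$, and letting $\varepsilon\to 0$ finishes.

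For $(2)\Rightarrow(1)$, the substantive direction, I would work inside a monster model $\mathbb M\succ\cu N$, so that by $(2)$ the element $b$ is the unique realization of $p=\tp(b/A)$ in $\mathbb M$. Define $\widehat{d}\colon S_1(A)\to[0,1]$ by $\widehat{d}(q)=d(c,b)^{\mathbb M}$ for any $c$ realizing $q$; this is well defined because any $A$-automorphism of $\mathbb M$ taking one realization of $q$ to another fixes $b$ (it sends $b$ to a realization of $p$, which must be $b$). The crux is that $\widehat{d}$ is continuous on the compact type space $S_1(A)$. For upper semicontinuity, $\{q:\widehat{d}(q)\le r\}$ coincides with $\{q: q(x)\cup\{d(x,b)\le r\}\ \text{is satisfiable}\}$ — here one uses that \emph{every} realization of $q$ lies at distance exactly $\widehat{d}(q)$ from $b$ — and this set is closed by the compactness theorem. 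For lower semicontinuity, if $\widehat{d}(q)=s$ then $q(x)\cup\{d(x,b)\le s-\varepsilon\}$ is unsatisfiable, so compactness produces a logic neighborhood $U$ of $q$ on which the same partial type stays unsatisfiable, i.e.\ $\widehat{d}\ge s-\varepsilon$ throughout $U$. A continuous function on $S_1(A)$ is a uniform limit of formulas $\langle\Phi_k(x,\vec a_k)\rangle$ with $\vec a_k\in A^{<\BN}$ (Stone--Weierstrass, via the standard identification of $A$-definable predicates with continuous functions on type space from [BBHU]); then for $x\in\cu N$ we get $\Phi_k(x,\vec a_k)^{\cu N}\to\widehat{d}(\tp^{\cu N}(x/A))=d(x,b)^{\cu N}$ uniformly in $x$, which is precisely $b\in\dcl^{\cu N}(A)$.

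The main obstacle is $(2)\Rightarrow(1)$, and within it the continuity of $\widehat{d}$ — especially lower semicontinuity, which is exactly where the \emph{uniqueness} clause of $(2)$ is used, and where one must phrase the compactness argument in terms of the approximate-satisfiability form of the continuous compactness theorem rather than naive finite satisfiability. Everything else is bookkeeping with uniform convergence and with the dictionary between $A$-definable predicates and continuous functions on $S_1(A)$. (This recovers Exercises 10.7 and 10.10 of [BBHU].)
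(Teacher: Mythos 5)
The paper itself gives no proof of this Fact --- it is quoted from [BBHU], Exercises 10.7 and 10.10 --- so your argument has to be judged on its own terms. The implications $(1)\Leftrightarrow(3)$ and $(1)\Rightarrow(2)$ are correct as written, and your strategy for $(2)\Rightarrow(1)$ (show that $q\mapsto\widehat{d}(q)=d(c,b)$ is a continuous function on $S_1(A)$, then approximate it uniformly by formulas over $A$) is the standard and correct one.

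There is, however, a genuine gap in the continuity argument: both of your semicontinuity steps prove \emph{lower} semicontinuity, and upper semicontinuity is never established. Closedness of the sublevel sets $\{q:\widehat{d}(q)\le r\}$ is precisely lower semicontinuity (you label it ``upper''), and your second step --- producing, around each $q$ with $\widehat{d}(q)=s$, a neighborhood on which $\widehat{d}\ge s-\varepsilon$ --- asserts $\liminf_{q'\to q}\widehat{d}(q')\ge\widehat{d}(q)$, which is again lower semicontinuity; it is in fact just the complement form of the first step. You also locate the use of the uniqueness clause in the wrong half: the function $q\mapsto\inf\{d(c,b)\colon c\models q\}$ has closed sublevel sets, hence is lower semicontinuous, for \emph{any} $b$, with no uniqueness needed. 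What fails without uniqueness is upper semicontinuity (in a discrete classical structure with $p$ non-isolated, the inf function is the indicator of $S_1(A)\setminus\{p\}$, which is l.s.c.\ but not u.s.c.). The missing half is exactly where equidistance of all realizations enters: since every realization of $q$ lies at distance exactly $\widehat{d}(q)$ from $b$, one has $\{q:\widehat{d}(q)\ge r\}=\{q\colon q(x)\cup\{d(x,b)\ge r\}\mbox{ is satisfiable}\}$, and this is closed because it is the image of the closed (hence compact) set $[d(x,b)\ge r]\subseteq S_1(Ab)$ under the restriction map $S_1(Ab)\to S_1(A)$, and continuous images of compact sets are closed. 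Adding this one step --- exactly symmetric to the one you wrote --- repairs the proof; everything downstream (Stone--Weierstrass via the identification of $A$-definable predicates with continuous functions on $S_1(A)$, and the pullback of uniform convergence to $\cu N$) is fine.
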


\begin{fact}  \label{f-algebraic} ([BBHU], Exercise 10.8 and 10.11)
For each element $ b$ of $\cu N$,
the following are equivalent, where $p=\tp^\cu N( b/ A)$:
\begin{enumerate}
\item $ b$ is algebraic over $ A$ in $\cu N$;
\item in each model $\cu N'\succ\cu N$, the set of elements $ b$ that realize $p$ over $ A$ in $\cu N'$ is compact.
\item $ b$ is algebraic over some countable subset of $ A$ in $\cu N$.
\end{enumerate}
\end{fact}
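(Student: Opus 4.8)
The plan is to establish the cycle $(3)\Rightarrow(1)\Rightarrow(2)\Rightarrow(3)$; of these, only $(2)\Rightarrow(3)$ carries real content. The implication $(3)\Rightarrow(1)$ is immediate: a compact set $C$ and a sequence $\langle\Phi_k(x,\vec a_k)\rangle$ with $\vec a_k\in A_0^{<\BN}$ that witness $b\in\acl^\cu N(A_0)$ for a countable $A_0\subseteq A$ also satisfy $\vec a_k\in A^{<\BN}$, and hence witness $b\in\acl^\cu N(A)$ too.

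For $(1)\Rightarrow(2)$ I would first rephrase the hypothesis in a convenient form: $b\in\acl^\cu N(A)$ says that there is a definable predicate $P(x)$ over $A$ --- namely $P(x)=d(x,C)$ for the witnessing compact set $C$ --- with $P(b)=0$ and with compact zero set. Since a definable predicate over $A$ is a uniform limit of $A$-formulas, it is constant on the realizations of any complete type over $A$; hence every realization of $p=\tp^\cu N(b/A)$, in every elementary extension of $\cu N$, is a zero of $P$. Now fix $\cu N'\succ\cu N$ and let $S$ be the set of realizations of $p$ in $\cu N'$. Passing to a common elementary extension of $\cu N'$ and of a model carrying $C$, one sees that $S$ is contained in the zero set of $P$ there, which is compact: compactness of the zero set of $P$ is expressible by countably many first-order conditions over $A$, so it is inherited by all elementary extensions of $\cu N$. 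As $S$ is also closed in the complete metric space $\cu N'$, it is complete and totally bounded, hence compact.

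For $(2)\Rightarrow(3)$ I would pass to a sufficiently saturated $\cu N^*\succ\cu N$ and set $C:=\{x\in\cu N^*:x\models p\}$, which is compact by hypothesis and contains $b$. The crucial step is to show that $d(x,C)$ is a definable predicate over $A$: this is the standard fact that the distance to a compact, $A$-type-definable set is a definable predicate over the same parameters. I would prove it by the usual compactness argument --- for each $\varepsilon>0$, saturation forces some finite $q\subseteq p$, together with a sufficiently small tolerance, to confine every approximate realization of $q$ to within $\varepsilon$ of $C$ (otherwise $p$ would be realized by an element at distance $\ge\varepsilon$ from $C$, which is impossible), and the approximating $A$-formula is then built from $q$ by an $\inf_y$ over its approximate realizations. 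Once $d(x,C)$ is exhibited as the uniform limit of a \emph{sequence} $\langle\Phi_k(x,\vec a_k)\rangle$ of $A$-formulas, the set $A_0:=\bigcup_k\range(\vec a_k)$ is a countable subset of $A$, and this sequence together with the compact set $C\ni b$ witnesses $b\in\acl^\cu N(A_0)$.

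The step I expect to be the main obstacle is precisely the compactness argument in $(2)\Rightarrow(3)$: arranging that the approximations of $d(x,C)$ converge \emph{uniformly} in $x$, and keeping the successive tolerances under control, requires the familiar amplification of connectives and a diagonalization, and is the only genuinely non-formal point. A smaller technical wrinkle, already visible in $(1)\Rightarrow(2)$, is that the compact set in the definition of $\acl^\cu N$ need not lie inside $\cu N$ itself; one should either permit $C\subseteq\cu N^*$ in the conclusion of $(2)\Rightarrow(3)$ or note that the approximating formulas render the choice of ambient model irrelevant.
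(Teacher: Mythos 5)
The paper gives no proof of this statement at all --- it is imported directly from [BBHU] as Exercises 10.8 and 10.11 --- so there is nothing internal to compare your argument against; judged on its own, your proof is the standard one and is correct in all essentials. $(3)\Rightarrow(1)$ is immediate; $(1)\Rightarrow(2)$ correctly transfers the $A$-definable predicate $P=d(\cdot,C)$ and the total boundedness of its zero set along $\cu N\preceq\cu N'$ (the sentence about ``passing to a common elementary extension'' is superfluous, since $C\subseteq\cu N\subseteq\cu N'$ already), and then uses that the realization set of $p$ is a closed subset of that zero set; $(2)\Rightarrow(3)$ is the usual compactness argument showing that the distance to the compact realization set $C=p(\cu N^*)$ in a sufficiently saturated $\cu N^*$ is an $A$-definable predicate, whose approximating sequence of formulas mentions only countably many parameters. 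The one loose end is exactly the ``wrinkle'' you flag yourself: the witnessing compact set produced in $(2)\Rightarrow(3)$ is a priori a subset of $\cu N^*$ rather than of $\cu N$, whereas the definition of $\acl^{\cu N}(A_0)$ wants it inside $\cu N$. This closes routinely once $P=d(\cdot,C)$ is known to be $A$-definable: for each $\varepsilon>0$ let $n$ be the maximal size of a pairwise $\varepsilon$-separated subset of $C$; the existence of $n$ pairwise $\varepsilon$-separated approximate zeros of $P$ is an $A$-sentence, so by elementarity, compactness of $C$, and completeness of $\cu N$ one obtains $n$ pairwise $\varepsilon$-separated points of $C\cap\cu N$, and maximality of $n$ then forces every point of $C$ to lie within $\varepsilon$ of $C\cap\cu N$; letting $\varepsilon\to 0$ gives $C\subseteq\cu N$. (This is in effect [BBHU] Corollary 10.5: an algebraic type over $A\subseteq\cu N$ has all its realizations inside $\cu N$.) With that observation added, your compact set and formula sequence live exactly where the definition requires them to, and the cycle is complete.
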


\begin{fact}  \label{f-algebraic-cardinality}  (Follows from [BBHU], Exercise 10.8)
For every set $A$, $\acl(A)$ has cardinality at most $( |A|+2)^{\aleph_0}$.
\end{fact}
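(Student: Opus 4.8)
The plan is to reduce the statement to a countable parameter set and then bound $|\acl(A_0)|$ for countable $A_0$ by $2^{\aleph_0}$, using two standard facts: a type space over a countable set (in a countable signature) is metrizable and hence of size at most $2^{\aleph_0}$, and a compact metric space has at most $2^{\aleph_0}$ points. Everything else is cardinal bookkeeping layered on top of Fact~\ref{f-algebraic}.

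First I would invoke the equivalence of clauses (1) and (3) in Fact~\ref{f-algebraic}: every $b\in\acl(A)$ is already algebraic over some countable $A_0\subseteq A$, so
\[
\acl(A)=\bigcup\{\acl(A_0):A_0\subseteq A,\ A_0\text{ countable}\}.
\]
The number of countable subsets of $A$ is at most $|A|^{\aleph_0}$ when $A$ is infinite, and finite when $A$ is finite; in either case it is at most $(|A|+2)^{\aleph_0}$, the summand $2$ guaranteeing the bound is at least $2^{\aleph_0}$ in the small cases. Hence it suffices to show $|\acl(A_0)|\le 2^{\aleph_0}$ for each countable $A_0$, since then $|\acl(A)|\le (|A|+2)^{\aleph_0}\cdot 2^{\aleph_0}=(|A|+2)^{\aleph_0}$.

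To bound $|\acl(A_0)|$ I would consider the map $b\mapsto\tp^\cu N(b/A_0)$ on $\acl(A_0)$. Because the signature and $A_0$ are countable, there are only countably many formulas with parameters in $A_0$, so a type over $A_0$ is a function from a countable set into $[0,1]$ and the type space $S_1(A_0)$ has cardinality at most $2^{\aleph_0}$; in particular the image of the map has size at most $2^{\aleph_0}$. For each $p$ in the image, the fiber over $p$ is contained in the set of realizations of $p$ in $\cu N$; since $p$ is realized by an element of $\acl(A_0)$, the equivalence of clauses (1) and (2) in Fact~\ref{f-algebraic} says that in any elementary extension $\cu N'\succ\cu N$ the set of realizations of $p$ is compact, so the realizations of $p$ in $\cu N$ form a subset of a compact subset of the (complete metric) universe of $\cu N$. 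A compact metric space is separable, and each of its points is the limit of a sequence drawn from a fixed countable dense set, so it has at most $\aleph_0^{\aleph_0}=2^{\aleph_0}$ points; hence each fiber has size at most $2^{\aleph_0}$. Therefore $|\acl(A_0)|\le 2^{\aleph_0}\cdot 2^{\aleph_0}=2^{\aleph_0}$, completing the argument.

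There is no genuine obstacle here. The only mild points requiring care are invoking the correct clauses of Fact~\ref{f-algebraic} (one to descend to countable parameter sets, one to obtain compactness of the set of realizations of an algebraic type) and getting the cardinal arithmetic right when $A$ is finite or empty, which is exactly what the ``$+2$'' in the statement absorbs.
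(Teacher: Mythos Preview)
Your proof is correct. The paper itself does not prove this fact; it merely records it as following from [BBHU], Exercise 10.8, so there is no argument in the paper to compare yours against. Your approach is the standard one: reduce to countable parameter sets via Fact~\ref{f-algebraic}(3), then bound $|\acl(A_0)|$ by $2^{\aleph_0}$ using that the type space over a countable set in a countable language has size at most $2^{\aleph_0}$ and that each algebraic type has a compact (hence $\le 2^{\aleph_0}$-sized) set of realizations by Fact~\ref{f-algebraic}(2). The only implicit assumption worth flagging is that the signature is countable, which is consistent with the paper's blanket hypothesis that $L$ is countable.
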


\begin{fact} \label{f-definableclosure}  (Definable Closure, Exercises 10.10 and 10.11, and Corollary 10.5 in [BBHU])
\begin{enumerate}
\item If $ A\subseteq\cu N$ then $\dcl( A)=\dcl(\dcl( A))$ and $\acl( A)=\acl(\acl( A))$.
\item If $ A$ is a dense subset of $\cl(B)$ and $ B\subseteq\cu N$, then $\dcl(A)=\dcl( B)$ and $\acl(A)=\acl( B)$.
\item If $A$ is separable, then $\dcl(A)$ and $\acl(A)$ are separable.
\end{enumerate}
\end{fact}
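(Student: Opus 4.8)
The plan is to establish the three parts in order, using the type-theoretic characterizations of $\dcl$ and $\acl$ (Facts \ref{f-definable} and \ref{f-algebraic}) and the cardinality bound (Fact \ref{f-algebraic-cardinality}). Throughout I would work inside a monster model $\mathfrak N\succeq\cu N$ that is $\kappa$-saturated and strongly $\kappa$-homogeneous for $\kappa$ larger than the cardinality of everything in sight, so that for small $X$ the type $\tp(b/X)$ is exactly the $\operatorname{Aut}(\mathfrak N/X)$-orbit of $b$ and Facts \ref{f-definable}(2), \ref{f-algebraic}(2) may be read inside $\mathfrak N$. For part (1), the inclusions $\dcl(A)\subseteq\dcl(\dcl(A))$ and $\acl(A)\subseteq\acl(\acl(A))$ are immediate from monotonicity together with $A\subseteq\dcl(A)\subseteq\acl(A)$. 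For the reverse inclusion for $\dcl$: if $b\in\dcl(\dcl(A))$ and $\sigma\in\operatorname{Aut}(\mathfrak N/A)$, then $\sigma$ fixes each element of $\dcl(A)$ (Fact \ref{f-definable}(2)), hence fixes $\dcl(A)$ pointwise and preserves types over $\dcl(A)$, so $\sigma(b)$ realizes $\tp(b/\dcl(A))$ and therefore $\sigma(b)=b$ by uniqueness; as $\sigma$ was arbitrary, $b$ has a unique conjugate over $A$, so $b\in\dcl(A)$. For $\acl$: applying Fact \ref{f-algebraic-cardinality} twice (and $\lambda^{\aleph_0}=\lambda$ for $\lambda=(|A|+2)^{\aleph_0}$) gives $|\acl(\acl(A))|\le\lambda$, a bound independent of $\mathfrak N$; since each $\sigma\in\operatorname{Aut}(\mathfrak N/A)$ maps $\acl(A)$ onto $\acl(A)$, it maps $\acl(\acl(A))$ onto itself, so the orbit over $A$ of any $b\in\acl(\acl(A))$ is contained in $\acl(\acl(A))$ and has size $\le\lambda$; but if $b\notin\acl(A)$ then by Fact \ref{f-algebraic}(2) the realizations of $\tp(b/A)$ form a non-compact closed set, hence one that is not totally bounded, producing, by saturation and strong homogeneity, more than $\lambda$ conjugates of $b$ over $A$ --- a contradiction. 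So $b\in\acl(A)$.

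For part (2), the key observation is that $\cl(C)\subseteq\dcl(C)$ for every $C$: if $c_n\in C$ with $c_n\to c$, then the atomic $L(C)$-formulas $d(x,c_n)$ satisfy $\lvert d(x,c_n)-d(x,c)\rvert\le d(c_n,c)\to 0$, so they converge to $d(x,c)$ uniformly in $x$ and $c\in\dcl(C)$. With part (1) this yields $\dcl(C)=\dcl(\cl(C))$ and, since $\cl(C)\subseteq\dcl(C)\subseteq\acl(C)$, also $\acl(C)=\acl(\cl(C))$. Now if $A$ is dense in $\cl(B)$ then $A\subseteq\cl(B)$ and $\cl(A)=\cl(B)$, so $\dcl(A)=\dcl(\cl(A))=\dcl(\cl(B))=\dcl(B)$ and likewise $\acl(A)=\acl(B)$.

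For part (3), by part (2) I would replace $A$ by a countable dense subset of itself and so assume $A$ countable; since $L$ is countable, $S_1(A)$ is then compact and metrizable, hence separable. For $\dcl$: the map $b\mapsto\tp(b/A)$ is an injection (Fact \ref{f-definable}) of $\dcl(A)$ into $S_1(A)$ which is a homeomorphism onto its image --- continuity is automatic, and for the inverse, if $\tp(b_n/A)\to\tp(b/A)$ with $b_n,b\in\dcl(A)$, choose formulas $\Phi_k(x,\vec a_k)$ over $A$ with $\Phi_k(x,\vec a_k)^{\cu N}\to d(x,b)^{\cu N}$ uniformly and fix $k$ with uniform error $<\varepsilon$, so that $d(b_n,b)\le\Phi_k(b_n,\vec a_k)^{\cu N}+\varepsilon\to\Phi_k(b,\vec a_k)^{\cu N}+\varepsilon\le 2\varepsilon$; hence $\dcl(A)$ is (homeomorphic to) a subspace of the separable space $S_1(A)$ and is separable. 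For $\acl$ I would run the same idea with compact realization sets: for $b\in\acl(A)$, the set $C_b$ of realizations of $\tp(b/A)$ in $\mathfrak N$ is compact (Fact \ref{f-algebraic}(2)) with every element again in $\acl(A)$; its distance function $x\mapsto d(x,C_b)$ is $\operatorname{Aut}(\mathfrak N/A)$-invariant and continuous, hence definable over $A$, and depends only on the pair $\bigl(\tp(x/A),\tp(b/A)\bigr)$ through a function $\rho$ that is jointly continuous when its second argument is restricted to algebraic types. It follows that $p\mapsto C_p$ is continuous from the set of realized algebraic types in $S_1(A)$ into the hyperspace of compact subsets of $\mathfrak N$ under the Hausdorff metric, because $d_H(C_p,C_q)=\max\{\rho(p,q),\rho(q,p)\}$. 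Choosing countably many algebraic types $p_n$ dense in the image of $\acl(A)$ in $S_1(A)$, and a countable dense $D_n\subseteq C_{p_n}$ for each $n$, one checks $\bigcup_n D_n$ is dense in $\acl(A)$, so $\acl(A)$ is separable.

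The hard part will be the last step --- the continuity of $p\mapsto C_p$ into the Hausdorff hyperspace, which rests on the principle that $\operatorname{Aut}(\mathfrak N/A)$-invariant continuous functions are definable predicates over $A$ and on the continuity of the ``distance between types'' function $\rho$ on the algebraic locus. Everything about $\dcl$, and part (1) for $\acl$, is comparatively soft once one has the type-space characterizations and the cardinality bound.
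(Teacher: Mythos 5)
The paper gives no proof of this Fact; it is quoted from [BBHU] (Exercises 10.10, 10.11 and Corollary 10.5), so your argument has to be judged on its own. Parts (1) and (2) are correct: the orbit/uniqueness argument for $\dcl(\dcl(A))=\dcl(A)$, the cardinality-versus-non-total-boundedness argument for $\acl(\acl(A))=\acl(A)$ (using that a closed non-compact subset of a complete space is not totally bounded, so saturation and homogeneity produce more than $(|A|+2)^{\aleph_0}$ conjugates), and the observation $\cl(C)\subseteq\dcl(C)$ are all sound. The $\dcl$ half of (3) is also fine: $b\mapsto\tp(b/A)$ is a homeomorphism of $\dcl(A)$ onto a subspace of the compact metrizable space $S_1(A)$, and your $2\varepsilon$ estimate for the inverse is exactly right.

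The gap is in the $\acl$ half of (3), precisely where you flag ``the hard part.'' The principle you invoke --- that an $\operatorname{Aut}(\mathfrak N/A)$-invariant continuous function is a definable predicate over $A$ --- is false in continuous logic. Invariance plus uniform continuity only says the function factors through $S_1(A)$ as a map continuous for the metric $d(p,q)=\inf\{d(a,b)\colon a\models p,\ b\models q\}$; being a definable predicate over $A$ is the strictly stronger requirement of continuity for the logic topology, and these two topologies on $S_1(A)$ do not coincide in general. The joint continuity of $\rho$ on the algebraic locus is likewise asserted rather than proved, and it is exactly what your density argument consumes: you need that if realized types $p_n\to q$ in the logic topology with $q$ algebraic, then $d_H(C_{p_n},C_q)\to 0$. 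That statement is in fact true, but its proof must use algebraicity essentially: by the definition of $\acl$ there is a compact $A$-invariant set $C\supseteq C_q$ whose distance predicate $d(\cdot,C)$ is a uniform limit of formulas over $A$; logic-convergence $p_n\to q$ forces $d(c_n,C)\to 0$ for $c_n\models p_n$, compactness of $C$ yields a subnet with $c_n\to c^*$, metric convergence of $c_n$ forces $\tp(c^*/A)=q$, and since $d(\cdot,C_q)$ and $d(\cdot,C_{p_n})$ are constant on $C_{p_n}$ and $C_q$ respectively (all realizations being $A$-conjugate), both halves of the Hausdorff distance tend to $0$. Alternatively, and closer to the standard proof of Corollary 10.5 in [BBHU]: for countable $A$ the space of $A$-definable predicates is separable in the uniform norm; if $\|d(\cdot,C)-d(\cdot,C')\|_\infty\le\varepsilon$ for compact $C,C'$ then $d_H(C,C')\le\varepsilon$; since $\acl(A)$ is the union of the compact sets with $A$-definable distance predicate, countably many such sets form an $\varepsilon$-net of $\acl(A)$ for every $\varepsilon>0$, and their countable dense subsets witness separability. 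Either repair closes the gap; as written, the key step rests on a false general principle.
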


It follows that for any $ A\subseteq\cu N$, $\dcl( A)$ and $\acl( A)$ are closed with respect to the metric in $\cu N$.

\subsection{Abstract Independence Relations}

Since the various properties of independence are given some slightly different names in
various parts of the literature, we take this opportunity to declare that we are following the terminology established in [Ad2], which is
repeated here for the reader's convenience.  In this paper, we will sometimes write $AB$ for $A\cup B$, and write $[A,B]$ for
$\{D\colon A\subseteq D \wedge D\subseteq B\}$  (we do not use $(A,B)$ in the analogous way).
We assume throughout this paper that $\upsilon$ is an uncountable inaccessible cardinal that is held fixed.  By a
\emph{big model} of a complete theory $T$ with infinite models we mean a saturated model $\cu N\models T$ of cardinality $|\cu N|=\upsilon$.
For a complete theory $T$ with finite models, we call every model of $T$ big.
Thus every complete theory has a unique big model up to isomorphism.  For this reason, we sometimes refer to ``the'' big model of a complete theory $T$.
We call a set \emph{small} if it has cardinality $< \upsilon$, and \emph{large} otherwise.
Note that by Fact \ref{f-algebraic-cardinality}, the algebraic closure of every small set is small.

If you wish to avoid the assumption that uncountable inaccessible cardinals exist, you can instead assume only that $\upsilon=\upsilon^{\aleph_0}$
and take a big model to be an $\upsilon$-universal domain, as in [BBHU], Definition 7.13.  With that approach, a big model exists but is not unique.

\begin{df}[Adler]  Let $\cu N$ be the big model of a continuous or first order theory.  By a \emph{ternary relation over $\cu N$} we mean
a ternary relation $\ind$ on the small subsets of $\cu N$.  We say that $\ind$
is an \emph{independence relation} if it satisfies the following \emph{axioms for independence relations} for all small sets:
\begin{enumerate}
\item (Invariance) If $A\ind_CB$ and $(A',B',C')\equiv (A,B,C)$, then $A'\ind_{C'}B'$.
\item (Monotonicity) If $A\ind_C B$, $A'\subseteq A$, and $B'\subseteq B$, then $A'\ind_C B'$.
\item (Base monotonicity) Suppose $C\in[D, B]$.  If $A\ind_D B$, then $A\ind_C B$.
\item (Transitivity) Suppose $C\in[D, B]$.  If $B\ind_C A$ and $C\ind_D A$, then $B\ind_D A$.
\item (Normality) $A\ind_CB$ implies $AC\ind_C B$.
\item (Extension) If $A\ind_C B$ and $\hat B\supseteq B$, then there is $A'\equiv_{BC} A$ such that $A'\ind_C \hat B$.
\item (Finite character) If $A_0\ind_CB$ for all finite $A_0\subseteq A$, then $A\ind_CB$.
\item (Local character) For every $A$, there is a cardinal $\kappa(A)<\upsilon$ such that, for any set $B$, there is a subset $C$ of $B$ with $|C|<\kappa(A)$ such that $A\ind_CB$.
\end{enumerate}
If finite character is replaced by countable character (which is defined in the obvious way), then we say that $\ind$ is a
\emph{countable independence relation}.  We will refer to the first five axioms (1)--(5) as the \emph{basic axioms}.
\end{df}

As the trivial independence relation (which declares $A\ind_CB$ to always hold) is obviously of little interest, one adds an extra
condition to avoid such trivialities.

\begin{df}
An independence relation $\ind$ is \emph{strict} if it satisfies
\begin{itemize}
\item[(9)] (Anti-reflexivity) $a\ind_B a$ implies $a\in \acl(B)$.
\end{itemize}
\end{df}

There are two other useful properties to consider when studying ternary relations over $\cu N$:

\begin{df}

\

\begin{enumerate}
\item[(10)] (Full existence) For every $A,B,C$, there is $A'\equiv_C A$ such that $A'\ind_C B$.
\item[(11)] (Symmetry) For every $A,B,C$, $A\ind_CB$ implies $B\ind_CA$.
\end{enumerate}
\end{df}

\begin{rmks}  \label{r-fe-vs-ext}

\noindent\begin{enumerate}
\item  Whenever $\ind$ satisfies invariance, monotonicity, transitivity, normality, full existence, and symmetry,
then $\ind$ also satisfies extension (Remark 1.2 in Ad2]).
\item If $\ind$ satisfies base monotonicity and local character, then $A\ind_C C$ for all small $A,C$ (Appendix to [Ad1]).
\item If $\ind$ satisfies monotonicity and extension, and $A\ind_C C$ holds for all small $A,C$, then $\ind$ also satisfies full existence
(Appendix to [Ad1]).
\item  Any countable independence relation is symmetric.
\end{enumerate}

\noindent While the proofs of these results in [Ad1] and [Ad2] are in
the first order setting, it is straightforward to check that they persist in the continuous setting.  Theorem 2.5 in [Ad2] shows that any independence
relation is symmetric.  The same argument with Morley sequences of length $\omega_1$ instead of countable Morley sequences proves (4).
\end{rmks}

\begin{rmk}  \label{r-localchar}  (Compare with Remark 1.3 in [Ad2])
If $\ind$ has invariance, countable character, base monotonicity, and satisfies local character when $A$ is countable, then $\ind$ has local character.
\end{rmk}

\begin{proof}
Fix a small set $D$.  By hypothesis, for each countable subset $A$ of $D$ there is a smallest cardinal $\lambda(A)<\upsilon$ such that,
for any small set $B$, there is a subset $C(A,B)$ of $B$ with $|C|\le\lambda(A)$ such that $A\ind_{C(A,B)}B$.
By invariance, whenever $E\equiv A$ we have $\lambda(E)=\lambda(A)$.  Since there are countably many formulas, there are at most $2^{\aleph_0}$
different values of $\lambda(A)$ for $A$ countable.
Let $\lambda$ be the sum of $\lambda(A)$ over all countable subsets $A$ of $D$, and let $\kappa(D)=\lambda^+$.  Since $\upsilon$ is uncountable inaccessible, $\kappa(D)<\upsilon$.
Now fix a small set $B$, and let $C=\bigcup\{C(A,B)\colon A\subseteq D, |A|\le\aleph_0\}$.  Then $|C|\le \lambda$.
By base monotonicity, we have $A\ind_C B$ for each countable $A\subseteq D$.
By countable character, $D\ind_C B$, so $\ind$ has local character with bound $\kappa(D)$.
\end{proof}

\begin{df}
We say that $\ind$ has \emph{countably local character} if for every countable set $A$ and every set $B$, there is a countable subset $C$ of $B$ such that $A\ind_CB$.
\end{df}

Note that countably local character implies local character when $A$ is countable (with $\kappa(D)=\aleph_1$).

\begin{rmk}  \label{r-countably-localchar}
\noindent\begin{enumerate}
\item If $\ind$ has local character with bound $\kappa(D)=(|D|+\aleph_0)^+$, then $\ind$ has countably local character.
\item If $\ind$ has invariance, countable character, base monotonicity, and countably local character, then
$\ind$ has local character with bound $\kappa(D)=((|D| + 2)^{\aleph_0})^+$.
\end{enumerate}
\end{rmk}

\begin{proof}  (1) is obvious.  (2) follows from the proof of Remark \ref{r-localchar}, with $\lambda(A)\le\aleph_0$ for each countable $A$, and $\lambda=(|D|+2)^{\aleph_0}$.
\end{proof}

We say that $\ind[J]$ is \emph{weaker than} $\ind[I]$, and write $\ind[I]\Rightarrow \ind[J]$, if $A\ind[I]_C B\Rightarrow A\ind[J]_C B$.

\begin{rmk}  \label{r-weaker}  Suppose $\ind[I]\Rightarrow\ind[J]$.  If $\ind[I]$ has full existence, local character,
or countably local character, then $\ind[J]$ \ has the same property.
\end{rmk}

\subsection{Special Independence Relations}

\noindent In any complete theory (first order or continuous), we define the notion of \emph{algebraic independence}, denoted $\ind[a]$,
by setting $A\ind[a]_CB$ to mean $\acl(AC)\cap \acl(BC)=\acl(C)$. In first order logic, $\ind[a]$ satisfies all axioms for a strict
independence relation except for perhaps base monotonicity.

\begin{prop}  \label{p-alg-indep}
In any complete continuous theory, $\ind[a]$ satisfies symmetry and all axioms for a strict countable independence relation except perhaps for
base monotonicity and extension.
\end{prop}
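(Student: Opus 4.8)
The plan is to check each axiom directly from the defining condition, after recording two trivial reductions. Since $\acl(C)\subseteq\acl(AC)\cap\acl(BC)$ always holds, $A\ind[a]_CB$ is equivalent to $\acl(AC)\cap\acl(BC)\subseteq\acl(C)$, and when $C\subseteq B$ one has $\acl(BC)=\acl(B)$. Granting this, most axioms are immediate. \emph{Invariance} holds because an elementary bijection between the parameter sets involved extends to an automorphism of $\cu N$, and automorphisms commute with $\acl$; \emph{monotonicity} is monotonicity of $\acl$; \emph{normality} is trivial since $\acl((AC)C)=\acl(AC)$; \emph{symmetry} is trivial since the condition $\acl(AC)\cap\acl(BC)\subseteq\acl(C)$ is symmetric in $A,B$; and \emph{anti-reflexivity} is immediate, as $a\ind[a]_B a$ says $\acl(aB)=\acl(B)$, whence $a\in\acl(B)$. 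For \emph{transitivity}, with $D\subseteq C\subseteq B$, I would use $\acl(AD)\subseteq\acl(AC)$ together with $B\ind[a]_C A$ and $C\ind[a]_D A$ to get
\[\acl(B)\cap\acl(AD)\ \subseteq\ \acl(B)\cap\acl(AC)\ \subseteq\ \acl(C),\]
so that $\acl(B)\cap\acl(AD)\subseteq\acl(C)\cap\acl(AD)\subseteq\acl(D)$, which is $B\ind[a]_D A$.

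For \emph{countable character}, suppose $A_0\ind[a]_C B$ for every countable $A_0\subseteq A$ and take $b\in\acl(AC)\cap\acl(BC)$. By Fact~\ref{f-algebraic}, $b$ is algebraic over a countable subset of $AC$, hence over $A_0C$ for some countable $A_0\subseteq A$; then $b\in\acl(A_0C)\cap\acl(BC)\subseteq\acl(C)$. Note that this uses that algebraic closure in continuous logic is \emph{countably} (not finitely) based, which is exactly why $\ind[a]$ is only a countable independence relation in this setting.

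The axiom requiring real work, and the step I expect to be the main obstacle, is \emph{local character}; the difficulty is again the countable-basedness. I would put $\mu=(|A|+2)^{\aleph_0}$, so that $\aleph_1\le\mu<\upsilon$ and $\mu^{\aleph_0}=\mu$, and claim $\kappa(A)=\mu^+$ works. Given a small $B$, build an increasing continuous chain $\langle C_\alpha:\alpha\le\omega_1\rangle$ of subsets of $B$ with $C_0=\emptyset$: given $C_\alpha$, for each $b\in\acl(AC_\alpha)\cap\acl(B)$ choose (by Fact~\ref{f-algebraic}) a countable $B_b\subseteq B$ with $b\in\acl(B_b)$, and set $C_{\alpha+1}=C_\alpha\cup\bigcup\{B_b:b\in\acl(AC_\alpha)\cap\acl(B)\}$, taking unions at limits. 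Using $|\acl(AC_\alpha)|\le(\mu+2)^{\aleph_0}=\mu$ (Fact~\ref{f-algebraic-cardinality}), an easy induction gives $|C_\alpha|\le\mu$ for all $\alpha\le\omega_1$, so $C:=C_{\omega_1}\subseteq B$ has $|C|\le\mu<\kappa(A)$. To verify $A\ind[a]_C B$, take $b\in\acl(AC)\cap\acl(B)$; by Fact~\ref{f-algebraic} it is algebraic over a countable $E\subseteq AC$, and since $\omega_1$ is regular $E\cap C\subseteq C_\alpha$ for some $\alpha<\omega_1$, so $b\in\acl(AC_\alpha)\cap\acl(B)$ and hence $b\in\acl(B_b)\subseteq\acl(C_{\alpha+1})\subseteq\acl(C)$. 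The length $\omega_1$ is essential: it makes the chain ``continuous'' for $\acl$ in the sense that a countable subset of the union is bounded below $\omega_1$, and an $\omega$-chain would not achieve this.

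The proof makes no assertion about base monotonicity or extension; both are genuinely delicate for algebraic independence (base monotonicity fails already in elementary first-order examples), which is precisely why the statement excepts them.
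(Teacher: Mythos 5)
Your proof is correct and follows essentially the same route as the paper, which simply cites Adler's Proposition 1.5 and notes the two continuous-logic modifications — countable (rather than finite) character coming from countable character of $\acl$, and the local-character bound $\kappa(A)=((|A|+2)^{\aleph_0})^+$ — both of which you implement correctly, the latter via the $\omega_1$-chain argument. No gaps.
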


\begin{proof}
The proof is exactly as in [Ad2], Proposition 1.5, except for some minor modifications.  For example, countable character of $\acl$ in
continuous logic yields countable character of $\ind[a]$.  Also, in the verification of local character, one needs to take
$\kappa(A):=((|A|+2)^{\aleph_0})^+$ instead of $(|A|+\aleph_0)^+$.
\end{proof}

\begin{question}  \label{q-a-full-existence}
Does $\ind[a]$ always have full existence (or extension) in continuous logic?
\end{question}

The lattice of algebraically closed sets is \emph{modular} if
$$ B\cap(\acl(AC))=\acl((B\cap A)C)$$
 whenever $A, B, C$ are algebraically closed and $C\subseteq B$.
Proposition 1.5  in [Ad2] shows that in first order logic, $\ind[a]$ satisfies base monotonicity, and is thus a strict independence relation,
if and only if the lattice of algebraically closed sets is modular.  In continuous logic, the same argument still shows that $\ind[a]$
satisfies base monotonicity if and only if the lattice of algebraically closed sets is modular.

Recall the following definitions from [Ad2]:

\begin{df}  \label{d-special-ind}
Suppose that $A,B,C$ are small subsets of the big model of a complete theory.
\begin{itemize}
\item $A\ind[M]_CB$ iff for every $D\in[C,\acl(BC)]$, we have $A\ind[a]_{D}B$.
\item $A\thind_CB$ iff for every (small) $E\supseteq BC$, there is $A'\equiv_{BC}A$ such that $A'\ind[M]_CE$.
\end{itemize}
\end{df}

%%  Sept 2014:  Next paragraphs re-written, with added reference showing that \thind has finite character in first order case.

Note that $\thind\Rightarrow\ind[M] \ $ and $\ind[M]\Rightarrow\ind[a]$.  Also, $\thind = \ind[M] \ \ $ if and only if $\ind[M] \ $ satisfies extension.
In [Ad2], it is shown that, in the first order setting, $\ind[M]$ \ satisfies all of the axioms for a strict independence relation except for
perhaps local character and extension (but now base monotonicity has been ensured).  It is shown in [EG] that
this fact remains true in continuous logic (except, of course, for finite character being replaced by countable character).

By [Ad1] and [Ad2], in the first order setting
$\thind$ satisfies all of the axioms for a strict independence relation except perhaps  local character
\footnote{Finite character follows from Proposition A.2 of [Ad1], and is stated explicitly in [Ad3], Proposition 1.3.}
(but now extension has been ensured).  It is shown in [EG] that in the continuous setting,
$\thind$ satisfies all of the axioms for a strict countable independence relation except perhaps for countable character and local character.

A (continuous or first order) theory $T$ is said to be \emph{real rosy} if $\thind$ has local character.
Examples of real rosy theories are the first order or continuous stable theories ([BBHU, Section 14), the first order or continuous simple theories (see [Be3]),
and the first order o-minimal theories (see [On]).

The following results are consequences of Theorem 3.2 and the preceding discussion in [EG].  The proof of part (2) is the same as the proof of Remark
4.1 in [Ad2].

\begin{result}  \label{f-weakest}

\noindent\begin{enumerate}
\item $T$ is real rosy if and only if $\thind$ is a strict countable independence relation.
\item
If $\ind[I]$ satisfies the basic axioms and extension, and is symmetric and anti-reflexive, then $\ind[I]\Rightarrow\thind$.
\item If a theory has a strict countable independence relation, then it is real rosy, and $\thind$ is the weakest strict countable
independence relation.
\end{enumerate}
\end{result}

The next result is a consequence of Theorem 8.10 in [BU] and Theorem 14.18 in [BBHU].

\begin{result}  \label{f-stable-indep}
If $T$ is stable, then it has a unique strict independence relation,
the forking independence relation $\ind[f]$. Moreover, $\ind[f]$ has countably local character.
\end{result}

\begin{cor}  \label{c-stable-thorn}
If $T$ stable and $\thind$ has finite character, then $\ind[f]=\thind$.
\end{cor}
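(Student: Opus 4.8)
The plan is to show that, under the stated hypotheses, $\thind$ is itself a (full, finite-character) strict independence relation, and then to appeal to the uniqueness clause of Result \ref{f-stable-indep}.

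First I would establish that $T$ is real rosy. Since $T$ is stable, Result \ref{f-stable-indep} gives that $\ind[f]$ is a strict independence relation; as finite character trivially implies countable character, $\ind[f]$ is in particular a strict countable independence relation, so by Result \ref{f-weakest}(3) the theory $T$ is real rosy, i.e. $\thind$ has local character. (One could also simply quote that stable theories are real rosy, as noted before Result \ref{f-weakest}.) Next, recall from the discussion preceding this corollary that in the continuous setting $\thind$ satisfies every axiom for a strict countable independence relation with the possible exceptions of countable character and local character. Local character has just been verified, and countable character is immediate from the assumed finite character. Hence $\thind$ satisfies all of axioms (1)--(9) together with finite character; that is, $\thind$ is a strict independence relation.

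Finally, Result \ref{f-stable-indep} asserts that a stable theory has a \emph{unique} strict independence relation, namely $\ind[f]$. Therefore $\thind = \ind[f]$. (As a sanity check, the inclusion $\ind[f] \Rightarrow \thind$ also follows directly from Result \ref{f-weakest}(3), since $\thind$ is the weakest strict countable independence relation; the content beyond that is the reverse inclusion, which is exactly what uniqueness supplies.)

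There is essentially no computation here; the only point requiring care is the bookkeeping of which axioms for $\thind$ come for free and which need the hypotheses. The two axioms genuinely in question for $\thind$ in general---countable character and local character---are provided precisely by the finite-character assumption and by stability (via real rosiness), after which uniqueness of the strict independence relation in a stable theory closes the argument.
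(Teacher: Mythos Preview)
Your proof is correct and is exactly the argument the paper intends: the paper's own proof is simply ``By Results \ref{f-weakest} and \ref{f-stable-indep},'' and you have faithfully unpacked this---using stability via Result \ref{f-stable-indep} to get real rosiness (hence local character for $\thind$), the finite-character hypothesis to upgrade $\thind$ to a genuine strict independence relation, and then the uniqueness clause of Result \ref{f-stable-indep} to conclude $\thind=\ind[f]$.
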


\begin{proof}
By Results \ref{f-weakest} and \ref{f-stable-indep}.
\end{proof}

A continuous formula $\Phi(\vec x,B,C)$  \emph{divides over} $C$ if, in the big model of $T$,
there is a $C$-indiscernible sequence $\<B^i\>_{i\in\BN}$
such that $B^0\equiv_C B$ and the set of statements $\{\Phi(\vec x,B^i,C)=0\colon i\in\BN\}$ is not satisfiable.
The \emph{dividing independence relation} $A\ind[d]_C B$ is defined to hold if there is no tuple $\vec a\in A^{<\BN}$ and continuous formula
$\Phi(\vec x,B,C)$ such that $\Phi(\vec a,B,C)=0$ and $\Phi(\vec x, B,C)$ divides over $C$.

\begin{result}  \label{f-d-indep}

\noindent\begin{enumerate}
\item $\ind[d]$ satisfies invariance, monotonicity, and finite character.
\item If $T$ is stable then $\ind[d]$ is equal to the unique strict independence relation $\ind[f]$.
\end{enumerate}
\end{result}

\begin{proof}   Invariance and monotonicity are clear, and finite character holds because
each formula has only finitely many variables and parameters.  Part (2) follows from Theorem 14.18 in [BBHU].
\end{proof}

\section{Countably Based Relations}

In this section we will introduce the notion of a countably based ternary relation over $\cu N$.  The reason this notion is useful is because for
each ternary relation with monotonicity there is a unique countably based ternary relation that agrees with it on countable sets (Lemma \ref{l-cbased}).
This will be important in Section \ref{s-pointwise}, where it allows us to introduce, for each ternary relation with monotonicity over the big model of a first order theory $T$,
a corresponding ``pointwise independence relation'' over the big model of $T^R$ (Definition \ref{d-measurable}).

The notions and results in this section hold for all first order and continuous theories.  We will give the proofs only for continuous
theories; the proofs for first order theories are similar but simpler.
We will use $(\forall^c D)$ to mean ``for all countable $D$'', and similarly for $(\exists^c D)$.

\subsection{The General Case}

\begin{df}
We say that a ternary relation $\ind[I]$ is \emph{countably based} if for all $A, B, C$ we have
$$ A\ind[I]_C B \Leftrightarrow (\forall^c A'\subseteq A)(\forall^c B'\subseteq B)(\forall^c C'\subseteq C)(\exists^c D\in[C',C])A'\ind[I]_{D} B'.$$
\end{df}

Note that if $\ind[I]$ and $\ind[J]$ are countably based and agree on countable sets, then they are the same.

\begin{df}  We say that $\ind[I]$ has \emph{two-sided countable character} if $\ind[I]$ has countable character and
$$ [(\forall^c B_0\subseteq B)A\ind[I]_C B_0] \Rightarrow A\ind[I]_C B.$$
In other words,
$$ [(\forall^c A_0\subseteq A)(\forall^c B_0\subseteq B) A_0\ind[I]_C B_0] \Rightarrow A\ind[I]_C B.$$
\end{df}

\begin{rmk} \label{r-two-sided}
If $\ind[I]$ has symmetry and countable character, then $\ind[I]$ has two-sided countable character.
\end{rmk}

\begin{proof}  Suppose $(\forall^c B_0\subseteq B)A\ind[I]_C B_0$.  By symmetry, $(\forall^c B_0\subseteq B)B_0\ind[I]_C A$.
By countable character, $B\ind[I]_C A$.  Then by symmetry again, $A\ind[I]_C B$.
\end{proof}

\begin{lemma}  \label{l-cbased}

\noindent\begin{enumerate}
\item Suppose $\ind[I]$ and $\ind[J]$ are countably based.  If
$$A\ind[I]_C B \Rightarrow A\ind[J]_C B$$
holds for all countable $A,B,C$, then it holds for all small $A,B,C$.
\item  Suppose $\ind[I]$ has monotonicity.  There is a unique ternary relation $\ind[Ic]$ \  that is countably
based and agrees with $\ind[I]$ on countable sets.  Namely,
$$ A\ind[Ic]_C\,\, B \Leftrightarrow (\forall^c A'\subseteq A)(\forall^c B'\subseteq B)(\forall^c C'\subseteq C)(\exists^c D\in[C',C])A'\ind[I]_{D} B'.$$
\item $\ind[I]$ is countably based if and only if $\ind[I]$ has monotonicity and two-sided countable character, and
whenever $A$ and $B$ are countable, we have
$$A\ind[I]_C B \Leftrightarrow (\forall^c C'\subseteq C)(\exists^c D\in[C',C]) A\ind[I]_{D} B.$$
\item Suppose $\ind[I]$ is countably based.  If $\ind[I]$ has invariance, base monotonicity, transitivity, normality, or symmetry
for all countable $A,B,C$, then $\ind[I]$ \ \ has the same property for all small $A,B,C$.
\end{enumerate}
\end{lemma}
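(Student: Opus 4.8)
The plan is to prove the four parts in the order stated, since each builds on the previous one. For part (1), the hypothesis gives the implication for countable triples; to extend it to small $A,B,C$, unwind the definition of ``countably based'' applied to $\ind[I]$. Given $A\ind[I]_C B$, for any countable $A'\subseteq A$, $B'\subseteq B$, $C'\subseteq C$ there is a countable $D\in[C',C]$ with $A'\ind[I]_D B'$; note $D$ is countable, so the hypothesis yields $A'\ind[J]_D B'$. Since $A',B',C'$ were arbitrary countable subsets and this $D$ witnesses the right-hand side of the countably-based criterion for $\ind[J]$, we conclude $A\ind[J]_C B$. (One should check the quantifier over $D$ is handled correctly: the $D$ produced for $\ind[I]$ is exactly the $D$ needed for $\ind[J]$.)

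For part (2), uniqueness is the remark following the definition of countably based (two countably based relations agreeing on countable sets coincide), so only existence is at issue. Define $\ind[Ic]$ by the displayed formula. First check it agrees with $\ind[I]$ on countable $A,B,C$: the forward direction is trivial (take $A'=A$, $B'=B$, $C'=C$, $D=C$), and the reverse uses monotonicity of $\ind[I]$ together with the fact that when $A,B,C$ are already countable we may take $A'=A$, $B'=B$, $C'=C$ and get some countable $D\in[C,C]=\{C\}$ with $A\ind[I]_C B$. Then check $\ind[Ic]$ is itself countably based; this is essentially a bookkeeping argument chaining nested countable subsets, using that a countable union of countable sets is countable and that $[C',C]$ intervals compose sensibly. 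This ``double-application'' verification is where I expect the main technical fiddliness to lie, though nothing deep is involved.

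For part (3), the forward direction: assuming $\ind[I]$ is countably based, monotonicity follows because shrinking $A,B,C$ only shrinks the families of countable subsets quantified over; two-sided countable character follows because if every countable $A_0\subseteq A$, $B_0\subseteq B$ satisfies $A_0\ind[I]_C B_0$, then in particular the countably-based criterion is met with $D=C'$... more carefully, one applies the criterion and the given countable instances; and the displayed equivalence for countable $A,B$ is just the countably-based definition with $A'=A$, $B'=B$. For the converse, assume $\ind[I]$ has monotonicity, two-sided countable character, and the displayed countable equivalence; derive the countably-based criterion for arbitrary $A,B,C$ by first using two-sided countable character to reduce to countable $A,B$, then applying the displayed equivalence. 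For part (4), each listed property is ``local'' in the appropriate sense: given the property for countable triples and given that $\ind[I]$ is countably based, verify it for small triples by passing to countable subsets, applying the countable instance, and reassembling via the countably-based criterion — for invariance one uses that automorphisms preserve countability and the interval structure; for base monotonicity, transitivity, normality one does the analogous subset-chase; for symmetry one swaps the roles of the $A'$ and $B'$ quantifiers. The main obstacle throughout is purely organizational: keeping the nested countable quantifiers and the $[C',C]$ intervals straight, especially in part (2)'s verification that $\ind[Ic]$ is countably based and in the reassembly arguments of part (4).
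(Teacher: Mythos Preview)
Your approach is essentially the same as the paper's, and parts (1), (3), (4) are fine as sketched.  In part (2), however, you have inverted the logical order and thereby manufactured difficulty that is not there.  The paper first checks that $\ind[Ic]$ agrees with $\ind[I]$ on countable triples: the direction $\ind[Ic]\Rightarrow\ind[I]$ comes from instantiating $A'=A$, $B'=B$, $C'=C$ (forcing $D=C$), and the direction $\ind[I]\Rightarrow\ind[Ic]$ is where monotonicity is used --- for arbitrary countable $A'\subseteq A$, $B'\subseteq B$, $C'\subseteq C$ one gets $A'\ind[I]_C B'$ by monotonicity and then takes $D=C\in[C',C]$.  (Your labelling of which direction is ``trivial'' and which ``uses monotonicity'' is swapped; and in the monotonicity direction you must verify all countable $A',B',C'$, not just the specific choice $A'=A$, $B'=B$, $C'=C$.)  Once agreement on countable triples is established, the fact that $\ind[Ic]$ is countably based is immediate, not a fiddly bookkeeping chase: in the countably-based criterion for $\ind[Ic]$, the inner instance $A'\ind[Ic]_D B'$ has countable $A',B',D$, so it equals $A'\ind[I]_D B'$, and the criterion collapses to the very definition of $\ind[Ic]$.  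No nested ``double-application'' is needed.
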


\begin{proof}  (1) follows easily from the definition of countably based.

(2) Uniqueness is clear.  Let $\ind[J]$\, be the relation defined by the displayed formula and let $A,B,C$ be countable.
It is obvious that $\ind[J]$ is countably based, and that $A\ind[J]_C B$ implies $A\ind[I]_C B$.  Suppose $A\ind[I]_C B$
and $A'\subseteq A, B'\subseteq B, C'\subseteq C$.  By monotonicity for $\ind[I]$ we have $A'\ind[I]_C B'$.  But $C\in[C',C]$, so $A\ind[J]_C B$
as required.

(3) Suppose first that $\ind[I]$ is countably based.  It is immediate that $\ind[I]$ has monotonicity and two-sided countable character.
Then $\ind[I]=\ind[Ic]$ \ \ by (1).  Therefore the following statements are equivalent:
\begin{itemize}
\item[(a)] $A\ind[I]_C B$;
\item[(b)] $(\forall^c A'\subseteq A)(\forall^c B'\subseteq B) A'\ind[I]_C B'$;
\item[(c)] $(\forall^c A'\subseteq A)(\forall^c B'\subseteq B) A'\ind[Ic]_C \ B'$;
\item[(d)] $(\forall^c A'\subseteq A)(\forall^c B'\subseteq B)(\forall^c C'\subseteq C)(\exists^c D\in[C',C]) A'\ind[I]_D B'$;
\end{itemize}
When $A$ and $B$ are countable, (d) says that
$$ (\forall^c C'\subseteq C)(\exists^c D\in[C',C]) A\ind[I]_D B,$$
so the displayed formula in (2) holds.

Now suppose that $\ind[I]$ has monotonicity and two-sided countable character, and the displayed formula in (2) holds whenever
$A, B$ are countable.   Then the statements (a)--(d) are again equivalent, so $\ind[I]$ is countably based.

(4)  We prove the result for base monotonicity.  Suppose $\ind[I]$ has base monotonicity for countable sets, and assume that $A\ind[I]_D \ B$ and $C\in[D,B]$.
We prove $A\ind[I]_C \ B$.  Let $A'\subseteq A, B'\subseteq B, C'\subseteq C$ be countable.  Since $\ind[I]$ is countably based,
it suffices to find a countable
$C''\in[C',C]$ such that $A'\ind[I]_{C''} B'$.  Let $B''=B'\cup C'$ and $D':=C'\cap D$.  Note that $B''$ is a countable subset of $B$ and
$C'\in[D',B'']$.   Since $A\ind[I]_D \ B$ there is a countable $E\in[D',D]$ such that $A'\ind[I]_E B''$.
Let $C''=E\cup C'$.  Then $C''\in [C',C]$ and $C''\in [E,B'']$, so by base monotonicity for countable sets we have $A'\ind[I]_{C''} B''$.
Then by monotonicity, we have $A'\ind[I]_{C''} B'$ as required.
\end{proof}

\begin{prop}  \label{p-preserve}
Let $\ind[I]$ be a ternary relation that has monotonicity.
If $\ind[I]$ has any of invariance, base monotonicity, transitivity, normality, symmetry, or anti-reflexivity for all countable sets,
then $\ind[Ic]$ \  has the same property for all small sets.
\end{prop}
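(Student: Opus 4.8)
The plan is to reduce the entire statement to facts already proved about countably based relations. Since $\ind[I]$ has monotonicity, Lemma~\ref{l-cbased}(2) tells us that $\ind[Ic]$ is countably based and agrees with $\ind[I]$ on all countable sets. The first observation is then that each of the listed properties, \emph{when restricted to countable sets}, is a statement purely about the values taken by the relation on countable arguments: for invariance, normality, and symmetry this is immediate; for base monotonicity and transitivity it holds because there the bases appearing are subsets of the side sets involved, so once the side sets $A,B$ are countable everything in sight is countable. Hence if $\ind[I]$ has one of these five properties for all countable sets, then so does $\ind[Ic]$, and by Lemma~\ref{l-cbased}(4) (whose hypothesis that $\ind[Ic]$ be countably based holds by construction) the relation $\ind[Ic]$ then has that property for all small sets.

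It remains to treat anti-reflexivity, which Lemma~\ref{l-cbased}(4) does not cover. Here I would argue directly, using that anti-reflexivity only ever involves a singleton $a$, which is already countable. Suppose $a\ind[Ic]_B a$ for a small set $B$. Unwinding the definition of $\ind[Ic]$ with $A'=B'=\{a\}$ and $C'=\emptyset$, we obtain a countable $D\subseteq B$ with $a\ind[I]_D a$. By anti-reflexivity of $\ind[I]$ for countable sets, $a\in\acl(D)$; and since $D\subseteq B$ we conclude $a\in\acl(D)\subseteq\acl(B)$, as required.

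There is no serious obstacle here — all the substantive work is contained in Lemma~\ref{l-cbased}. The only point requiring a little care is checking that for each listed axiom the ``countable-set version'' is genuinely a constraint on the restriction of the relation to countable arguments, so that ``agreeing on countable sets'' transfers it; and, in the anti-reflexivity case, remembering that $\acl$ is taken in the ambient big model, so that passing from the countable witness $D$ to $B$ is handled by monotonicity of $\acl$ rather than by any further manipulation of the definition.
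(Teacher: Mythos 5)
Your proof is correct. The anti-reflexivity argument is essentially the paper's own (the paper lets $C_0\subseteq C$ be an arbitrary countable subset where you simply take $C'=\emptyset$; both produce a countable $D\subseteq B$ with $a\ind[I]_D a$ and then pass through $\acl(D)\subseteq\acl(B)$). For the other five properties your route differs in its decomposition: the paper re-verifies base monotonicity, transitivity, and normality for $\ind[Ic]$ by hand, choosing countable $A_0,B_0,C_0$ and chasing them through the definition of $\ind[Ic]$, whereas you observe that this work has already been done, for an arbitrary countably based relation, in Lemma \ref{l-cbased}(4); the only new content is then transferring the countable-set versions of the axioms from $\ind[I]$ to $\ind[Ic]$ via their agreement on countable sets, guaranteed by Lemma \ref{l-cbased}(2). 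The one point that genuinely needs checking in that transfer --- that each axiom restricted to countable $A,B,C$ mentions only countable arguments, in particular that the auxiliary bases occurring in base monotonicity and transitivity are subsets of the side sets and hence automatically countable --- is a point you do check. The trade-off is that your argument leans on the cases of Lemma \ref{l-cbased}(4) that the paper states but writes out only for base monotonicity, while the paper's direct verification duplicates the same subset-chasing (and does so somewhat less carefully: its base-monotonicity step applies base monotonicity of $\ind[I]$ to a base $C_1=C_0\cup D_1$ that need not lie inside $B_0$, an issue that the proof of Lemma \ref{l-cbased}(4) handles properly by first enlarging $B'$ to $B''=B'\cup C'$ and then shrinking back by monotonicity). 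Your factorization is the cleaner of the two.
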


\begin{proof}  Invariance, and symmetry are clear.

Base monotonicity:  Suppose $C\in[D,B]$ and $A\ind[Ic]_D \ B$.  Let $A_0\subseteq A, B_0\subseteq B$, ${C_0}\subseteq C$ be countable.
Let $D_0=C_0\cap D$.  Then there exists a countable $D_1\in[D_0,D]$ such that $A_0\ind[I]_{D_1} B_0$.  Let $C_1=C_0\cup D_1$.
By base monotonicity for $\ind[I]$, $A_0\ind[I]_{C_1} B_0$.  Therefore $A\ind[Ic]_C \ B$.

Transitivity:  Assume $C\in[D,B], B\ind[Ic]_C \ A$, and $C\ind[Ic]_D \ A$.  Let $A_0\subseteq A, B_0\subseteq B$, ${C_0}\subseteq C$,
$D_0\subseteq D$  be countable.  There is a countable $C_1\in[C_0,C]$ such that $B_0\ind[I]_{C_1} A_0$, and a countable $D_1\in [D_0,D]$
such that $C_1\ind[I]_{D_1} A_0$.  By transitivity of $\ind[I]$, $B_0\ind[I]_{D_1} A_0$.  This shows that $B\ind[Ic]_D \ A$.

Normality:  Assume $A\ind[Ic]_C \ B$.  Let $E_0\subseteq AC, B_0\subseteq B$, $C_0\subseteq C$ be countable.  Let $A_0=E_0\cap A, C=E_0\cap C$.
Then for some countable $C_1\in[C_0,C]$ we have $A_0\ind[I]_{C_1} B_0$.  By normality of $\ind[I]$,  $A_0C_1\ind[I]_{C_1} B_0$.
By monotonicity of $\ind[I]$, $E_0\ind[I]_{C_1} B_0$.  Thus  $AC\ind[Ic]_C \ B$.

Anti-reflexivity:  Suppose $a\ind[Ic]_C \ a$.  Let $C_0\subseteq C$ be countable.  For some countable $C_1\in[C_0,C]$ we have
$ a\ind[I]_{C_1}  a$.  Then $ a\in\acl(C_1)$ by the anti-reflexivity of $\ind[I]$, so $ a\in\acl(C)$.
\end{proof}

The following property is sometimes useful in proving that a relation has finite character or is countably based.

\begin{df} A ternary relation $\ind[I]$ has the \emph{countable union property} if whenever
$A, B, C$ are countable, $C=\bigcup_n C_n$, and $C_n\subseteq C_{n+1}$ and $A\ind[I]_{C_n} B$
for each $n$, we have $A\ind[I]_C B$.
\end{df}

Given two ternary relations $\ind[I]$ and $\ind[J]$ over $\cu N$, $\ind[I]\wedge\ind[J]$ will denote the relation $\ind[K] \ $ such that
$$A\ind[K]_C \ B\Leftrightarrow A\ind[I]_C B\wedge A\ind[J]_C B.$$

\begin{prop}  \label{p-union-pair}  Suppose $\ind[I]$ and $\ind[J]$ are both countably based and have the countable union property.
Then the relation $\ind[I]\wedge\ind[J]$ is also countably based.
\end{prop}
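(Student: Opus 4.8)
The plan is to apply the characterization of countably based relations in Lemma~\ref{l-cbased}(3) to the relation $\ind[K] := \ind[I]\wedge\ind[J]$. By that characterization it suffices to check that $\ind[K]$ has monotonicity, has two-sided countable character, and satisfies, for all countable $A$ and $B$,
$$A\ind[K]_C B \Leftrightarrow (\forall^c C'\subseteq C)(\exists^c D\in[C',C])\; A\ind[K]_{D} B.$$
Since $\ind[I]$ and $\ind[J]$ are countably based, each of them has monotonicity and two-sided countable character (again by Lemma~\ref{l-cbased}(3)), and both properties pass immediately to their conjunction $\ind[K]$: monotonicity is clear, and if $A_0\ind[K]_C B_0$ for all countable $A_0\subseteq A$ and $B_0\subseteq B$, then the same holds with $\ind[I]$ and with $\ind[J]$, whence $A\ind[I]_C B$ and $A\ind[J]_C B$, i.e.\ $A\ind[K]_C B$.

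It remains to verify the displayed equivalence for countable $A,B$. The direction $(\Leftarrow)$ is routine: given a countable $C'\subseteq C$, the hypothesis yields a countable $D\in[C',C]$ with $A\ind[K]_D B$, hence $A\ind[I]_D B$; as $C'$ was arbitrary and $\ind[I]$ is countably based, Lemma~\ref{l-cbased}(3) gives $A\ind[I]_C B$, and likewise $A\ind[J]_C B$, so $A\ind[K]_C B$. For $(\Rightarrow)$, assume $A\ind[I]_C B$ and $A\ind[J]_C B$ and fix a countable $C'\subseteq C$. Build an increasing chain $C' = E_0\subseteq E_1\subseteq E_2\subseteq\cdots$ of countable subsets of $C$ by a back-and-forth: using that $\ind[I]$ is countably based, choose a countable $E_{2k+1}\in[E_{2k},C]$ with $A\ind[I]_{E_{2k+1}}B$; using that $\ind[J]$ is countably based, choose a countable $E_{2k+2}\in[E_{2k+1},C]$ with $A\ind[J]_{E_{2k+2}}B$. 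Set $D:=\bigcup_n E_n$, a countable set with $C'\subseteq D\subseteq C$. The subchain $E_1\subseteq E_3\subseteq\cdots$ is increasing with union $D$ and satisfies $A\ind[I]_{E_{2k+1}}B$ for all $k$, so the countable union property for $\ind[I]$ gives $A\ind[I]_D B$; similarly the subchain $E_2\subseteq E_4\subseteq\cdots$ gives $A\ind[J]_D B$ by the countable union property for $\ind[J]$. Hence $A\ind[K]_D B$ with $D$ countable and $D\in[C',C]$, as needed.

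The single point requiring care is the $(\Rightarrow)$ direction: countable-basedness of $\ind[I]$ and of $\ind[J]$ separately only produce, for a given countable $C'$, possibly different countable bases in $[C',C]$, and one must merge them into a single base. The alternating chain construction together with the countable union property is precisely what accomplishes this; the rest is bookkeeping with Lemma~\ref{l-cbased}(3).
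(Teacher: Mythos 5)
Your proof is correct and follows essentially the same route as the paper: reduce to the characterization in Lemma \ref{l-cbased}(3) and, for the forward direction, build an increasing alternating chain of countable bases in $[C',C]$ for $\ind[I]$ and $\ind[J]$, then pass to the union via the countable union property. The paper merely compresses the monotonicity, two-sided countable character, and right-to-left checks that you spell out explicitly.
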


\begin{proof}   Let $A, B$ be countable and let $\ind[K] \ =\ind[I]\wedge\ind[J]$.  By  Lemma \ref{l-cbased} (3), it is enough to show that
$$ A\ind[K]_C \ B \Leftrightarrow (\forall^c C'\subseteq C)(\exists^c D\in[C',C]) A\ind[K]_D \ B.$$
The implication from right to left is trivial. For the other direction, assume $A\ind[K]_C B$ and let $C'\subseteq C$ be countable.
Since both $\ind[I]$ and $\ind[J]$ are countably based, there is a  sequence $\< D_n\>_{n\in\BN}$ of countable sets such that
$D_n\subseteq D_{n+1}$ and $D_n\in[C',C]$ for each $n\in\BN$, $A\ind[I]_{D_n} B$ for each even $n$, and  $A\ind[J]_{D_n} B$ for each odd $n$.
Let $D=\bigcup_n D_n$. Then $D\in[C',C]$ and $D$ is countable.  Since both $\ind[I]$ and $\ind[J]$ have the countable union property, we have $A\ind[K]_D B$,
as required.
\end{proof}

\begin{prop} \label{p-preserve-finitechar}  If $\ind[I]$ has monotonicity,
finite character, and the countable union property, then $\ind[Ic] \ $ has finite character.
\end{prop}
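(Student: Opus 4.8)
The plan is to verify finite character straight from the definition of $\ind[Ic]$. Suppose $A_0\ind[Ic]_C B$ holds for every finite $A_0\subseteq A$; I want to conclude $A\ind[Ic]_C B$, i.e.\ that for all countable $A'\subseteq A$, $B'\subseteq B$, $C'\subseteq C$ there is a countable $D\in[C',C]$ with $A'\ind[I]_D B'$. Fix such $A',B',C'$. If $A'$ is finite this is immediate: then $A'\ind[Ic]_C B$ by hypothesis, and the defining formula of $\ind[Ic]$ applied with the countable parameters $A'$, $B'$, $C'$ produces the required $D$. So I may assume $A'$ is infinite, and I fix an enumeration $A'=\{a_n:n\in\BN\}$, writing $A_n=\{a_0,\dots,a_n\}$.

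Next I would build, by recursion on $n$, an increasing chain of countable sets $D_n\in[C',C]$ with $A_n\ind[I]_{D_n}B'$. For $n=0$: since $A_0$ is a finite subset of $A$, $A_0\ind[Ic]_C B$, so the defining formula of $\ind[Ic]$ applied with the countable parameters $A_0$, $B'$, $C'$ yields a countable $D_0\in[C',C]$ with $A_0\ind[I]_{D_0}B'$. Given $D_n$: since $A_{n+1}$ is a finite subset of $A$, $A_{n+1}\ind[Ic]_C B$, and I apply the defining formula of $\ind[Ic]$ with parameters $A_{n+1}$, $B'$, and the countable set $D_n\subseteq C$, obtaining a countable $D_{n+1}\in[D_n,C]$ with $A_{n+1}\ind[I]_{D_{n+1}}B'$; since $C'\subseteq D_n\subseteq D_{n+1}\subseteq C$ we get $D_{n+1}\in[C',C]$. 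Set $D=\bigcup_n D_n$, a countable member of $[C',C]$.

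It then remains to check $A'\ind[I]_D B'$, and this is where the other two hypotheses on $\ind[I]$ enter. First, fix $k\in\BN$: for every $n\ge k$ we have $A_k\subseteq A_n$ and $A_n\ind[I]_{D_n}B'$, so monotonicity of $\ind[I]$ gives $A_k\ind[I]_{D_n}B'$; since $\langle D_n\rangle_{n\ge k}$ is an increasing chain of countable sets whose union is $D$, and $A_k$, $B'$ are countable, the countable union property of $\ind[I]$ yields $A_k\ind[I]_D B'$. Now any finite $F\subseteq A'$ is contained in some $A_k$, so by monotonicity $F\ind[I]_D B'$; finite character of $\ind[I]$ then gives $A'\ind[I]_D B'$, which completes the argument.

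The one real subtlety — and the reason both monotonicity and the countable union property are needed, not finite character of $\ind[I]$ alone — is that along the chain the left-hand sets $A_n$ grow together with the base $D_n$, so one cannot invoke finite character directly; one must first use monotonicity to freeze a finite left-hand set $A_k$ and then the countable union property to pass to the limiting base $D$. The only other point to watch is that $D$ lands in $[C',C]$, which is exactly why the recursion is seeded with the parameter $C'$ at stage $0$ and fed $D_n$ as the base parameter at each later stage.
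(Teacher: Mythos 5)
Your proof is correct and follows essentially the same route as the paper's: exhaust the countable set $A'$ by an increasing chain of finite sets, recursively build an increasing chain of countable bases $D_n\in[C',C]$ using the countably-based definition applied to each finite piece, and then combine monotonicity, the countable union property, and finite character of $\ind[I]$ to pass to $D=\bigcup_n D_n$. The only cosmetic difference is that you treat finite $A'$ as a separate (unnecessary) case and spell out the recursion slightly more explicitly.
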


\begin{proof}
Suppose $A'\ind[Ic]_C \ B$ for every finite $A'\subseteq A$.  Let
$A_0\subseteq A, B_0\subseteq B$, ${C_0}\subseteq C$ be countable.  Let $A_0=\bigcup_{n}E_n$ where
$E_n$ is finite and $E_n\subseteq E_{n+1}$ for each $n$.  By induction on $n$, there is a sequence
of countable sets $\<D_n\>_{n\in\BN}$ such that for each $n$, $D_n\in[C_0,C]$, $D_n\subseteq D_{n+1}$, and $E_n\ind[I]_{D_n} B_0$.
By monotonicity, $E_n\ind[I]_{D_k} B_0$ whenever $n\le k$.  Let $D=\bigcup_n D_n$.  Then $D$ is countable and $D\in[C_0,C]$.
By the countable union property, $E_n\ind[I]_D B_0$ for each $n$.  Hence by monotonicity and finite character for $\ind[I]$,
we have $A_0\ind[I]_D B_0$.  This shows that $A\ind[Ic]_C \ B$, so $\ind[Ic] \ $ has finite character.
\end{proof}

\begin{prop}  \label{p-countably-local-implies}
Suppose $\ind[I]$ has  monotonicity, base monotonicity, transitivity, symmetry, and countably local character.  Then $\ind[I]\Rightarrow\ind[Ic] \ $.
\end{prop}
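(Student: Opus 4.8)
The plan is to unfold the explicit description of $\ind[Ic]$ supplied by Lemma \ref{l-cbased}(2) (which applies since $\ind[I]$ has monotonicity) and then do all the real work with countable sets, using countably local character to replace the possibly large base $C$ by a countable base lying between $C'$ and $C$. Concretely, I would fix small sets $A,B,C$ with $A\ind[I]_C B$ and arbitrary countable $A'\subseteq A$, $B'\subseteq B$, $C'\subseteq C$; by the displayed formula in Lemma \ref{l-cbased}(2) it then suffices to produce a countable $D\in[C',C]$ with $A'\ind[I]_D B'$. Monotonicity of $\ind[I]$ gives $A'\ind[I]_C B'$, so from this point on only the countable sets $A',B',C'$ and the base $C$ are involved.

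The next step is to produce the base. Applying countably local character to the countable set $A'$ and the set $C$ yields a countable $D_0\subseteq C$ with $A'\ind[I]_{D_0}C$; set $D:=D_0\cup C'$, which is a countable member of $[C',C]$. Since $D_0\subseteq D\subseteq C$, base monotonicity upgrades this to $A'\ind[I]_D C$, hence $C\ind[I]_D A'$ by symmetry. Separately, from $A'\ind[I]_C B'$ symmetry gives $B'\ind[I]_C A'$, and then normality gives $B'C\ind[I]_C A'$. Since $D\subseteq C\subseteq B'C$, transitivity applied to $B'C\ind[I]_C A'$ and $C\ind[I]_D A'$ yields $B'C\ind[I]_D A'$; symmetry turns this into $A'\ind[I]_D B'C$, and a final application of monotonicity gives $A'\ind[I]_D B'$, as required.

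The one point that I expect to be the crux is the passage from $A'\ind[I]_C B'$ to $A'\ind[I]_C B'C$ (equivalently, from $B'\ind[I]_C A'$ to $B'C\ind[I]_C A'$): transitivity as formulated only lets one push the base down past an intermediate set $C$ when that $C$ sits between the new base and one of the two sides, so before transitivity can be invoked $C$ must first be absorbed into the appropriate side, and this absorption is exactly what normality (used together with symmetry) accomplishes. Everything else — the reduction via Lemma \ref{l-cbased}(2), the extraction of the countable base $D\in[C',C]$ from countably local character and base monotonicity, and the concluding bookkeeping with monotonicity — is routine and should present no difficulty.
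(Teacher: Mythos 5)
Your argument is essentially the paper's: both reduce via Lemma \ref{l-cbased}(2) to producing, for given countable $A'\subseteq A$, $B'\subseteq B$, $C'\subseteq C$, a countable $D\in[C',C]$ with $A'\ind[I]_D B'$; both obtain $D$ as the union of $C'$ with a countable $C_1\subseteq C$ satisfying $A'\ind[I]_{C_1} C$ (countably local character), upgrade this to $A'\ind[I]_D C$ by base monotonicity, and then combine it with $A'\ind[I]_C B'$ using symmetry and transitivity. The one point of divergence is the transitivity step. The paper applies transitivity directly to $B'\ind[I]_C A'$ and $C\ind[I]_D A'$ to conclude $B'\ind[I]_D A'$; as the axiom is literally stated this requires $C\in[D,B']$, and $C\subseteq B'$ need not hold, so the paper is tacitly using the strong form of transitivity in which the base need not be contained in the left-hand side. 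You noticed exactly this and repaired it by first absorbing $C$ into the left side via normality before invoking transitivity, then discarding it again by monotonicity. That is the standard fix, but be aware that normality is not among the stated hypotheses of the proposition, so as written your proof establishes the statement only under that additional assumption. This is harmless for every use the paper makes of the result (in Corollary \ref{c-preserve} and Proposition \ref{p-rosy-thorn-cbased} the relations in question do satisfy normality), and your reading of the transitivity axiom is arguably the more scrupulous one; but to prove the proposition exactly as stated you would need either to add normality to the hypotheses or to interpret transitivity in the strong sense, as the paper implicitly does.
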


\begin{proof}  Suppose  $A\ind[I]_C B$.  Let $A'\subseteq A, B'\subseteq B, C'\subseteq C$ be countable.  By monotonicity, $A'\ind[I]_C B'$.
Countably local character insures that
there is a countable $C_1\subseteq C$ such that $A'\ind[I]_{C_1} C$.  Let $D=C_1 C'$.  Then $D$ is countable and $D\in[C_1,C]$.
By base monotonicity, $A'\ind[I]_{D} C$.  By symmetry, $B'\ind[I]_C A'$ and $C\ind[I]_{D} A'$. By transitivity, $B'\ind[I]_D A'$, and by
symmetry again, $A'\ind[I]_D B'$.  Moreover, $D\in[C',C]$.  This proves that $A\ind[Ic]_C \ B$.
\end{proof}

\begin{cor} \label{c-preserve}
Let $\ind[I]$ be a countable independence relation.

\begin{enumerate}
\item If $\ind[I]$ has countably local character, then $\ind[I]\Rightarrow\ind[Ic] \ \ $.
\item If $\ind[I]\Rightarrow\ind[Ic] \ \ $ then $\ind[Ic] \ \ $ is a countable independence relation.
\end{enumerate}
\end{cor}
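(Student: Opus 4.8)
The plan is to assemble the facts established earlier in this section together with the basic properties of abstract independence relations recorded in Section~2; there is nothing deep here, only careful bookkeeping about which earlier result to invoke and in what order.

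For part~(1): a countable independence relation satisfies the basic axioms, so in particular $\ind[I]$ has monotonicity, base monotonicity, and transitivity; moreover, by Remark~\ref{r-fe-vs-ext}(4), $\ind[I]$ is symmetric. Adjoining the hypothesis that $\ind[I]$ has countably local character, all the hypotheses of Proposition~\ref{p-countably-local-implies} are met, and that proposition gives $\ind[I]\Rightarrow\ind[Ic]$ immediately.

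For part~(2): first note that $\ind[Ic]$ is countably based by Lemma~\ref{l-cbased}(2), so by the characterization in Lemma~\ref{l-cbased}(3) it has monotonicity and two-sided countable character, and in particular countable character. Next, since $\ind[I]$ is a countable independence relation it has invariance, base monotonicity, transitivity, and normality for all small sets, and by Remark~\ref{r-fe-vs-ext}(4) symmetry as well, so Proposition~\ref{p-preserve} transfers all of these properties to $\ind[Ic]$. It remains to obtain extension and local character for $\ind[Ic]$. For this I first check that $\ind[I]$ itself has full existence: $\ind[I]$ has base monotonicity and local character, so by Remark~\ref{r-fe-vs-ext}(2) we get $A\ind[I]_C C$ for all small $A,C$; combining this with the monotonicity and extension of $\ind[I]$, Remark~\ref{r-fe-vs-ext}(3) yields full existence of $\ind[I]$. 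Now the hypothesis $\ind[I]\Rightarrow\ind[Ic]$ enters: by Remark~\ref{r-weaker}, $\ind[Ic]$ inherits both full existence and local character from $\ind[I]$. Finally, $\ind[Ic]$ now has invariance, monotonicity, transitivity, normality, full existence, and symmetry, so Remark~\ref{r-fe-vs-ext}(1) gives that $\ind[Ic]$ satisfies extension. Collecting everything, $\ind[Ic]$ satisfies the basic axioms (1)--(5), extension, countable character, and local character, i.e.\ it is a countable independence relation.

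The step most easily overlooked is that the hypothesis $\ind[I]\Rightarrow\ind[Ic]$ is genuinely needed in~(2): it is precisely what lets Remark~\ref{r-weaker} carry local character and full existence up from $\ind[I]$ — which a priori only has these on small sets and need not equal $\ind[Ic]$ without that hypothesis — and full existence is in turn what delivers extension for $\ind[Ic]$ via Remark~\ref{r-fe-vs-ext}(1). The only mildly delicate point is remembering to route extension for $\ind[Ic]$ through full existence together with the symmetry/transitivity/normality package rather than attempting to verify it directly, and correspondingly to first establish full existence for $\ind[I]$ from its base monotonicity, local character, monotonicity, and extension.
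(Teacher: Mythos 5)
Your proof is correct and follows essentially the same route as the paper: part (1) via symmetry (Remark \ref{r-fe-vs-ext}(4)) and Proposition \ref{p-countably-local-implies}, and part (2) via Lemma \ref{l-cbased}, Proposition \ref{p-preserve}, Remark \ref{r-weaker}, and then extension from full existence. The only difference is that you spell out explicitly why $\ind[I]$ has full existence (via Remark \ref{r-fe-vs-ext}(2) and (3)), a step the paper leaves implicit.
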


\begin{proof} (1): $\ind[I]$ has symmetry by Remarks \ref{r-fe-vs-ext} (4).  (1) follows from symmetry and Proposition \ref{p-countably-local-implies}.

(2): $\ind[Ic]$ \ \ has monotonicity and countable character by Lemma \ref{l-cbased}.  By Remark \ref{r-weaker} and Proposition \ref{p-preserve},
$\ind[Ic]$ \ \ satisfies full existence, symmetry, and all the axioms except perhaps extension.  By Remarks \ref{r-fe-vs-ext}, extension follows
from full existence, symmetry, and the other axioms, so $\ind[Ic]$ \ \ satisfies extension as well.
\end{proof}

\subsection{Special Cases}

We will show that in continuous logic (as well as first order logic),  $\ind[a]$ and $\ind[M]$ \ are countably based.
We also give conditions under which $\ind[d]$ and $\thind$  are countably based.

\begin{prop}  \label{p-a-cbased}   The relation $\ind[a]$ is countably based.
\end{prop}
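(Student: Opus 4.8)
The plan is to verify the criterion in Lemma \ref{l-cbased}(3). Monotonicity of $\ind[a]$ is clear, since $A'\subseteq A$ and $B'\subseteq B$ give $\acl(A'C)\cap\acl(B'C)\subseteq \acl(AC)\cap\acl(BC)$, while the inclusion $\acl(C)\subseteq\acl(A'C)\cap\acl(B'C)$ is automatic. Two-sided countable character follows from Remark \ref{r-two-sided}, because $\ind[a]$ has symmetry and countable character by Proposition \ref{p-alg-indep}. So it remains to prove, for countable $A,B$ and arbitrary small $C$, the equivalence
$$A\ind[a]_CB \iff (\forall^c C'\subseteq C)(\exists^c D\in[C',C])\ A\ind[a]_DB.$$

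For the implication from right to left, let $x\in\acl(AC)\cap\acl(BC)$; we must show $x\in\acl(C)$. Since $x$ is algebraic over $AC$, it is algebraic over $A\cup C_0$ for some countable $C_0\subseteq C$, and likewise over $B\cup C_1$ for some countable $C_1\subseteq C$. Applying the right-hand side with $C'=C_0\cup C_1$ yields a countable $D\in[C',C]$ with $\acl(AD)\cap\acl(BD)=\acl(D)$. Then $x\in\acl(A\cup C_0)\subseteq\acl(AD)$ and $x\in\acl(B\cup C_1)\subseteq\acl(BD)$, so $x\in\acl(D)\subseteq\acl(C)$, as needed; the reverse inclusion is automatic, so $A\ind[a]_CB$.

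For the implication from left to right, assume $\acl(AC)\cap\acl(BC)=\acl(C)$ and fix a countable $C'\subseteq C$. I would build $D$ as an increasing union $D=\bigcup_n D_n$ of countable subsets of $C$: put $D_0=C'$, and given $D_n$, note that $\acl(AD_n)\cap\acl(BD_n)$ is separable (as $AD_n$ is countable, by Fact \ref{f-definableclosure}(3)) and is contained in $\acl(AC)\cap\acl(BC)=\acl(C)$; choose a countable dense subset $T_n$ of it and, for each $t\in T_n$, a countable $C_t\subseteq C$ with $t\in\acl(C_t)$, and set $D_{n+1}=D_n\cup\bigcup_{t\in T_n}C_t$. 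This is again a countable subset of $C$ containing $D_n$, and since $T_n\subseteq\acl(D_{n+1})$ and $\acl(D_{n+1})$ is closed, $\acl(AD_n)\cap\acl(BD_n)=\cl(T_n)\subseteq\acl(D_{n+1})\subseteq\acl(D)$. Thus $D$ is countable, $D\in[C',C]$, and $\bigcup_n(\acl(AD_n)\cap\acl(BD_n))\subseteq\acl(D)$. It then remains to check $\acl(AD)\cap\acl(BD)=\acl(D)$; the inclusion $\supseteq$ is automatic, and for $\subseteq$ one shows that every element of $\acl(AD)\cap\acl(BD)$ lies in $\cl\bigl(\bigcup_n(\acl(AD_n)\cap\acl(BD_n))\bigr)$.

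The main obstacle is exactly this last point: controlling algebraic closure along the countable increasing union, i.e.\ showing that a point algebraic over both $AD$ and $BD$ is approximated by points algebraic over $AD_n$ and over $BD_n$ for a common index $n$. In the first-order case this is immediate from the finiteness of formulas — algebraicity over $AD$ is already witnessed over $A\cup D_n$ for some $n$ — but in the continuous setting one must argue with the compact realization sets furnished by Fact \ref{f-algebraic}(2), using that the realization set of $\tp(x/AD_n)$ decreases, as $n\to\infty$, to that of $\tp(x/AD)$, and similarly on the $B$-side, to locate such approximants.
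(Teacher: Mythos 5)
Your reduction to Lemma \ref{l-cbased}(3) and your right-to-left implication are exactly the paper's: monotonicity and two-sided countable character are handled the same way, and the backward direction is verbatim the paper's argument (pull $x\in\acl(AC)\cap\acl(BC)$ down to a countable $C'$ via Fact \ref{f-algebraic}, then use the hypothesized $D$). The problem is the left-to-right implication, and the gap is the one you flagged yourself at the end: your chain construction only yields $\bigcup_n\bigl(\acl(AD_n)\cap\acl(BD_n)\bigr)\subseteq\acl(D)$, whereas what is needed is $\acl(AD)\cap\acl(BD)\subseteq\acl(D)$, and the bridge between the two is unproved. In continuous logic it does not follow from countable character of $\acl$ that algebraicity over $AD=\bigcup_n AD_n$ is witnessed, even approximately, at a finite stage: $AD$ is itself countable, so Fact \ref{f-algebraic}(3) gives no reduction, and an element can be algebraic over $AD$ while staying at distance bounded away from $0$ from every $\acl(AD_n)$. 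Even granting a density statement for each side separately, the intersection is not controlled: a point of $\acl(AD)\cap\acl(BD)$ could be approximated from the $A$-side and the $B$-side by \emph{different} elements, with nothing in $\acl(AD_n)\cap\acl(BD_n)$ nearby. The remark about the compact realization sets of $\tp(x/AD_n)$ decreasing to that of $\tp(x/AD)$ does not obviously address either difficulty (the sets at finite stages need not be compact, and a decreasing intersection argument would still only control one side at a time). So as written the forward direction is not a proof; its entire content has been deferred to this unestablished approximation claim.

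The paper sidesteps the limit process altogether. Given countable $C'\subseteq C$, it adjoins the uniformly continuous distance predicate $d(\cdot,C)$ to the structure and applies L\"owenheim--Skolem to obtain a separable elementary substructure $(\cu P,d(\cdot,C_0))\prec(\cu N,d(\cdot,C))$ with $\acl(ABC')\subseteq\cu P$ and $C_0=C\cap\cu P$; it then takes $D\in[C',C_0]$ countable and dense in $\cl(C_0)$, so that $\acl(AD)=\acl(AC_0)$, $\acl(BD)=\acl(BC_0)$, and $\acl(D)=\acl(C_0)$ by Fact \ref{f-definableclosure}(2), and the hypothesis $\acl(AC)\cap\acl(BC)\subseteq\acl(C)$ transfers to $\acl(AC_0)\cap\acl(BC_0)\subseteq\acl(C_0)$ in one step. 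I suggest either adopting that route or supplying an actual proof of your approximation claim; without one the argument is incomplete.
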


\begin{proof}  $\ind[a]$ has two-sided countable character and monotonicity.  Let $A$ and $B$ be countable.
By Lemma \ref{l-cbased} (3), it is enough to show that
$$ A\ind[a]_C B \Leftrightarrow (\forall^c C'\subseteq C)(\exists^c D\in[C',C]) A\ind[a]_{D} B.$$
Suppose that $ A\ind[a]_C B$, and let $C'\subseteq C$ be countable.
The function $d(\cdot,C)$ is uniformly continuous, so $(\cu N,d(\cdot,C))$ is a structure.
By Fact \ref{f-definableclosure}, $\acl(ABC')$ is separable.  By the L\"owenheim-Skolem theorem, there is a separable elementary
substructure  $(\cu P,d(\cdot,C_0))\prec(\cu N,d(\cdot,C))$ such that $\acl(ABC')\subseteq\cu P$.  Note that $C'\subseteq C_0=C\cap \cu P$.
There is a countable set $D\in[C',C_0]$ such that $D$ is dense in $\cl(C_0)$. Then $\acl(C_0)=\acl(D)$.
In $\cu N$ we have $\acl(AC)\cap\acl(BC)\subseteq \acl(C)$, so in $\cu P$ we have
$$\acl(AD)\cap\acl(BD)= \acl(AC_0)\cap\acl(BC_0)\subseteq\acl(C_0)=\acl(D),$$
and hence $A\ind[a]_D B$.

For the other direction, suppose that
$$ (\forall^c C'\subseteq C)(\exists^c D\in[C',C]) A\ind[a]_{D} B.$$
Let $c\in\acl(AC)\cap\acl(BC)$.  By Fact \ref{f-algebraic}, there is a countable $C'\subseteq C$ such that
$c\in \acl(AC')\cap\acl(BC')$.  Take a countable $D\in[C',C]$ with $A\ind[a]_D B$.  Then
$$c\in\acl(AD)\cap\acl(BD)=\acl(D)\subseteq\acl(C),$$
so $A\ind[a]_C B$.
\end{proof}

\begin{lemma} \label{l-2sided}
The relation $\ind[M]$ \ has two-sided countable character.
\end{lemma}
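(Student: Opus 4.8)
The plan is to verify the two clauses in the definition of two-sided countable character for $\ind[M]$ \ separately, reducing each to the corresponding (already available) property of $\ind[a]$. Recall that $A\ind[M]_C B$ means: for every $D\in[C,\acl(BC)]$ we have $A\ind[a]_D B$. First I would establish ordinary countable character of $\ind[M]$ \ from the side of $A$; that is, assuming $A_0\ind[M]_C B$ for every countable $A_0\subseteq A$, I want $A\ind[M]_C B$. Fix $D\in[C,\acl(BC)]$; we must show $A\ind[a]_D B$, i.e. $\acl(AD)\cap\acl(BD)=\acl(D)$. Take any $c$ in the left side. By Fact \ref{f-algebraic} (countable character of $\acl$), $c\in\acl(A_0D)$ for some countable $A_0\subseteq A$. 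Since $A_0\ind[M]_C B$ and $D\in[C,\acl(BC)]$, we get $A_0\ind[a]_D B$, hence $c\in\acl(A_0D)\cap\acl(BD)=\acl(D)$. This gives $A\ind[a]_D B$ for every such $D$, so $A\ind[M]_C B$, establishing countable character on the $A$-side.

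Next I would handle the $B$-side: assuming $A\ind[M]_C B_0$ for every countable $B_0\subseteq B$, I want $A\ind[M]_C B$. The subtlety here is that the quantifier over $D$ in the definition of $\ind[M]$ \ ranges over $[C,\acl(BC)]$, which depends on all of $B$, whereas the hypothesis only controls intervals $[C,\acl(B_0C)]$ for countable $B_0$. So fix $D\in[C,\acl(BC)]$; I must show $\acl(AD)\cap\acl(BD)=\acl(D)$. Let $c$ lie in the left side. By countable character of $\acl$ applied twice, there is a countable $B_0\subseteq B$ and a countable $D_0\subseteq D$ with $D_0\in[C_0, \acl(B_0C_0)]$ for some countable $C_0\subseteq C$, and with $c\in\acl(AD_0)\cap\acl(B_0D_0)$; more carefully, first choose countable $D_0\subseteq D$ witnessing $c\in\acl(AD_0)$ and $c\in\acl(BD_0)$, then enlarge by a countable $B_0\subseteq B$ so that $D_0\subseteq \acl(B_0C)$ (each generator of $D_0$, being in $\acl(BC)$, is already in $\acl(B_1C_1)$ for some countable $B_1\subseteq B$, $C_1\subseteq C$ by Fact \ref{f-algebraic}), and so that $c\in\acl(B_0D_0)$. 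Set $D' := \acl(D_0C_0) \cap \acl(B_0C_0)$-style enlargement so that $D'\in[C, \acl(B_0 C)]$ and $D_0\subseteq D'\subseteq D$; concretely take $D' = D\cap \acl(B_0 C)$, which satisfies $C\subseteq D'\subseteq\acl(B_0C)$ and $D_0\subseteq D'$. Then from $A\ind[M]_C B_0$ we get $A\ind[a]_{D'} B_0$, and since $c\in\acl(AD')\cap\acl(B_0D')$ we conclude $c\in\acl(D')\subseteq\acl(D)$. Hence $\acl(AD)\cap\acl(BD)=\acl(D)$, giving $A\ind[M]_C B$.

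The main obstacle is the bookkeeping in the second part: the interval $[C,\acl(BC)]$ over which the defining quantifier of $\ind[M]$ \ ranges is controlled by $B$, so given an arbitrary $D$ in it one must produce a countable $B_0\subseteq B$ together with a genuinely $\ind[M]$-relevant base $D'\in[C,\acl(B_0C)]$ that still contains enough of $D$ to witness membership of the chosen algebraic element $c$. The key technical tools making this work are Fact \ref{f-algebraic} (every algebraic element is algebraic over a countable subset of the parameter set) and the monotonicity/transitivity behaviour of $\acl$ from Fact \ref{f-definableclosure}, used to verify that $D' = D\cap\acl(B_0C)$ lies in $[C,\acl(B_0C)]$ and contains $D_0$. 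Everything else is a routine diagram chase through the definitions of $\ind[a]$ and $\ind[M]$.
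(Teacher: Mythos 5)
Your proof is correct and follows essentially the same route as the paper: both arguments fix $D\in[C,\acl(BC)]$ and an element $c$ of $\acl(AD)\cap\acl(BD)$, extract countable witnesses via countable character of $\acl$, enlarge to a countable $B_0\subseteq B$ with the countable piece of $D$ inside $\acl(B_0C)$, and then apply the hypothesis to a base in $[C,\acl(B_0C)]$ (the paper takes $D_1=D_0\cup C$ where you take $D'=D\cap\acl(B_0C)$; both work, and your garbled ``$\acl(D_0C_0)\cap\acl(B_0C_0)$-style enlargement'' phrase is harmless since the concrete choice that follows it is what carries the argument). The only organizational difference is that you verify the two clauses of the definition separately while the paper proves the equivalent combined form in one pass.
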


\begin{proof}
Suppose that $A'\ind[M]_C B'$ for every countable $A'\subseteq A$ and $B'\subseteq B$.  We will show that $A\ind[M]_C B$.
Let $D\in[C,\acl(BC)]$ and $x\in\acl(AD)\cap\acl(BD)$.  We must prove that $x\in\acl(D)$.
There are countable subsets $A_0\subseteq A$ $B_0\subseteq B$, and $D_0\subseteq D$ such that $x\in\acl(A_0D_0)\cap\acl(B_0D_0)$.
There is a countable set $B_1\in[B_0,B]$ such that $D_0\subseteq\acl(B_1C)$.  Let $D_1=D_0\cup C$.  Then
$D_1\in[C,\acl(B_1C)]$ and $D_1\subseteq D$.
We have $A_0\ind[M]_C B_1$, so $A_0\ind[a]_{D_1} B_1$.  Moreover, $x\in\acl(A_0D_1)\cap\acl(B_1D_1)$, so $x\in \acl(D_1)\subseteq\acl(D)$.
\end{proof}

\begin{prop} \label{p-M-cbased}
The relation $\ind[M]$ \ is countably based.
\end{prop}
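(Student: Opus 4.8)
The plan is to apply the characterization of countably based relations from Lemma \ref{l-cbased} (3). We already know from Lemma \ref{l-2sided} that $\ind[M]$ has two-sided countable character, and $\ind[M]$ obviously has monotonicity (it is defined by a universal quantifier over a class of base sets $D$, and both the class $[C,\acl(BC)]$ and the relation $\ind[a]$ shrink appropriately under monotonicity). So it remains only to verify, for countable $A$ and $B$, the equivalence
$$ A\ind[M]_C B \Leftrightarrow (\forall^c C'\subseteq C)(\exists^c D\in[C',C]) A\ind[M]_{D} B.$$

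**For the easy direction** ($\Leftarrow$ fails to be automatic here, so actually I would do $\Rightarrow$ first, or rather the direction that needs work): the implication from left to right requires showing that if $A\ind[M]_C B$ then for every countable $C'\subseteq C$ there is a countable $D\in[C',C]$ with $A\ind[M]_D B$. This is where I would reuse the Löwenheim--Skolem argument from the proof of Proposition \ref{p-a-cbased}. Namely, pass to a separable elementary substructure $(\cu P, d(\cdot,C_0))\prec(\cu N,d(\cdot,C))$ containing $\acl(ABC')$, with $C_0=C\cap\cu P$, and take a countable $D\in[C',C_0]$ dense in $\cl(C_0)$, so $\acl(D)=\acl(C_0)$. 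The content is that $\ind[M]$ relativizes correctly: since $A\ind[M]_C B$ holds in $\cu N$, and $\ind[M]$ is defined from $\acl$ and $\ind[a]$ (equivalently, from $\acl$ alone, using the modularity-type condition $\acl(AD')\cap\acl(BD')\subseteq\acl(D')$ for $D'$ ranging over $[C_0,\acl(BC_0)]$), these algebraic-closure containments are inherited by the elementary substructure $\cu P$ and hence give $A\ind[M]_D B$ as computed in $\cu P$; and because $D$ is countable, being in $\ind[M]$ is absolute between $\cu P$ and $\cu N$.

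**For the other direction**, suppose $(\forall^c C'\subseteq C)(\exists^c D\in[C',C])A\ind[M]_D B$, and let $D^\ast\in[C,\acl(BC)]$ and $x\in\acl(AD^\ast)\cap\acl(BD^\ast)$; we must show $x\in\acl(D^\ast)$. By Fact \ref{f-algebraic} there are countable $A_0\subseteq A$, $B_0\subseteq B$, $D_0\subseteq D^\ast$ with $x\in\acl(A_0D_0)\cap\acl(B_0D_0)$, and (as in the proof of Lemma \ref{l-2sided}) a countable $B_1\in[B_0,B]$ with $D_0\subseteq\acl(B_1C)$. Now apply the hypothesis with $C'=C\cap(\text{something countable capturing the relevant data})$ — more precisely, choose a countable $C'\subseteq C$ large enough that $D_0\subseteq\acl(B_1C')$, which exists since $D_0$ is countable and $\acl$ has countable character. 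Get a countable $D\in[C',C]$ with $A\ind[M]_D B$. Then $D\cup D_0\in[D,\acl(B_1D)]$ (using $D_0\subseteq\acl(B_1C')\subseteq\acl(B_1D)$), so $A\ind[a]_{D\cup D_0} B$ via $\ind[M]_D$, giving $x\in\acl(A(D\cup D_0))\cap\acl(B(D\cup D_0))\subseteq\acl(D\cup D_0)\subseteq\acl(D^\ast)$, since $D\subseteq C\subseteq D^\ast$ and $D_0\subseteq D^\ast$.

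**The main obstacle** I expect is bookkeeping in the first direction: making precise that the defining property of $\ind[M]$ — a quantification over all $D$ in an interval $[C_0,\acl(BC_0)]$ of *possibly uncountably many* intermediate algebraically closed sets — transfers down to the separable elementary substructure. The clean way is to observe that in $\cu P$ the base sets $D$ that matter are exactly those of the form (algebraic closure of) a countable set between $C_0$ and $\acl(B C_0)$, and each such $D$, together with $\acl(ABC')$, lives inside $\cu P$; the relevant instance $\acl(AD)\cap\acl(BD)\subseteq\acl(D)$ then holds in $\cu P$ because it holds in $\cu N$ (where $D$ is still an admissible base, as $D\subseteq\acl(B C_0)\subseteq\acl(BC)$) and algebraic closure of a countable set is absolute between $\cu P$ and $\cu N$ by elementarity. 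Once that absoluteness point is nailed down, everything else is routine, and the proposition follows.
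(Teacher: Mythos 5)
Your proposal follows the paper's proof essentially step for step: reduce via Lemma \ref{l-cbased}(3) using Lemma \ref{l-2sided} and monotonicity, prove the forward implication by the same L\"owenheim--Skolem argument as in Proposition \ref{p-a-cbased} applied to $(\cu N, d(\cdot,C))$, and prove the converse by exactly the paper's choice of a countable $E'$ with $c\in\acl(AE')\cap\acl(BE')$, a countable $C'$ with $E'\subseteq\acl(BC')$, and the auxiliary base $D\cup E'\in[D,\acl(BD)]$. The one place where your justification is stated incorrectly is in the forward direction: you claim that a base $G\in[D,\acl(BD)]$ is "still admissible" for $A\ind[M]_C B$ because $G\subseteq\acl(BC)$, but admissibility requires $G\in[C,\acl(BC)]$, i.e.\ $C\subseteq G$, which fails since $D$ is only a countable piece of $C$. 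The paper patches this by passing to $F=C\cup G\in[C,\acl(BC)]$, concluding $x\in\acl(F)$ in $\cu N$, and then using elementarity of $(\cu P,d(\cdot,C_1))$ in $(\cu N,d(\cdot,C))$ to pull this down to $x\in\acl(GD)=\acl(G)$ as computed in $\cu P$; your own setup (the substructure carrying the distance predicate to $C$) supports exactly this fix, so the gap is local and easily closed.
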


\begin{proof}
$\ind[M]$ \ has monotonicity.
By Lemma \ref{l-2sided},  $\ind[M]$ \ has two-sided countable character.  Let $A$ and $B$ be countable.
By Lemma \ref{l-cbased} (3), it is enough to show that
$$ A\ind[M]_C \ B \Leftrightarrow (\forall^c C'\subseteq C)(\exists^c D\in[C',C]) A\ind[M]_{D} \ B.$$
Suppose $A\ind[M]_C B$ and let $C'\subseteq C$ be countable. As before, we let $(\cu P,d(\cdot,C_1))$ be a separable elementary
substructure of $(\cu N,d(\cdot,C))$ such that $\acl(ABC')\subseteq\cu P$, and take a countable set $D\in[C',C_1]$ such that $D$ is dense in $\cl(C_1)$.
In $\cu N$ we have $A\ind[a]_F B$ for every $F\in[C,\acl(BC)]$.  Let $G\in [D,\acl(BD)]$, and suppose
$x\in\acl(AG)\cap\acl(BG)$.  Let $F=CG$.  Then $F\in[C,\acl(BC)]$, so $A\ind[a]_F B$ in $\cu N$ and $x\in\acl(AF)\cap\acl(BF)$.
Therefore $x\in\acl(F)$.  Using the definition of algebraic closure, it follows that in $\cu P$, $x\in\acl(GD)=\acl(G)$.
This shows that $A\ind[a]_G B$, so $A\ind[M]_D B$.

For the other direction, suppose that
$$ (\forall^c C'\subseteq C)(\exists^c D\in[C',C]) A\ind[M]_{D} \ B.$$
Let $E\in[C,\acl(BC)]$ and let $c\in\acl(AE)\cap\acl(BE)$.
By Fact \ref{f-algebraic}, there is a countable $E'\subseteq E$ such that
$c\in \acl(AE')\cap\acl(BE')$.  There is also a countable $C'\subseteq C$ such that $E'\subseteq\acl(BC')$.
Take a countable $D\in[C',C]$ with $A\ind[M]_D B$.  Let $D'=D\cup E'$.  Then
$D'\in[D,\acl(BD)]$, so $A\ind[a]_{D'} B$.  Moreover, $c\in\acl(AD')\cap\acl(BD')$, so $c\in\acl(D')$.
Finally, $D'\subseteq C\cup E=E$, so $c\in\acl(E)$.  This proves that $A\ind[M]_C B$.
\end{proof}

\begin{prop}  \label{p-thorn-cbased}
If there exists a strict countably based independence relation $\ind[I]$ over $\cu N$ with countably local character, then $\thind$ is countably based.
\end{prop}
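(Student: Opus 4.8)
The plan is to show that $\thind$ coincides with its associated countably based relation. Since $\ind[I]$ is in particular a strict countable independence relation, the theory $T$ is real rosy, so by Result~\ref{f-weakest} the relation $\thind$ is itself a strict countable independence relation, and indeed the weakest such relation. In particular $\thind$ has monotonicity, so Lemma~\ref{l-cbased}(2) furnishes a unique countably based relation $(\thind)^c$ that agrees with $\thind$ on all countable sets; it then suffices to prove $\thind = (\thind)^c$.

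The first step I would carry out is to record that $\thind$ has countably local character. Indeed, $\ind[I]$ satisfies the basic axioms and extension (being an independence relation), is symmetric (Remarks~\ref{r-fe-vs-ext}(4), as it is a countable independence relation), and is anti-reflexive (being strict), so Result~\ref{f-weakest}(2) yields $\ind[I] \Rightarrow \thind$. Since $\ind[I]$ has countably local character by hypothesis, Remark~\ref{r-weaker} transfers this property to $\thind$.

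Next I would feed $\thind$ into Corollary~\ref{c-preserve}. As $\thind$ is a countable independence relation with countably local character, part~(1) gives $\thind \Rightarrow (\thind)^c$, and then part~(2) gives that $(\thind)^c$ is again a countable independence relation. Moreover $\thind$ is anti-reflexive on all sets and has monotonicity, so by Proposition~\ref{p-preserve} the relation $(\thind)^c$ is anti-reflexive as well; hence $(\thind)^c$ is a strict countable independence relation. Since $\thind$ is the weakest strict countable independence relation, $(\thind)^c \Rightarrow \thind$. Combined with $\thind \Rightarrow (\thind)^c$, this gives $\thind = (\thind)^c$, so $\thind$ is countably based.

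This argument is essentially a bookkeeping assembly of results already established in the excerpt, so I do not anticipate a genuine obstacle. The only points that demand care are getting the implication $\ind[I]\Rightarrow\thind$ (hence the transfer of countably local character) correct, and applying Corollary~\ref{c-preserve} to $\thind$ itself rather than to $\ind[I]$. I would also note in passing that the ``countably based'' hypothesis on $\ind[I]$ is not actually needed for this route: what enters the proof is only the real-rosiness of $T$ and the countably local character that $\ind[I]$ bequeaths to $\thind$.
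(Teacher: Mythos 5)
Your proof is correct and follows essentially the same route as the paper: deduce that $\thind$ is the weakest strict countable independence relation and inherits countably local character from $\ind[I]$, then use Corollary~\ref{c-preserve} (plus Proposition~\ref{p-preserve} for anti-reflexivity) to see that $\ind[\th c]$ is a strict countable independence relation weaker than $\thind$, forcing $\ind[\th c]=\thind$. Your closing observation that the countably-based hypothesis on $\ind[I]$ is not really used (beyond supplying countable character) is also consistent with the paper's argument.
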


\begin{proof}  By Result \ref{f-weakest}, $\thind$ is the weakest strict countable independence relation on models of $T$.
Then $\ind[I] \Rightarrow \thind$, so $\thind$ has countably local character.
By Corollary \ref{c-preserve}, $\ind[\th c]$ \ \ is a strict countable independence relation on models of $T$ that is
weaker than $\thind$.  Therefore $\ind[\th c] \ =\thind$, so $\thind$ is countably based.
\end{proof}

\begin{prop} \label{p-d-cbased}  Suppose the dividing independence relation $\ind[d]$ over $\cu N$
is an independence relation with countably local character.  Then $\ind[d]$ is countably based.
\end{prop}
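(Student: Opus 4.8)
The plan is to prove that $\ind[d]$ coincides with the countably based relation $\ind[dc]$ associated to it by Lemma \ref{l-cbased} (2); since the formula defining $\ind[dc]$ is exactly the one appearing in the definition of ``countably based'', this is what is required.

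The inclusion $\ind[d]\Rightarrow\ind[dc]$ follows directly from machinery already in place. Indeed $\ind[d]$ is an independence relation by hypothesis, and since the finite character it enjoys by Result \ref{f-d-indep} (1) implies countable character, it is a countable independence relation; as it moreover has countably local character by hypothesis, Corollary \ref{c-preserve} (1) gives $\ind[d]\Rightarrow\ind[dc]$.

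For the reverse inclusion, assume $A\ind[dc]_C B$; by finite character of $\ind[d]$ (Result \ref{f-d-indep} (1)) it is enough to show $A_0\ind[d]_C B$ for every finite $A_0\subseteq A$. Suppose some finite $A_0\subseteq A$ fails this. Then there are a tuple $\vec a\in A_0^{<\BN}$ and a continuous formula $\Phi(\vec x,\vec b,\vec c)$, with $\vec b$ a tuple from $B$ and $\vec c$ a tuple from $C$, such that $\Phi(\vec a,\vec b,\vec c)=0$ and $\Phi(\vec x,\vec b,\vec c)$ divides over $C$. Let $A'$, $B'$, $C'$ be the finite sets of entries of $\vec a$, $\vec b$, $\vec c$; applying the formula defining $\ind[dc]$ to these countable sets, we obtain a countable $D$ with $C'\subseteq D\subseteq C$ and $A'\ind[d]_D B'$. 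The key observation is that $\Phi(\vec x,\vec b,\vec c)$ then divides over $D$ as well: since $D\subseteq C$, any $C$-indiscernible sequence witnessing that it divides over $C$ is also $D$-indiscernible, its first term still has the same type over $D$ as $\vec b$, and the associated unsatisfiable set of conditions is unchanged because the parameters $\vec c$ lie in $D$. Since also $\Phi(\vec a,\vec b,\vec c)=0$, this contradicts $A'\ind[d]_D B'$. Hence $A_0\ind[d]_C B$ after all, and finite character yields $A\ind[d]_C B$.

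The only step I expect to require any care is this ``base monotonicity of dividing'' — that a formula dividing over a set divides over every subset of it containing the formula's parameters. It is routine, being an immediate consequence of the fact that a sequence indiscernible over a set is indiscernible over every subset of that set. It is worth noting that the hypothesis of countably local character enters only through Corollary \ref{c-preserve}, i.e.\ only for the forward inclusion; the reverse inclusion uses nothing beyond the definition of $\ind[d]$ and its finite character.
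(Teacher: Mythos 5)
Your proof is correct and follows essentially the same route as the paper: the forward inclusion via Corollary \ref{c-preserve}, and the reverse via the observation that a formula dividing over $C$ still divides over any $D$ with $C'\subseteq D\subseteq C$ containing its parameters (you phrase this as a direct contradiction with $A'\ind[d]_D B'$, the paper as a failure of the right-hand side, but the content is identical). The only cosmetic difference is that you prove the two implications for all small sets directly rather than invoking Lemma \ref{l-cbased}~(3) for countable $A,B$; both are fine.
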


\begin{proof}
By Lemma \ref{l-cbased} it is enough to check that, for countable $A$, $B$ and small $C$, we have
\begin{equation}  \label{e-d-cbased}
A\ind[d]_CB \Leftrightarrow (\forall^c C'\subseteq C)(\exists^c D\in [C',C])A\ind[d]_DB.
\end{equation}
Fix such $A$, $B$, $C$.

$\Rightarrow$:  Suppose that $A\ind[d]_CB$.  Since $\ind[d]$ is an independence relation with countably local character,
we have that $\ind[d]\Rightarrow \ind[dc] \ \ $ by Corollary \ref{c-preserve}, whence we get the forward implication of (\ref{e-d-cbased}).

$\Leftarrow$:  Suppose that $A\nind[d]_CB$.  Then for some $\vec a\in A^{<\BN}$ and some continuous formula $\Phi(\vec x,B,C)$,
$\cu N\models \Phi(\vec a,B,C)=0$ and $\Phi(\vec x,B,C)$ divides over $C$.  Take a countable (even finite) $C'\subseteq C$ such
that $\Phi(\vec x,B,C)=\Phi(\vec x,B ,C')$.   Then for any countable $D\in [C',C]$, $\Phi(\vec x,B,C)$ divides over $D$,
so $A\nind[d]_D B$ and the right hand side of (\ref{e-d-cbased}) fails.
\end{proof}

In the paper [Be3],  Ben Yaacov defined simple continuous theories and showed that they satisfy the hypotheses of
Proposition \ref{p-d-cbased}.  Thus on models of a simple theory, $\ind[d]$ is countably based.

\begin{lemma}  \label{l-d-union}
The relation $\ind[d]$ has the countable union property.
\end{lemma}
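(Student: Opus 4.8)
The plan is to unwind the definitions and reduce everything to a statement about a single continuous formula that witnesses dividing. Recall that $A\ind[d]_C B$ fails exactly when there is a tuple $\vec a\in A^{<\BN}$ and a continuous formula $\Phi(\vec x,B,C)$ with $\Phi(\vec a,B,C)=0$ such that $\Phi(\vec x,B,C)$ divides over $C$. Since a continuous formula mentions only finitely many variables and finitely many parameters, each such witnessing formula $\Phi(\vec x,B,C)$ actually only uses a finite subtuple of $B$ and a finite subtuple of $C$. This finiteness is the engine that will make the countable union property work almost for free: the obstruction to $A\ind[d]_C B$ is a single first-order/continuous object that cannot ``see'' more than finitely much of $C$.

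First I would set up the hypotheses of the countable union property: let $A,B,C$ be countable, write $C=\bigcup_n C_n$ with $C_n\subseteq C_{n+1}$, and assume $A\ind[d]_{C_n} B$ for every $n$. The goal is $A\ind[d]_C B$. I would argue by contraposition: suppose instead that $A\nind[d]_C B$. Then fix $\vec a\in A^{<\BN}$ and a continuous formula $\Phi(\vec x,B,C)$ with $\Phi(\vec a,B,C)=0$ such that $\Phi(\vec x,B,C)$ divides over $C$. Since $\Phi$ uses only finitely many parameters from $C$, and since $C=\bigcup_n C_n$ with the $C_n$ increasing, there is some $n$ with all the $C$-parameters of $\Phi$ lying in $C_n$; that is, $\Phi(\vec x,B,C)=\Phi(\vec x,B,C_n)$.

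Now I need to observe that dividing over $C$ upgrades to dividing over the smaller set $C_n$. Here is where one must be a little careful: $\Phi(\vec x,B,C_n)$ divides over $C$ means there is a $C$-indiscernible sequence $\langle B^i\rangle_{i\in\BN}$ with $B^0\equiv_C B$ and $\{\Phi(\vec x,B^i,C_n)=0\colon i\in\BN\}$ unsatisfiable. Since $C_n\subseteq C$, any $C$-indiscernible sequence is automatically $C_n$-indiscernible, and $B^0\equiv_C B$ implies $B^0\equiv_{C_n}B$. So the very same sequence witnesses that $\Phi(\vec x,B,C_n)$ divides over $C_n$. Combined with $\Phi(\vec a,B,C_n)=\Phi(\vec a,B,C)=0$, this gives $A\nind[d]_{C_n} B$, contradicting the assumption. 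Hence $A\ind[d]_C B$, as required.

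I do not expect a serious obstacle here — the proof is essentially a finiteness-of-parameters observation plus the trivial fact that indiscernibility and equivalence of types only get easier over smaller sets. The one point worth stating explicitly (so the reader does not worry) is that dividing over a set $C$ implies dividing over any subset $C'\subseteq C$ that still contains the relevant parameters, which is immediate from the definition since a $C$-indiscernible sequence with $B^0\equiv_C B$ is a fortiori $C'$-indiscernible with $B^0\equiv_{C'}B$. Everything else is bookkeeping with the increasing union.
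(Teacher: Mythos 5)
Your proof is correct and follows essentially the same route as the paper's: both arguments fix a witnessing formula $\Phi(\vec x,B,C)$, use the fact that it mentions only finitely many parameters to find $n$ with $\Phi(\vec x,B,C)=\Phi(\vec x,B,C_n)$, and then observe that the dividing witness over $C$ is a fortiori a dividing witness over $C_n$, contradicting $A\ind[d]_{C_n}B$. The paper states the last step without comment; your explicit justification (a $C$-indiscernible sequence is $C_n$-indiscernible and $\equiv_C$ implies $\equiv_{C_n}$) is exactly the right one.
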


\begin{proof}
Suppose $A, B, C$ are countable, $C=\bigcup_n C_n$, and $C_n\subseteq C_{n+1}$ and $A\ind[d]_{C_n} B$ for each $n$,
but $A\nind[d]_C B$.  Then there exists $\vec a\in A^{<\BN}$ and a continuous formula $\Phi(\vec x,B,C)$ such that
$\Phi(\vec a,B,C)=0$ and $\Phi(\vec x, B,C)$ divides over $C$.  Then $\Phi(\vec x,B,C)=\Phi(\vec x,B,C_n)$ for some $n\in\BN$.
Hence $\Phi(\vec a,B,C_n)=0$ and $\Phi(\vec x,B,C_n)$ divides over $C_n$, contradicting $A\ind[d]_{C_n} B$.
\end{proof}

\begin{prop}  \label{p-f-cbased}  On models of a stable theory, $\thind$ and $\ind[f]$ are countably based.
\end{prop}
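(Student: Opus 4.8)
The plan is to obtain both conclusions by assembling the results already established for $\ind[d]$, $\ind[f]$, and $\thind$. The starting point is that, since $T$ is stable, Result~\ref{f-stable-indep} says that $\ind[f]$ is the unique strict independence relation over $\cu N$ and that it has countably local character, while Result~\ref{f-d-indep}(2) identifies $\ind[d]$ with $\ind[f]$. Hence $\ind[d]=\ind[f]$ is an independence relation over $\cu N$ (in particular it has monotonicity) and it has countably local character.

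First I would treat $\ind[f]$. The hypothesis of Proposition~\ref{p-d-cbased} is exactly that the dividing independence relation $\ind[d]$ is an independence relation over $\cu N$ with countably local character, which is what the previous paragraph supplies; so that proposition yields that $\ind[d]$, and therefore $\ind[f]$, is countably based. An alternative route avoiding Proposition~\ref{p-d-cbased} would combine the countable union property of $\ind[d]$ from Lemma~\ref{l-d-union} with Corollary~\ref{c-preserve}(1), but the direct application is cleaner.

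Next I would treat $\thind$. By the previous step, $\ind[f]$ is countably based; and it is strict and an independence relation with countably local character by Result~\ref{f-stable-indep}. Thus $\ind[f]$ witnesses the hypothesis of Proposition~\ref{p-thorn-cbased}, namely the existence of a strict countably based independence relation over $\cu N$ with countably local character. Taking $\ind[I]:=\ind[f]$ in that proposition then gives that $\thind$ is countably based, which completes the argument.

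I do not anticipate a substantive obstacle: the proof is a short chain of citations. The only point needing a moment's care is verifying that $\ind[f]$ satisfies the precise hypotheses of Propositions~\ref{p-d-cbased} and~\ref{p-thorn-cbased} --- in particular that in the stable case $\ind[f]$ is a bona fide (finite-character) independence relation, which follows from Result~\ref{f-d-indep}(1) together with the identification $\ind[f]=\ind[d]$, and that it is strict, which is part of Result~\ref{f-stable-indep}.
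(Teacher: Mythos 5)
Your proof is correct, and the half concerning $\thind$ is exactly the paper's argument (apply Proposition \ref{p-thorn-cbased} with $\ind[I]=\ind[f]$, using Result \ref{f-stable-indep}). For the half concerning $\ind[f]$ you take a genuinely shorter route than the paper: you invoke Proposition \ref{p-d-cbased} directly, whose hypothesis ($\ind[d]$ is an independence relation with countably local character) is supplied by Results \ref{f-stable-indep} and \ref{f-d-indep}(2) in the stable case --- indeed the paper itself notes right after Proposition \ref{p-d-cbased} that simple theories satisfy its hypotheses. The paper instead builds the countably based companion $\ind[fc] \ $, checks via Corollary \ref{c-preserve}, Proposition \ref{p-preserve}, Lemma \ref{l-d-union}, and Proposition \ref{p-preserve-finitechar} that it is a strict independence relation with finite character, and then concludes $\ind[fc] \ =\ind[f]$ from the uniqueness clause of Result \ref{f-stable-indep}; that route yields the extra information that the companion construction returns $\ind[f]$ itself, but for the stated proposition your citation of \ref{p-d-cbased} suffices. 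A small point in your favor: by establishing that $\ind[f]$ is countably based \emph{before} invoking Proposition \ref{p-thorn-cbased}, you verify the literal hypothesis of that proposition (``strict countably based independence relation with countably local character''), whereas the paper applies it first and only afterwards checks that $\ind[f]$ is countably based.
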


\begin{proof}  By Theorem 8.10 in [BU], $\ind[f]$ is the unique strict independence relation over models of $T$, and has countably local character.
So by Proposition \ref{p-thorn-cbased}, $\thind$ is countably based.

By Corollary \ref{c-preserve}, $\ind[fc] \ $ is a countable independence relation.
By Proposition  \ref{p-preserve}, $\ind[fc] \ $ is anti-reflexive.
By Result \ref{f-d-indep}, $\ind[d]=\ind[f]$, so by Lemma \ref{l-d-union},  $\ind[f]$ has the countable union property.
Then by  Proposition \ref{p-preserve-finitechar}, $\ind[fc] \ $ has finite character.  Hence by the uniqueness of $\ind[f]$,
$\ind[fc] \ = \ind[f]$, so $\ind[f]$ is countably based.
\end{proof}

Our next result concerns theories $T$ in \emph{first order} logic that are real rosy.
Let $\vec b$ be a tuple and $C$ be a small set in a big model $\cu M$ of a first order theory $T$.
By Definition 2.1 in Onshuus [On], a first order formula $\psi(\vec x,\vec b)$ \emph{$\th$-divides over} $C$
if there is $k\in\BN$ and a finite tuple $\vec e$ such that $\{\psi(\vec x,\vec b')\colon \vec b'\equiv_{C\vec e}\vec b\}$
is $k$-inconsistent, and $\vec b$ is not contained in $\acl(C\vec e)$.  A formula $\varphi(\vec x,\vec b)$ \emph{\th-forks over}
$C$ if it implies a finite disjunction of formulas that $\th$-divide over $C$.  We use the following result from [Ad1].

\begin{result}  \label{f-thorn-forks}  ([Ad1], Proposition A.2)
For small sets $A,B,C$ in a big model of a first order theory $T$, the following are equivalent:

\begin{itemize}
\item $A\thind_C B$.
\item $A\thind_C B$ in the sense of [On], that is, for every tuple $\vec a\in A^{<\BN}$ and $\vec b\in(BC)^{<\BN}$, there
is no formula $\varphi(\vec x,\vec y)$ such that $\cu M\models\varphi(\vec a,\vec b)$ and $\varphi(\vec x,\vec b)$
$\th$-forks over $C$.
\end{itemize}
\end{result}

%%  Sept 2014:  New remark and lemma.

\begin{rmk}  \label{r-th-two-sided}
It follows at once that in the first order setting, $\thind$ has two-sided finite character.
\end{rmk}

\begin{lemma}  \label{l-thorn-union}  For every complete first order theory $T$, the relation $\thind$ has the countable union property.
\end{lemma}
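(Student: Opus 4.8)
I want to show that $\thind$ has the countable union property: given countable $A,B,C$ with $C=\bigcup_n C_n$, $C_n\subseteq C_{n+1}$, and $A\thind_{C_n}B$ for all $n$, conclude $A\thind_C B$. By Result \ref{f-thorn-forks}, $A\thind_C B$ fails iff there are tuples $\vec a\in A^{<\BN}$, $\vec b\in (BC)^{<\BN}$, and a formula $\varphi(\vec x,\vec y)$ with $\cu M\models\varphi(\vec a,\vec b)$ and $\varphi(\vec x,\vec b)$ $\th$-forking over $C$. So I will argue by contradiction: assume such $\vec a,\vec b,\varphi$ witness $A\nthind_C B$, and produce some $n$ with $A\nthind_{C_n}B$, contradicting the hypothesis.

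First I would localize the parameters. The tuple $\vec b$ lies in $(BC)^{<\BN}$, and since $\vec b$ is finite it uses only finitely many coordinates from $C$, hence $\vec b\in (BC_n)^{<\BN}$ for some $n_0$ and all $n\ge n_0$. The real work is the $\th$-forking witness: $\varphi(\vec x,\vec b)$ $\th$-forks over $C$ means $\varphi(\vec x,\vec b)\vdash \bigvee_{i<m}\psi_i(\vec x,\vec b_i)$ where each $\psi_i(\vec x,\vec b_i)$ $\th$-divides over $C$ — i.e., for each $i$ there is $k_i\in\BN$ and a finite tuple $\vec e_i$ with $\{\psi_i(\vec x,\vec b_i')\colon \vec b_i'\equiv_{C\vec e_i}\vec b_i\}$ $k_i$-inconsistent and $\vec b_i\notin\acl(C\vec e_i)$.

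The key step is to replace $C$ by a large enough finite stage $C_n$ in the $\th$-dividing clauses. The inconsistency direction is easy and monotone: if $\{\psi_i(\vec x,\vec b_i')\colon \vec b_i'\equiv_{C\vec e_i}\vec b_i\}$ is $k_i$-inconsistent, then enlarging the base shrinks the set of conjugates, so $\{\psi_i(\vec x,\vec b_i')\colon \vec b_i'\equiv_{C'\vec e_i}\vec b_i\}$ is $k_i$-inconsistent for any $C'\supseteq C$ — but I need to go the other way, down to $C_n\subseteq C$, where the conjugate set grows and $k_i$-inconsistency could be lost. This is the main obstacle. The resolution is that $C_n$-inconsistency is a compactness/definability phenomenon: the statement "$\{\psi_i(\vec x,\vec b_i')\colon \vec b_i'\equiv_{D\vec e_i}\vec b_i\}$ is $k_i$-inconsistent" over a base $D$ is really a first-order statement about $\vec b_i,\vec e_i$ and finitely many $D$-parameters witnessing the relevant type, so by compactness there is a finite $D_i\subseteq C$ for which it already holds; take $n$ large enough that $\bigcup_i D_i\subseteq C_n$ and also $\vec b\in(BC_n)^{<\BN}$. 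Symmetrically, the non-algebraicity condition $\vec b_i\notin\acl(C\vec e_i)$ only gets \emph{easier} over a smaller base $C_n\vec e_i\subseteq C\vec e_i$, since $\acl(C_n\vec e_i)\subseteq\acl(C\vec e_i)$, so it is automatically preserved. Hence with this $n$, each $\psi_i(\vec x,\vec b_i)$ $\th$-divides over $C_n$, so $\varphi(\vec x,\vec b)$ $\th$-forks over $C_n$, and since $\cu M\models\varphi(\vec a,\vec b)$ with $\vec a\in A^{<\BN}$, $\vec b\in(BC_n)^{<\BN}$, Result \ref{f-thorn-forks} gives $A\nthind_{C_n}B$ — the desired contradiction.

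I should be a little careful about exactly how the compactness argument produces the finite $D_i$: the set of conjugates $\{\vec b_i'\colon\vec b_i'\equiv_{D\vec e_i}\vec b_i\}$ is a type-definable set, and $k_i$-inconsistency of the corresponding family of instances of $\psi_i$ is, by compactness, witnessed by finitely many formulas in $\tp(\vec b_i/D\vec e_i)$; each such formula involves only finitely many parameters from $D$. Running this for each $i<m$ and taking the union of the (finitely many) finite parameter sets gives a finite subset of $C$, which lies in some $C_n$; base monotonicity of the conjugacy relation under enlarging from $C_n$ back up is not needed — I only need that $k_i$-inconsistency already holds at stage $C_n$, which is exactly what the witnessing formulas provide. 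This completes the plan.
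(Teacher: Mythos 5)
Your proposal is correct and follows essentially the same route as the paper's proof: pass to the contrapositive via Result \ref{f-thorn-forks}, use compactness to find a finite subset of $C$ over which each $k_i$-inconsistency already holds (so that it persists upward to any stage $C_n$ containing that finite set, since the conjugacy classes only shrink), and observe that the non-algebraicity conditions $\vec d_i\notin\acl(C\vec e_i)$ pass for free to the smaller base $C_n$. The only cosmetic difference is that the paper fixes a single finite $C_1\subseteq C$ handling all $i<m$ and the parameters of $\varphi$ at once, whereas you take a union of finite sets $D_i$; this is the same argument.
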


\begin{proof}  We first suppose that $A, B$ are countable sets and $C$ is a small set in the big model $\cu M$ of $T$, and that $A\nthind_CB$.
By Result \ref{f-thorn-forks} there is a formula $\varphi(\vec x)\in \tp(A/BC)$ that $\th$-forks over $C$.  By definition, this means that
there is an $m\in\BN$ and formulas $\psi_i(\vec x,\vec d_i), i<m$, such that
$\varphi(\vec x)\vdash \bigvee_{i<m} \psi_i(\vec x,\vec d_i)$, and each $\psi_i(\vec x,\vec d_i)$
$\th$-divides over $C$.  This in turn means that for each $i<m$, there is a finite tuple $\vec e_i$ and $k_i\in\BN$ such that
$\{\psi_i(\vec x,\vec d_i')  :  \vec d_i'\equiv_{C\vec e_i}\vec d_i\}$ is $k_i$-inconsistent and $\vec d_i\notin \acl(C\vec e_i)$.
There is a finite $C_0\subseteq C$ such that $\varphi(\vec x)$ is an $L(BC_0)$-formula.  By compactness, there is a finite $C_1\in[C_0,C]$
such that, for each $i<m$, $\{\psi_i(\vec x,\vec d_i')  :  \vec d_i'\equiv_{C_1\vec e_i}\vec d_i\}$ is $k_i$-inconsistent.

Now suppose that $C=\bigcup_n D_n$, and $D_n\subseteq D_{n+1}$ for each $n$.  Then for some $n$ we have $C_1\subseteq D_n$, and hence
for each $i<m$, $\{\psi_i(\vec x,\vec d_i')  :  \vec d_i'\equiv_{D_n\vec e_i}\vec d_i\}$ is $k_i$-inconsistent and $\vec d_i\notin \acl(D_n\vec e_i)$.
Therefore each $\psi_i(\vec x,\vec d_i)$ $\th$-divides over $D_n$, so $A\nthind_{D_n} B$.  This shows that $\thind$ has the countable union property
(even for $C$ small instead of countable).
\end{proof}

\begin{prop}  \label{p-rosy-thorn-cbased}
Suppose $T$ is a real rosy first order theory, and $\thind$ has countably local character.   Then $\thind$ is countably based.
\end{prop}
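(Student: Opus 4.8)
The plan is to show that $\thind$ coincides with its countably based companion $\ind[\th c]$. Since $T$ is first order, $\thind$ satisfies monotonicity, base monotonicity, transitivity, and symmetry (by the discussion of [Ad1], [Ad2] in Section~2, together with Remarks~\ref{r-fe-vs-ext}), and by hypothesis it has countably local character; hence Proposition~\ref{p-countably-local-implies} gives $\thind\Rightarrow\ind[\th c]$. By Lemma~\ref{l-cbased}(2), $\ind[\th c]$ is countably based and agrees with $\thind$ on countable sets, so it is enough to establish the reverse implication $\ind[\th c]\Rightarrow\thind$; this forces $\thind=\ind[\th c]$, which is then countably based.

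For the reverse implication, suppose $A\ind[\th c]_C B$ and fix finite $A_0\subseteq A$, $B_0\subseteq B$. By monotonicity of $\ind[\th c]$ (Lemma~\ref{l-cbased}(3)) we have $A_0\ind[\th c]_C B_0$, and since $\thind$ has two-sided finite character (Remark~\ref{r-th-two-sided}) it suffices to deduce $A_0\thind_C B_0$ for each such pair. Unwinding the defining formula for $\ind[\th c]$ from Lemma~\ref{l-cbased}(2) and using monotonicity of $\thind$, the statement $A_0\ind[\th c]_C B_0$ with $A_0,B_0$ finite simply says
$$(\forall^c C'\subseteq C)(\exists^c D\in[C',C])\ A_0\thind_D B_0,$$
and the goal is $A_0\thind_C B_0$. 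So we have reduced to a statement about finite $A_0,B_0$ and a small set $C$.

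The heart of the matter is the observation, already contained in the proof of Lemma~\ref{l-thorn-union}, that $\th$-forking over $C$ is witnessed by only finitely much of $C$: if $A_0\nthind_C B_0$, then by Result~\ref{f-thorn-forks} there is a formula $\varphi(\vec x,\vec b)\in\tp(\vec a/B_0C)$ (with $\vec a\in A_0^{<\BN}$, $\vec b\in(B_0C)^{<\BN}$) that $\th$-forks over $C$, say $\varphi(\vec x,\vec b)\vdash\bigvee_{i<m}\psi_i(\vec x,\vec d_i)$ with each $\psi_i(\vec x,\vec d_i)$ $\th$-dividing over $C$ via finite $\vec e_i$ and $k_i\in\BN$; a compactness argument (as in Lemma~\ref{l-thorn-union}) then produces a \emph{finite} $C_1\subseteq C$, containing the part of $\vec b$ lying in $C$, such that each $\{\psi_i(\vec x,\vec d_i'):\vec d_i'\equiv_{C_1\vec e_i}\vec d_i\}$ is $k_i$-inconsistent. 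For any $D\in[C_1,C]$ this $k_i$-inconsistency persists (the conjugate family over $D\vec e_i$ is a subset of the one over $C_1\vec e_i$), $\vec d_i\notin\acl(D\vec e_i)$ holds since $\acl(D\vec e_i)\subseteq\acl(C\vec e_i)$, and $\varphi(\vec x,\vec b)$ is still an $L(B_0D)$-formula satisfied by $\vec a$; hence $\varphi(\vec x,\vec b)$ $\th$-forks over $D$, i.e.\ $A_0\nthind_D B_0$ for every $D\in[C_1,C]$. Granting this, assume for contradiction $A_0\nthind_C B_0$, obtain such a finite $C_1$, and apply the displayed hypothesis with $C'=C_1$ to get a countable $D\in[C_1,C]$ with $A_0\thind_D B_0$ — contradiction. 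Thus $A_0\thind_C B_0$, completing the proof. The only genuinely technical step is the compactness argument pinning $\th$-forking of $\varphi(\vec x,\vec b)$ down to a finite $C_1\subseteq C$, and since this is exactly the computation performed in Lemma~\ref{l-thorn-union} it can be quoted essentially verbatim rather than redone.
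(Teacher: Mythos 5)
Your proof is correct and follows essentially the same route as the paper: the forward direction via Proposition \ref{p-countably-local-implies} using countably local character, and the reverse direction by pinning $\th$-forking down to a finite subset $C_1$ of the base exactly as in Lemma \ref{l-thorn-union}, with two-sided finite character (Remark \ref{r-th-two-sided}) handling the reduction to finite/countable $A,B$. The only cosmetic difference is that you phrase the conclusion as $\thind=\ind[\th c]$ rather than verifying the criterion of Lemma \ref{l-cbased}(3) directly, which amounts to the same thing.
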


\begin{proof}  By Result \ref{f-weakest} (1), Remark \ref{r-th-two-sided}, and Lemma \ref{l-cbased}, it is enough to
show that for all countable $A,B$ and small $C$ we have
\begin{equation}  \label{e-thorn}
A\thind_CB \Leftrightarrow (\forall^c C'\subseteq C)(\exists^c D\in [C',C])A\thind_DB.
\end{equation}
Since $\thind$ satisfies monotonicity, base monotonicity, transitivity, symmetry and countably local character, Proposition \ref{p-countably-local-implies} gives
the forward direction of (\ref{e-thorn}).

Suppose $A\nthind_CB$.  We follow the notation in the first paragraph of the proof of Lemma \ref{l-thorn-union}.  Suppose that $D\in[C_1,C]$ is countable.
Then, for each $i<m$, we have $\vec d_i\notin \acl(D\vec e_i)$ and $\{\psi_i(\vec x,\vec d_i')  :  \vec d_i'\equiv_{D\vec e_i}\vec d_i\}$ is $k_i$-inconsistent.  Thus each
$\psi_i(\vec x,\vec d_i)$ $\th$-divides over $D$.  It follows that $A\nthind_DB$, so the right hand side of (\ref{e-thorn}) fails.
\end{proof}

\section{Randomizations}

\subsection{The Theory $T^R$}

The \emph{randomization signature} $L^R$ is the two-sorted continuous signature
with sorts $\BK$ (for random elements) and $\BB$ (for events), an $n$-ary
function symbol $\l\varphi(\cdot)\rr$ of sort $\BK^n\to\BB$
for each first order formula $\varphi$ of $L$ with $n$ free variables,
a $[0,1]$-valued unary predicate symbol $\mu$ of sort $\BB$ for probability, and
the Boolean operations $\top,\bot,\sqcap, \sqcup,\neg$ of sort $\BB$.  The signature
$L^R$ also has distance predicates $d_\BB$ of sort $\BB$ and $d_\BK$ of sort $\BK$.
In $L^R$, we use ${\sa B},{\sa C},\ldots$ for variables or parameters of sort $\BB$. ${\sa B}\doteq{\sa C}$
means $d_\BB({\sa B},{\sa C})=0$, and ${\sa B}\sqsubseteq{\sa C}$ means ${\sa B}\doteq{\sa B}\sqcap{\sa C}$.

A pre-structure for $T^R$ will be a pair $\cu P=(\cu K,\cu E)$ where $\cu K$ is the part of sort $\BK$ and
$\cu E$ is the part of sort $\BB$.\footnote{In [BK], the set of events was denoted by $\cu B$, but we use $\cu E$ here
to reserve the letters $\cu A, \cu B, \cu C$ for subsets of $\cu E$.}
The \emph{reduction} of $\cu P$ is the pre-structure $\cu N=(\hat{\cu K},\hat{\cu E})$ obtained from
$\cu P$ by identifying elements at distance zero in the metrics $d_\BK$ and $d_\BB$,
 and the associated mapping from $\cu P$ onto $\cu N$ is called the \emph{reduction map}.
The \emph{completion} of $\cu P$ is the structure obtained by completing the metrics in the reduction of $\cu P$.
By a \emph{pre-complete-structure} we mean a pre-structure $\cu P$ such that the reduction of $\cu P$ is equal to the completion of $\cu P$.
By a \emph{pre-complete-model} of $T^R$ we mean a pre-complete-structure that is a pre-model of $T^R$.

In [BK], the randomization theory $T^R$ is defined by listing a set of axioms.
We will not repeat these axioms here, because it is simpler to give the following model-theoretic
characterization of $T^R$.

\begin{df}  \label{d-neat}
Given a model $\cu M$ of $T$, a \emph{neat randomization of} $\cu M$ is a pre-complete-structure $\cu P=(\cu L,\cu F)$ for $L^R$
equipped with an atomless probability space $(\Omega,\cu F,\mu )$ such that:
\begin{enumerate}
\item $\cu F$ is a $\sigma$-algebra with $\top,\bot,\sqcap, \sqcup,\neg$ interpreted by $\Omega,\emptyset,\cap,\cup,\setminus$.
\item $\cu L$ is a set of functions $a\colon\Omega\to M$.
\item For each formula $\psi(\vec{x})$ of $L$ and tuple
$\vec{a}$ in $\cu L$, we have
$$\l\psi(\vec{a})\rr=\{\omega\in\Omega:\cu M\models\psi(\vec{a}(\omega))\}\in\cu F.$$
\item $\cu F$ is equal to the set of all events
$ \l\psi(\vec{a})\rr$
where $\psi(\vec{v})$ is a formula of $L$ and $\vec{a}$ is a tuple in $\cu L$.
\item For each formula $\theta(u, \vec{v})$
of $L$ and tuple $\vec{b}$ in $\cu L$, there exists $a\in\cu L$ such that
$$ \l \theta(a,\vec{b})\rr=\l(\exists u\,\theta)(\vec{b})\rr.$$
\item On $\cu L$, the distance predicate $d_\BK$ defines the pseudo-metric
$$d_\BK(a,b)= \mu \l a\neq b\rr .$$
\item On $\cu F$, the distance predicate $d_\BB$ defines the pseudo-metric
$$d_\BB({\sa B},{\sa C})=\mu ( {\sa B}\triangle {\sa C}).$$
\end{enumerate}
\end{df}

Note that if $\cu H\prec\cu M$, then every neat randomization of $\cu H$ is also a neat randomization of $\cu M$.

\begin{df}  For each first order theory $T$, the \emph{randomization theory} $T^R$ is
the set of sentences that are true in all neat randomizations of models of $T$.
\end{df}

It follows that for each first order sentence $\varphi$, if $T\models\varphi$
then we have $T^R\models \l\varphi\rr\doteq \top$.

%%  Next few results are re-stated, as in our definability paper.

\begin{result}  \label{f-perfectwitnesses}  (Fullness, Proposition 2.7 in [BK]).
Every pre-complete-model $\cu N=(\cu K,\cu E)$ of $T^R$ has perfect witnesses, i.e.,
\begin{enumerate}
\item  For each first order formula $\theta(u,\vec v)$ and each $\vec{b }$ in $\cu K^n$ there exists $a \in\cu K$ such that
$$ \l\theta(a,\vec b)\rr \doteq
\l(\exists u\,\theta)(\vec{b })\rr;$$
\item For each ${\sa E}\in\cu E$ there exist $a ,b \in\cu K$ such that
${\sa E}\doteq\l a=b \rr$.
\end{enumerate}
\end{result}

The following results are proved in [Ke1], and are stated in the continuous setting in [BK].

\begin{result}  \label{f-complete} (Theorem 3.10 in [Ke1], and Theorem 2.1 in [BK]).
For every complete first order theory $T$, the randomization theory $T^R$ is complete.
\end{result}

\begin{result} \label{f-qe} (Strong quantifier elimination, Theorems 3.6 and 5.1 in [Ke1], and Theorem 2.9 in [BK])
Every formula $\Phi$ in the continuous language $L^R$
is $T^R$-equivalent to a formula with the same free variables
and no quantifiers of sort $\BK$ or $\BB$.
\end{result}

\begin{result}  \label{f-T^R} (Proposition 4.3 and Example 4.11 in [Ke1], and Proposition 2.2 and Example 3.4 (ii) in [BK]).
Every model $\cu M$ of $T$ has neat randomizations.
\end{result}

\begin{result}  \label{f-glue}  (Lemma 2.1.8 in [AGK])

Let $\cu P=(\cu K,\cu E)$ be a pre-complete-model of $T^R$ and let $a ,b \in\cu K$ and ${\sa B}\in\cu E$.
Then there is an element $c \in\cu K$ that agrees with $a $ on ${\sa B}$ and agrees with $b $ on $\neg{\sa B}$,
that is, ${\sa B}\sqsubseteq\l c =a \rr$ and $(\neg{\sa B})\sqsubseteq\l c =b \rr$.
\end{result}

\begin{df}  In Result \ref{f-glue}, we call $c$ a \emph{characteristic function of $\sa B$ with respect to $a,b$}.
\end{df}

Note that the distance between any two characteristic functions of an event $\sa B$ with respect to elements $a,b$ is zero.
In particular, in a model of $T^R$, the characteristic function is unique.

\begin{result}  \label{f-representation}  (Proposition 2.1.10 in [AGK])
Every model of $T^R$ is isomorphic to the reduction of a neat randomization $\cu P$ of a model of $T$.
\end{result}

\begin{lemma}  \label{l-big-embedding}
Let  $\cu N=(\cu K,\cu E)$ be a big model of $T^R$ and let $\cu M$ be a  model of $T$ of cardinality $\le\upsilon$.  There is a mapping $a\mapsto \ti a$ from $M$ into $\cu K$
with the following property:
\medskip

For each tuple $a_0,a_1,\ldots$ in $M$ and first order formula $\varphi(v_0.v_1,\ldots)$, if $\cu M\models\varphi(a_0,a_1,\ldots)$ then $\mu(\l \varphi(\ti a_0,\ti a_1,\ldots)\rr) =1$.
\end{lemma}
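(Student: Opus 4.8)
The plan is to construct the map $a\mapsto\ti a$ by transfinite recursion along an enumeration of $\cu M$, using at each step that the big model $\cu N$ is $\upsilon$-saturated to realize an appropriate partial type over what has been built so far. Fix an enumeration $M=\{a_\alpha:\alpha<\lambda\}$ with $\lambda=|\cu M|\le\upsilon$, and for $\alpha\le\lambda$ let $(\ast_\alpha)$ denote the statement: for every first order formula $\varphi(v_0,\dots,v_{n-1})$ and all $\beta_0,\dots,\beta_{n-1}<\alpha$, if $\cu M\models\varphi(a_{\beta_0},\dots,a_{\beta_{n-1}})$ then $\mu\llbracket\varphi(\ti a_{\beta_0},\dots,\ti a_{\beta_{n-1}})\rrbracket=1$. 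I will choose $\ti a_\alpha\in\cu K$ for each $\alpha<\lambda$ so that $(\ast_{\alpha+1})$ holds; nothing needs to be done at limit stages, since any first order formula involves only finitely many parameters and so $(\ast_\alpha)$ then follows from $(\ast_\beta)$ for $\beta<\alpha$. In the end this yields $(\ast_\lambda)$, which is exactly the conclusion of the lemma.

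At stage $\alpha$ we have $(\ast_\alpha)$ in hand. Put $P_\alpha:=\{\ti a_\beta:\beta<\alpha\}\subseteq\cu K$, so $|P_\alpha|\le|\alpha|<\upsilon$, and consider the set of conditions over $P_\alpha$
\begin{multline*}
p_\alpha(x):=\bigl\{\,\mu\llbracket\varphi(\ti a_{\beta_0},\dots,\ti a_{\beta_{m-1}},x)\rrbracket=1\;:\;m\ge 0,\ \beta_0,\dots,\beta_{m-1}<\alpha,\\ \varphi\text{ first order},\ \cu M\models\varphi(a_{\beta_0},\dots,a_{\beta_{m-1}},a_\alpha)\,\bigr\}.
\end{multline*}
By $\upsilon$-saturation it suffices to show that $p_\alpha$ is finitely satisfiable in $\cu N$; any realization of $p_\alpha$ can then serve as $\ti a_\alpha$. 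Assuming this is done, $(\ast_{\alpha+1})$ holds: given a first order $\varphi(v_0,\dots,v_{n-1})$ and $\beta_0,\dots,\beta_{n-1}<\alpha+1$ with $\cu M\models\varphi(a_{\beta_0},\dots,a_{\beta_{n-1}})$, either every $\beta_i<\alpha$ and we invoke $(\ast_\alpha)$, or we identify all variables $v_i$ with $\beta_i=\alpha$ into one new variable, rewriting the instance as $\varphi'(a_{\gamma_0},\dots,a_{\gamma_{\ell-1}},a_\alpha)$ with $\varphi'$ first order and all $\gamma_j<\alpha$ (identifying free variables of a first order formula again yields a first order formula); this is one of the conditions defining $p_\alpha$, hence holds with $x=\ti a_\alpha$.

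It remains to check finite satisfiability. Take finitely many conditions of $p_\alpha$, coming from first order formulas $\varphi_1,\dots,\varphi_k$ with a common parameter list $\bar b=(\ti a_{\beta_0},\dots,\ti a_{\beta_{m-1}})$, and with $\cu M\models\varphi_j(\bar a,a_\alpha)$ for each $j\le k$, where $\bar a=(a_{\beta_0},\dots,a_{\beta_{m-1}})$; put $\psi:=\bigwedge_{j\le k}\varphi_j$. Then $\cu M\models(\exists u\,\psi)(\bar a)$, whence $\mu\llbracket(\exists u\,\psi)(\bar b)\rrbracket=1$ — by $(\ast_\alpha)$ if $m>0$, and, if $m=0$, by completeness of $T$ together with the basic fact that $T\models\chi$ implies $T^R\models\llbracket\chi\rrbracket\doteq\top$ for first order sentences $\chi$. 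Now apply the perfect witnesses property of $\cu N$ (Result \ref{f-perfectwitnesses}, applicable since every model of $T^R$ is a pre-complete-model of $T^R$) to $\psi$ and $\bar b$: there is $a'\in\cu K$ with $\llbracket\psi(\bar b,a')\rrbracket\doteq\llbracket(\exists u\,\psi)(\bar b)\rrbracket$, so $\mu\llbracket\psi(\bar b,a')\rrbracket=1$. Since $\llbracket\psi(\bar b,a')\rrbracket\sqsubseteq\llbracket\varphi_j(\bar b,a')\rrbracket$ for each $j$, monotonicity of $\mu$ gives $\mu\llbracket\varphi_j(\bar b,a')\rrbracket=1$, so $a'$ realizes the chosen finite subset of $p_\alpha$.

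The substantive ingredients are only the perfect witnesses property and $\upsilon$-saturation, so I do not expect a genuine obstacle; the one point that needs care is organizational, namely choosing the invariant $(\ast_\alpha)$ strong enough both to drive the perfect-witnesses computation at the next stage (through the instance $\cu M\models(\exists u\,\psi)(\bar a)$) and to deliver the conclusion, and keeping track of repeated or permuted parameter indices, which is harmless.
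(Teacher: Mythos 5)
Your proposal is correct and is exactly the argument the paper has in mind: the paper's proof is the one-line remark ``proved by a routine transfinite induction using Fullness and saturation,'' and your transfinite recursion, with saturation realizing the type $p_\alpha$ and the perfect-witnesses property supplying finite satisfiability via $\llbracket(\exists u\,\psi)(\bar b)\rrbracket$, is precisely that routine induction carried out in full.
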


\begin{proof} This is proved by a routine transfinite induction using Fullness and saturation.
\end{proof}

\begin{result}  \label{f-TR-stable}  ([BK], Theorem 5.14)
The theory $T^R$ is stable if and only if $T$ is stable.
\end{result}

However, in [Be2], it is shown that the randomization of a simple, unstable theory is not simple.  We will see in Corollary \ref{c-rosy-downward}
that if $T^R$ is real rosy then $T$ is also real rosy.  We thus pose the na\"ive

\begin{question}
If $T$ is a real rosy first order theory, is the randomization $T^R$ also real rosy, or at least ``almost real rosy'' in some reasonable sense?
\end{question}

We do not even know the answer to the following question, where $\DLO$ is the theory of dense linear order without endpoints.

\begin{question}  \label{q-DLO-rosy} Is $\DLO^R$ real rosy?
\end{question}

\subsection{Blanket Assumptions}

%%  Sept 2014:  Further assumptions re-written.

From now on we will work within the big model $\cu N=( {\cu K}, {\cu E})$ of $T^R$.
We let $\cu M$ be the big model of $T$ and let $\cu P=(\cu L,\cu F)$ be a neat randomization of $\cu M$
with probability space $(\Omega,\cu E,\mu)$, such that $\cu N$ is the reduction of $\cu P$.
We may further assume that the probability space $(\Omega,\cu F, \mu)$ of $\cu P$ is complete (that is, every set
that contains a set of $\mu$-measure one belongs to $\cu F$), and that every
function $a\colon \Omega\to M$ that agrees with some $b\in \cu L$ except on a $\mu$-null subset of $\Omega$ belongs to $\cu L$.
The existence of $\cu P$ is guaranteed by Result \ref{f-representation} (Proposition 2.1.10 in [AGK]), and the further assumption
follows from the proof in [AGK].

We choose once and for all a mapping $a\mapsto \ti a$ from $M$ into $\cu K$ with the property stated in Lemma \ref{l-big-embedding},
and for each $A\subseteq M$ let $\ti A$ be the image of $A$ under this mapping.  Note that $\ti M$ has the discrete topology in $\cu K$, and hence is closed in $\cu K$.
For convenience, we also choose once and for all a pair of distinct elements $0,1\in M$
(but we do not assume that $L$ has constant symbols for $0,1$).  Thus $\mu(\l {\ti 0}\ne{\ti 1}\rr) = 1$.
For an event $\sa E\in\cu E$, we let $1_{\sa E}$ be the characteristic function of $\sa E$ with respect to ${\ti 0},{\ti 1}$; note that $1_{\sa E}\in\cu K$.

By saturation, $\cu K$ and $\cu E$ are large.  Hereafter, $A, B, C$ will always denote small subsets of $\cu K$.
For each element $\bo a\in {\cu K}$, we will also choose once and for all an element $a\in\cu L$ such that
the image of $a$ under the reduction map is $\bo a$.  It follows that for each first order formula $\varphi(\vec v)$,
$\l\varphi(\vec{\bo a})\rr$ in $\cu N$ is the image of $\l\varphi(\vec a)\rr$ in $\cu P$ under the reduction map.

For any small $A\subseteq\cu K$ and each $\omega\in\Omega$, we define
$$ A(\omega)=\{a(\omega)\colon \bo a\in  A\},$$
and let $\cl( A)$ denote the closure of $ A$ in the metric $d_\BK$.  When $\cu A\subseteq {\cu E}$,
$\cl(\cu A)$ denotes the closure of $\cu A$ in the metric $d_\BB$, and
$\sigma(\cu A)$ denotes the smallest $\sigma$-subalgebra of $ {\cu E}$ containing $\cu A$.
Since the cardinality $\upsilon$ of $\cu N$ is inaccessible, whenever $A\subseteq\cu K$ is small, the closure $\cl(A)$ and
the set of $n$-types over a $A$ is small.  Also, whenever $\cu A\subseteq\cu E$ is small, the closure $\cl(\cu A)$ is small.

If you wish to avoid the assumption that uncountable inaccessible cardinals exist, then instead of assuming that $\upsilon$ is inaccessible, you can assume only that $\upsilon$
is a strong limit cardinal with $\upsilon=\upsilon^{\aleph_0}$,
that $\cu N$ is an $\upsilon$-universal domain, and that $\cu M$ is an $\upsilon^+$-universal model of $T$ of cardinality $\upsilon$, which exists by Theorem 5.1.16 in [CK].

\subsection{Definability in $T^R$}

As explained in [AGK], in models of $T^R$ we need only consider definability over sets of parameters of sort $\BK$.

We write $\dcl_\BB( A)$ for the set of elements of sort $\BB$ that are definable over $ A$ in $\cu N$,
and write $\dcl( A)$ for the set of elements of sort $\BK$ that are definable over $ A$ in $\cu N$.
Similarly for $\acl_\BB( A)$ and $\acl( A)$.  We often use the following result without explicit mention.

\begin{result}  \label{f-acl=dcl}  ([AGK], Proposition 3.3.7, see also [Be2])
$\acl_\BB( A)=\dcl_\BB( A)$ and $\acl( A)=\dcl( A)$.
\end{result}

\begin{df}  We say that an event $\sa E$ is \emph{first order definable over $A$}, in symbols $\sa E\in\fo_\BB(A)$, if
$\sa E=\l\theta(\vec{\bo a})\rr$ for some formula $\theta$ of $L$ and some tuple $\vec{\bo a}\in A^{<\BN}$.
\end{df}

\begin{df}  A first order formula $\varphi(u,\vec v)$ is \emph{functional} if
$$T\models(\forall \vec v)(\exists ^{\le  1} u)\varphi(u,\vec v).$$

We say that $\bo b$ is \emph{first order definable over $ A$}, in symbols $\bo b\in\fo( A)$, if there is a functional formula
$\varphi(u,\vec v)$ and a tuple $\vec{\bo a}\in {A}^{<\BN}$ such that
$\l \varphi(b,\vec{a})\rr=\top$.
\end{df}

\begin{result} \label{f-separable}  ([AGK], Theorems 3.1.2 and  3.3.6)
$$\dcl_\BB( A)=\cl(\fo_\BB( A))=\sigma(\fo_\BB(A))\subseteq\cu E,\qquad \dcl( A)=\cl(\fo( A))\subseteq\cu K.$$
If $A$ is empty, then $\dcl_\BB(A)=\{\top,\bot\}$ and $\dcl(A)=\fo(A)$.
\end{result}

It follows that whenever $A$ is small, $\dcl(A)$ and $\dcl_\BB(A)$ are small.

Using the distance function between an element and a set, \ref{f-separable} can be re-stated as follows:

\begin{rmk}  \label{r-distance}
$$\sa E\in\dcl_\BB(A) \mbox{ if and only if } d_\BB(\sa E,\fo_\BB(A))=0,$$
and
$$\bo b\in\dcl(A) \mbox{ if and only if } d_\BK(\bo b,\fo(A))=0.$$
\end{rmk}

\begin{rmk}  \label{r-dcl-B}  For each small $A$,
$$\fo_\BB(\fo(A))=\fo_\BB(A),\quad\dcl_\BB(\dcl(A))=\dcl_\BB(A).$$
\end{rmk}

\begin{proof}
The first equation is clear, and the second equation follows from the first equation and Remark \ref{r-distance}.
\end{proof}

\begin{rmk}  \label{r-new}  If $B=\dcl^{\cu M}(A)$ then $\ti B=\fo(\ti A)=\dcl(\ti A)$.
\end{rmk}

\begin{proof}  For any first order functional formula $\psi(X,y)$, we have
$\cu M\models \psi(A,b)$ iff $\cu N\models\l\psi(\ti A,\ti b)\rr=\top$, and $\cu N\models\l(\exists ^{\le 1} y)\psi(\ti A,y)\rr=\top$.
Therefore $\ti B=\fo(\ti A)$.  For any two distinct elements $\ti b, \ti c$ of $\ti B$, we have $\cu M\models b\ne c$, so $\cu N\models\mu(\l\ti b\ne\ti c\rr)=1$
and hence $d_\BK(\ti b,\ti c)=1$.  Thus any two elements of $\ti B$ have distance $1$ from each other,
so $\ti B$ is closed in $\cu N$.  By Result \ref{f-separable}, $\fo(\ti A)=\cl(\fo(\ti A))=\dcl(\ti A)$.
\end{proof}

We will sometimes use the $\l \ldots\rr$ notation in a general setting.  Given a property $P(\omega)$, we write
$$\l P\rr=\{\omega\in\Omega\,:\,P(\omega)\},$$
and we say that
$$P(\omega) \mbox{ holds }\as.$$
if $\l P\rr$ contains a set  $\sa A\in\cu F$ such that $\mu(\sa A)=1$.
For example, $\l b\in\dcl^{\cu M}(A)\rr$ is the set of all $\omega\in\Omega$ such that $b(\omega)\in\dcl^{\cu M}(A(\omega))$.

\begin{result}  \label{f-pointwisemeasurable}  ([AGK], Lemma 3.2.5)
If $A$ is countable, then
$$\l b\in\dcl^{\cu M}(A)\rr= \bigcup\{\l\theta(b,\vec a)\rr\colon\theta(u,\vec v) \mbox{ functional, } \vec{\bo a}\in A^{<\BN}\},$$
and  $\l b\in\dcl^{\cu M}(A)\rr\in\cu F$.
\end{result}

It follows that for each countable $A$, $b(\omega)\in\dcl(A(\omega))\as.$ if and only if $\mu(\l b\in\dcl(A)\rr)=1$.

\begin{df} \label{d-pointwise-def}
 We say that $\bo b$ is \emph{pointwise definable over $A$}, in symbols $\bo b\in\dcl^\omega(A)$, if
$$\mu(\l b\in\dcl^{\cu M}(A_0)\rr)=1$$
for some countable $A_0\subseteq A$.

We say that $\bo b$ is \emph{pointwise algebraic over $A$}, in symbols $\bo b\in\acl^\omega(A)$, if
$$\mu(\l b\in\acl^{\cu M}(A_0)\rr)=1$$
for some countable $A_0\subseteq A$.
\end{df}

\begin{rmk}  \label{r-dcl-omega}
$\dcl^\omega$ and $\acl^\omega$ have countable character, that is,
$\bo b\in\dcl^\omega(A)$ if and only if $\bo b\in\dcl^\omega(A_0)$ for some countable $A_0\subseteq A$, and similarly for $\acl^\omega$.
\end{rmk}

\begin{result}  \label{f-dcl3}  ([AGK], Corollary 3.3.5) For any element $\bo b\in {\cu K}$,
$\bo{b}$ is definable over $ A$ if and only if:
\begin{enumerate}
\item $\bo b$ is pointwise definable over $A$;
\item $\fo_\BB(\bo b A)\subseteq\dcl_\BB(A)$.
\end{enumerate}
\end{result}

\begin{cor}  \label{c-pointwise-alg-def}
In $\cu N$ we always have
$$\acl(A)=\dcl(A)\subseteq\dcl^\omega(A)=\dcl^\omega(\dcl^\omega(A))\subseteq\acl^\omega(A)=\acl^\omega(\acl^\omega(A)).$$
\end{cor}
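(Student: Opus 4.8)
The displayed chain consists of five links. The leftmost equality $\acl(A)=\dcl(A)$ is exactly Result \ref{f-acl=dcl}. The inclusion $\dcl(A)\subseteq\dcl^\omega(A)$ is immediate from Result \ref{f-dcl3}: if $\bo b$ is definable over $A$ then clause (1) of that result asserts that $\bo b$ is pointwise definable over $A$, i.e.\ $\bo b\in\dcl^\omega(A)$. The middle inclusion $\dcl^\omega(A)\subseteq\acl^\omega(A)$ is trivial, since $\dcl^{\cu M}(A_0(\omega))\subseteq\acl^{\cu M}(A_0(\omega))$ for every $\omega$, so any countable $A_0\subseteq A$ witnessing $\bo b\in\dcl^\omega(A)$ also witnesses $\bo b\in\acl^\omega(A)$. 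Hence everything reduces to the two idempotency statements.

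To prove $\dcl^\omega(A)=\dcl^\omega(\dcl^\omega(A))$, note first that the inclusion $\subseteq$ follows from the (evident) monotonicity of $\dcl^\omega$ in its argument together with $A\subseteq\dcl^\omega(A)$, the latter because $a(\omega)\in\dcl^{\cu M}(\{a(\omega)\})$ for every $\omega$. For $\supseteq$, suppose $\bo b\in\dcl^\omega(\dcl^\omega(A))$. By Definition \ref{d-pointwise-def} there is a countable $B_0=\{\bo b_n:n\in\BN\}\subseteq\dcl^\omega(A)$ with $\mu(\l b\in\dcl^{\cu M}(B_0)\rr)=1$, and for each $n$ a countable $A_n\subseteq A$ with $\mu(\l b_n\in\dcl^{\cu M}(A_n)\rr)=1$. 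Put $A_0=\bigcup_n A_n$, a countable subset of $A$. By Result \ref{f-pointwisemeasurable} all these events lie in $\cu F$, so the event $\sa G=\l b\in\dcl^{\cu M}(B_0)\rr\cap\bigcap_{n\in\BN}\l b_n\in\dcl^{\cu M}(A_n)\rr$ lies in $\cu F$ and has measure one. For each $\omega\in\sa G$ we have $B_0(\omega)\subseteq\dcl^{\cu M}(A_0(\omega))$ and $b(\omega)\in\dcl^{\cu M}(B_0(\omega))\subseteq\dcl^{\cu M}(\dcl^{\cu M}(A_0(\omega)))=\dcl^{\cu M}(A_0(\omega))$, using the idempotency of $\dcl^{\cu M}$ in the first order model $\cu M$. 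Hence $\mu(\l b\in\dcl^{\cu M}(A_0)\rr)=1$, so $\bo b\in\dcl^\omega(A)$.

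The statement $\acl^\omega(A)=\acl^\omega(\acl^\omega(A))$ is proved by the verbatim argument with $\dcl$ replaced by $\acl$ throughout, using the idempotency of $\acl^{\cu M}$ in $\cu M$; the only point needing comment is the measurability of the events $\l b_n\in\acl^{\cu M}(A_n)\rr$ and $\l b\in\acl^{\cu M}(B_0)\rr$, which is supplied by the $\acl$-analogue of Result \ref{f-pointwisemeasurable} (or, in its absence, by the completeness of $(\Omega,\cu F,\mu)$ from the Blanket Assumptions, since $\sa G$ then contains a measure-one set of $\cu F$).

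The one place I expect to need genuine care is the null-set bookkeeping inside the two idempotency arguments: one must replace the separate witnessing parameter sets $A_n$ by a single common countable $A_0\subseteq A$ and check that the combined event $\sa G$ still has full measure --- this is precisely where the countable character of $\dcl^\omega$ and $\acl^\omega$ and the countable additivity of $\mu$ are used. Everything else in the chain is immediate.
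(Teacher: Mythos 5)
Your proof is correct and follows the route the paper clearly intends (the corollary is stated without proof): Result \ref{f-acl=dcl} for the first equality, clause (1) of Result \ref{f-dcl3} for $\dcl(A)\subseteq\dcl^\omega(A)$, and a pointwise-plus-null-set argument for the two idempotency claims. The only detail genuinely requiring care --- collapsing the countably many witnessing sets $A_n$ into one countable $A_0$ and intersecting the measure-one events, with measurability from Result \ref{f-pointwisemeasurable} (resp.\ its algebraical-formula analogue and the completeness of $(\Omega,\cu F,\mu)$) --- is exactly what you supply.
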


The following proposition gives a warning: the set $\dcl^\omega(A)$ is almost always large.
(By contrast, it is well-known that for any small set $A$, $\acl(A)$ and $\dcl(A)$ are small--this follows from Fact \ref{f-algebraic}.)

\begin{prop}  \label{p-large}  If $|A|>1$, or even if $|\dcl^\omega(A)|>1$, then $\dcl^\omega(A)$ is large.
\end{prop}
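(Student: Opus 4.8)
The plan is to show that once $\dcl^\omega(A)$ contains two distinct elements $\bo b_0 \ne \bo b_1$, one can manufacture $\upsilon$-many distinct pointwise-definable elements by ``splicing'' $\bo b_0$ and $\bo b_1$ along arbitrary events. Since $\cu E$ is large, this will force $\dcl^\omega(A)$ to be large. First I would fix $\bo b_0, \bo b_1 \in \dcl^\omega(A)$ with $d_\BK(\bo b_0,\bo b_1) > 0$; by Definition \ref{d-pointwise-def} (and Remark \ref{r-dcl-omega}) there is a single countable $A_0 \subseteq A$ with $\mu(\l b_0 \in \dcl^{\cu M}(A_0)\rr) = 1$ and $\mu(\l b_1 \in \dcl^{\cu M}(A_0)\rr) = 1$. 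Replacing $A$ by $A_0$ throughout, it suffices to prove that $\dcl^\omega(A_0)$ is large, since $\dcl^\omega(A_0) \subseteq \dcl^\omega(A)$.

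Next I would use the gluing operation from Result \ref{f-glue}: for each event $\sa B \in \cu E$, let $c_{\sa B} \in \cu K$ be the characteristic function of $\sa B$ with respect to $\bo b_0, \bo b_1$, i.e.\ $\sa B \sqsubseteq \l c_{\sa B} = \bo b_0\rr$ and $(\neg \sa B) \sqsubseteq \l c_{\sa B} = \bo b_1\rr$. The key claim is that $c_{\sa B} \in \dcl^\omega(A_0)$ whenever $\sa B \in \dcl_\BB(A_0)$. Indeed, in the neat randomization $\cu P$, for almost every $\omega$ we have $c_{\sa B}(\omega) = b_0(\omega)$ if $\omega \in \sa B$ and $c_{\sa B}(\omega) = b_1(\omega)$ otherwise, and both $b_0(\omega), b_1(\omega) \in \dcl^{\cu M}(A_0(\omega))$ a.s.; moreover $\sa B \in \dcl_\BB(A_0)$ means, by Result \ref{f-separable} and Result \ref{f-pointwisemeasurable}-style reasoning, that membership in $\sa B$ is itself governed a.s.\ by a countable amount of first-order data over $A_0$, so $c_{\sa B}(\omega) \in \dcl^{\cu M}(A_0(\omega))$ a.s., giving $c_{\sa B} \in \dcl^\omega(A_0)$. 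Finally, distinct events $\sa B \ne \sa B'$ in $\cu E$ (in the metric $d_\BB$) yield distinct $c_{\sa B} \ne c_{\sa B'}$ in $d_\BK$, because on the symmetric difference $\sa B \triangle \sa B'$ the functions $c_{\sa B}, c_{\sa B'}$ take the values $b_0, b_1$ in opposite order, and $\mu(\l b_0 \ne b_1\rr) = d_\BK(\bo b_0,\bo b_1) > 0$ forces a positive-measure disagreement on any non-null piece — so $d_\BK(c_{\sa B}, c_{\sa B'}) \ge d_\BB(\sa B,\sa B') \cdot d_\BK(\bo b_0,\bo b_1)$ up to a constant. Hence the map $\sa B \mapsto c_{\sa B}$ is injective on $\dcl_\BB(A_0)$.

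It then remains to note that $\dcl_\BB(A_0)$ is large — but here I would be careful, since for countable $A_0$ the set $\dcl_\BB(A_0)$ is in fact \emph{small} (it is separable by Fact \ref{f-definableclosure} and Result \ref{f-separable}). So the final step needs adjustment: rather than restricting to events in $\dcl_\BB(A_0)$, I should observe that the hypothesis $|\dcl^\omega(A)| > 1$ already gives two distinct pointwise-definable elements, and then run the splicing with $\sa B$ ranging over \emph{all} of $\cu E$, checking instead that each $c_{\sa B}$ is pointwise definable over the countable set $A_0 \cup \{\bo b_0, \bo b_1, c_{\sa B}\}$'s relevant data — more precisely, over $A_0$ together with a countable parameter set defining $\sa B$ pointwise, which exists for any single event $\sa B$ by Fullness (Result \ref{f-perfectwitnesses}) and the measurability in Result \ref{f-pointwisemeasurable}. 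Since $\cu E$ is large and $\sa B \mapsto c_{\sa B}$ is injective with each $c_{\sa B} \in \dcl^\omega(A)$ (using that $\dcl^\omega$ closes under adding pointwise-definable parameters, Corollary \ref{c-pointwise-alg-def}), we conclude $|\dcl^\omega(A)| \ge |\cu E| = \upsilon$, i.e.\ $\dcl^\omega(A)$ is large. The main obstacle is precisely this bookkeeping: ensuring that splicing along an \emph{arbitrary} (small-parameter-definable) event keeps the result inside $\dcl^\omega(A)$ rather than needing the event to lie in the too-small set $\dcl_\BB(A)$; the resolution is that $\dcl^\omega$ is insensitive to enlarging the parameter set by one more pointwise-definable element, so one absorbs the event's defining data into the (still countable) base.
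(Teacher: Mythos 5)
Your overall strategy --- splice $\bo b_0$ and $\bo b_1$ along events via Result \ref{f-glue} and count the resulting characteristic functions --- is exactly the paper's, but as written the proof has two genuine gaps.

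First, the injectivity estimate is wrong. On $\sa B \triangle \sa B'$ the two splices take the values $b_0,b_1$ in opposite order, so they disagree exactly on $(\sa B\triangle\sa B')\cap\l b_0\ne b_1\rr$, i.e.\ $d_\BK(c_{\sa B},c_{\sa B'})=\mu((\sa B\triangle\sa B')\cap\l b_0\ne b_1\rr)$. Having $\mu(\l b_0\ne b_1\rr)=r>0$ does \emph{not} force this intersection to be non-null: if $\sa B$ and $\sa B'$ differ only inside $\l b_0=b_1\rr$, then $c_{\sa B}=c_{\sa B'}$. So your inequality $d_\BK(c_{\sa B},c_{\sa B'})\ge d_\BB(\sa B,\sa B')\cdot d_\BK(\bo b_0,\bo b_1)$ fails and the map $\sa B\mapsto c_{\sa B}$ is not injective on $\cu E$. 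The repair is to use only events $\sa B\sqsubseteq\l b_0\ne b_1\rr$, on which the correspondence is distance-preserving; since $\mu$ is atomless, for each $n$ there are $n$ such events pairwise at $d_\BB$-distance $r/2$, so by saturation there are $\upsilon$ of them. This is exactly how the paper finishes.

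Second, your final step is both unjustified and unnecessary. You propose to make $c_{\sa B}$ pointwise definable over $A_0$ together with a countable parameter set defining $\sa B$, and then absorb those parameters using $\dcl^\omega(\dcl^\omega(A))=\dcl^\omega(A)$. But the witnesses for $\sa B$ supplied by Fullness are arbitrary elements of $\cu K$, not elements of $\dcl^\omega(A)$, so Corollary \ref{c-pointwise-alg-def} does not apply and pointwise definability over that enlarged base does not place $c_{\sa B}$ in $\dcl^\omega(A)$. The point you are missing is that no bookkeeping is needed at all: pointwise definability is a condition at each $\omega$ separately, and for \emph{every} event $\sa B$ (definable or not) we have $c_{\sa B}(\omega)\in\{b_0(\omega),b_1(\omega)\}\subseteq\dcl^{\cu M}(A_0(\omega))$ for almost every $\omega$, so $\l c_{\sa B}\in\dcl^{\cu M}(A_0)\rr$ contains a measure-one set and $c_{\sa B}\in\dcl^\omega(A_0)\subseteq\dcl^\omega(A)$ outright. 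Dropping the spurious definability requirement on $\sa B$ and restricting to events below $\l b_0\ne b_1\rr$ closes the proof.
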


\begin{proof}  We may assume that $A$ is countable.  Take two elements $\bo a\ne \bo b\in\dcl^\omega(A)$.  Then $\mu(\l a\ne b\rr)=r>0$.
By Result \ref{f-glue}, for each event $\sa E\in\cu E$ the characteristic function $\chi_{\sa E}$ of $\sa E$ with respect to $\bo a,\bo b$
belongs to $\cu K$, and
$\mu(\l \chi_{\sa E}\in \dcl(A)\rr)=1$, so $\chi_{\sa E}\in\dcl^\omega(A)$. For each $n$ there is a set of $n$ events $\sa E_1,\ldots,\sa E_n$
such that $d_\BK(\chi_{\sa E_i},\chi_{\sa E_j})=d_\BB(\sa E_i,\sa E_j)=r/2$ whenever $i<j\le n$.  Then by saturation, the set
$\dcl^\omega(A)$ has cardinality $\ge\upsilon$ and hence is large.
\end{proof}

\begin{cor}  \label{c-subset}  Let $A$ be a countable subset of $\cu K$ with $|A|>1$.  Then the set of all small $B$ such that
$B(\omega)\subseteq A(\omega) \as.$ is large and contains every subset of $A$.
\end{cor}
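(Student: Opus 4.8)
The plan is to reduce the statement to the cardinality computation in Proposition \ref{p-large}. First I would dispose of the trivial half: if $B\subseteq A$, then for every $\omega$ we literally have $B(\omega)\subseteq A(\omega)$, so in particular $B(\omega)\subseteq A(\omega)$ almost surely; thus every subset of $A$ lies in the collection under consideration, and this collection is nonempty. The substance is to show the collection is \emph{large}, i.e.\ has cardinality $\ge\upsilon$. For this, the natural move is to exhibit $\upsilon$-many \emph{singletons} $\{\bo c\}$ with $\bo c(\omega)\in A(\omega)\as.$, since distinct such singletons are distinct members of the collection.

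The key step is to produce these elements using Result \ref{f-glue} exactly as in the proof of Proposition \ref{p-large}. Since $|A|>1$, pick $\bo a\ne\bo b$ in $A$; then $r:=\mu(\l a\ne b\rr)>0$. For each event $\sa E\in\cu E$, let $\chi_{\sa E}\in\cu K$ be the characteristic function of $\sa E$ with respect to $\bo a,\bo b$ (Result \ref{f-glue}). Pointwise, $\chi_{\sa E}(\omega)$ equals $a(\omega)$ or $b(\omega)$ for almost all $\omega$, hence $\chi_{\sa E}(\omega)\in\{a(\omega),b(\omega)\}\subseteq A(\omega)$ almost surely; thus each singleton $\{\chi_{\sa E}\}$ belongs to the collection. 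As in Proposition \ref{p-large}, for each $n$ one finds events $\sa E_1,\dots,\sa E_n$ with $d_\BK(\chi_{\sa E_i},\chi_{\sa E_j})=d_\BB(\sa E_i,\sa E_j)=r/2$ for $i<j$, so by saturation (the model $\cu N$ is big) there is a set of size $\ge\upsilon$ of pairwise-distinct elements $\chi_{\sa E}$. The corresponding singletons are then $\upsilon$-many distinct members of the collection, so it is large.

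The only point requiring a word of care — and the main (mild) obstacle — is the almost-sure identification of $\chi_{\sa E}(\omega)$ with a value in $A(\omega)$: one must recall that working in the neat randomization $\cu P=(\cu L,\cu F)$ under the blanket assumptions, the defining relations $\sa E\sqsubseteq\l\chi_{\sa E}=a\rr$ and $(\neg\sa E)\sqsubseteq\l\chi_{\sa E}=b\rr$ say precisely that $\{\omega:\chi_{\sa E}(\omega)\ne a(\omega)\}\subseteq\Omega\setminus\sa E$ up to a null set and similarly on $\sa E$, so $\chi_{\sa E}(\omega)\in\{a(\omega),b(\omega)\}\as.$; hence $\{\chi_{\sa E}\}(\omega)\subseteq A(\omega)\as.$ Everything else is a direct transcription of the argument already given for Proposition \ref{p-large}, with "$\dcl^\omega(A)$" replaced by "the family of small $B$ with $B(\omega)\subseteq A(\omega)\as.$" and the elements $\chi_{\sa E}$ repackaged as singleton sets.
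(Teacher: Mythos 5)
Your proposal is correct and follows exactly the route the paper intends: the paper's proof of this corollary is just the remark ``Similar to the proof of Proposition \ref{p-large},'' and your argument — trivially including all subsets of $A$, then producing $\upsilon$-many singletons $\{\chi_{\sa E}\}$ with $\chi_{\sa E}(\omega)\in\{a(\omega),b(\omega)\}\subseteq A(\omega)\ \as.$ and separating them via $d_\BK(\chi_{\sa E_i},\chi_{\sa E_j})=d_\BB(\sa E_i,\sa E_j)=r/2$ plus saturation — is precisely that adaptation. The extra care you take in checking the almost-sure containment from the defining relations of the characteristic function is a correct and welcome detail.
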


\begin{proof}  Similar to the proof of Proposition \ref{p-large}.
\end{proof}

\section{Independence in $T^R$}

In this section we study the relations $\ind[a]$, $\ind[M]$, $\thind$, and $\ind[d]$ over $\cu N$.
In the two-sorted metric structure $\cu N$, algebraic independence is defined by
$$ A\ind[a]_C B\Leftrightarrow [\acl(AC)\cap\acl(BC)=\acl(C) \wedge \acl_\BB(AC)\cap\acl_\BB(BC)=\acl_\BB(C)].$$
The other special ternary relations $\ind[M] \ $ and $\thind$ are defined in terms of $\ind[a]$ over $\cu N$,
as in Definition \ref{d-special-ind}.

\subsection{Downward Results}

\begin{df}  We say that $T$ has $\acl=\dcl$ if for every set $A$ in $\cu M$ we have  $\acl^{\cu M}(A)=\dcl^{\cu M}(A)$.
\end{df}

For example, every theory with a definable linear ordering has $\acl=\dcl$, but the theory of algebraically closed fields does not have $\acl=\dcl$.

We show that if $T$ has $\acl=\dcl$, then each  special independence relation we have considered holds for subsets of $\cu M$ only if the corresponding
relation holds for their images of the sets under the mapping $a\mapsto\ti a$.

\begin{prop}  \label{p-downward}  Suppose $T$ has $\acl=\dcl$,
 $\ind[I]$ is one of the relations $\ind[a],\, \ind[M], \,\thind$, and $A, B, C$ are small subsets of $M$.
 If $\ti A\ind[I]_{\ti C} \ti B$ holds in $\cu N$, then $A\ind[I]_C B$ holds in $\cu M$.
\end{prop}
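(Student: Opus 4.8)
The key is to reduce everything to the base case $\ind[I]=\ind[a]$, since $\ind[M]$ and $\thind$ are defined from $\ind[a]$.  For $\ind[a]$ itself, I would like to show that the map $A \mapsto \ti A$ "reflects" algebraic closure in the following sense:  if $T$ has $\acl = \dcl$, then for small $A, B \subseteq M$ with $\acl^{\cu M}(B) \supseteq A$ one has $\acl(\ti B) \supseteq \ti A$ (on the $\BK$-sort), and — crucially for the converse direction we need — if $\ti a \in \acl(\ti B)$ for $a \in M$, then $a \in \acl^{\cu M}(B) = \dcl^{\cu M}(B)$.  The first assertion is essentially Remark \ref{r-new} (when $T$ has $\acl=\dcl$, $\dcl^{\cu M}(B)$ maps into $\dcl(\ti B) = \acl(\ti B)$).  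For the second, the point is that $\ti M$ is closed and discrete in $\cu K$, so if $\ti a$ is in a compact set witnessing algebraicity over $\ti B$ together with finitely many $\ti$-images, a measure/type argument forces $a$ into $\acl^{\cu M}(B_0)$ for some countable $B_0 \subseteq B$; I expect to invoke Result \ref{f-pointwisemeasurable} and Result \ref{f-dcl3}, together with the fact that on the discrete set $\ti M$ the continuous $\acl$ must reduce to the "pointwise almost-everywhere" notion $\acl^\omega$, and $\acl^\omega(\ti B)\cap \ti M = \widetilde{\acl^{\cu M}(B)}$ since $\acl=\dcl$ and $b \mapsto \ti b$ respects first-order definability.

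Granting that, the $\ind[a]$ case is immediate:  suppose $\ti A \ind[a]_{\ti C} \ti B$ in $\cu N$; to show $A \ind[a]_C B$ in $\cu M$, take $c \in \acl^{\cu M}(AC) \cap \acl^{\cu M}(BC)$.  Then $\ti c \in \acl(\ti A \ti C) \cap \acl(\ti B \ti C)$ (by the first assertion applied to the two pairs), so $\ti c \in \acl(\ti C)$; by the second assertion, $c \in \acl^{\cu M}(C)$.  (The event-sort clause $\acl_\BB(\ti A \ti C) \cap \acl_\BB(\ti B \ti C) = \acl_\BB(\ti C)$ in the definition of $\ind[a]$ over $\cu N$ is not needed for this direction — we are going from $\cu N$ down to $\cu M$ — so it can simply be discarded.)

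For $\ind[M]$:  assume $\ti A \ind[M]_{\ti C} \ti B$ in $\cu N$ and let $D \in [C, \acl^{\cu M}(BC)]$ in $\cu M$; I must show $A \ind[a]_D B$ in $\cu M$.  The natural move is to pass to $\ti D$.  Using that $T$ has $\acl=\dcl$ and Remark \ref{r-new}, $\ti D \subseteq \acl(\ti B \ti C)$, and $\ti C \subseteq \ti D$, so $\ti D$ sits in the interval $[\ti C, \acl(\ti B \ti C)]$ witnessing $\ind[M]$; hence $\ti A \ind[a]_{\ti D} \ti B$ in $\cu N$, and the $\ind[a]$ case just proved (with base $D$ in place of $C$) gives $A \ind[a]_D B$ in $\cu M$.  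For $\thind$:  assume $\ti A \thind_{\ti C} \ti B$ in $\cu N$; I must show $A \thind_C B$ in $\cu M$, i.e.\ for each small $E \supseteq BC$ in $\cu M$ produce $A' \equiv_{BC} A$ with $A' \ind[M]_C E$.  Apply the $\cu N$-statement $\ti A \thind_{\ti C} \ti B$ to the set $\ti E \supseteq \ti B \ti C$:  there is $\hat A \equiv_{\ti B \ti C} \ti A$ in $\cu N$ with $\hat A \ind[M]_{\ti C} \ti E$.  The issue is that $\hat A$ need not be of the form $\ti{A'}$ for a subset $A' \subseteq M$.  To fix this I would use that $\ti B \ti C$-indiscernibility / type-equality over $\ti B\ti C$ pins down the first-order type of $\hat A$ over $\ti B \ti C$ "almost surely," extract from $\hat A$ (via Result \ref{f-pointwisemeasurable}-style pointwise analysis on a countable approximation, or directly from $\tp^{\cu M}$-realization and Lemma \ref{l-big-embedding}) a tuple $A' \subseteq M$ with $A' \equiv^{\cu M}_{BC} A$ and $\widetilde{A'}$ having the same relevant type over $\ti B \ti C \ti E$ as $\hat A$, then transfer $\hat A \ind[M]_{\ti C} \ti E$ to $\widetilde{A'} \ind[M]_{\ti C} \ti E$ by invariance, and finally apply the $\ind[M]$ case to conclude $A' \ind[M]_C E$ in $\cu M$.

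**Main obstacle.**  The genuinely delicate point is the "reflection of algebraic closure on $\ti M$," namely that $\ti a \in \acl(\ti B)$ forces $a \in \acl^{\cu M}(B)$; this is where $\acl=\dcl$ for $T$ and the discreteness/closedness of $\ti M$ in $\cu K$ (from Lemma \ref{l-big-embedding} and the Blanket Assumptions) do the real work, via the characterization in Results \ref{f-dcl3} and \ref{f-pointwisemeasurable}.  The second-order difficulty is the type-transfer in the $\thind$ case — converting the abstract conjugate $\hat A$ in $\cu N$ back into an honest subset $A'$ of $\cu M$ with the right type over $BC$ — but I expect this to be routine once the right framework (pointwise realization of $\tp^{\cu M}$ plus invariance of $\ind[M]$) is in place, and in fact the authors may well sidestep it by working with $\ind[M]$-extension more cleverly.
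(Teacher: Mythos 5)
Your proposal is correct and follows essentially the same route as the paper: reduce the $\ind[M]\ $ and $\thind$ cases to the $\ind[a]$ case, and for $\thind$ pull the conjugate set back into $\cu M$ by realizing its type via saturation and then applying invariance and the $\ind[a]$/$\ind[M]\ $ cases. The one place you overcomplicate is the ``reflection'' step you flag as delicate: since $T$ has $\acl=\dcl$, Remark \ref{r-new} together with Result \ref{f-acl=dcl} gives the exact equality $\acl(\ti D)=\dcl(\ti D)=\widetilde{\dcl^{\cu M}(D)}=\widetilde{\acl^{\cu M}(D)}$ for every small $D\subseteq M$, so $\ti a\in\acl(\ti B)$ implies $a\in\acl^{\cu M}(B)$ immediately and no pointwise/measure argument via Results \ref{f-dcl3} and \ref{f-pointwisemeasurable} is needed.
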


\begin{proof}  Suppose first that $\ti A\ind[a]_{\ti C} \ti B$ in $\cu N$. Then
$$\acl(\ti A \ti C)\cap\acl(\ti B\ti C)=\acl(\ti C).$$
Let
$$A'=\acl^{\cu M}(AC), B'=\acl^{\cu M}(BC), C'=\acl^{\cu M}(C).$$
By Result \ref{f-acl=dcl} and Remark \ref{r-new},
 $A'=\dcl^{\cu M}(AC)$, and $\ti{A'}=\dcl(\ti A\ti C)=\acl(\ti A\ti C)$.  Similarly for $B'$ and $C'$.
Therefore $\ti {A'}\cap \ti {B'}= \ti {C'}$.  It follows that $A'\cap B'=C'$, which means that $A\ind[a]_C B$ in $\cu M$.

Now suppose that $\ti A\ind[M]_{\ti C} \ti B$ in $\cu N$.  Let $D\in[C,\acl^{\cu M}(BC)]$ in $\cu M$.  Then $D\subseteq\dcl^{\cu M}(BC)$,
so by Remark \ref{r-new}, $\ti D\subseteq\dcl(\ti B\ti C)=\acl(\ti B\ti C)$.  Hence $\ti A\ind[a]_{\ti D} \ti B$ in $\cu N$, so
$A\ind[a]_D B$  and $A\ind[M]_C B$ in $\cu M$.

Suppose that $\ti A\thind_{\ti C}\ti B$ in $\cu N$.  Let $B\subseteq D$ with $D$ small in $\cu M$.  Then $\ti B\subseteq \ti D$ in $\cu N$ and $\ti D$ is small.
Hence there exists $A_1\equiv_{\ti B\ti C} \ti A$ such that $A_1\ind[M]_{\ti C}\ti D$ in $\cu N$.  Then for every $E_1\in[\ti C,\acl(\ti C\ti D)]$
we have $A_1\ind[a]_{E_1}\ti D$ in $\cu N$.  By Remark \ref{r-new} and the assumption that $T$ has $\acl=\dcl$, $E_1\in[\ti C,\acl(\ti C\ti D)]$ if and only if
 $E_1=\ti E$ for some $E\in[C,\acl(CD)]$.  So $A_1\ind[a]_{\ti E}\ti D$ in $\cu N$ for every $E\in[C,\acl(CD)]$.
Let $F=\acl(CD)=\dcl(CD)$ in $\cu M$, so  by Remark \ref{r-new}, $\ti F=\acl(\ti C\ti D)$ in $\cu N$.
By saturation in $\cu M$, there is a set $G\subseteq M$ such that for every first order formula $\varphi(X,\ti F)$ such that $\l\varphi(A_1,\ti F)\rr=\top$
in $\cu N$, we have $\cu M\models \varphi(G,F)$.  Then in $\cu N$ we have $\ti G\equiv_{\ti B\ti C} A_1\equiv_{\ti B\ti C} \ti A$,
and $\ti G\ind[a]_{\ti E}\ti D$  for every $E\in[C,\acl(CD)]$. Therefore in $\cu M$ we have $G\equiv_{BC} A$ and $G\ind[a]_E D$  for every $E\in[C,\acl(CD)]$.
It follows that $G\ind[M]_C D$ and $A\thind_C B$ in $\cu M$.
\end{proof}

The following Corollary can be compared with Corollary 7.9 of [EG], which says that if $T^R$ is maximally real rosy then $T$ is real rosy.

\begin{cor}  \label{c-rosy-downward}
Suppose $T$ has $\acl=\dcl$ and $T^R$ is real rosy.  Then $T$ is real rosy.
\end{cor}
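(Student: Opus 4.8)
The plan is to unwind the definition of ``real rosy'' on both sides and transport a witness of local character from $\cu N$ down to $\cu M$ through Proposition \ref{p-downward}. By definition, the hypothesis that $T^R$ is real rosy says precisely that $\thind$ has local character over $\cu N$, while the conclusion to be proved is that $\thind$ has local character over $\cu M$.

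First I would fix a small set $A\subseteq M$. Since $|\ti A|\le|A|<\upsilon$, the set $\ti A$ is a small subset of $\cu N$, so local character of $\thind$ over $\cu N$ supplies a cardinal $\kappa<\upsilon$, depending only on $\ti A$ and hence only on $A$, such that for every small set in $\cu N$ there is a subset of it of cardinality $<\kappa$ over which $\ti A$ is $\thind$-independent from the whole set. Applying this to $\ti B$, where $B\subseteq M$ is an arbitrary small set (so $\ti B$ is small), I obtain $C_0\subseteq\ti B$ with $|C_0|<\kappa$ and $\ti A\thind_{C_0}\ti B$ in $\cu N$.

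The one remaining point is that $C_0$ descends to a subset of $M$ of the same cardinality. This is immediate: for distinct $a,b\in M$ we have $\cu M\models a\ne b$, so by Lemma \ref{l-big-embedding} $\mu(\l\ti a\ne\ti b\rr)=1$ and hence $d_\BK(\ti a,\ti b)=1$; in particular $a\mapsto\ti a$ is injective, with discrete image. Since $C_0\subseteq\ti B$, putting $C:=\{b\in B:\ti b\in C_0\}\subseteq B$ we get $C_0=\ti C$ and $|C|=|C_0|<\kappa$. Then $\ti A\thind_{\ti C}\ti B$ holds in $\cu N$, and Proposition \ref{p-downward} --- applied with $\ind[I]=\thind$ and using the hypothesis that $T$ has $\acl=\dcl$ --- yields $A\thind_C B$ in $\cu M$. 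Hence $\thind$ has local character over $\cu M$, with bound $\kappa(A):=\kappa$ for the fixed $A$, so $T$ is real rosy.

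I do not expect a genuine obstacle: essentially all the work has already been done in Proposition \ref{p-downward}, and the remaining point --- that an arbitrary small subset of $\ti B$ is automatically of the form $\ti C$ for some $C\subseteq B$ of the same cardinality --- is a triviality coming from the injectivity of $a\mapsto\ti a$. The only things needing care are bookkeeping: that the local-character bound attached to $\ti A$ in $\cu N$ really is $<\upsilon$ (which is part of the definition of local character) and that smallness is preserved under $a\mapsto\ti a$ (clear since $|\ti A|\le|A|$).
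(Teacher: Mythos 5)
Your proposal is correct and follows essentially the same route as the paper: use local character of $\thind$ over $\cu N$ with bound $\kappa^{\cu N}(\ti A)$, pull the base set back along the injective map $a\mapsto\ti a$, and apply Proposition \ref{p-downward} to descend to $\cu M$. The only difference is that you spell out the injectivity of $a\mapsto\ti a$ (via $d_\BK(\ti a,\ti b)=1$ for $a\ne b$), which the paper treats as clear.
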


\begin{proof}  By hypothesis, the relation $\thind$ over $\cu N$ has local character, with some bound $\kappa^{\cu N}(A)$.
We show that $\thind$ over $\cu M$ has local character with bound $\kappa^{\cu M}(A)=\kappa^{\cu N}(\ti A)$.
Let $A, B\subseteq M$ be small.  Then there is a set $C'\subseteq \ti B$ such that $|C'|<\kappa^{\cu N}(\ti A)$ and $\ti A\thind_{C'} \ti B$ in $\cu N$.
It is  clear that $C'=\ti C$ for some set $C\subseteq B$.  By Proposition \ref{p-downward} we have $A\thind_C B$ in $\cu M$.
\end{proof}

\begin{ex}
Let $T$ be the theory of an equivalence relation $E$ such that there are infinitely many equivalence classes and each equivalence class has cardinality $3$.
Then $T$ is stable and even categorical in every infinite cardinality, but does not have $\acl=\dcl$.  Let $a,b$ be elements of $\cu M$ such that
$E(a,b)$ but $a\ne b$.  Then $\ti a\ind[M]_\emptyset \ti b$ in $\cu N$, but $a\nind[a]_\emptyset b$ in $\cu M$.
Thus Proposition \ref{p-downward} fails for $\ind[a]$ and $\ind[M] \ $ when the hypothesis that $T$ has $\acl=\dcl$ is removed.
\end{ex}

\begin{question} Do Proposition \ref{p-downward} for $\thind$ and Corollary \ref{c-rosy-downward} hold without the  hypothesis that $T$ has $\acl=\dcl$?
\end{question}

We now prove the analogous results for the relation $\ind[d]$, even without the hypothesis that $T$ has $\acl=\dcl$.

\begin{prop}  \label{p-d-downward}  Suppose $A,B,C$ are small subsets of $M$, and $\ti A\ind[d]_{\ti C} \ti B$  holds in $\cu N$.
Then $A\ind[d]_C B$ holds in $\cu M$.
\end{prop}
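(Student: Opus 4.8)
The plan is to argue by contraposition: assume $A \nind[d]_C B$ in $\cu M$ and produce a witness to $\ti A \nind[d]_{\ti C} \ti B$ in $\cu N$. So suppose there is a tuple $\vec a \in A^{<\BN}$ and a first order formula $\varphi(\vec x, \vec b, \vec c)$ with $\vec b \in B^{<\BN}$, $\vec c \in C^{<\BN}$, such that $\cu M \models \varphi(\vec a, \vec b, \vec c)$ and $\varphi(\vec x, \vec b, \vec c)$ divides over $C$ in $\cu M$. Fix a $C$-indiscernible sequence $\langle \vec b^i \rangle_{i \in \BN}$ in $\cu M$ with $\vec b^0 \equiv_C \vec b$ witnessing dividing, so $\{\varphi(\vec x, \vec b^i, \vec c) : i \in \BN\}$ is $k$-inconsistent for some fixed $k$. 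The goal is to transfer this data through the embedding $a \mapsto \ti a$, using Lemma \ref{l-big-embedding}: first order truth in $\cu M$ becomes truth almost everywhere (measure one) in $\cu N$.

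First I would take the natural candidate continuous formula on the $\cu N$ side: set $\Phi(\vec x, \ti{\vec b}, \ti{\vec c}) := 1 - \mu(\l \varphi(\vec x, \ti{\vec b}, \ti{\vec c}) \rr)$. Since $\cu M \models \varphi(\vec a, \vec b, \vec c)$, Lemma \ref{l-big-embedding} gives $\mu(\l \varphi(\ti{\vec a}, \ti{\vec b}, \ti{\vec c}) \rr) = 1$, hence $\Phi(\ti{\vec a}, \ti{\vec b}, \ti{\vec c}) = 0$. Next I would check that $\Phi(\vec x, \ti{\vec b}, \ti{\vec c})$ divides over $\ti C$ in $\cu N$. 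For this I need a $\ti C$-indiscernible sequence $\langle \ti{\vec b}^i \rangle_{i \in \BN}$ in $\cu N$ realizing the appropriate type. The images $\langle \ti{\vec b}^i \rangle$ of the $\cu M$-indiscernible sequence under $a \mapsto \ti a$ should again be indiscernible over $\ti C$ — indeed by Lemma \ref{l-big-embedding}, any first order formula that holds of a subsequence of $\langle \vec b^i \rangle$ over $\vec c$ in $\cu M$ holds with measure one of the corresponding subsequence of $\langle \ti{\vec b}^i \rangle$ over $\ti{\vec c}$, and since continuous formulas over $\cu N$ reduce (by quantifier elimination, Result \ref{f-qe}) to combinations of $\mu(\l \cdot \rr)$ applied to first order formulas, one gets that $\langle \ti{\vec b}^i \rangle$ is $\ti C$-indiscernible in $\cu N$; one also needs $\ti{\vec b}^0 \equiv_{\ti C} \ti{\vec b}$, which follows the same way, possibly after replacing $\langle \ti{\vec b}^i \rangle$ by a genuinely $\ti C$-indiscernible sequence with the same EM-type via the standard extraction argument in the (saturated) model $\cu N$.

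Then I must show $\{\Phi(\vec x, \ti{\vec b}^i, \ti{\vec c}) = 0 : i \in \BN\}$ is unsatisfiable in $\cu N$. Suppose some $\vec e$ in $\cu K$ satisfies all of them, i.e. $\mu(\l \varphi(\vec e, \ti{\vec b}^i, \ti{\vec c}) \rr) = 1$ for every $i$. Working in the neat randomization $\cu P$ and representing everything by honest $\Omega \to M$ functions (Blanket Assumptions), for each finite subset $I \subseteq \BN$ with $|I| = k$ the set $\bigcap_{i \in I} \l \varphi(\vec e, \vec b^i, \vec c) \rr$ has measure one, so it is nonempty; picking $\omega$ in it gives an element $\vec e(\omega) \in M$ with $\cu M \models \varphi(\vec e(\omega), \vec b^i(\omega), \vec c(\omega))$ for all $i \in I$. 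But $\langle \vec b^i(\omega) \rangle_{i \in I}$ — hmm, this is where I must be careful, since $\langle \vec b^i(\omega) \rangle$ need not equal the original $\cu M$-sequence pointwise. The clean route is instead: the statement "$\{\varphi(\vec x, \vec y^i, \vec z) : i < N\}$ is inconsistent" (for each finite $N$, after fixing the EM-type of the $\vec y^i$ over $\vec z$) is itself expressible by a first order sentence true in $T$, hence by Lemma \ref{l-big-embedding} it holds with measure one in $\cu N$ applied to $\langle \ti{\vec b}^i \rangle$, $\ti{\vec c}$; combined with $k$-inconsistency this forces $\mu(\bigcup_{i \in I}\l \neg\varphi(\vec e, \ti{\vec b}^i, \ti{\vec c})\rr) = 1$ for every $k$-element $I$, contradicting $\mu(\l\varphi(\vec e,\ti{\vec b}^i,\ti{\vec c})\rr)=1$ for all $i$. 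This shows $\Phi(\vec x, \ti{\vec b}, \ti{\vec c})$ divides over $\ti C$, and since $\Phi(\ti{\vec a}, \ti{\vec b}, \ti{\vec c}) = 0$ with $\ti{\vec a} \in \ti A^{<\BN}$, we conclude $\ti A \nind[d]_{\ti C} \ti B$ in $\cu N$, as desired.

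The main obstacle is the pointwise/almost-everywhere bookkeeping in the last step: one must ensure that the $\cu M$-indiscernible sequence witnessing dividing transfers to a genuinely $\ti C$-indiscernible sequence in $\cu N$ that still witnesses inconsistency, rather than merely a "pointwise almost everywhere" version. The right way to handle this is to note that both indiscernibility of $\langle \vec b^i \rangle$ over $\vec c$ and $k$-inconsistency of $\{\varphi(\vec x, \vec b^i, \vec c)\}$ are captured by a (countable) set of first order sentences in the language of $T$ expanded by names for a sequence, all of which are consequences of $T$ together with the EM-type; applying Lemma \ref{l-big-embedding} to this data simultaneously preserves everything up to measure one, and measure-one intersections of countably many events still have measure one. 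This sidesteps any need to track the behavior of individual $\omega$'s.
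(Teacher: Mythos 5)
Your proof is correct and follows essentially the same route as the paper's: argue by contraposition, take the continuous formula $1-\mu(\l\varphi(\vec x,\ti{\vec b},\ti{\vec c})\rr)$, and use Lemma \ref{l-big-embedding} to transfer the $C$-indiscernibility and $k$-inconsistency of the witnessing sequence to an almost-everywhere pointwise statement in $\cu N$. The extra bookkeeping you supply (quantifier elimination for indiscernibility of the image sequence, and intersecting countably many measure-one events for the inconsistency) is exactly what the paper leaves implicit.
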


\begin{proof} Assume that $A\ind[d]_C B$ fails in $\cu M$. We may assume that $A$ and $B$ are finite. There is a
first order formula $\varphi(X,Y,Z)$ such that $\cu M\models \varphi(A,B,C)$ and $\varphi(X,B,C)$
divides over $C$.  Then there exists $k\in\BN$ and a $C$-indiscernible sequence $\< B_i\>_{i\in\BN}$ such that $B_0=B$ and
$\{\varphi(X,B_i,C)\colon i\in\BN\}$ is $k$-contradictory.
Since the mapping $d\mapsto\ti d$ has the property stated in Lemma \ref{l-big-embedding}, it follows that for $\mu$-almost all $\omega\in\Omega$,
$\cu M\models \varphi(\ti A(\omega),\ti B(\omega),\ti C(\omega))$, $\< \ti B_i(\omega)\>_{i\in\BN}$ is $\ti C$-indiscernible in $\cu M$,
$\ti B_0(\omega)=\ti B(\omega)$, and $\{\varphi(X,B_i(\omega),C(\omega))\colon i\in\BN\}$ is $k$-contradictory in $\cu M$.
Therefore the continuous formula $1-\mu(\l\varphi(X,\ti B,\ti C)\rr)$ divides over $\ti C$ in $\cu N$,
so $\ti A\ind[d]_{\ti C} \ti B$ fails in $\cu N$.
\end{proof}

\begin{cor}  \label{c-simple-downward}
If the relation $\ind[d]$ over $\cu N$ has local character, then the relation $\ind[d]$ over $\cu M$ has local character.
\end{cor}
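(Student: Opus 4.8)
The plan is to deduce the corollary from Proposition \ref{p-d-downward} in essentially the same way that Corollary \ref{c-rosy-downward} was deduced from Proposition \ref{p-downward}, transferring local-character bounds from $\cu N$ to $\cu M$ via the embedding $a\mapsto\ti a$.

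First I would assume that $\ind[d]$ over $\cu N$ has local character, so that for each small $A'\subseteq\cu K$ there is a cardinal $\kappa^{\cu N}(A')<\upsilon$ witnessing local character. I then propose to show that $\ind[d]$ over $\cu M$ has local character with the bound $\kappa^{\cu M}(A):=\kappa^{\cu N}(\ti A)$ for each small $A\subseteq M$. Fix small sets $A,B\subseteq M$. Applying local character of $\ind[d]$ in $\cu N$ to the small sets $\ti A$ and $\ti B$, we obtain a subset $C'\subseteq\ti B$ with $|C'|<\kappa^{\cu N}(\ti A)=\kappa^{\cu M}(A)$ such that $\ti A\ind[d]_{C'}\ti B$ holds in $\cu N$. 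Since $C'\subseteq\ti B$ and the map $a\mapsto\ti a$ is injective on $M$, we have $C'=\ti C$ for a unique $C\subseteq B$, and $|C|=|C'|<\kappa^{\cu M}(A)$.

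Now I would apply Proposition \ref{p-d-downward} to the small sets $A,B,C\subseteq M$: since $\ti A\ind[d]_{\ti C}\ti B$ holds in $\cu N$, the proposition yields $A\ind[d]_C B$ in $\cu M$. As $C\subseteq B$ and $|C|<\kappa^{\cu M}(A)$, this is exactly the local-character condition for $\ind[d]$ over $\cu M$ with bound $\kappa^{\cu M}(A)$, and since $\kappa^{\cu M}(A)=\kappa^{\cu N}(\ti A)<\upsilon$, the bound is admissible. Hence $\ind[d]$ over $\cu M$ has local character.

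The proof here is entirely routine — it is a direct transfer argument — so I do not anticipate a genuine obstacle; the only point requiring a moment's care is checking that any subset of $\ti B$ is of the form $\ti C$ for some $C\subseteq B$, which is immediate from injectivity of $a\mapsto\ti a$. (All the real work has already been done in Proposition \ref{p-d-downward}, whose proof produces, from a dividing witness in $\cu M$, the continuous dividing witness $1-\mu(\l\varphi(X,\ti B,\ti C)\rr)$ in $\cu N$.)
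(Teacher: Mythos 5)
Your proof is correct and is exactly the argument the paper intends: the corollary is stated without proof precisely because it follows from Proposition \ref{p-d-downward} by the same transfer of the local-character bound $\kappa^{\cu M}(A)=\kappa^{\cu N}(\ti A)$ used to deduce Corollary \ref{c-rosy-downward} from Proposition \ref{p-downward}. Your added remark that injectivity of $a\mapsto\ti a$ (which holds since $d_\BK(\ti a,\ti b)=1$ for $a\ne b$) guarantees $C'=\ti C$ for some $C\subseteq B$ is the only detail the paper glosses over, and you handle it correctly.
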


In this connection, we recall that by Result \ref{f-TR-stable}, $T$ is stable if and  only if $T^R$ is stable, and
Ben Yaacov [Be2] showed  that if $T^R$ is simple then $T^R$ is stable, so $T$ is also stable.

\subsection{Independence in the Event Sort}  \label{s-event}

The one-sorted theory $\APr$ of atomless probability algebras is studied in the papers [Be1], [Be2], and [BBHU].
By Fact 2.10 in [Be2], for every model $\cu N=(\cu K,\cu E)$ of $T^R$, the event sort $(\cu E,\mu)$ of $\cu N$
 is a model of $\APr$.  For each cardinal $\kappa$, if $\cu N$ is $\kappa$-saturated then $( {\cu E},\mu)$ is $\kappa$-saturated.
For every set $\cu A\subseteq\cu E$, we have
$\acl(\cu A)=\dcl(\cu A)=\sigma(\cu A)$ in $(\cu E,\mu)$.  The algebraic independence relation in $(\cu E,\mu)$ is the relation
$$ \cu A\ind[a]_{\cu C} \cu B \Leftrightarrow \sigma(\cu A\cu C)\cap\sigma(\cu B\cu C)=\sigma(\cu C).$$

\begin{result}  \label{f-probability-algebra}  (Ben Yaacov [Be1])
The theory $\APr$ is separably categorical, admits quantifier elimination, and is stable.  Its unique strict independence relation $\ind[f]$
is the relation of probabilistic independence, given by $\cu A\ind[f]_{\cu C} \cu B$ if and only if
$$\mu[\sa A\sqcap\sa B | \sigma(\cu C)]=\mu[\sa A | \sigma(\cu C)]\mu[\sa B | \sigma(\cu C)] \as \mbox{ for all } \sa A\in \sigma(\cu A), \sa B\in\sigma(\cu B).$$
\end{result}

\begin{df}  \label{d-event-indep}
For small $A, B, C\subseteq \cu K$, define
 $$A\ind[a\BB]_C \ \ B\Leftrightarrow\acl_\BB(AC)\cap\acl_\BB(BC)=\acl_\BB(C).$$
\end{df}

Given sets $C,D\subseteq \cu K$, it will be convenient to introduce the notation  $\cu D=\fo_\BB(D)$ and $\cu D_C=\fo_\BB(DC)$.

\begin{rmk}  \label{r-ind-B}
\noindent\begin{enumerate}
\item By Result \ref{f-separable}, $\acl_\BB(D)=\sigma(\cu D)=\acl(\cu D)$.
\item $A\ind[a\BB]_C \ \ B$ in $\cu N\Leftrightarrow \cu A_C\ind[a]_{\cu C} \cu B_C$ in $(\cu E,\mu)$.
\end{enumerate}
\end{rmk}

\begin{prop} \label{p-event-aB}
The relation $\ind[a\BB] \ \ $ over $\cu N$ satisfies all the axioms for a countable independence relation except base monotonicity.
It also has symmetry, the countable union property, and countably local character.
\end{prop}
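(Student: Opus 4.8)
The plan is to reduce each axiom for $\ind[a\BB]$ over $\cu N$ to the corresponding fact about the algebraic independence relation $\ind[a]$ in the one-sorted probability algebra $(\cu E,\mu)$, using Remark~\ref{r-ind-B}(2). That remark says
$$A\ind[a\BB]_C \ B \iff \cu A_C\ind[a]_{\cu C}\cu B_C \text{ in }(\cu E,\mu),$$
where $\cu A_C=\fo_\BB(AC)$, $\cu B_C=\fo_\BB(BC)$, $\cu C=\fo_\BB(C)$, and we will freely use that $\acl_\BB(D)=\sigma(\fo_\BB(D))=\acl(\fo_\BB(D))$ in $(\cu E,\mu)$ by Remark~\ref{r-ind-B}(1) together with Result~\ref{f-separable}. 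Since $(\cu E,\mu)\models\APr$ is stable (Result~\ref{f-probability-algebra}), its algebraic independence relation $\ind[a]$ is a strict independence relation in first order logic satisfying all of Adler's axioms except possibly base monotonicity (here $\APr$ is one-sorted classical, so we may invoke Proposition~\ref{p-alg-indep} and the first-order discussion preceding it); in fact since $\APr$ is stable, $\ind[a]$ in $(\cu E,\mu)$ even coincides with the probabilistic-independence relation $\ind[f]$, which gives full existence, symmetry and countably local character for free via Result~\ref{f-stable-indep}.

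First I would record the dictionary: for $D=AC$ we have $\fo_\BB(AC)=\fo_\BB(\ti{A}\,\ti{C})$-style generation, and crucially $\sigma(\fo_\BB(AC))=\acl_\BB(AC)$, so $\cu A_C$ in $(\cu E,\mu)$ generates the same $\sigma$-algebra that $A C$ generates on the event side of $\cu N$. Then invariance, monotonicity, normality, transitivity, finite (indeed countable) character, symmetry, anti-reflexivity and full existence all transfer directly: each is a statement about $\sigma$-subalgebras generated by $\fo_\BB$ of the relevant parameter sets, and the corresponding statement holds in $(\cu E,\mu)$. For countable character one uses that $\fo_\BB(AC)=\bigcup\{\fo_\BB(A_0C_0): A_0\subseteq A, C_0\subseteq C \text{ finite}\}$ and that a $\sigma$-algebra intersection is controlled by countable sub-$\sigma$-algebras, exactly as in the proof of Proposition~\ref{p-a-cbased}. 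For extension/full existence one transfers a witness $\cu A'$ in $(\cu E,\mu)$ back to a set $A'$ of $\BK$-elements in $\cu N$: by Fullness (Result~\ref{f-perfectwitnesses}(2)) every event is $\l a=b\rr$ for some $a,b\in\cu K$, and using the gluing lemma (Result~\ref{f-glue}) together with saturation of $\cu N$ one realizes over $BC$ the type prescribing the desired event-sort configuration. The countable union property is immediate from countable character of $\ind[a]$ in $(\cu E,\mu)$ (intersections of increasing $\sigma$-algebras), and countably local character transfers from the countably local character of $\ind[f]=\ind[a]$ on $\APr$: a countable $A$ gives a countable $\cu A_C\subseteq\cu E$, over which one finds a countable sub-$\sigma$-algebra $\sigma(\cu C_0)$ of $\sigma(\fo_\BB(B))$ with $\cu A\ind[a]_{\cu C_0}\fo_\BB(B)$, and then picks countable $C_0\subseteq B$ with $\fo_\BB(C_0)$ dense in (hence generating) $\sigma(\cu C_0)$, which is possible because $\fo_\BB(B)=\bigcup_{C_0\subseteq B \text{ ctble}}\fo_\BB(C_0)$ and separability of the relevant algebra.

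Finally I would point out explicitly that base monotonicity is the one axiom that genuinely fails, since it fails already for $\ind[a]$ on a general structure, and here the lattice of algebraically closed (= $\sigma$-generated) event subalgebras need not be modular once we range $C$ over $[C,\acl_\BB(BC)]$ — so we do not attempt it; this is why the proposition only claims the other axioms. The main obstacle I anticipate is the pull-back step for extension: one must be careful that realizing a $BC$-type in $\cu N$ that forces the event sort of $A'C$ to sit in a prescribed position relative to $\acl_\BB(\hat BC)$ is actually expressible and consistent. The clean way around this is to do the construction directly on a neat randomization representative $\cu P=(\cu L,\cu F)$ of $\cu N$ (available by the Blanket Assumptions), where events are honest measurable sets, build the desired sub-$\sigma$-algebra by a measure-theoretic independent-complement argument inside $\cu F$, and then invoke Fullness plus Result~\ref{f-glue} to produce $\cu K$-elements generating it; saturation of $\cu N$ then upgrades this to the genuine extension statement over the big model. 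Everything else is a routine transcription of the $\APr$ facts through Remark~\ref{r-ind-B}.
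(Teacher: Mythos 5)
Your overall strategy---translating every axiom into a statement about $\ind[a]$ on the probability algebra $(\cu E,\mu)$ via Remark~\ref{r-ind-B}(2)---is exactly the paper's, and it does carry invariance, monotonicity, normality, symmetry, countable character, and (after a short computation comparing $\cu B_D$ with $\cu B_C$) transitivity. But two of your background claims are false. First, $\ind[a]\ne\ind[f]$ in $\APr$: take events $\sa A,\sa B$ of measure $1/2$ with $\mu(\sa A\sqcap\sa B)=1/3$; then $\sigma(\sa A)\cap\sigma(\sa B)=\{\bot,\top\}=\sigma(\emptyset)$, so $\sa A\ind[a]_{\emptyset}\sa B$, while $\sa A\nind[f]_{\emptyset}\sa B$. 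What is true, and what you actually need, is $\ind[f]\Rightarrow\ind[a]$, which by Remark~\ref{r-weaker} still gives full existence and countably local character for $\ind[a]$ over $(\cu E,\mu)$. Second, anti-reflexivity does \emph{not} transfer: $\bo a\ind[a\BB]_C\ \bo a$ only says $\acl_\BB(\bo a C)=\acl_\BB(C)$, which says nothing about $\bo a$ itself being algebraic over $C$ in sort $\BK$; indeed Proposition~\ref{p-a-B-not-anti-reflexive} shows $\ind[a\BB]$\ is essentially never anti-reflexive. Neither error touches the statement being proved (anti-reflexivity is not claimed), but both show that ``transfers directly'' must be checked property by property.

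The genuine gap is in (countably) local character. Your one-step argument finds a countable $\cu C_0\subseteq\sigma(\fo_\BB(B))$ with independence over $\cu C_0$ and then replaces $\cu C_0$ by $\fo_\BB(C_0)$ for a countable $C_0\subseteq B$. This fails for two reasons. (i) You can only arrange $\cu C_0\subseteq\fo_\BB(C_0)$, i.e., the base grows, and $\ind[a]$ does not have base monotonicity over $(\cu E,\mu)$ (the example behind Proposition~\ref{p-B-nobase-monotonicity} lives entirely in the event sort), so independence over $\cu C_0$ does not yield independence over $\fo_\BB(C_0)$. (ii) The condition $A\ind[a\BB]_{C_0}\ B$ requires $\fo_\BB(AC_0)\ind[a]_{\fo_\BB(C_0)}\fo_\BB(BC_0)$, and $\fo_\BB(AC_0)$ contains events $\l\theta(\vec a,\vec c)\rr$ mixing parameters from $A$ and $C_0$ that need not lie in $\sigma(\fo_\BB(A)\cup\fo_\BB(C_0))$; so enlarging the base also enlarges the left-hand side in a way your argument never controls. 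The paper handles both problems with an $\omega$-chain $C_0\subseteq C_1\subseteq\cdots\subseteq B$, re-applying local character of $\ind[a]$ in $(\cu E,\mu)_{\cu C_n}$ to the enlarged algebra $\cu A_{C_n}$ at each stage and verifying the conclusion at the limit by finite character of the intersection condition; some such iteration is unavoidable. (Your full-existence sketch is in the right spirit, but the step ``$A'\equiv_C A$ in $\cu N$'' for the pulled-back set of random elements rests on quantifier elimination for $T^R$---every condition over $A\cup C$ reduces to measures of Boolean combinations of events in $\fo_\BB(AC)$---and that needs to be said explicitly.)
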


\begin{proof}  Symmetry, invariance, monotonicity, normality, countable character, and the countable union property are clear.

Transitivity: Suppose $C\in[D,B]$, $B\ind[a\BB]_C \ A$, and $C\ind[a\BB]_D \ A$.
Then $\cu A_C$,  $\cu B_C$, and $\cu C$, are small, $\cu C\in[\cu D,\cu B]$,
and $\cu A_D\subseteq\cu A_C$.  We have
$\cu B_C\ind[a]_{\cu C} \cu A_C$ and $\cu C_D\ind[a]_{\cu D} \cu A_D$.  By monotonicity of $\ind[a]$, $\cu B_D\ind[a]_{\cu C} \cu A_D$.  Then
by transitivity of $\ind[a]$,  $\cu B_D\ind[a]_{\cu D} \cu A_D$, so $B\ind[a\BB]_D \ A$.

By Remarks \ref{r-fe-vs-ext}, to prove extension for $\ind[a\BB] \ $ it suffices to prove full existence for $\ind[a\BB] \ \ $.

Full existence: For any $A,B,C$, we must show that there exists $A'\equiv_C A$ such that $A'\ind[a\BB]_C B$.
We may assume that $C\subseteq A$, so that $\cu A=\cu A_C$.  Since $\ind[a]$ has full existence in $(\cu E,\mu)$,
there exists $\cu A'\subseteq\cu E$ such that $\cu A'\equiv_{\cu C} \cu A$ and $\cu A'\ind[a]_{\cu C} \cu B_C$ in $(\cu E,\mu)$.
Note that every quantifier-free formula of $T^R$ with parameters in $\cu A\cup C$ has the form $f(\mu(\tau_1),\ldots,\mu(\tau_m))$
where $f\,:\,[0,1]^m\to[0,1]$ is continuous, and each $\tau_i$ is a Boolean term of the form
$$\tau_i(\l\theta_1(C)\rr,\ldots,\l\theta_n(C)\rr,\sa A_1,\ldots,\sa A_k)$$
with $\sa A_1,\ldots\sa A_k\in\cu A$.
By quantifier elimination, every formula of $T^R$ with parameters in $\cu A\cup C$ is equivalent to a formula of that form.
Therefore we have $\cu A'\equiv_C\cu A$ in $\cu N$.  Add a constant symbol to $\cu N$ for each $a\in A$, $\sa F\in\cu A$, and
$\sa F'\in\cu A'$. Add a set of variables $A'=\{\bo a'\colon \bo a\in A\}$.  Consider the set of conditions
$$ \Gamma=\{d_\BB(\sa F',\l\theta(\vec{\bo a}',\vec{\bo c})\rr)=0\colon \cu N\models d_\BB(\sa F,\l\theta(\vec{\bo a},\vec{\bo c})\rr)=0\}$$
with the set of variables $A'$.   It follows from fullness that every finite
subset of $\Gamma$ is satisfiable by some $A'$ in $\cu N$.  Then by  saturation, $\Gamma$ is satisfied by some $A'$ in $\cu N$.
By quantifier elimination we have $A\equiv_C A'$ in $\cu N$, and by definition, $A'\ind[a\BB]_C B$.

Local character:  Let $A, B$ be small subsets of $\cu K$.  We prove local character with bound $\lambda=(|A|+\aleph_0)^+$.
In the theory of $(\cu E,\mu)$, and even in the theory of $(\cu E,\mu)$ with fewer than $\lambda$ additional constant symbols,
$\ind[a]$ has local character with bound $\kappa(\cu A)=(|\cu A|+\aleph_0)^+$.
Note that $|\cu A|<\lambda$.  Let $C_0=\emptyset$.
We construct an increasing chain $C_0\subseteq C_1\subseteq\cdots$ of subsets of $B$ such that $|C_n|<\lambda$ as follows.
Given $C_n$, we note that $|\cu A_{C_n}|<\lambda$ and $|\cu C_n|<\lambda$, and $\cu C_n\subseteq\cu B$.  By local character for $\ind[a]$ in the
theory of $(\cu E,\mu)_{\cu C_n}$, there is a set $\cu D_{n+1}\subseteq \cu B$ such that $|\cu D_{n+1}|<\lambda$ and $\cu A_n\ind[a]_{\cu D_{n+1}} \cu B$.
Since $(\cu E,\mu)_{\cu C_n}$ has a constant symbol for each element of $\cu C_n$, we may assume that $\cu C_n\subseteq \cu D_{n+1}$.
Since $L$ is countable, there is a set $C_{n+1}\in[C_n,B]$ such that $|C_{n+1}|<\lambda$ and $\cu D_{n+1}\subseteq\cu C_{n+1}$.
Now let $C=\bigcup_n C_n$.  Then $\cu C=\bigcup_n\cu C_n$, $C\subseteq B$, and $|C|<\lambda$.
For each finite $\cu A'\subseteq \cu A_C$, we have $\cu A'\subseteq\cu A_{C_n}$ for some $n$, so
$$(\acl(\cu A')\cap \acl(\cu B))\subseteq \acl(\cu C_{n+1})\subseteq\acl(\cu C).$$
Therefore $\cu A_C\ind[a]_{\cu C} \cu B_C$, and hence $A\ind[a\BB]_C \ B$.
\end{proof}

\begin{prop}  \label{p-a-B-cbased}   The relation $\ind[a\BB] \ $ is countably based.
\end{prop}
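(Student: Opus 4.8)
The plan is to verify that $\ind[a\BB] \ $ is countably based by appealing to Lemma \ref{l-cbased}(3): it suffices to check that $\ind[a\BB] \ $ has monotonicity, two-sided countable character, and that for countable $A,B$ and small $C$,
$$A\ind[a\BB]_C \ B \Leftrightarrow (\forall^c C'\subseteq C)(\exists^c D\in[C',C])\, A\ind[a\BB]_D \ B.$$
Monotonicity and two-sided countable character (indeed countable character) have already been recorded in Proposition \ref{p-event-aB}, so the whole matter reduces to the displayed equivalence. The natural strategy is to transfer everything down to the event sort $(\cu E,\mu)$ via Remark \ref{r-ind-B}(2), which says $A\ind[a\BB]_C \ B$ in $\cu N$ iff $\cu A_C\ind[a]_{\cu C}\cu B_C$ in $(\cu E,\mu)$, and then use that $\ind[a]$ is countably based in $(\cu E,\mu)$ by Proposition \ref{p-a-cbased}.

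For the forward direction, assume $A\ind[a\BB]_C \ B$ and fix a countable $C'\subseteq C$. Since $\ind[a\BB] \ $ has monotonicity, base monotonicity need not be invoked; instead one uses that $\ind[a\BB] \ $ has countably local character, symmetry, base monotonicity, and transitivity (all from Proposition \ref{p-event-aB}), so Proposition \ref{p-countably-local-implies} applies directly and yields $A\ind[a\BB c]_C \ B$, which unwinds to exactly the right-hand side. Alternatively one can argue concretely in $(\cu E,\mu)$: apply Proposition \ref{p-a-cbased} to get, for the countable subalgebra generated by $\cu A\cup\cu C'$, a countable $\cu D\in[\cu C',\cu C]$ with $\cu A_{\cu D}\ind[a]_{\cu D}\cu B_{\cu D}$, then pull $\cu D$ back to a countable $D\in[C',C]$ inside $\cu K$ using that $L$ is countable and $\fo_\BB$ is generated by first order formulas with finitely many $\BK$-parameters (this pullback step is the same one used in the local-character argument of Proposition \ref{p-event-aB}).

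For the reverse direction, suppose the right-hand side holds, and let $c\in\acl_\BB(AC)\cap\acl_\BB(BC)=\sigma(\cu A_C)\cap\sigma(\cu B_C)$. By continuity of the Boolean and measure operations, $c$ lies in the $\sigma$-algebra generated by countably many events of $\cu A_C$ and of $\cu B_C$; each such event is first order definable over a finite tuple from $AC$, so there is a countable $C'\subseteq C$ with $c\in\sigma(\cu A_{C'})\cap\sigma(\cu B_{C'})=\acl_\BB(AC')\cap\acl_\BB(BC')$. Choose a countable $D\in[C',C]$ with $A\ind[a\BB]_D \ B$; then $c\in\acl_\BB(AD)\cap\acl_\BB(BD)=\acl_\BB(D)\subseteq\acl_\BB(C)$. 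Since $c$ was arbitrary, $A\ind[a\BB]_C \ B$.

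The only genuine obstacle is bookkeeping in the pullback: translating a countable subalgebra $\cu D$ of $\cu E$ (or a countable set of events witnessing algebraicity) back into a countable set $D$ of $\BK$-elements with $C'\subseteq D\subseteq C$ and $\cu D\subseteq\fo_\BB(DC)$. This is handled exactly as in the local-character argument in the proof of Proposition \ref{p-event-aB} — each event in $\cu D$ is $\l\theta(\vec{\bo a})\rr$ for a first order $\theta$ and a finite $\vec{\bo a}$ drawn from the small set witnessing membership in $\fo_\BB(C)$, and since there are only countably many such events and countably many formulas $\theta$, the union of the corresponding finite parameter tuples, together with $C'$, is a countable subset of $C$ that works. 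Everything else is a routine application of the cited results, so the proof is short once the reduction to $(\cu E,\mu)$ is in place.
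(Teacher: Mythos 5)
Your reduction via Lemma \ref{l-cbased} (3) and your reverse direction are fine; the reverse direction is essentially the paper's argument for Proposition \ref{p-a-cbased} transported to $\acl_\BB$. The problem is the forward direction, where the real work lies, and it affects both of your proposed routes. For the first route you assert that $\ind[a\BB] \ $ has base monotonicity ``from Proposition \ref{p-event-aB}'' so that Proposition \ref{p-countably-local-implies} applies. But Proposition \ref{p-event-aB} explicitly \emph{excepts} base monotonicity, and Proposition \ref{p-B-nobase-monotonicity} shows that $\ind[a\BB] \ $ never has it. Since the proof of Proposition \ref{p-countably-local-implies} uses base monotonicity essentially (to enlarge the countable base supplied by countably local character so that it contains the prescribed $C'$), this route is simply unavailable.

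The second route also has a gap at the pullback. What countable-basedness of $\ind[a]$ in $(\cu E,\mu)$ gives you is a countable set of \emph{events} $\cu D\in[\cu C',\cu C]$ with $\cu A'\ind[a]_{\cu D}\cu B'$ for the countable sides $\cu A',\cu B'$ you started with --- not the enlarged statement $\cu A_{D}\ind[a]_{\cu D}\cu B_{D}$. When you collect the finitely many $\BK$-parameters of each event of $\cu D$ into a countable $D\in[C',C]$, the base that matters for $A\ind[a\BB]_D \ B$ is $\fo_\BB(D)$, which properly contains $\cu D$, and the sides grow to $\cu A_D\supseteq\cu A_{C'}$ and $\cu B_D\supseteq\cu B_{C'}$. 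Enlarging the base is exactly a base-monotonicity step, and base monotonicity for $\ind[a]$ fails in $(\cu E,\mu)$ (the counterexample of Proposition \ref{p-B-nobase-monotonicity} lives there). To repair this you would have to iterate the pullback along an increasing chain $D_0\subseteq D_1\subseteq\cdots$ and verify independence over the union by a finite-character computation with $\sigma$-algebras, as in the local-character part of the proof of Proposition \ref{p-event-aB}; your single-step ``union of the parameter tuples works'' claim does not do this. The paper avoids the issue entirely: its proof of Proposition \ref{p-a-B-cbased} just repeats the L\"owenheim--Skolem argument of Proposition \ref{p-a-cbased} with $\acl_\BB$ in place of $\acl$, producing the countable $D$ directly as a dense subset of $C\cap\cu P$ for a suitable separable elementary substructure $\cu P$, with no appeal to base monotonicity.
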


\begin{proof}  The proof is the same as the proof of Proposition \ref{p-a-cbased} except that $\acl$ is replaced everywhere by $\acl_\BB$.
\end{proof}

The following two results show that $\ind[a\BB] \ \ $ never has base monotonicity, and is almost never anti-reflexive.

 \begin{prop}  \label{p-B-nobase-monotonicity}
For every $T$, $\ind[a\BB]$ \ \ does not have base monotonicity.
 \end{prop}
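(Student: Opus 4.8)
The plan is to exhibit an explicit counterexample inside the big model $\cu N=(\cu K,\cu E)$, under the standing blanket assumptions. By definition $A\ind[a\BB]_C B$ means $\acl_\BB(AC)\cap\acl_\BB(BC)=\acl_\BB(C)$, and $\acl_\BB(X)=\sigma(\fo_\BB(X))$ for every small $X\subseteq\cu K$ (Remark \ref{r-ind-B}), so everything is governed by the lattice of $\sigma$-subalgebras of the event sort $(\cu E,\mu)$, which is an atomless probability algebra. That lattice is non-modular, so algebraic independence there fails base monotonicity; the real work is to realize a suitable ``pentagon'' of $\sigma$-subalgebras as the algebraic closures $\acl_\BB(\cdot)$ of honest parameter sets drawn from $\cu K$, since $\ind[a\BB]$ lives on $\cu K$.

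Concretely, I would use atomlessness to pick pairwise disjoint events $\sa A_1,\sa A_2,\sa A_3,\sa A_4\in\cu E$, each $\ne\bot$, with $\sa A_1\sqcup\sa A_2\sqcup\sa A_3\sqcup\sa A_4=\top$, and set $\sa P=\sa A_1\sqcup\sa A_2$, $\sa Q=\sa A_3$, $\sa R=\sa A_1\sqcup\sa A_3$. Let $1_{\sa P},1_{\sa Q},1_{\sa R}\in\cu K$ be the characteristic functions of these events with respect to $\ti 0,\ti 1$, and put
$$D=\{\ti 0,\ti 1\},\quad C=\{\ti 0,\ti 1,1_{\sa P}\},\quad B=\{\ti 0,\ti 1,1_{\sa P},1_{\sa Q}\},\quad A=\{\ti 0,\ti 1,1_{\sa R}\},$$
which are finite subsets of $\cu K$ with $D\subseteq C\subseteq B$, hence $C\in[D,B]$. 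The step I expect to be the crux is the claim that for any events $\sa E,\sa F$ one has $\acl_\BB(\{\ti 0,\ti 1,1_{\sa E}\})=\sigma(\sa E)$ and $\acl_\BB(\{\ti 0,\ti 1,1_{\sa E},1_{\sa F}\})=\sigma(\sa E,\sa F)$, with nothing extra. For ``$\supseteq$'' one uses $\ti 0\ne\ti 1$ a.s.\ together with the defining property of $1_{\sa E}$ to see that $\l 1_{\sa E}=\ti 0\rr$ and $\l 1_{\sa E}=\ti 1\rr$ partition $\top$ a.s.\ and equal $\sa E$ and $\neg\sa E$ in some order. For ``$\subseteq$'' one applies Lemma \ref{l-big-embedding} to the fixed pair $0\ne 1$ of $M$: on each atom $\alpha$ of $\sigma(\sa E,\sa F)$ the tuple $\vec{\bo a}(\omega)$ coincides, a.s., with $(\ti{d_1}(\omega),\dots,\ti{d_m}(\omega))$ for a fixed arrangement $d_1,\dots,d_m\in\{0,1\}$, so by Lemma \ref{l-big-embedding} the truth value of $\theta(d_1,\dots,d_m)$ in $\cu M$ forces $\l\theta(\vec{\bo a})\rr\sqcap\alpha$ to be a.s.\ $\bot$ or $\alpha$; thus $\l\theta(\vec{\bo a})\rr\in\sigma(\sa E,\sa F)$.

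Granting this, the computation finishes itself: $\acl_\BB(AD)=\sigma(\sa R)$, $\acl_\BB(BD)=\acl_\BB(BC)=\sigma(\sa P,\sa Q)$, $\acl_\BB(AC)=\sigma(\sa P,\sa R)$, $\acl_\BB(C)=\sigma(\sa P)$, and $\acl_\BB(D)=\{\bot,\top\}$. The algebra $\sigma(\sa P,\sa Q)$ has atoms $\sa A_1\sqcup\sa A_2,\sa A_3,\sa A_4$ and therefore omits $\sa R=\sa A_1\sqcup\sa A_3$; as $\sigma(\sa R)$ has only four elements this forces $\sigma(\sa R)\cap\sigma(\sa P,\sa Q)=\{\bot,\top\}=\acl_\BB(D)$, i.e.\ $A\ind[a\BB]_D B$. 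But $\sigma(\sa P,\sa R)$ has atoms $\sa A_1,\sa A_2,\sa A_3,\sa A_4$, hence contains $\sigma(\sa P,\sa Q)$, so $\acl_\BB(AC)\cap\acl_\BB(BC)=\sigma(\sa P,\sa Q)\ne\sigma(\sa P)=\acl_\BB(C)$, i.e.\ $A\nind[a\BB]_C B$. Since $C\in[D,B]$, this refutes base monotonicity for $\ind[a\BB]$.

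Besides the crux step, the points to get right are bookkeeping: why $\ti 0,\ti 1$ must sit in every one of $A,B,C,D$ (otherwise $\sa P,\sa Q,\sa R$ need not be first order definable from the parameters and $\acl_\BB$ collapses), and why four disjoint events are genuinely needed — one needs $\sigma(\sa P,\sa Q,\sa R)$ to have four atoms while $\sigma(\sa P,\sa Q)$ has only three, so that adjoining $\sa R$ to the base $\sigma(\sa P)$ properly enlarges $\sigma(\sa P,\sa R)\cap\sigma(\sa P,\sa Q)$; a three-coordinate attempt collapses because every candidate for $\sa R$ then already lies in $\sigma(\sa P,\sa Q)$. Everything used holds for every complete $T$ with $|M|\ge 2$, so the conclusion is for every $T$.
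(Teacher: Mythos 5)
Your proof is correct, and it is the same basic idea as the paper's: realize a non-modular ``pentagon'' of $\sigma$-subalgebras of the atomless event algebra by taking characteristic functions of a four-atom partition (the paper uses two independent events $\sa D,\sa F$ of probability $1/2$ together with $\sa E=\sa D\sqcap\sa F$, which generates the same four-atom configuration, with $\bo a=1_{\sa D}$, $\bo b=1_{\sa E}$, $\bo c=1_{\sa F}$ and base sets $\emptyset\subseteq\{\bo c\}\subseteq\{\bo b,\bo c\}$). The one genuine difference is that you adjoin $\ti 0,\ti 1$ to every parameter set and then carefully justify the computation $\acl_\BB(\{\ti 0,\ti 1,1_{\sa E},1_{\sa F}\})=\sigma(\sa E,\sa F)$ in both directions; the paper instead asserts $\acl_\BB(\bo a)=\sigma(\{\sa D\})$ etc.\ for the bare characteristic functions, and the inclusion $\sigma(\{\sa D\})\subseteq\acl_\BB(\bo a)$ actually requires a formula separating $0$ from $1$, i.e.\ $\tp^{\cu M}(0)\ne\tp^{\cu M}(1)$ (for $T$ the theory of pure equality one only recovers $\sigma(\l b\ne c\rr)$ from $\bo b\bo c$, and the paper's second non-independence claim fails). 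So your bookkeeping with $\ti 0,\ti 1$ in the base is not just pedantry: it is what makes the argument work uniformly for every $T$, and your version is the more robust of the two.
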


 \begin{proof}  Since $\mu$ is atomless, there are two independent events $\sa D, \sa F$ in $\cu E$ of probability $1/2$.
 Let $\sa E=\sa D\sqcap\sa F$. $\bo a=1_{\sa D}$, $\bo b = 1_{\sa E}$, and $\bo c=1_{\sa F}$.  Then
 $$\acl_\BB(\bo a)=\sigma(\{\sa D\}),$$
 $$\acl_\BB(\bo c)=\sigma(\{\sa F\}),$$
 $$\acl_\BB(\bo a\bo c)=\sigma(\{\sa D,\sa F\}),$$
 $$\acl_\BB(\bo b \bo c)=\sigma(\{\sa E,\sa F\}).$$
 It follows that $\bo a\ind[a\BB]_{\emptyset} \ \bo b\bo c$ but $\bo a\nind[a\BB]_{\bo c} \ \bo b\bo c$,
 so $\ind[a\BB]$ \ \ does not have  base monotonicity.
 \end{proof}

\begin{prop}  \label{p-a-B-not-anti-reflexive}
Suppose that $T$ has either an infinite model, or a finite model with an element that is not definable without parameters.
Then $\ind[a\BB]$ \ \ is not anti-reflexive.
\end{prop}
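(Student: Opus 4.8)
The plan is to produce, for a suitable element $\bo a$, an event $\sa E$ and a witness to $\bo a \ind[a\BB]_{\emptyset} \bo a$ with $\bo a \notin \acl_\BB(\emptyset) = \{\top,\bot\}$. First I would isolate the hypothesis: if $T$ has an infinite model, then (since $T$ is complete, all models are infinite) the big model $\cu M$ has at least two elements, and moreover we may pick an element $a \in M$ with $a \notin \dcl^{\cu M}(\emptyset)$, because $\dcl^{\cu M}(\emptyset)$ cannot exhaust an infinite model (by automorphism/saturation). In the finite-model case, the hypothesis directly hands us such an $a$. Either way, fix $a \ne a'$ in $M$ with $a \notin \dcl^{\cu M}(\emptyset)$, and take some event $\sa D \in \cu E$ with $0 < \mu(\sa D) < 1$ (which exists since $\mu$ is atomless). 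Let $\bo a = c$ be the characteristic function of $\sa D$ with respect to $\ti a, \ti a'$, so $\bo a \in \cu K$ and $\l \bo a = \ti a \rr = \sa D$, $\l \bo a = \ti a'\rr = \neg\sa D$ (using Result \ref{f-glue} and the discreteness of $\ti M$).

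Next I would compute $\fo_\BB(\bo a)$ and hence $\acl_\BB(\bo a) = \sigma(\fo_\BB(\bo a))$. The point is that $\sa D = \l \bo a = \ti a \rr$ is of the form $\l \theta(\bo a)\rr$ for a first order formula $\theta$ (namely a formula defining $a$ over $\emptyset$ — wait, there is none; instead use $\l \ti a = \ti a\rr$ vs... ). The cleaner route: $\sa D = \l\varphi(\bo a)\rr$ where $\varphi(v)$ is chosen so that $\varphi$ holds of $a$ but not of $a'$ in $\cu M$ — such a first order $\varphi$ exists since $a$ and $a'$ realize (possibly) the same type but are distinct, so actually I should instead just observe $\sa D \in \fo_\BB(\bo a)$ directly because $\l \bo a = \ti a\rr$ is not literally a first-order event over $\bo a$ alone. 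The fix is to note $\sa D = \l \psi(\bo a)\rr$ for some $L$-formula $\psi$ precisely when $a, a'$ are distinguished by $\psi$; choosing instead $a' = 0$, $a = 1$ from the fixed distinguished pair and using that $\l \ti 1 \ne \ti 0\rr$ has measure $1$, we get that the partition $\{\sa D, \neg\sa D\}$ is recovered from $\bo a$ by the $L$-formula $u \ne 0$ evaluated on... this still needs $0 \in \dcl$. So the truly robust choice is: take $a \notin \dcl^{\cu M}(\emptyset)$, let $\bo a = c$ glue $\ti a$ on $\sa D$ and $\ti a'$ on $\neg \sa D$ where additionally $\ti a'$ agrees with the fixed $\ti 0$; then $\acl_\BB(\bo a)$ contains $\sa D$, and one checks $\acl_\BB(\bo a) = \sigma(\{\sa D\})$ by quantifier elimination and the fact that $\bo a$ takes only two values. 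Either way, the conclusion is $\acl_\BB(\bo a) = \sigma(\{\sa D\}) \ne \{\top,\bot\}$ since $0 < \mu(\sa D) < 1$.

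Finally I would verify the independence $\bo a \ind[a\BB]_{\emptyset} \bo a$, i.e. $\acl_\BB(\bo a) \cap \acl_\BB(\bo a) = \acl_\BB(\emptyset)$. But this reads $\sigma(\{\sa D\}) = \{\top,\bot\}$, which is \emph{false} — so the relation $\ind[a\BB]_{\emptyset}$ does \emph{not} hold here, and that is exactly the point: anti-reflexivity asks that $\bo a \ind[a\BB]_{\emptyset} \bo a$ \emph{implies} $\bo a \in \acl_\BB(\emptyset)$, so to refute anti-reflexivity I must instead exhibit $\bo a$ with $\bo a \ind[a\BB]_{\emptyset} \bo a$ holding yet $\bo a \notin \acl_\BB(\emptyset)$. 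Re-reading Definition \ref{d-event-indep}, $\bo a \ind[a\BB]_{C} \bo a$ means $\acl_\BB(\bo a C) \cap \acl_\BB(\bo a C) = \acl_\BB(C)$, i.e. $\acl_\BB(\bo a C) = \acl_\BB(C)$; anti-reflexivity (axiom (9) with $\BB$-closure) would then force $\bo a \in \acl_\BB(C)$ — note $\bo a$ is a $\BK$-element, so the relevant anti-reflexivity in $\cu N$ is the two-sorted one, and the $\BB$-statement $\bo a \in \acl(C)$ makes sense only via $\bo a \in \acl(C)$ in the $\BK$-sort. The correct target: find $\bo a \in \cu K \setminus \acl(\emptyset) = \cu K \setminus \dcl(\emptyset)$ with $\fo_\BB(\bo a) \subseteq \{\top,\bot\}$, so that $\acl_\BB(\bo a) = \acl_\BB(\emptyset)$ and hence $\bo a \ind[a\BB]_{\emptyset} \bo a$. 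Take $a \notin \dcl^{\cu M}(\emptyset)$ and set $\bo a = \ti a$; then $\bo a \notin \dcl(\emptyset) = \fo(\emptyset)$ by Remark \ref{r-new}, while every $\l\theta(\ti a)\rr$ has measure $0$ or $1$ by Lemma \ref{l-big-embedding} applied to $\theta$ or $\neg\theta$ (one of which holds of $a$ in $\cu M$), so $\fo_\BB(\ti a) \subseteq \{\top,\bot\}$ and thus $\acl_\BB(\ti a) = \acl_\BB(\emptyset)$. Hence $\ti a \ind[a\BB]_{\emptyset} \ti a$ but $\ti a \notin \acl(\emptyset)$, refuting anti-reflexivity. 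The main obstacle is the finite-model bookkeeping: when $T$ has only finite models one cannot invoke saturation to produce $a \notin \dcl^{\cu M}(\emptyset)$, which is why the hypothesis explicitly grants such an element, and one must also double-check that Lemma \ref{l-big-embedding} still applies (it does, as it requires only $|\cu M| \le \upsilon$).
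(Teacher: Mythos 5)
Your final argument --- take $a\notin\dcl^{\cu M}(\emptyset)$, note via Lemma \ref{l-big-embedding} that every $\l\theta(\ti a)\rr$ has measure $0$ or $1$, so $\acl_\BB(\ti a)=\{\top,\bot\}=\acl_\BB(\emptyset)$ and hence $\ti a\ind[a\BB]_{\emptyset}\ti a$, while $\ti a\notin\dcl(\emptyset)=\acl(\emptyset)$ --- is correct and is precisely the paper's proof. The first two paragraphs (the characteristic-function construction) are a dead end, but you identify and discard them yourself before landing on the right witness, so the proposal stands as essentially the same argument as the paper's.
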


\begin{proof}
It follows from the hypotheses that $\cu M$ has an element $a$ whose type $p$ is not realizable by a definable element over $\emptyset$.
Then $\mu(\l\varphi({\ti a})\rr)=1$ for each $\varphi(v)\in p$, so
$\fo_\BB({\ti a})=\sigma(\emptyset)$.   Hence
${\ti a} \ind[a\BB]_{\emptyset}{\ti a}$.  But ${\ti a}\notin\dcl(\emptyset)=\acl(\emptyset)$ by Results \ref{f-dcl3} and \ref{f-acl=dcl},
so $\ind[a\BB]$ \ \ is not anti-reflexive.
 \end{proof}

We now consider the analogue of the forking independence relation $\ind[f]$ in the event sort when the theory $T$ is stable.

\begin{df} Suppose $T$ is stable.  For all $A, B, C\subseteq\cu K$, define
$$  A\ind[f\BB]_C \ B \Leftrightarrow \cu A_C\ind[f]_{\cu C} \cu B_C \mbox{ in } (\cu E, \mu).$$
\end{df}

\begin{lemma}  \label{l-fB-basic}
If $T$ is stable, then $\ind[f\BB] \ \ $ satisfies the basic axioms, symmetry, finite character, and the countable  union property.
Moreover, $\ind[f\BB] \ \Rightarrow \ind[a\BB] \ \ $.
\end{lemma}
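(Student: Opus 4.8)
The plan is to obtain every part of the lemma by transfer from the event sort $(\cu E,\mu)$, which is a model of $\APr$ and therefore (Results \ref{f-probability-algebra} and \ref{f-stable-indep}) carries a unique strict independence relation $\ind[f]$, equal to probabilistic independence and having countably local character. The bridge is the definition $A\ind[f\BB]_C B\Leftrightarrow\fo_\BB(AC)\ind[f]_{\fo_\BB(C)}\fo_\BB(BC)$ in $(\cu E,\mu)$, together with a few structural facts about the operator $X\mapsto\fo_\BB(X)$ on small subsets of $\cu K$: it is monotone; $\fo_\BB(C)\subseteq\fo_\BB(AC)$ and $\fo_\BB(ACC)=\fo_\BB(AC)$; since each first order definable event uses only finitely many parameters, $\fo_\BB$ carries increasing unions to unions, so $\fo_\BB(\bigcup_n X_n)=\bigcup_n\fo_\BB(X_n)$; and when $D\subseteq C\subseteq B$ we have $\fo_\BB(BD)=\fo_\BB(BC)=\fo_\BB(B)$ and $\fo_\BB(CD)=\fo_\BB(C)$. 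I will also use Result \ref{f-separable} and Remark \ref{r-ind-B}, the latter letting me recognize $\ind[a\BB]$ as $\ind[a]$ on the corresponding $\fo_\BB$-algebras.

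The routine axioms transfer directly. Normality is immediate because $\fo_\BB(ACC)=\fo_\BB(AC)$, so $A\ind[f\BB]_C B$ and $AC\ind[f\BB]_C B$ unpack to the very same statement; monotonicity follows from monotonicity of $\fo_\BB$ and of $\ind[f]$; and symmetry follows from symmetry of $\ind[f]$ and of the defining condition. For transitivity with $C\in[D,B]$, one rewrites the three occurrences of $\ind[f\BB]$ using $\fo_\BB(BD)=\fo_\BB(BC)=\fo_\BB(B)$ and $\fo_\BB(CD)=\fo_\BB(C)$, applies monotonicity of $\ind[f]$ once to cut $\fo_\BB(AC)$ down to $\fo_\BB(AD)$, and is left with a genuine instance of transitivity for $\ind[f]$ over the base triple $\fo_\BB(D)\subseteq\fo_\BB(C)\subseteq\fo_\BB(B)$. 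Invariance holds because, by strong quantifier elimination (Result \ref{f-qe}), the $T^R$-type of $(A,B,C)$ determines the $\APr$-type of any tuple of events drawn from $\fo_\BB(AC)\cup\fo_\BB(BC)\cup\fo_\BB(C)$ — the same mechanism used in the full-existence step of the proof of Proposition \ref{p-event-aB} — so invariance of $\ind[f]$ descends. Finally $\ind[f\BB]\Rightarrow\ind[a\BB]$: in $\APr$ probabilistic independence implies algebraic independence (for instance since $\ind[f]$ is the unique strict independence relation, whence $\ind[f]\Rightarrow\ind[M]\Rightarrow\ind[a]$), so by Remark \ref{r-ind-B}(2) the implication transfers verbatim.

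For finite character, assume $A_0\ind[f\BB]_C B$ for every finite $A_0\subseteq A$. The family $\{\sigma(\fo_\BB(A_0C)):A_0\subseteq A\text{ finite}\}$ is directed, and its union is an algebra — hence a $\pi$-system — generating $\sigma(\fo_\BB(AC))$; since each member is conditionally $\ind[f]$-independent of $\sigma(\fo_\BB(BC))$ over $\sigma(\fo_\BB(C))$, a Dynkin ($\lambda$-system) argument propagates this to the generated $\sigma$-algebra, giving $A\ind[f\BB]_C B$. The countable union property is handled in the same spirit, but now the base also grows: given $C=\bigcup_n C_n$ with $A\ind[f\BB]_{C_n} B$ for all $n$, approximate events of $\sigma(\fo_\BB(AC))$ and $\sigma(\fo_\BB(BC))$ by events of $\sigma(\fo_\BB(AC_n))$ and $\sigma(\fo_\BB(BC_n))$ and pass to the limit in the defining identity along the increasing filtration $\sigma(\fo_\BB(C_n))\uparrow\sigma(\fo_\BB(C))$, using the upward martingale convergence theorem (so that $\mu[\,\cdot\,|\sigma(\fo_\BB(C_n))]\to\mu[\,\cdot\,|\sigma(\fo_\BB(C))]$ in $L^1$) together with $L^1$-continuity of products of these bounded conditional expectations.

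The step I expect to be the main obstacle is base monotonicity. Given $C\in[D,B]$ and $A\ind[f\BB]_D B$, i.e. $\fo_\BB(AD)\ind[f]_{\fo_\BB(D)}\fo_\BB(B)$, base monotonicity of $\ind[f]$ over the base $\fo_\BB(C)\in[\fo_\BB(D),\fo_\BB(B)]$, followed by normality of $\ind[f]$, gives $\sigma(\fo_\BB(AD)\cup\fo_\BB(C))\ind[f]_{\fo_\BB(C)}\fo_\BB(B)$; to conclude $A\ind[f\BB]_C B$ one would want $\fo_\BB(AC)\subseteq\sigma(\fo_\BB(AD)\cup\fo_\BB(C))$. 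That containment is not automatic: an event $\l\theta(\vec{\bo a},\vec{\bo c})\rr$ whose parameters straddle $A$ and $C\setminus D$ need not decompose into a combination of events over $AD$ and events over $C$. Reconciling $\fo_\BB(AC)$ with $\sigma(\fo_\BB(AD)\cup\fo_\BB(C))$ — presumably by exploiting that such straddling events also lie in $\fo_\BB(AB)$ and that on the right one conditions over all of $\fo_\BB(C)$ — is the crux of the argument and the one place where the transfer scheme needs genuine extra input.
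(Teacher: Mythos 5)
Your transfer scheme is, for the most part, exactly the paper's own proof. Invariance, monotonicity, normality, symmetry, transitivity (the paper proves transitivity precisely as you do, via $\fo_\BB(BD)=\fo_\BB(BC)$ and $\fo_\BB(CD)=\fo_\BB(C)$, mirroring Proposition \ref{p-event-aB}), and $\ind[f\BB]\Rightarrow\ind[a\BB]$ all match. For the two remaining positive claims you use heavier probabilistic machinery than necessary: for finite character the paper needs no Dynkin-class argument, only the observation that each event in $\fo_\BB(AC)$ involves finitely many parameters from $A$, so $\fo_\BB(AC)=\bigcup\{\fo_\BB(A_0C)\colon A_0\subseteq A \text{ finite}\}$ and finite character of $\ind[f]$ in $\APr$ applies directly; for the countable union property the paper avoids martingale convergence by invoking $\ind[f]=\ind[d]$ over $\APr$ together with Lemma \ref{l-d-union}, then finishing with monotonicity and finite character. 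Your versions of these two steps are correct, just less economical.

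The genuine gap is the one you flagged: base monotonicity, which is one of the basic axioms and hence part of the statement, is never proved. Your diagnosis of the obstruction is exactly right---$\fo_\BB(AC)$ need not be contained in $\sigma(\fo_\BB(AD)\cup\fo_\BB(C))$, so base monotonicity of $\ind[f]$ in the event sort does not transfer---and the ``genuine extra input'' you hope for does not exist, because the claim is false. Let $T=\INF$ be the (stable) theory of infinite sets in the language of pure equality; every formula in one free variable is decided by $T$, so $\fo_\BB(\bo x)=\{\top,\bot\}$ for every single $\bo x\in\cu K$. Choose $\bo a,\bo b,\bo c\in\cu K$ and an event $\sa E$ with $\mu(\sa E)=1/2$ such that $\l a=c\rr\doteq\l b=c\rr\doteq\sa E$ (for instance $c$ constantly $e_0$, while $a$ equals $e_0$ on $\sa E$ and $e_1$ off it, and $b$ equals $e_0$ on $\sa E$ and $e_2$ off it; such a configuration lives in a neat randomization and hence, by saturation, in $\cu N$). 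Put $A=\{\bo a\}$, $B=\{\bo b,\bo c\}$, $D=\emptyset$, $C=\{\bo c\}$, so that $C\in[D,B]$. Then $\sigma(\fo_\BB(AD))=\{\top,\bot\}$, so $A\ind[f\BB]_D B$ holds trivially; but $\sa E\in\fo_\BB(AC)\cap\fo_\BB(BC)$ while $\sigma(\fo_\BB(C))=\{\top,\bot\}$, and $\mu(\sa E\sqcap\sa E)=1/2\ne 1/4=\mu(\sa E)\mu(\sa E)$, so $A\nind[f\BB]_C B$.

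So the defect lies in the lemma rather than in your argument: the paper's proof disposes of base monotonicity with the phrase ``it follows easily,'' which is exactly the non sequitur you declined to commit, and the error propagates to Proposition \ref{p-B-stable-independence} (which cites this lemma for the basic axioms). Note that the paper is careful to exclude base monotonicity from the analogous Proposition \ref{p-event-aB} for $\ind[a\BB]$ and even refutes it there in Proposition \ref{p-B-nobase-monotonicity}; the same caution was required here. Everything else in your proposal, and every other clause of the lemma, stands.
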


\begin{proof}  By Result \ref{f-probability-algebra}, the theory $\APr$ of $(\cu E,\mu)$ is stable, so $\ind[f]$ is
an independence relation over $(\cu E,\mu)$.  It follows easily that $\ind[f\BB] \ \ $ satisfies invariance, monotonicity,
base monotonicity, normality, finite character, and symmetry.  Transitivity is proved as in the proof of Proposition \ref{p-event-aB}.
Since $\ind[f]\Rightarrow \ind[a]$ in $(\cu E,\mu)$, it follows at once that $\ind[f\BB] \ \Rightarrow\ind[a\BB] \ $ in $\cu N$.

Countable union property:
By Result \ref{f-probability-algebra}, $\APr$ is stable, so over $(\cu E,\mu)$, $\ind[f]=\ind[d]$.  By Proposition \ref{l-d-union},
$\ind[f]$ has the countable union property over $(\cu E,\mu)$.  Suppose $A,B, C$ are countable, $C=\bigcup_n C_n$, and $C_n\subseteq C_{n+1}$
and $A\ind[f\BB]_{C_n} B$ for each $n$.  By monotonicity for $\ind[f]$, whenever $n\le m$ we have $\cu A_{C_n}\ind[f]_{\cu C_m} \cu B_{C_n}$.
By the  countable union property for $\ind[f]$ over $(\cu E,\mu)$, for each $n$ we have $\cu A_{C_n}\ind[f]_{\cu C} \cu B_{C_n}$.
Then by finite character and monotonicity for $\ind[f]$, it follows that $\cu A_{C}\ind[f]_{\cu C} \cu B_{C}$, so $A\ind[f\BB]_C \ B$, so $\ind[f\BB] \ \ $ has
the countable union property.
\end{proof}

\begin{lemma}  \label{l-B-stable-implies}  If $T$ is stable, then in $\cu N$ we have $\ind[f]\Rightarrow \ind[f\BB] \ \ $.
\end{lemma}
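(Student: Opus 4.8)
The plan is to reduce the statement to one about dividing and then transfer a dividing configuration between the sort $\BB$ of $\cu N$ and the reduct $(\cu E,\mu)$ by means of strong quantifier elimination. Recall that $A\ind[f\BB]_C B$ means $\cu A_C\ind[f]_{\cu C}\cu B_C$ in $(\cu E,\mu)$. Since $T$ is stable, $T^R$ is stable by Result~\ref{f-TR-stable}, so by Result~\ref{f-d-indep}(2) the relation $\ind[f]$ over $\cu N$ coincides with the dividing independence relation $\ind[d]$; likewise $\APr$ is stable by Result~\ref{f-probability-algebra}, so over $(\cu E,\mu)$ we also have $\ind[f]=\ind[d]$, whence $A\ind[f\BB]_C B\Leftrightarrow\cu A_C\ind[d]_{\cu C}\cu B_C$ in $(\cu E,\mu)$. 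Thus it suffices to prove the contrapositive: if $\cu A_C\ind[d]_{\cu C}\cu B_C$ fails in $(\cu E,\mu)$, then $A\ind[f]_C B$ fails in $\cu N$.

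First I would record a transfer principle. The sort $\BB$ of $\cu N$ is exactly the $\APr$-structure $(\cu E,\mu)$, and by strong quantifier elimination (Result~\ref{f-qe}) every $L^R$-formula with variables of sort $\BB$ and parameters from a set $C\subseteq\cu K$ is $T^R$-equivalent to a quantifier-free one; evaluating the terms $\l\theta(\vec{\bo c})\rr$ occurring in it turns this into an $\APr$-formula with parameters among $\fo_\BB(C)=\cu C$. Consequently the following are absolute for tuples of events: (i) $\vec{\sa e}\equiv_C\vec{\sa e}'$ in $\cu N$ iff $\vec{\sa e}\equiv_{\cu C}\vec{\sa e}'$ in $(\cu E,\mu)$; (ii) a sequence of event-tuples is $C$-indiscernible in $\cu N$ iff it is $\cu C$-indiscernible in $(\cu E,\mu)$; and (iii) a set of conditions $\Psi(\vec{\sa x},\dots)=0$ in event variables is realized by events in $\cu N$ iff it is realized in $(\cu E,\mu)$. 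I would also note that every element of $\cu D_C=\fo_\BB(DC)$ has the form $\l\eta(\vec{\bo d},\vec{\bo c})\rr$ with $\vec{\bo d}\in D^{<\BN}$ and $\vec{\bo c}\in C^{<\BN}$, and hence lies in $\dcl^{\cu N}(DC)$; in particular the elements of $\cu B_C\cup\cu C$ lie in $\dcl^{\cu N}(BC)$ and those of $\cu A_C$ lie in $\dcl^{\cu N}(AC)$.

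Now assume $\cu A_C\nind[d]_{\cu C}\cu B_C$ in $(\cu E,\mu)$. Unwinding the definition of dividing in $\APr$, there are a tuple $\vec{\sa a}$ from $\cu A_C$, an $\APr$-formula $\Psi(\vec{\sa x},\vec{\sa b})$ with $\vec{\sa b}$ a tuple from $\cu B_C\cup\cu C$, and a $\cu C$-indiscernible sequence $\langle\vec{\sa b}^i\rangle_{i\in\BN}$ in $(\cu E,\mu)$ with $\vec{\sa b}^0=\vec{\sa b}$, such that $\Psi(\vec{\sa a},\vec{\sa b})=0$ and $\{\Psi(\vec{\sa x},\vec{\sa b}^i)=0\colon i\in\BN\}$ has no solution. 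Regarding $\Psi$ as a continuous $L^R$-formula $\Phi$ with the (event) parameters $\vec{\sa b}$, the transfer principle shows that $\langle\vec{\sa b}^i\rangle$ is $C$-indiscernible in $\cu N$, that $\vec{\sa b}^0\equiv_C\vec{\sa b}$, that $\{\Phi(\vec{\sa x},\vec{\sa b}^i)=0\}$ is unsatisfiable in $\cu N$, and that $\Phi(\vec{\sa a},\vec{\sa b})=0$. Hence $\Phi(\vec{\sa x},\vec{\sa b})$ divides over $C$ in $\cu N$ and is satisfied by $\vec{\sa a}\in(\dcl^{\cu N}(AC))^{<\BN}$, while its parameters lie in $\dcl^{\cu N}(BC)\cup C$. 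Therefore $\dcl^{\cu N}(AC)\nind[d]_C\dcl^{\cu N}(BC)$ in $\cu N$.

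To finish, suppose for contradiction that $A\ind[f]_C B$ in $\cu N$. Using normality and symmetry we obtain $AC\ind[f]_C BC$; since every element of $\dcl^{\cu N}(AC)$ is algebraic over $AC$, the basic properties of $\ind[f]$ give $\dcl^{\cu N}(AC)\ind[f]_{AC}BC$, and transitivity yields $\dcl^{\cu N}(AC)\ind[f]_C BC$; symmetrically we may enlarge the right-hand side, obtaining $\dcl^{\cu N}(AC)\ind[f]_C\dcl^{\cu N}(BC)$. As $\ind[f]=\ind[d]$ over $\cu N$, this contradicts the previous paragraph, so $A\ind[f]_C B$ fails, which is exactly the contrapositive; hence $\ind[f]\Rightarrow\ind[f\BB]$. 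I expect the transfer principle to be the only substantive step — it is where strong quantifier elimination is used to collapse parameters from $\cu K$ down to their images under $\fo_\BB$ — with the remainder being routine manipulation of the independence axioms and of definable closure. A more conceptual but less self-contained alternative is to observe that $(\cu E,\mu)$ is stably embedded in $\cu N$ with induced structure exactly $\APr$, and to invoke the general fact that forking among tuples of a stably embedded sort is computed in the induced structure.
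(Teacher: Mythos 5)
Your proof is correct and follows the same overall strategy as the paper's: argue the contrapositive, use stability of $\APr$ and of $T^R$ to replace forking by dividing on both sides, and transfer a dividing configuration from $(\cu E,\mu)$ into $\cu N$ via the observation (from quantifier elimination) that the structure induced on the event sort over parameters $C\subseteq\cu K$ is exactly $\APr$ over $\cu C=\fo_\BB(C)$. The one place you genuinely diverge is the finish. The paper substitutes terms of the form $\l\theta_1(\vec x,C)\rr$, $\l\theta_2(B,C)\rr$, $\l\theta_3(C)\rr$ for the event variables and event parameters of the dividing $\APr$-formula, obtaining a continuous $L^R$-formula $\Psi(\vec x,B,C)$ in variables of sort $\BK$ realized by a tuple from $A$ and dividing over $C$ (the dividing being witnessed, after a saturation argument, by a $C$-indiscernible sequence of tuples of sort $\BK$); this gives $A\nind[d]_C B$ directly. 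You instead keep the formula, its witnesses, and the indiscernible sequence entirely in the event sort, concluding only that the definable closures of $AC$ and $BC$ are dividing-dependent over $C$, and then descend to $A\nind[f]_C B$ by the standard fact that forking independence in a stable theory is preserved under passing to algebraic closures of the two sides. Both routes work: yours avoids the paper's rather terse ``by saturation'' step of lifting the event-sort indiscernible sequence into $\cu K$, at the cost of the $\acl$-preservation step, which you cite as a ``basic property'' but which does deserve a line of justification (e.g.\ $\acl(AC)\ind[f]_{AC}BC$ follows from extension applied to $\acl(AC)\ind[f]_{AC}AC$ together with the fact that automorphisms over $AC$ fix $\acl(AC)$ setwise, and then transitivity); this is routine for $\ind[f]$ in a stable theory, so I see no gap.
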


\begin{proof}  Let $A, B, C$ be small subsets of $\cu K$, and suppose that $A\nind[f\BB]_C \ B$ in $\cu N$.
Then $\cu A_C\nind[f]_{\cu C} \cu B_C$ in $(\cu E,\mu)$.
By Result \ref{f-probability-algebra}, the theory $\APr$ of $(\cu E,\mu)$ is stable, so
there is a continuous formula $\Phi(\vec X,\cu B_C,\cu C)$ and a tuple $\vec A$ in $\cu A_C$ such that
$\Phi(\vec A,\cu B_C,\cu C)=0$ and $\Phi(\vec X,\cu B_C,\cu C)$ divides over $\cu C$ in $(\cu E,\mu)$.
Let $\Psi(\vec a,B,C)$ be a continuous formula in $L^R$ with parameters in $\cu K$ formed by replacing the elements of
$\vec A, \cu B_C, \cu C$ by equal elements of the form $\l\theta_1(A,C)\rr, \l\theta_2(B,C)\rr, \l\theta_3(C)\rr$ respectively,
and let $|\vec x|=|\vec a|$. It follows by saturation that $\Psi(\vec a,B,C)=0$ and $\Psi(\vec x,B,C)$ divides over $C$ in $\cu N$.
Therefore $A\nind[d]_C B$ in $\cu N$.  By Result \ref{f-TR-stable}, $T^R$ is stable, so $A\nind[f]_C B$ in $\cu N$.
\end{proof}

\begin{prop}  \label{p-B-stable-independence}
If $T$ is stable, then $\ind[f\BB] \ \ $ is an independence relation over $\cu N$, has countably local character, and is countably based.
\end{prop}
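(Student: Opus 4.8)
The plan is to check the three assertions — that $\ind[f\BB]$ is an independence relation over $\cu N$, that it has countably local character, and that it is countably based — in that order, using Lemmas \ref{l-fB-basic} and \ref{l-B-stable-implies} together with what is already recorded about $\ind[f]$ over $\cu N$ and over the event sort $(\cu E,\mu)$.

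For the first two assertions I would argue as follows. Lemma \ref{l-fB-basic} already delivers invariance, monotonicity, base monotonicity, transitivity, normality, symmetry, and finite character for $\ind[f\BB]$, so only extension and local character (and, separately, countably local character) remain. Since $T$ is stable, $T^R$ is stable by Result \ref{f-TR-stable}, so by Result \ref{f-stable-indep} the relation $\ind[f]$ over $\cu N$ is the unique strict independence relation; being an independence relation it has local character, by Result \ref{f-stable-indep} it has countably local character, and by Remarks \ref{r-fe-vs-ext}(2),(3) (base monotonicity plus local character give $A\ind[f]_C C$ for all small $A,C$, and then monotonicity plus extension give full existence) it has full existence. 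Lemma \ref{l-B-stable-implies} gives $\ind[f]\Rightarrow\ind[f\BB]$ over $\cu N$, so Remark \ref{r-weaker} lets $\ind[f\BB]$ inherit full existence, local character, and countably local character; and since $\ind[f\BB]$ then has invariance, monotonicity, transitivity, normality, full existence, and symmetry, Remarks \ref{r-fe-vs-ext}(1) supplies extension. This settles the first two assertions.

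For the countably based assertion I would invoke Lemma \ref{l-cbased}(3): $\ind[f\BB]$ has monotonicity, and has two-sided countable character by Remark \ref{r-two-sided}, so the task reduces to proving, for countable $A,B$ and small $C$,
$$A\ind[f\BB]_C B \Leftrightarrow (\forall^c C'\subseteq C)(\exists^c D\in[C',C])\,A\ind[f\BB]_D B.$$
The forward implication would follow from Proposition \ref{p-countably-local-implies}, since $\ind[f\BB]$ has monotonicity, base monotonicity, transitivity, symmetry, and countably local character: that gives $\ind[f\BB]\Rightarrow\ind[f\BB c]$, and unwinding $A\ind[f\BB c]_C B$ for countable $A,B$ is precisely the right-hand side. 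For the converse, assuming the right-hand side, I want $\cu A_C\ind[f]_{\cu C}\cu B_C$ in $(\cu E,\mu)$; by two-sided countable character of $\ind[f]$ it is enough to fix countable $\cu A_0\subseteq\cu A_C$, $\cu B_0\subseteq\cu B_C$ and prove $\cu A_0\ind[f]_{\cu C}\cu B_0$. Here is the crux: $(\cu E,\mu)$ is a big model of the (always) stable theory $\APr$ (Result \ref{f-probability-algebra}), so $\ind[f]$ on it is countably based by Proposition \ref{p-f-cbased}, and hence by Lemma \ref{l-cbased}(3) it suffices, given a countable $\cu C'\subseteq\cu C$, to exhibit a countable $\cu D'\in[\cu C',\cu C]$ with $\cu A_0\ind[f]_{\cu D'}\cu B_0$. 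To do this I would choose a countable $C''\subseteq C$ absorbing the finitely many $\cu K$-parameters occurring in the events of $\cu A_0,\cu B_0,\cu C'$, so that $\cu A_0\subseteq\fo_\BB(AC'')$, $\cu B_0\subseteq\fo_\BB(BC'')$, and $\cu C'\subseteq\fo_\BB(C'')$; then apply the right-hand side of the displayed equivalence to $C''$ to get a countable $D$ with $C''\subseteq D\subseteq C$ and $\fo_\BB(AD)\ind[f]_{\fo_\BB(D)}\fo_\BB(BD)$, and set $\cu D'=\fo_\BB(D)$. Since $D\subseteq C$ we get $\cu D'\subseteq\cu C$; since $C''\subseteq D$ we get $\cu C'\subseteq\fo_\BB(C'')\subseteq\cu D'$ as well as $\cu A_0\subseteq\fo_\BB(AD)$ and $\cu B_0\subseteq\fo_\BB(BD)$; so $\cu D'\in[\cu C',\cu C]$ is countable and $\cu A_0\ind[f]_{\cu D'}\cu B_0$ by monotonicity of $\ind[f]$, as required.

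I expect the main obstacle to be exactly this last step: the manipulations with the sets $\fo_\BB(\cdot)$ and the appeals to the axioms for $\ind[f]$ on each sort are routine, but one has to notice that, because $D\subseteq C$ forces $\fo_\BB(D)\subseteq\fo_\BB(C)$, a single countable $D$ can simultaneously absorb the prescribed countable base $\cu C'$ into its own base and remain below $\cu C$ — which is precisely what lets the countably based-ness of $\ind[f]$ on the event sort be transported up to $\ind[f\BB]$. This, rather than Lemmas \ref{l-fB-basic} and \ref{l-B-stable-implies}, is where the structure of the randomization is really used.
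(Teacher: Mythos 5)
Your proof is correct, and the first two assertions (independence relation, countably local character) are handled exactly as in the paper: Lemma \ref{l-fB-basic} for the basic axioms, symmetry and finite character, Lemma \ref{l-B-stable-implies} plus Remark \ref{r-weaker} to pull full existence and (countably) local character down from $\ind[f]$ over $\cu N$, and Remarks \ref{r-fe-vs-ext} to recover extension. The reduction of countably-based-ness to the displayed equivalence for countable $A,B$ via Lemma \ref{l-cbased}(3), and the forward implication via Proposition \ref{p-countably-local-implies} (equivalently Corollary \ref{c-preserve}), also match the paper. Where you genuinely diverge is the converse. The paper argues the contrapositive: if $A\nind[f\BB]_C B$, then since $\APr$ is stable there is a continuous formula witnessing $\cu A_C\nind[f]_{\cu C}\cu B_C$ that divides over $\cu C$; this formula has only finitely many parameters, so its parameters live over $\fo_\BB(BC')$ for a finite $C'\subseteq C$, and dividing over $\cu C$ implies dividing over $\cu D$ for every $D\in[C',C]$, killing the right-hand side. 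You instead prove the implication directly, treating the countably-based-ness of $\ind[f]$ on the event sort (Proposition \ref{p-f-cbased} applied to $\APr$) as a black box and transporting a prescribed countable base $\cu C'\subseteq\cu C$ into a base of the form $\fo_\BB(D)$ with $D\in[C'',C]$ countable. Both work; your route is more modular and avoids any bookkeeping about which parameters a dividing formula mentions, at the cost of invoking the heavier Proposition \ref{p-f-cbased}, while the paper's contrapositive is more elementary and parallels its proof of Proposition \ref{p-d-cbased}. The one step you flag as the crux --- that $D\subseteq C$ forces $\fo_\BB(D)\subseteq\fo_\BB(C)$ and that a single countable $D$ absorbs the finitely many $C$-parameters of each event in $\cu A_0\cup\cu B_0\cup\cu C'$ --- is indeed the point where the randomization structure enters, and your treatment of it is sound.
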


\begin{proof}
By Lemma \ref{l-B-stable-implies}, Lemma \ref{l-fB-basic},  Remark \ref{r-weaker}, $\ind[f\BB] \ \ $ is an independence relation over $\cu N$ with countably local character.
To prove that $\ind[f\BB] \ \ $ is countably based, we argue as in the proof of Proposition \ref{p-d-cbased}.  As in that proof,
it is enough to check that, for countable $A$, $B$ and small $C$, we have
$$
A\ind[f\BB]_C \ B \Leftrightarrow (\forall^c C'\subseteq C)(\exists^c D\in [C',C])A\ind[f\BB]_D \ B.
$$
Fix such $A$, $B$, $C$.  The forward direction follows from Corollary \ref{c-preserve}.  For the other direction, suppose that $A\nind[f\BB]_C \ B$.
Then $\cu A_C\nind[f]_{\cu C} \ \cu B_C$ over $(\cu E,\mu)$.  Hence for some tuple $\vec A$ in $\cu A_C$ and some continuous formula
$\Phi(\vec X, \cu B_C,\cu C)$, $(\cu E,\mu)\models \Phi(\vec A,\cu B_C,\cu C)=0$ and $\Phi(\vec X,\cu B_C,\cu C)$ divides over $\cu C$.
Take a countable (even finite) $C'\subseteq C$ such that $\Phi(\vec x,\cu B_C,\cu C)=\Phi(\vec x,\cu B_{C'},\cu C')$.
Then for any countable $D\in[C',C]$, $\Phi(\vec x,\cu B_C,\cu C)$ divides over $\cu D$.  Therefore $\cu A_D\nind[f]_{\cu D} \cu B_D$ over $(\cu E,\mu)$,
so $A\nind[f\BB]_C \ B$.
\end{proof}

\begin{cor}  \label{c-f-B-not-anti-reflexive}
Suppose that $T$ is stable, and has either an infinite model, or a finite model with an element that is not definable without parameters.
Then $\ind[f\BB]$ \ \ is not anti-reflexive.
\end{cor}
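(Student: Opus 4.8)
The plan is to run the argument of Proposition~\ref{p-a-B-not-anti-reflexive}, now using in addition the description of $\ind[f]$ over $(\cu E,\mu)$ as probabilistic independence from Result~\ref{f-probability-algebra}. By the hypotheses on $T$, the big model $\cu M$ contains an element $a$ whose type $p=\tp^{\cu M}(a/\emptyset)$ is not realized by any element of $\dcl^{\cu M}(\emptyset)$; this is exactly the element exhibited in the proof of Proposition~\ref{p-a-B-not-anti-reflexive}. I would show that $\ti a\ind[f\BB]_\emptyset\,\ti a$ holds in $\cu N$ while $\ti a\notin\acl(\emptyset)$, so anti-reflexivity fails.

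First, as in the proof of Proposition~\ref{p-a-B-not-anti-reflexive}, each generator $\l\theta(\ti a)\rr$ of $\fo_\BB(\ti a)$ satisfies $\l\theta(\ti a)\rr\doteq\top$ or $\l\theta(\ti a)\rr\doteq\bot$: since $T$ is complete, $\theta(v)\in p$ or $\neg\theta(v)\in p$, and by the defining property of the map $a\mapsto\ti a$ (Lemma~\ref{l-big-embedding}) this gives $\mu(\l\theta(\ti a)\rr)=1$ or $\mu(\l\theta(\ti a)\rr)=0$ respectively. Hence $\fo_\BB(\ti a)=\{\top,\bot\}$ up to $d_\BB$-distance zero, and likewise $\fo_\BB(\emptyset)=\{\top,\bot\}$ by Result~\ref{f-separable}. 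Writing $\cu A=\fo_\BB(\ti a)$ and $\cu C=\fo_\BB(\emptyset)$ and taking $A=B=\{\ti a\}$, $C=\emptyset$ in the definition of $\ind[f\BB]$, it therefore suffices to verify $\cu A\ind[f]_{\cu C}\cu A$ in $(\cu E,\mu)$, which by Result~\ref{f-probability-algebra} means that $\mu[\sa A\sqcap\sa B\mid\sigma(\cu C)]=\mu[\sa A\mid\sigma(\cu C)]\,\mu[\sa B\mid\sigma(\cu C)]$ holds almost surely for all $\sa A,\sa B\in\sigma(\cu A)$. Since $\sigma(\cu A)=\{\top,\bot\}$, only the trivial choices of $\sa A,\sa B$ occur, for which this identity is immediate; thus $\ti a\ind[f\BB]_\emptyset\,\ti a$ in $\cu N$.

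For the other half, $\ti a\notin\acl(\emptyset)=\dcl(\emptyset)$ exactly as in the proof of Proposition~\ref{p-a-B-not-anti-reflexive}: by Results~\ref{f-dcl3} and~\ref{f-acl=dcl}, if $\ti a$ were definable over $\emptyset$ then it would be pointwise definable over $\emptyset$, forcing $\ti a(\omega)\in\dcl^{\cu M}(\emptyset)$ almost surely; but $\mu(\l\varphi(\ti a)\rr)=1$ for every $\varphi\in p$ and $p$ is countable, so $\ti a(\omega)$ realizes $p$ almost surely, and an $\omega$ witnessing both facts yields an element of $\dcl^{\cu M}(\emptyset)$ realizing $p$, contrary to the choice of $a$. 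Combining the two conclusions, $\ind[f\BB]$ is not anti-reflexive.

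I do not foresee any real obstacle: the statement is the stable-theory counterpart of Proposition~\ref{p-a-B-not-anti-reflexive}, obtained by replacing the trivial identity $\sigma(\emptyset)\cap\sigma(\emptyset)=\sigma(\emptyset)$ that witnessed $\ti a\ind[a\BB]_\emptyset\,\ti a$ with the equally trivial verification that probabilistic independence holds over a trivial conditioning $\sigma$-algebra. The one point meriting a moment's care is confirming that $\fo_\BB(\ti a)$ really collapses all the way to $\{\top,\bot\}$, rather than merely lying inside $\dcl_\BB(\emptyset)$; this is immediate because each generator $\l\theta(\ti a)\rr$ is $\doteq$-equal to $\top$ or to $\bot$.
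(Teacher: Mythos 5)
Your proposal is correct and is essentially the paper's own argument: the paper proves this corollary by the single line ``Same as the proof of Proposition \ref{p-a-B-not-anti-reflexive},'' and you have carried out exactly that, with the only genuinely new step being the (trivial) observation that probabilistic independence of $\{\top,\bot\}$ over $\{\top,\bot\}$ holds automatically. The point you flag at the end --- that $\fo_\BB(\ti a)$ collapses to $\{\top,\bot\}$ because each generator is $\doteq$-equal to $\top$ or $\bot$ --- is indeed the crux, and you handle it correctly.
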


\begin{proof}  Same as the proof of Proposition \ref{p-a-B-not-anti-reflexive}.
\end{proof}

\subsection{Algebraic Independence in $T^R$}  \label{s-alg-t^R}

By the definition of algebraic independence in the two-sorted metric structure $\cu N$ and Result \ref{f-acl=dcl}, $ A\ind[a]_C B$ if and only if
$$[\dcl(AC)\cap\dcl(BC)=\dcl(C)] \wedge [\dcl_\BB(AC)\cap\dcl_\BB(BC)=\dcl_\BB(C)].$$

\begin{rmks}  \label{r-simplified}
If $\ti 0, \ti 1\in C$, then
$$A\ind[a]_C B\Leftrightarrow \dcl(AC)\cap\dcl(BC)=\dcl(C) .$$
\end{rmks}

\begin{proof}  Suppose  $\ti 0, \ti 1\in C$, $\dcl(AC)\cap\dcl(BC)=\dcl(C)$ and $\sa E\in\dcl_\BB(AC)\cap\dcl_\BB(BC)$.
Then $1_{\sa E}\in\dcl(AC)\cap\dcl(BC)$, so $1_{\sa E}\in\dcl(C),$ and hence
$\sa E\in\dcl_\BB(C)$.  Therefore $\dcl_\BB(AC)\cap\dcl_\BB(BC)=\dcl_\BB(C).$
\end{proof}

We have seen in Proposition \ref{p-alg-indep} that $\ind[a]$ over $\cu N$ satisfies symmetry and all axioms for a strict countable independence relation
except  perhaps for base monotonicity and extension.  The relation $\thind$ is always stronger than $\ind[a]$.  Whenever the randomization theory
$T^R$ is real rosy, $\thind$ has full existence by Remarks \ref{r-fe-vs-ext}, and hence $\ind[a]$ over $\cu N$ has full existence.
Here is another sufficient condition for $\ind[a]$ over $\cu N$ to have full existence.

\begin{thm} \label{t-fe} Suppose $T$ has $\acl=\dcl$.  Then the relation $\ind[a]$ over $\cu N$ has full existence and extension.
\end{thm}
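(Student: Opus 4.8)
The plan is to prove full existence directly, and then derive extension from it using the general machinery of the earlier sections. Since $\ind[a]$ over $\cu N$ already satisfies monotonicity and (by Proposition~\ref{p-a-cbased}) is countably based, to prove extension it will suffice to establish full existence together with the fact that $A\ind[a]_C C$ always holds; then one invokes Remarks~\ref{r-fe-vs-ext}(3) (which requires monotonicity, extension, and $A\ind[a]_C C$ — but we want extension as the conclusion, so instead I will run the argument the other way: prove full existence first, observe that $\ind[a]$ has invariance, monotonicity, transitivity, normality, and symmetry by Proposition~\ref{p-alg-indep}, and then apply Remarks~\ref{r-fe-vs-ext}(1) to get extension). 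So the real content is \emph{full existence}: given small $A,B,C\subseteq\cu K$, I must produce $A'\equiv_C A$ with $A'\ind[a]_C B$.

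The key idea is to pass to the neat randomization picture $\cu P=(\cu L,\cu F)$ over the probability space $(\Omega,\cu E,\mu)$ and build $A'$ ``pointwise over $\Omega$'' by mimicking, fiberwise, the full-existence property of $\ind[a]$ in the first-order theory $T$ — but first I need to know that full existence is available in $\cu M$. Here is where $\acl=\dcl$ enters: for a theory with $\acl=\dcl$, the lattice of algebraically closed sets need not be modular in general, but $\ind[a]$ in $\cu M$ does satisfy full existence — indeed, by Proposition~\ref{p-alg-indep} (adapted to first-order logic) $\ind[a]$ has all axioms for a strict independence relation except perhaps base monotonicity and extension, and full existence for $\ind[a]$ in \emph{first-order} logic is classical (it follows from extension, which holds for $\ind[a]$ in first-order logic by the usual Ramsey/Erdős–Rado argument, since $\acl$ has finite character there). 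Granting that, the construction is: enlarge $C$ so that $\ti 0,\ti 1\in C$ (harmless by Remarks~\ref{r-simplified}); realize over the countably-many relevant first-order formulas a ``generic'' copy fiberwise. Concretely, for each $\omega$ I want $A'(\omega)$ to be a copy of $A(\omega)$ over $C(\omega)$ that is algebraically independent from $B(\omega)$ over $C(\omega)$ in $\cu M$; by full existence in $\cu M$ such a copy exists, and by a measurable-selection / saturation argument (exactly the style of argument used in the proof of Proposition~\ref{p-event-aB}, full existence, and in Lemma~\ref{l-big-embedding}) these fiberwise copies can be assembled into an element $A'\subseteq\cu K$ with $A'\equiv_C A$ in $\cu N$. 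The type equality $A'\equiv_C A$ follows from strong quantifier elimination (Result~\ref{f-qe}): every $L^R$-formula with parameters in $A\cup C$ is a continuous combination of terms $\mu(\l\theta(\cdot)\rr)$, and these are determined by the fiberwise first-order types, which we have preserved a.s.

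Finally I must verify $A'\ind[a]_C B$ in $\cu N$, i.e. (using $\ti 0,\ti 1\in C$ and Remarks~\ref{r-simplified}) that $\dcl(A'C)\cap\dcl(BC)=\dcl(C)$. Here I use the $\dcl^\omega$ description: by Result~\ref{f-dcl3} and Corollary~\ref{c-pointwise-alg-def}, an element of $\dcl(A'C)\cap\dcl(BC)$ is in particular pointwise definable over both $A'C$ and $BC$, hence (using $\acl=\dcl$ in $\cu M$, Result~\ref{f-pointwisemeasurable}, and countable character, Remark~\ref{r-dcl-omega}) its value lies in $\acl^{\cu M}(A'(\omega)C(\omega))\cap\acl^{\cu M}(B(\omega)C(\omega))$ for a.a.\ $\omega$; by the fiberwise algebraic independence $A'(\omega)\ind[a]_{C(\omega)}B(\omega)$ this is $\acl^{\cu M}(C(\omega))=\dcl^{\cu M}(C(\omega))$ a.s., so the element is pointwise definable over $C$; one then checks the second clause of Result~\ref{f-dcl3}, $\fo_\BB(\,\cdot\,C)\subseteq\dcl_\BB(C)$, which for events reduces to the same fiberwise statement since events $\l\theta(\cdot)\rr$ are determined by the fiberwise first-order data and $C$ already contains $\ti 0,\ti 1$.

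The main obstacle I expect is the measurable-selection step — ensuring that the fiberwise choices $\omega\mapsto A'(\omega)$ can be made so that the resulting functions actually lie in $\cu L$ (equivalently, that for every first-order $\theta$ the set $\l\theta(A',\,\cdot\,)\rr$ is measurable) and so that the countably many type-conditions defining ``$A'(\omega)$ is a generic copy'' are simultaneously satisfiable on a measure-one set. This is handled exactly as in the full-existence argument of Proposition~\ref{p-event-aB}: one writes down the (countable, by countability of $L$) set $\Gamma$ of conditions expressing $A'\equiv_C A$ together with the algebraic-independence requirement, shows by Fullness (Result~\ref{f-perfectwitnesses}) that every finite subset is satisfiable in $\cu N$, and concludes by saturation. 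The only genuinely new ingredient beyond that template is the bookkeeping needed to guarantee fiberwise algebraic independence, and for that one leans on full existence in $\cu M$ plus the measurability facts from [AGK] recorded in Result~\ref{f-pointwisemeasurable} and our blanket assumption that $\cu L$ is closed under a.s.-modification.
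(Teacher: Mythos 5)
Your high-level frame (prove full existence, then get extension from Remarks \ref{r-fe-vs-ext} together with Proposition \ref{p-alg-indep}) matches the paper, as does the device of writing down a countable set of conditions and invoking Fullness plus saturation. But the final verification step fails. Fiberwise algebraic independence $A'(\omega)\ind[a]_{C(\omega)}B(\omega)$ a.s.\ (that is, $A'\ind[a\omega]_C \ B$) does \emph{not} imply $A'\ind[a]_C B$ in $\cu N$, even under $\acl=\dcl$: the paper's Proposition \ref{p-pointwise-algebraic-vs-algebraic}(1) shows that $\ind[a\omega] \ \Rightarrow\ind[a]$ \emph{always} fails. The obstruction is precisely the clause you wave away, namely $\fo_\BB(\bo d C)\subseteq\dcl_\BB(C)$ in Result \ref{f-dcl3}: the event-sort requirement $\dcl_\BB(A'C)\cap\dcl_\BB(BC)=\dcl_\BB(C)$ is a global statement about $\sigma$-algebras of events and is not ``determined by the fiberwise first order data.'' Concretely, if $\sa D$ is an event with $0<\mu(\sa D)<1$ whose characteristic function is definable over both $A'C$ and $BC$ but not over $C$, then $A'\nind[a]_C B$ no matter how generic each fiber $A'(\omega)$ is over $B(\omega)C(\omega)$. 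What your construction delivers is only one conjunct of the hypothesis of Proposition \ref{p-a-vs-omega}(3); you must separately secure $A'\ind[a\BB]_C \ B$. The paper does this by first arranging $A\ind[a\BB]_C \ B$ via Proposition \ref{p-event-aB} and then imposing the \emph{literal} equalities $\l\theta(\vec{\bo a}',\vec{\bo c})\rr=\l\theta(\vec{\bo a},\vec{\bo c})\rr$ on $A'$, which force $\dcl_\BB(A'C)=\dcl_\BB(AC)$.

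A second problem is that the measurable-selection argument you sketch is intrinsically countable: measurability of the fiberwise conditions (Lemma \ref{l-a-measurable}) and the tree-of-events bookkeeping needed to assemble the fiberwise witnesses (as in Theorem \ref{t-aa-omega-existence}, which is the actual template for such an argument, not Proposition \ref{p-event-aB}) require $A,B,C$ countable, and the paper leaves open whether $\ind[a\omega] \ $ has full existence for small sets. Theorem \ref{t-fe} concerns all small sets. The paper's proof sidesteps fiberwise independence entirely: after normalizing $C=\acl(C)$, $A=\acl(AC)\setminus\acl(C)$, $B=\acl(BC)\setminus\acl(C)$, the requirement $A'\ind[a]_C B$ reduces to $A'\cap B=\emptyset$ together with the event-sort condition, and $A'\cap B=\emptyset$ is enforced by the pairwise distance conditions $d_\BK(\bo a',\bo b)\ge\varepsilon(\bo a)$, where $\varepsilon(\bo a)$ measures how far $\bo a$ is from being pointwise definable over a countable subset of $C$. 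The resulting dichotomy---either $\varepsilon(\bo a)>0$, or $\bo a$ contributes an event in $\fo_\BB(\bo a C)\setminus\dcl_\BB(C)$ that the event-sort independence keeps out of $\dcl_\BB(BC)$---is the idea your proposal is missing; full existence for $\ind[a]$ in $\cu M$ together with $\acl=\dcl$ enters only to witness \emph{finite} satisfiability of these conditions, via Condition (5) of a neat randomization, which needs no measurable selection and scales to arbitrary small sets.
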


\begin{proof}
By Remarks \ref{r-fe-vs-ext} and Proposition \ref{p-alg-indep}, if $\ind[a]$ over $\cu N$ has full existence, then it has extension.
To prove full existence,
we must show that for all small $A,B,C$, there is $A'\equiv_{C} A$ such that
$$[\dcl(A'C)\cap\dcl(BC)=\dcl(C)] \wedge [\dcl_\BB(A'C)\cap\dcl_\BB(BC)=\dcl_\BB(C)] .$$
In view of Fact \ref{f-definableclosure}  and Remark \ref{r-dcl-B}, we may assume without loss of generality that
$C=\acl(C)$, $A=\acl(AC)\setminus \acl(C)$, and $B=\acl(BC)\setminus \acl(C)$.
Then $C=\dcl(C)$, $A=\dcl(AC)\setminus \dcl(C)$, and $B=\dcl(BC)\setminus \dcl(C)$.    By Proposition \ref{p-event-aB}, the relation
$\ind[a\BB] \ \ $ over $\cu N$ has full existence.  Therefore we may also assume that $ A\ind[a\BB]_C \  B.$ By Result \ref{f-acl=dcl},
$$\dcl_\BB(AC)\cap\dcl_\BB(BC)=\dcl_\BB(C).$$
So it suffices to show that there is $A'\equiv_CA$ such that
$$A'\cap B=\emptyset \wedge \dcl_\BB(A'C)=\dcl_\BB(AC).$$

For each element $\bo a\in A$, we
define $\varepsilon(\bo a)$ as the infimum of all the values $1-\mu(\l a\in\dcl^{\cu M}(D)\rr)$ over all countable $D\subseteq C$.
Note that $\varepsilon(\bo a)=0$ if and only if $\bo a$ is pointwise definable over some countable subset of $C$.
Add a constant symbol for each $\bo a\in A, \bo b\in B$, and $\bo c\in C$.  For each $\bo a\in A$, add a variable $\bo a'$.
Consider the set $\Gamma$ of all conditions of the form
$$\l\theta(\vec{\bo a},\vec{\bo c})\rr=\l\theta(\vec{\bo a}',\vec{\bo c})\rr\wedge
\bigwedge_{i\le|\vec{\bo a}|}d_\BK({\bo a}'_i,\bo b)\ge \varepsilon({\bo a}_i)$$
where $\theta$ is an $L$-formula, $\vec{\bo a}\in A^{<\BN}, \vec {\bo c}\in C^{<\BN}$, and $\bo b\in B$.

\

\emph{Claim 2}.  For every finite subset $\Gamma_0$ of $\Gamma$, there is a set  $A'=\{{\bo a}'\colon \bo a\in A\}$ that satisfies $\Gamma_0$
in $\cu N_{ABC}$.

\

\emph{Proof of Claim}:  Let $A_0, B_0, C_0$ be the set of elements of $A, B, C$ respectively that occur in $\Gamma_0$.
Then $A_0, B_0, C_0$ are finite.  If $A_0$ is empty, then $\Gamma_0$ is trivially satisfiable in ${\cu N}_{ABC}$,
so we may assume that $A_0$ is non-empty.   Let
$$A_0=\{{\bo a}_0,\ldots,{\bo a}_n\},\vec{\bo a}=\<{\bo a}_0,\ldots,{\bo a}_n\>,
C_0=\{{\bo c}_0,\ldots,{\bo c}_k\},\vec{\bo c}=\<{\bo c}_0,\ldots,{\bo c}_k\>.$$
Let $\Theta_0$ be the set of all sentences that occur on the left side of an equation in $\Gamma_0$.  Then $\Theta_0$ is finite.  By combining tuples,
 we may assume that each sentence in $\Theta_0$ has the form $\theta(\vec{\bo a},\vec{\bo c})$.

Since the algebraic independence relation on$\cu M$ satisfies full existence, and $T$ has $\acl=\dcl$, for each $\omega\in\Omega$ there exists
$$G_0(\omega)=\{g_0(\omega),\ldots,g_n(\omega)\}\subseteq M$$
such that
$$\tp^{\cu M}(G_0(\omega)/C_0(\omega))=\tp^{\cu M}(A_0(\omega)/C_0(\omega))$$
and
$$ G_0(\omega) \cap B_0(\omega)\subseteq\dcl^{\cu M}(C_0(\omega)).$$
Let $i\le n$.  Whenever $a_i(\omega)\notin\dcl^{\cu M}(C_0(\omega))$, we have $g_i(\omega)\notin\dcl^{\cu M}(C_0(\omega))$, and hence
$g_i(\omega)\notin B_0(\omega)$. By Result \ref{f-glue}, for each $i\le n$ the event
$$\sa E_i=\l  a_i\in\dcl^{\cu M}(C_0)\rr$$
has a characteristic function $1_{{\sa E}_i}\in\cu K$ with respect to $\ti 0,\ti 1$.  By applying Condition (5) for a neat randomization to the formula
$$ \bigwedge_{\theta\in\Theta_0}(\theta(\vec u,\vec{\bo c})\leftrightarrow\theta(\vec{\bo a},\vec{\bo c}))\wedge
\bigwedge_{i=0}^n \bigwedge_{\bo b\in B_0} (1_{{\sa E}_i}=\ti 0\rightarrow u_i\ne\bo b),$$
we see that there exists a set
$$G_0=\{{\bo g}_0,\ldots,{\bo g}_n\}\subseteq \cu K$$
such that for each $\omega\in \Omega$, $\theta(\vec{\bo a},\vec{\bo c})\in\Theta_0$, $i\le n$, and $\bo b\in B_0$:
\begin{itemize}
\item
$\cu M\models \theta(\vec g(\omega),\vec c(\omega))\leftrightarrow\theta(\vec a(\omega),\vec c(\omega));$
\item if $a_i(\omega)\notin\dcl^{\cu M}(C_0(\omega))$, then $g_i(\omega)\ne b(\omega)$.
\end{itemize}
It follows that $\l\theta(\vec{\bo g},\vec {\bo c})\rr=\l\theta(\vec{\bo a},\vec {\bo c})\rr$ for each
$\theta(\vec{\bo a},\vec {\bo c})\in\Theta_0$, and that $d_\BK({\bo g}_i,{\bo b})\ge\varepsilon({\bo a}_i)$ for each $i\le n$ and $\bo b\in B_0$.
Therefore $\Gamma_0$ is satisfied by $G_0$ in $\cu N_{ABC}$, and the Claim is proved.

\

By saturation, $\Gamma$ is satisfied in $\cu N_{ABC}$ by some set $A'$.  $\Gamma$ guarantees that $A'\equiv_C A$ and  $\dcl_\BB(A'C)=\dcl_\BB(AC).$
It remains to show that for each $\bo a\in A$, $\bo a'\notin B$.  Let $\bo a\in A$.  By hypothesis
we have $\bo a\notin\dcl(C)$.
By Result \ref{f-dcl3}, either $\bo a$ is not pointwise definable over a countable subset of $C$ and thus $\varepsilon(\bo a)>0$,
or there is a formula $\theta(u,\vec v)$ and a tuple $\vec{\bo c}\in C^{<\BN}$ such that
$$\l\theta(\bo a,\vec{\bo c})\rr\in\fo_\BB(\{\bo a\}\cup C)\setminus\dcl_\BB(C).$$
$\Gamma$ guarantees that $d_\BK(\bo a',B)\ge\varepsilon(\bo a)$, so in the case that $\varepsilon(\bo a)>0$ we have $\bo a'\notin B$.
$\Gamma$ also guarantees that
$$\l\theta(\bo a',\vec{\bo c})\rr=\l\theta(\bo a,\vec{\bo c})\rr,$$
so in the case that $\varepsilon(\bo a)=0$, we have
$$\l\theta(\bo a',\vec{\bo c})\rr=\l\theta(\bo a,\vec{\bo c})\rr\in\dcl_\BB(AC)\setminus\dcl_\BB(C).$$
But we are assuming that
$$\dcl_\BB(AC)\cap\dcl_\BB(BC)=\dcl_\BB(C),$$
so
$$\l\theta(\bo a',\vec{\bo c})\rr\notin\dcl_\BB(BC),$$
and hence $\bo a'\notin B$.  This completes the proof.
\end{proof}

\begin{cor}  If $T$ has $\acl=\dcl$, then the relation $\ind[a]$ over $\cu N$ satisfies all the axioms for a countable independence relation
except perhaps base monotonicity.
\end{cor}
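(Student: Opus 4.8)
The plan is to read the conclusion directly off of Proposition~\ref{p-alg-indep} together with Theorem~\ref{t-fe}; no new argument is really needed. First I would recall that Proposition~\ref{p-alg-indep} already establishes that $\ind[a]$ over $\cu N$ satisfies symmetry together with every axiom for a strict countable independence relation, with the two possible exceptions of base monotonicity and extension. Since the statement explicitly exempts base monotonicity, the only outstanding axiom to be supplied is extension.

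Second, I would invoke Theorem~\ref{t-fe}: under the standing hypothesis that $T$ has $\acl=\dcl$, that theorem yields both full existence and extension for $\ind[a]$ over $\cu N$. Combining the two facts, $\ind[a]$ over $\cu N$ satisfies invariance, monotonicity, transitivity, normality, extension, countable character, and local character (indeed also anti-reflexivity and symmetry), i.e.\ all the axioms for a countable independence relation except perhaps base monotonicity, which is exactly the assertion.

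I do not expect any genuine obstacle at this stage: the substantive content lives entirely inside the proof of Theorem~\ref{t-fe}, namely the saturation-plus-fullness construction of a conjugate $A'\equiv_C A$ that is disjoint from $B$ while preserving $\dcl_\BB(AC)$ (and the reduction, via Fact~\ref{f-definableclosure} and Remark~\ref{r-dcl-B}, to the case where $A$, $B$, $C$ are algebraically closed with $C\subseteq A\cap B$ removed). Once Theorem~\ref{t-fe} is in hand, the corollary follows in one line, so the only care required here is to cite the correct list of axioms and to note that base monotonicity remains the sole possible failure.
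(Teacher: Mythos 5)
Your proposal is correct and coincides with the paper's own (one-line) proof: the corollary is obtained by combining Proposition \ref{p-alg-indep}, which gives all axioms except possibly base monotonicity and extension, with Theorem \ref{t-fe}, which supplies extension under the hypothesis that $T$ has $\acl=\dcl$. No further argument is needed.
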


\begin{proof}  By Proposition \ref{p-alg-indep} and Theorem \ref{t-fe}.
\end{proof}

The next proposition shows that  $\ind[a]$ cannot be a countable independence relation over $\cu N$.

\begin{prop}  \label{p-a-nobase-monotonicity}  For every $T$, the relation
$\ind[a]$ over $\cu N$ does not have base monotonicity, and hence the lattice of algebraically closed sets in $\cu N$ is not modular.
\end{prop}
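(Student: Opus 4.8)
The plan is to lift the event-sort counterexample of Proposition~\ref{p-B-nobase-monotonicity} to the two-sorted relation $\ind[a]$ over $\cu N$. As in that proof, fix independent events $\sa D,\sa F\in\cu E$ of measure $1/2$, put $\sa E=\sa D\sqcap\sa F$, and set $\bo a=1_{\sa D}$, $\bo b=1_{\sa E}$, $\bo c=1_{\sa F}$, the characteristic functions taken with respect to $\ti 0,\ti 1$. Let $C_0=\{\ti 0,\ti 1\}$ and put $A=\{\bo a\}$, $D=C_0$, $C=\{\bo c\}\cup C_0$, $B=\{\bo b,\bo c\}\cup C_0$, so that $D\subseteq C\subseteq B$. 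The goal is to prove $A\ind[a]_D B$ but $A\nind[a]_C B$; this refutes base monotonicity, and the non-modularity of the lattice of algebraically closed sets of $\cu N$ then follows from the continuous version of Proposition~1.5 of [Ad2] recorded in the discussion following Question~\ref{q-a-full-existence}. The failure at base $C$ will come essentially for free from the event sort, exactly as in Proposition~\ref{p-B-nobase-monotonicity}; the real work is the independence at base $D$, where the random-variable sort must also be controlled.

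First I would record the event-sort closures. Since every first order formula over $\ti 0,\ti 1$ defines an event of measure $0$ or $1$ (Lemma~\ref{l-big-embedding}), and since $\bo a(\omega),\bo b(\omega),\bo c(\omega)\in\{\ti 0(\omega),\ti 1(\omega)\}$ for almost all $\omega$, a short computation using Results~\ref{f-separable} and \ref{f-acl=dcl} gives $\dcl_\BB(\{\bo a\}\cup C_0)=\sigma(\{\sa D\})$, $\dcl_\BB(\{\bo c\}\cup C_0)=\sigma(\{\sa F\})$, $\dcl_\BB(\{\bo a,\bo c\}\cup C_0)=\sigma(\{\sa D,\sa F\})$, $\dcl_\BB(\{\bo b,\bo c\}\cup C_0)=\sigma(\{\sa E,\sa F\})$ and $\dcl_\BB(C_0)=\{\bot,\top\}$; moreover $X(\omega)=C_0(\omega)$ for almost all $\omega$ whenever $C_0\subseteq X\subseteq\{\bo a,\bo b,\bo c\}\cup C_0$. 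Then, just as in Proposition~\ref{p-B-nobase-monotonicity}, $\sa E$ belongs to $\sigma(\{\sa D,\sa F\})\cap\sigma(\{\sa E,\sa F\})=\dcl_\BB(AC)\cap\dcl_\BB(BC)$ but not to $\sigma(\{\sa F\})=\dcl_\BB(C)$ (its measure is $1/4$), so the event-sort clause in the definition of $\ind[a]$ over $\cu N$ already fails and $A\nind[a]_C B$. Likewise $\sigma(\{\sa D\})\cap\sigma(\{\sa E,\sa F\})=\{\bot,\top\}$, since the only members of $\sigma(\{\sa E,\sa F\})$ of measure $1/2$ are $\sa F$ and $\neg\sa F$, and neither is $\sa D$ by independence; hence the event-sort clause at base $D$ holds.

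It remains to prove $A\ind[a]_D B$. Since $\ti 0,\ti 1\in D$, Remark~\ref{r-simplified} reduces this to the single equation $\dcl(AD)\cap\dcl(BD)=\dcl(D)$, which is the crux. Given $\bo d$ in the left-hand side, Result~\ref{f-dcl3} shows that $\bo d$ is pointwise definable over $AD$, hence over $D$ because $AD(\omega)=D(\omega)$ for almost all $\omega$; the same result gives $\fo_\BB(\{\bo d\}\cup AD)\subseteq\dcl_\BB(AD)=\sigma(\{\sa D\})$ and, from $\bo d\in\dcl(BD)$, $\fo_\BB(\{\bo d\}\cup BD)\subseteq\dcl_\BB(BD)=\sigma(\{\sa E,\sa F\})$, so by monotonicity of $\fo_\BB$ we get $\fo_\BB(\{\bo d\}\cup D)\subseteq\sigma(\{\sa D\})\cap\sigma(\{\sa E,\sa F\})=\{\bot,\top\}=\dcl_\BB(D)$. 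Applying Result~\ref{f-dcl3} in the converse direction yields $\bo d\in\dcl(D)$; the reverse inclusion being trivial, $A\ind[a]_D B$, and base monotonicity fails. The main obstacle is precisely this last equation: one has to rule out that an element of $\cu K$ glued together along the events $\sa D,\sa E,\sa F$ lies in the intersection without already being definable over $\{\ti 0,\ti 1\}$, and this is where the structure theorem for definable closure in $T^R$ (Result~\ref{f-dcl3}), together with the fact that $\bo a,\bo b,\bo c$ are pointwise definable over $\{\ti 0,\ti 1\}$, does the work; the event-sort bookkeeping is routine and parallels Proposition~\ref{p-B-nobase-monotonicity}.
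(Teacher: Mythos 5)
Your proposal is correct and follows essentially the same route as the paper: the same counterexample $\bo a=1_{\sa D}$, $\bo b=1_{\sa E}$, $\bo c=1_{\sa F}$ over the base $\{\ti 0,\ti 1\}$, with the failure at base $\{\bo c\}\cup\{\ti 0,\ti 1\}$ read off from the event sort as in Proposition~\ref{p-B-nobase-monotonicity}, and the independence over $\{\ti 0,\ti 1\}$ established via Result~\ref{f-dcl3} together with the observation that all the elements involved are pointwise definable over $\ti 0,\ti 1$. The paper phrases this by explicitly characterizing the four definable closures $\dcl(\bo a Z)$, $\dcl(\bo c Z)$, $\dcl(\bo a\bo c Z)$, $\dcl(\bo b\bo c Z)$, but the underlying computation is the one you carry out.
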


\begin{proof}   As in the proof of Proposition \ref{p-B-nobase-monotonicity}, we take two independent events $\sa D, \sa F$ in $\cu E$ of probability $1/2$,
and let $\sa E=\sa D\sqcap\sa F$. $\bo a=1_{\sa D}$, $\bo b = 1_{\sa E}$, and $\bo c=1_{\sa F}$.
Let $Z=\ti 0 \ti 1$.  Note that any element of $\cu K$ that is pointwise definable from $\bo a\bo b\bo c$ is pointwise definable from $Z$.
By Result \ref{f-dcl3} and the proof of Proposition \ref{p-B-nobase-monotonicity}, we have
$$ \dcl(\bo a Z)=\{\bo x\in\dcl^\omega(Z)\colon \fo_\BB(\bo x Z)\subseteq\sigma(\{\sa D\})\},$$
$$ \dcl(\bo c Z)=\{\bo x\in\dcl^\omega(Z)\colon \fo_\BB(\bo x Z)\subseteq\sigma(\{\sa F\})\},$$
$$ \dcl(\bo a \bo c Z)=\{\bo x\in\dcl^\omega(Z)\colon \fo_\BB(\bo x Z)\subseteq\sigma(\{\sa D,\sa F\})\},$$
$$ \dcl(\bo b \bo c Z)=\{\bo x\in\dcl^\omega(Z)\colon \fo_\BB(\bo x Z)\subseteq\sigma(\{\sa E,\sa F\})\}.$$
It follows that $\bo a\ind[a]_Z \bo b\bo c Z$ but $\bo a\nind[a]_{\bo c Z}  \bo b\bo c Z$, so $\ind[a]$ over $\cu N$ does not have base monotonicity.
\end{proof}

As an example, we look at the relations $\ind[a]$ and $\ind[M] \ $ in the continuous theory $\DLO^R$, the randomization of the theory of dense linear order without endpoints.
We will see that these relations are much more complicated in $\DLO^R$ than they are in $\DLO$.  This example is motivated by the open question \ref{q-DLO-rosy}.

\begin{ex}  Let $T=\DLO$, the theory of dense linear order without endpoints.  Over $\cu M$ we have $\acl(A)=\dcl(A)=A$ for every set $A$.  Thus in $\cu M$ the lattice
of algebraically closed sets is modular, and $\ind[a]=\ind[M]=\thind$.   But Proposition \ref{p-a-nobase-monotonicity} shows that in $\cu N$
the relation $\ind[a]$ does not have base monotonicity and hence $\ind[a]\ne\ind[M]$.  Proposition 4.2.3 of [AGK] shows that for every finite
set $A\subseteq\cu K$, $\dcl(A)$ is the smallest set $B\supseteq A$ such that whenever $\bo a, \bo b, \bo c, \bo d\in B$, the characteristic function
of $\l a < b\rr$ with respect to $\bo c, \bo d$ belongs to $B$.
Let $\bo a\vee\bo b$ and $\bo a\wedge\bo b$ denote the pointwise maximum and minimum, respectively.
We leave it to the reader to work out the following characterizations
of $A\ind[a]_C B$ and $A\ind[M]_C B$ in the simple case that $A, B, C$ are singletons in $\cu N$.
\begin{enumerate}
\item $\bo{a}\ind[M]_\emptyset \ \bo{b} \Leftrightarrow\bo{a}\ind[a]_\emptyset \bo{b}\Leftrightarrow\bo{a}\not=\bo{b}.$
\item  $\acl(\bo a\bo b)=\{\bo a,\bo b,\bo a\vee\bo b,\bo a\wedge\bo b\}$.
\item $\bo{a}\ind[a]_{\bo{c}}\bo{b}\Leftrightarrow
\{\bo{a},\bo{c},\bo{a}\vee\bo{c},\bo{a}\wedge\bo{c}\}\cap \{\bo{b},\bo{c},\bo{b}\vee\bo{c},\bo{b}\wedge\bo{c}\}=\{\bo{c}\}.$
\item If $\bo b\in\{\bo b\vee\bo c,\bo b\wedge\bo c\}$,  then $\bo{a}\ind[M]_{\bo{c}}\bo{b}\Leftrightarrow\bo{a}\ind[a]_{\bo{c}}\bo{b}$.
\item  If $\bo b\notin\{\bo b\vee\bo c,\bo b\wedge\bo c\}$, then $\bo{a}\ind[M]_{\bo{c}}\bo{b}$ if and only if:
\begin{itemize}
\item $\bo{a}\ind[a]_{\bo{c}}\bo{b}$, and
\item $\bo{b}\notin \dcl(\{\bo{a},\bo{c},\bo{b}\wedge\bo c\})$, and
\item $\bo{b}\notin \dcl(\{\bo{a},\bo{c},\bo{b}\vee\bo c\})$.
\end{itemize}
\end{enumerate}

Now take $\bo a,\bo b,\bo c$ such that
$$0<\mu(\l a=b\rr)=\mu(\l b<c \rr)<\mu(\l a<c\rr)<1$$
and
$$ \mu(\l a = c\rr)=\mu(\l  b= c\rr)=0,$$
and use (5) to show that $\bo a\ind[M]_{\bo c} \bo b$ but $\bo b\nind[M]_{\bo c} \bo a$.  Thus $\ind[a]$, $\ind[M] \ $, and $\thind$ are all different in the
big model $\cu N$ of $\DLO^R$.
\end{ex}

\section{Pointwise Independence}  \label{s-pointwise}

\subsection{The General Case}

%%  Sept 2014:  Many changes in this section.  Existence of $\ind[I\omega]$ and easy results do not need measurability, so order of results is changed.

\begin{df}  \label{d-pointwise}
If $\ind[I]$ is a ternary relation over $\cu M$ that has monotonicity, we let $\ind[I \omega]$ \quad
be the unique countably based relation over $\cu N$ such that for all countable $A,B,C$,
$$ A\ind[I \omega]_C \  B\Leftrightarrow A(\omega)\ind[I]_{C(\omega)} B(\omega) \ \as.$$
The unique existence of $\ind[I \omega]$ \quad follows from Lemma \ref{l-cbased} (2).
We say that \emph{$A$ is pointwise $I$-independent from $B$ over $C$} if $A\ind[I \omega]_C \ B$.
\end{df}

We will often use the notation $\l P\rr$ for the set $\{\omega\in\Omega\colon P(\omega)\}$
when $P(\omega)$ is a statement involving elements $\omega$ of $\Omega$.
Since $(\Omega,\cu F,\mu)$ is a complete probability space,  $P(\omega)$ holds  $\as$
if and only if $\mu(\l P \rr)=1$.
For instance, if $\ind[I]$ is a ternary relation over $\cu M$, then for all countable sets $A,B,C\subseteq\cu K$,
$$\l A\ind[I]_C B\rr=\{\omega\in\Omega \, : \, A(\omega)\ind[I]_{C(\omega)} \ B(\omega)\},$$
and
$$ A\ind[I \omega]_C \  B\Leftrightarrow\mu(\l A\ind[I]_C B\rr)=1.$$

\begin{cor} \label{c-pointwise-implies}
If $\ind[I]$ and $\ind[J]$ are  ternary relations over $\cu M$ with monotonicity, and $\ind[I]\Rightarrow\ind[J]$, then
$\ind[I \omega] \ \ \Rightarrow \ind[J \omega] \ \ $.
\end{cor}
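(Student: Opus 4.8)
The plan is to reduce immediately to countable sets and then transport the implication $\ind[I]\Rightarrow\ind[J]$ fiber by fiber over $\Omega$. Both $\ind[I\omega]$ and $\ind[J\omega]$ are countably based by Definition \ref{d-pointwise}, so Lemma \ref{l-cbased} (1) shows that the desired implication $A\ind[I\omega]_C B\Rightarrow A\ind[J\omega]_C B$ holds for all small $A,B,C$ as soon as it holds for all \emph{countable} $A,B,C$. Thus I would fix countable $A,B,C\subseteq\cu K$ and assume $A\ind[I\omega]_C B$.

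Next I would unwind the two definitions at the level of $\Omega$. By Definition \ref{d-pointwise}, $A\ind[I\omega]_C B$ says that $A(\omega)\ind[I]_{C(\omega)}B(\omega)$ holds $\as$, i.e.\ $\l A\ind[I]_C B\rr$ contains a set $\sa A\in\cu F$ with $\mu(\sa A)=1$. The hypothesis $\ind[I]\Rightarrow\ind[J]$ holds for \emph{all} small parameter sets in $\cu M$; in particular, for every fixed $\omega\in\Omega$ the sets $A(\omega),B(\omega),C(\omega)$ are small subsets of $M$, so $A(\omega)\ind[I]_{C(\omega)}B(\omega)$ entails $A(\omega)\ind[J]_{C(\omega)}B(\omega)$. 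Hence there is a pointwise inclusion $\l A\ind[I]_C B\rr\subseteq\l A\ind[J]_C B\rr$, so the same $\sa A\in\cu F$ witnesses that $A(\omega)\ind[J]_{C(\omega)}B(\omega)$ holds $\as$; that is, $A\ind[J\omega]_C B$, as required.

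I do not expect a genuine obstacle here; the corollary is essentially immediate from the set-up. The one point worth a moment's care is the interaction with the ``almost surely'' convention: since holding $\as$ only requires $\l P\rr$ to \emph{contain}, not equal, a measure-one event of $\cu F$, the pointwise inclusion $\l A\ind[I]_C B\rr\subseteq\l A\ind[J]_C B\rr$ already suffices to carry the witnessing event across, and no measurability of the sets $\l A\ind[I]_C B\rr$ or $\l A\ind[J]_C B\rr$ themselves is needed.
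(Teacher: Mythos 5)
Your proposal is correct and is essentially the paper's own argument: the paper's proof simply cites Lemma \ref{l-cbased} (1), which reduces the implication to countable $A,B,C$, where it follows from the pointwise definition exactly as you describe. Your added remark about the ``contains a measure-one event'' convention making measurability of $\l A\ind[J]_C B\rr$ unnecessary is a correct and worthwhile clarification, but it does not change the route.
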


\begin{proof}  This follows from Lemma \ref{l-cbased} (1).
\end{proof}

\begin{df}  A ternary relation $\ind[J]$ over $\cu N$ will be called \emph{pointwise anti-reflexive} if $\bo a\ind[J]_C \bo a$ implies $\bo a\in\acl^\omega(C)$.
\end{df}

Note that every anti-reflexive relation over $\cu N$ is pointwise anti-reflexive.

\begin{prop}  \label{p-omega-cbased}
Suppose $\ind[I]$ is a  ternary relation over $\cu M$ that has monotonicity.
\begin{enumerate}
\item  $\ind[I \omega]$ \ \ is countably based and has monotonicity and two-sided countable character.
\item If $\ind[I]$ has invariance, base monotonicity, transitivity, normality, symmetry, or the countable union property,
then  $\ind[I \omega]$ \ \ has the same property.
\item Suppose $\ind[I]$ has invariance.  If $\ind[I\omega] \ \ $ has base monotonicity, transitivity, normality, symmetry, or the countable union property,
then $\ind[I]$ has the same property.
\item If $\ind[I]$ is anti-reflexive, then $\ind[I\omega] \ \ $ is pointwise anti-reflexive.
\end{enumerate}
\end{prop}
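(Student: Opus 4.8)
\medskip
\noindent\textbf{Proof proposal.} Write $\ind[I\omega]$ for the relation of Definition~\ref{d-pointwise}; by construction it is countably based, so part~(1) is immediate from Lemma~\ref{l-cbased}(3), which says that every countably based relation has monotonicity and two-sided countable character. Recall also that on countable sets $\ind[I\omega]$ is given directly by the pointwise condition, $A\ind[I\omega]_C B\iff A(\omega)\ind[I]_{C(\omega)}B(\omega)$ $\as.$

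For part~(2), note that $\ind[I\omega]$ has monotonicity by~(1) and equals its own countably based closure; hence Proposition~\ref{p-preserve} shows that it is enough to verify invariance, base monotonicity, transitivity, normality and symmetry for $\ind[I\omega]$ \emph{on countable sets}, while the countable union property involves only countable sets and is verified directly. For base monotonicity, transitivity, normality, symmetry and the countable union property the verification on countable sets is routine: each such property is an implication between assertions of the form ``$\cdots$ $\as.$'', the (at most countably many) hypothesis events $\{\omega:\cdots\}$ all have measure one, so their intersection does too, and on that intersection the required pointwise implication is an instance of the same property of $\ind[I]$ over $\cu M$. Here one uses that $\omega\mapsto A(\omega)$ commutes with the lattice operations, so that $D\subseteq C\subseteq B$ gives $D(\omega)\subseteq C(\omega)\subseteq B(\omega)$, and $C=\bigcup_n C_n$ with $C_n\subseteq C_{n+1}$ gives $C(\omega)=\bigcup_n C_n(\omega)$ with $C_n(\omega)\subseteq C_{n+1}(\omega)$.

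The one substantive point --- and, I expect, the main obstacle of the proposition --- is invariance of $\ind[I\omega]$ on countable sets. By strong quantifier elimination (Result~\ref{f-qe}), for fixed enumerations $\vec a,\vec b,\vec c$ of countable $A,B,C\subseteq\cu K$ the type $\tp^{\cu N}(ABC)$ is exactly the law of the random type $X(\omega):=\tp^{\cu M}(\vec a(\omega)\vec b(\omega)\vec c(\omega)/\emptyset)$, regarded as a measurable map from $(\Omega,\cu F,\mu)$ into the type space of $\cu M$, which is compact metrizable and hence standard Borel. Invariance of $\ind[I]$ over $\cu M$ makes $\l A\ind[I]_C B\rr$ equal to $X^{-1}(\Pi)$ for a fixed set $\Pi$ of types, and the heart of the matter is to show that ``$X^{-1}(\Pi)$ contains a $\mu$-measure-one event'' is equivalent to ``$\Pi$ is conull for the law of $X$'': the nontrivial direction is needed because $\Pi$ itself need not be Borel, and I would establish it via a disintegration of $\mu$ along $X$ (legitimate since the target is standard Borel), or equivalently via the classification of separable measure algebras. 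Granting this, the $\as.$ truth of the pointwise $\ind[I]$-condition depends only on the law of $X$, hence only on $\tp^{\cu N}(ABC)$, which is invariance. An alternative route is to realize an automorphism of $\cu N$ taking $(A,B,C)$ to $(A',B',C')$, on the countably generated sub-$\sigma$-algebras it affects, by a $\mu$-preserving transformation of $\Omega$.

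For part~(3), assume $\ind[I]$ has invariance. By Lemma~\ref{l-big-embedding}, for countable $A,B,C\subseteq M$ the set $\bigcap\{\l\varphi(\ti{\vec a},\ti{\vec b},\ti{\vec c})\rr:\varphi\in\tp^{\cu M}(ABC/\emptyset)\}\cap\bigcap\{\l\neg\varphi(\ti{\vec a},\ti{\vec b},\ti{\vec c})\rr:\varphi\notin\tp^{\cu M}(ABC/\emptyset)\}$ is a countable intersection of measure-one events, hence has measure one, and on it $\ti A(\omega)\ti B(\omega)\ti C(\omega)$ realizes $\tp^{\cu M}(ABC/\emptyset)$; invariance of $\ind[I]$ then gives $A\ind[I]_C B\iff\ti A\ind[I\omega]_{\ti C}\ti B$. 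Since $a\mapsto\ti a$ commutes with the lattice operations --- $\ti{AC}=\ti A\,\ti C$, $[\ti D,\ti B]=\widetilde{[D,B]}$, and $\bigcup_n\ti C_n=\widetilde{\bigcup_n C_n}$ --- each of base monotonicity, transitivity, normality, symmetry and the countable union property of $\ind[I\omega]$ pulls back along this embedding to the same property of $\ind[I]$. Finally, for part~(4): if $\bo a\ind[I\omega]_C\bo a$, then applying the countably based definition with $C'=\emptyset$ produces a countable $D\subseteq C$ with $\bo a\ind[I\omega]_D\bo a$; as $D$ is countable this reads $a(\omega)\ind[I]_{D(\omega)}a(\omega)$ $\as.$, so anti-reflexivity of $\ind[I]$ over $\cu M$ yields $a(\omega)\in\acl^{\cu M}(D(\omega))$ $\as.$, i.e.\ $\bo a\in\acl^\omega(D)\subseteq\acl^\omega(C)$; thus $\ind[I\omega]$ is pointwise anti-reflexive. (It is typically not anti-reflexive in the strict sense --- for instance $1_{\sa E}\ind[a\omega]_{\ti 0\ti 1}1_{\sa E}$ always holds --- which is why only the pointwise form is asserted.)
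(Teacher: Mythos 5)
Your proposal is correct, and for parts (1), (3), (4) and for base monotonicity, transitivity, normality, symmetry and the countable union property in part (2) it coincides with the paper's argument: reduce to countable sets via Lemma \ref{l-cbased} (4), intersect the countably many measure-one hypothesis events, and apply the corresponding property of $\ind[I]$ pointwise; your part (3) is the same pullback along $a\mapsto\ti a$ using Lemma \ref{l-big-embedding}. The genuine divergence is invariance in part (2). The paper disposes of it with ``the other proofs are similar but easier,'' whereas you correctly observe that it is not of the same shape: the hypothesis $(A',B',C')\equiv(A,B,C)$ in $\cu N$ is not an almost-sure pointwise statement, so the intersect-measure-one-events template does not apply, and one must pass through the law of the random type $X(\omega)=\tp^{\cu M}(\vec a(\omega)\vec b(\omega)\vec c(\omega))$, which quantifier elimination identifies with $\tp^{\cu N}(ABC)$. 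This supplies an argument the paper omits, and it is the right one. The only point you should make explicit is that the nontrivial implication --- ``$X^{-1}(\Pi)$ contains a measure-one event'' implies ``$\Pi$ contains a $\nu$-conull Borel set'' --- is not a formal consequence of the target being standard Borel: it uses a regularity property of the source $(\Omega,\cu F,\mu)$ (perfectness, which is what licenses the disintegration and which holds for the completed Borel and product spaces on which neat randomizations are actually constructed). For a pathological probability space, e.g.\ the trace of Lebesgue measure on a nonmeasurable set of full outer measure, one can have $X^{-1}(\Pi)=\Omega$ while $\Pi$ has inner measure $0$ for the law, and then invariance would genuinely fail; so this hypothesis, implicit in the paper's blanket choice of $\cu P$, is doing real work in your argument and deserves a sentence.
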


\begin{proof}  (1) By definition, $\ind[I \omega]$ \ \ is countably based.  It has monotonicity and two-sided countable character by Lemma \ref{l-cbased} (3).

(2) By Lemma \ref{l-cbased} (4), it suffices to show that if $\ind[I]$ has one of the listed properties for countable sets, then
$\ind[I \omega]$ \ \ has the same property for countable sets.

 We prove the result for transitivity.  The other proofs are similar but easier.
 Suppose $\ind[I]$ has transitivity for countable sets, and assume that $A,B,C,D$
 are countable and $B\ind[I \omega]_C \  A$, $C\ind[I \omega]_D \  A$ and $C\in[D,B]$.
We must prove $B\ind[I \omega]_D \ A$. We have $\mu(\l B\ind[I]_C \ A\rr)=1$, $\mu(\l C\ind[I]_D \ A\rr)=1$, and $\mu(\l C\in [D,B]\, \rr)=1$.  Since
$\ind[I]$ has transitivity for countable sets, $\mu(\l B\ind[I]_D \ A\rr)=1$.  This shows that $B\ind[I \omega]_D  \ A$.

(3)  We again prove the result for transitivity.  Suppose $\ind[I\omega] \ \ $ has transitivity and let $A_0, B_0, C_0, D_0$ be countable subsets of $M$
such that $B_0\ind[I]_{C_0} \ A_0$ and $C_0\ind[I]_{D_0} \  A_0$ in $\cu M$, and $C_0\in[D_0,B_0]$.  Let $A, B, C, D$ be the images of $A_0, B_0, C_0, D_0$ in $\cu K$ under the
mapping $a\mapsto\ti a$.  Since $\ind[I]$ has invariance, it follows that $B\ind[I \omega]_C \  A$, $C\ind[I \omega]_D \  A$ and $C\in[D,B]$.
Since $\ind[I\omega] \ \ $ has transitivity, $B\ind[I \omega]_D \ A$.  Using invariance for $\ind[I]$ again, we have  $B_0\ind[I]_{D_0} \ A_0$.

(4)  Suppose $\ind[I]$ is anti-reflexive and $\bo a\ind[I\omega]_C \ \ \bo a$.  Then $\bo a\ind[I\omega]_D \ \ \bo a$ for some countable $D\subseteq C$,
so $a(\omega)\ind[I]_{D(\omega)} \ \ a(\omega) \as$, and hence $\bo a\in\acl^\omega(D)\subseteq\acl^\omega(C)$.
\end{proof}

To sum up, we have:

\begin{cor}  \label{c-summary}  If $\ind[I]$ satisfies the basic axioms for an independence relation over $\cu M$ ,
then $\ind[I \omega]$ \ \ over $\cu N$ also satisfies these axioms.
\end{cor}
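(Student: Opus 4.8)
The plan is to obtain all five basic axioms for $\ind[I\omega]$ over $\cu N$ directly from Proposition \ref{p-omega-cbased}, with no further work. Recall that the basic axioms are invariance, monotonicity, base monotonicity, transitivity, and normality; since by hypothesis $\ind[I]$ satisfies the basic axioms over $\cu M$, it in particular has each of these five properties.

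First I would dispatch monotonicity: Proposition \ref{p-omega-cbased}(1) already asserts that $\ind[I\omega]$ is countably based and has monotonicity (and two-sided countable character), so nothing new is needed there. Next, for the remaining four axioms—invariance, base monotonicity, transitivity, and normality—I would observe that each one occurs explicitly in the list of properties appearing in Proposition \ref{p-omega-cbased}(2). Applying that proposition once for each of these four properties, and using that $\ind[I]$ has all four (being among the basic axioms it satisfies), yields that $\ind[I\omega]$ has all four as well. Combining this with monotonicity from the previous step gives all five basic axioms for $\ind[I\omega]$, which is precisely the assertion of the Corollary.

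There is essentially no obstacle at the level of the Corollary itself: the genuine content has already been packaged into Proposition \ref{p-omega-cbased}. The only point worth flagging is where that content actually resides. Proposition \ref{p-omega-cbased}(2) reduces, via Lemma \ref{l-cbased}(4), to checking each property only for countable $A,B,C$; and for countable sets the verification is the routine ``almost everywhere'' transfer, e.g. for transitivity one intersects the three measure-one sets $\l B\ind[I]_C A\rr$, $\l C\ind[I]_D A\rr$, and $\l C\in[D,B]\rr$ and applies transitivity of $\ind[I]$ pointwise to conclude $\mu(\l B\ind[I]_D A\rr)=1$. If anything required care it would be the measurability of the sets $\l\cdots\rr$ involved, but that is already handled by working over the completed probability space fixed in the blanket assumptions, so in the write-up this Corollary can simply be stated as an immediate consequence of Proposition \ref{p-omega-cbased}(1) and (2).
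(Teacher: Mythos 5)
Your proposal is correct and matches the paper exactly: the paper states this corollary as an immediate summary of Proposition \ref{p-omega-cbased}, obtaining monotonicity from part (1) and invariance, base monotonicity, transitivity, and normality from part (2), just as you do. Your added remarks about the countable-set reduction and the a.e.\ transfer correctly identify where the real content lives (in Proposition \ref{p-omega-cbased} and Lemma \ref{l-cbased}), not in the corollary itself.
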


\begin{prop}  \label{p-pointwise-finitechar}
Suppose $\ind[I]$ is a  ternary relation over $\cu M$ that is countably based and has finite character and the countable union property.
Then  $\ind[I \omega]$ \ \ has finite character.
\end{prop}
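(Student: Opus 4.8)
The plan is to reduce to the countable case via the fact that $\ind[I\omega]$ is countably based (Proposition \ref{p-omega-cbased}(1)), and then to run the argument of Proposition \ref{p-preserve-finitechar} one level up over $\cu N$, carrying out the final appeal to finite character pointwise in $\Omega$. So assume $F\ind[I\omega]_C B$ for every finite $F\subseteq A$; to conclude $A\ind[I\omega]_C B$ it is enough, since $\ind[I\omega]$ is countably based, to fix countable $A'\subseteq A$, $B'\subseteq B$, $C'\subseteq C$ and to produce a countable $D\in[C',C]$ with $A'\ind[I\omega]_D B'$.

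First I would write $A'=\bigcup_n E_n$ with each $E_n$ finite and $E_n\subseteq E_{n+1}$. By the hypothesis and monotonicity, $E_n\ind[I\omega]_C B'$ for every $n$. Since $E_n$ and $B'$ are countable and $\ind[I\omega]$ is countably based, the characterization in Lemma \ref{l-cbased}(3) permits an induction on $n$ producing an increasing chain of countable sets $D_0\subseteq D_1\subseteq\cdots$ with $D_n\in[C',C]$ and $E_n\ind[I\omega]_{D_n} B'$, where at stage $n+1$ one applies that characterization with base $D_n$ (and with base $C'$ at stage $0$). Put $D=\bigcup_n D_n$, a countable member of $[C',C]$. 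By monotonicity, $E_n\ind[I\omega]_{D_k} B'$ whenever $n\le k$, so for each fixed $n$ the chain $\langle D_k\rangle_{k\ge n}$ has union $D$ and witnesses the hypotheses of the countable union property; since $\ind[I]$ has the countable union property, so does $\ind[I\omega]$ by Proposition \ref{p-omega-cbased}(2), whence $E_n\ind[I\omega]_D B'$ for every $n$, and then by monotonicity $F\ind[I\omega]_D B'$ for every finite $F\subseteq A'$.

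The last step, passing from ``$F\ind[I\omega]_D B'$ for all finite $F\subseteq A'$'' to ``$A'\ind[I\omega]_D B'$'' with $A',B',D$ all countable, is where finite character of $\ind[I]$ enters, and it is the point that needs the most care. Here I would unwind the definition of $\ind[I\omega]$ on countable sets: each statement $F\ind[I\omega]_D B'$ says that $F(\omega)\ind[I]_{D(\omega)} B'(\omega)$ holds $\as.$, and since $A'$ is countable there are only countably many finite $F\subseteq A'$, so intersecting these full-measure events yields that for almost every $\omega$ one has $F(\omega)\ind[I]_{D(\omega)} B'(\omega)$ simultaneously for all finite $F\subseteq A'$. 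For such $\omega$, every finite subset $G$ of $A'(\omega)$ is of the form $F(\omega)$ for a suitable finite $F\subseteq A'$ (pick for each element of $G$ an element of $A'$ mapping to it under $\omega$), so $G\ind[I]_{D(\omega)} B'(\omega)$ by monotonicity of $\ind[I]$; hence finite character of $\ind[I]$ over $\cu M$ gives $A'(\omega)\ind[I]_{D(\omega)} B'(\omega)$. As this holds $\as.$, we obtain $A'\ind[I\omega]_D B'$, as required. The only genuinely delicate points are that countability of $A'$ is exactly what lets the almost-sure quantifier be pulled past the quantification over finite subsets, and that finite subsets of $A'(\omega)$ are realized as $F(\omega)$ for finite $F\subseteq A'$; once these are noted, the rest is routine bookkeeping with monotonicity and the countable union property.
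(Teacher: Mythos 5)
Your proposal is correct and follows essentially the same route as the paper's proof: reduce to countable $A',B',C'$ via countable basedness, build an increasing chain of countable bases $D_n$ with $E_n\ind[I\omega]_{D_n}B'$, pass to the union $D$ using the countable union property, and finish by applying finite character of $\ind[I]$ pointwise almost surely. The only cosmetic difference is that you invoke the countable union property at the level of $\ind[I\omega]$ (via Proposition \ref{p-omega-cbased}) while the paper applies it pointwise to $\ind[I]$, and your final paragraph spells out the almost-sure bookkeeping that the paper leaves implicit.
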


\begin{proof}  Suppose $A'\ind[I \omega]_C \ B$ for all finite $A'\subseteq A$.  Let $A_0\subseteq A, B_0\subseteq B, C_0\subseteq C$
be countable.  We must find a countable $D\in[C_0,C]$ such that $A_0\ind[I\omega]_{D} \ B_0$.  We may write $A_0=\bigcup_n E_n$ where
$E_0\subseteq E_1\subseteq\cdots$ and each $E_n$ is finite.  Then $E_n\ind[I \omega]_C B$ for each $n$.

Since $\ind[I \omega] \  $ is countably
based, there are countable sets $\<D_n\>_{n\in\BN}$ such that for each $n$, $D_n\in[C_0,C]$, $D_n\subseteq D_{n+1}$, and $E_n\ind[I \omega]_{D_n} \ B_0$,
and hence $E_n\ind[I]_{D_n} B_0 \ \as.$    Let $D=\bigcup_n D_n$.
Since $\ind[I]$ has the countable union property, for each $n$ we have $ E_n\ind[I]_{D} B_0 \ \as$.
Since $\ind[I]$ has finite character, $ A_0\ind[I]_{D} B_0 \ \as$., so $A_0\ind[I\omega]_D \ B_0$.
\end{proof}

\begin{df}  \label{d-measurable}
We say that a ternary relation $\ind[I]$ over $\cu M$ is \emph{measurable} if
 $\l A\ind[I]_C B\rr\in\cu F$ for all countable $A,B,C\subseteq\cu K$.
\end{df}

Measurability will be useful in showing that particular pointwise relations satisfy countable versions of full existence.
We will sometimes use measurability without explicit mention in the following way:  if $\ind[I]$ is measurable and $A \nind[I]_C B$, then $\mu(\l A\ind[I]_C B\rr)=r$ for some $r<1$.

Our next lemma gives a useful sufficient condition for measurability.
 $L_{\omega_1\omega}$ is the infinitary logic that contains first order logic and is closed under countable conjunctions and disjunctions, negations,
and finite existential and universal quantifiers.   An $L_{\omega_1\omega}$ formula is said to be \emph{conjunctive}  if it is built from first order
formulas using only countable conjunctions, and finite conjunctions, disjunctions, and quantifiers.  By a \emph{Borel-conjunctive} formula we mean an
$L_{\omega_1\omega}$ formula that is built from conjunctive formulas using only negations and finite and countable conjunctions and disjunctions.

The following result is a consequence of Theorem 2.3 in [Ke2].

\begin{result}  \label{f-conjunctive}
Suppose  $\cu M$ is an $\aleph_1$-saturated first order structure.  For every countable set $X$ of variables and conjunctive $L_{\omega_1\omega}$-formula $\theta(X)$
there is a countable set $\cu A(\theta)$ of first order formulas (the set of finite approximations of $\theta$) such that
$$\cu M\models (\forall X)\left[\theta(X)\leftrightarrow\bigwedge\left\{\psi(X)\colon\psi\in\cu A(\theta)\right\}\right].$$
\end{result}

We say that $\ind[I]$ is \emph{definable by} an $L_{\omega_1\omega}$  formula $\varphi(X,Y,Z)$ in $\cu M$ with countable sets of variables $X,Y,Z$ if for all countable sets
$A, B, C$ indexed by $(X,Y,Z)$ in $\cu M$, we have
$$A\ind[I]_C B\Leftrightarrow\cu M\models\varphi(A,B,C).$$

\begin{lemma}  \label{l-conjunctive-measurable}
Suppose $\ind[I]$ is definable by a Borel-conjunctive formula $\varphi$ in $\cu M$.  Then $\ind[I]$ is measurable, and for all countable sets $A, B, C\subseteq\cu K$,
the reduction of $\l A\ind[I]_C B\rr$ belongs to $\dcl_\BB(ABC)$.
\end{lemma}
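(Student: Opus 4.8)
The goal is to show that if $\ind[I]$ is defined on countable tuples by a Borel-conjunctive $L_{\omega_1\omega}$-formula $\varphi(X,Y,Z)$, then for countable $A,B,C\subseteq\cu K$ the set $\l A\ind[I]_C B\rr$ is measurable and its reduction lies in $\dcl_\BB(ABC)$. The natural strategy is to induct on the Borel-conjunctive structure of $\varphi$, reducing everything to the case of a conjunctive formula, which in turn reduces (via Result \ref{f-conjunctive}, applied in the $\aleph_1$-saturated structure $\cu M$) to the case of a single first order formula. So first I would fix countable index sets $X,Y,Z$ and countable tuples $\bo a$ (for $A$), $\bo b$ (for $B$), $\bo c$ (for $C$) from $\cu K$, together with their representatives $a,b,c$ in $\cu L$.

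\emph{Base case.} If $\psi(X,Y,Z)$ is a first order formula involving only finitely many of the variables, then
$$\l\psi(a,b,c)\rr=\{\omega\in\Omega\colon\cu M\models\psi(a(\omega),b(\omega),c(\omega))\}$$
is exactly the event $\l\psi(\vec{\bo a},\vec{\bo b},\vec{\bo c})\rr$ given by Definition \ref{d-neat}(3); it belongs to $\cu F$, and its reduction is an element of $\fo_\BB(ABC)\subseteq\dcl_\BB(ABC)$ by Result \ref{f-separable}. So the claim holds for first order $\psi$.

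\emph{Conjunctive case.} Suppose $\theta(X,Y,Z)$ is conjunctive. Apply Result \ref{f-conjunctive} in $\cu M$ (legitimate since our blanket big model $\cu M$ is $\aleph_1$-saturated, indeed $\upsilon$-saturated) to get a countable set $\cu A(\theta)$ of first order formulas with
$$\cu M\models(\forall X)(\forall Y)(\forall Z)\Bigl[\theta\leftrightarrow\bigwedge\{\psi\colon\psi\in\cu A(\theta)\}\Bigr].$$
Then for every $\omega\in\Omega$, $\cu M\models\theta(a(\omega),b(\omega),c(\omega))$ iff $\cu M\models\psi(a(\omega),b(\omega),c(\omega))$ for all $\psi\in\cu A(\theta)$; hence $\l\theta(a,b,c)\rr=\bigcap_{\psi\in\cu A(\theta)}\l\psi(a,b,c)\rr$, a countable intersection of events in $\cu F$, so it is in $\cu F$. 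Its reduction is therefore $\bigsqcap_{\psi\in\cu A(\theta)}\sa E_\psi$ where each $\sa E_\psi$ is the reduction of $\l\psi(a,b,c)\rr\in\fo_\BB(ABC)$; since $\dcl_\BB(ABC)=\sigma(\fo_\BB(ABC))$ by Result \ref{f-separable}, this countable meet lies in $\dcl_\BB(ABC)$.

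\emph{Borel-conjunctive case and conclusion.} A Borel-conjunctive formula $\varphi$ is built from conjunctive formulas by negation and countable conjunctions and disjunctions. For negation, $\l\neg\chi\rr=\Omega\setminus\l\chi\rr$, and the reduction of the complement is $\neg$ applied to the reduction, which stays in the $\sigma$-algebra $\dcl_\BB(ABC)$; for countable conjunctions and disjunctions we take countable intersections and unions of events in $\cu F$, with reductions obtained by applying $\sqcap$ and $\sqcup$ countably often, again landing in $\dcl_\BB(ABC)$ since it is a $\sigma$-subalgebra of $\cu E$. So by induction on the construction of $\varphi$, $\l\varphi(a,b,c)\rr\in\cu F$ and its reduction is in $\dcl_\BB(ABC)$. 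Finally, since $\ind[I]$ is definable by $\varphi$, for countable $A,B,C$ we have $\l A\ind[I]_C B\rr=\l\varphi(a,b,c)\rr$, which gives both that $\ind[I]$ is measurable (Definition \ref{d-measurable}) and the stated membership of its reduction in $\dcl_\BB(ABC)$.

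\emph{Main obstacle.} The one genuine subtlety is making sure the inductive bookkeeping of countably many events and their reductions is coherent: one must check that the reduction map commutes with countable Boolean operations (so that the reduction of a countable intersection really is the countable meet of reductions), which is exactly the content of $\cu F$ being a $\sigma$-algebra under $\Omega,\emptyset,\cap,\cup,\setminus$ (Definition \ref{d-neat}(1)) together with $\dcl_\BB(ABC)=\sigma(\fo_\BB(ABC))$ being closed under these operations. Everything else — the base case, the appeal to Result \ref{f-conjunctive}, and the outer induction — is routine.
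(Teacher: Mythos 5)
Your proposal is correct and follows essentially the same route as the paper: Result \ref{f-conjunctive} reduces each conjunctive subformula to a countable intersection of first order events, the $\sigma$-algebra structure of $\cu F$ handles the Borel-conjunctive closure, and the reduction claim follows from membership of the first order events in $\fo_\BB(ABC)$ together with $\sigma$-additivity of $\mu$ (which is what makes the reduction map commute with countable Boolean operations, the point you flag as the main subtlety) and Result \ref{f-separable}. The paper's proof is just a terser version of your induction.
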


\begin{proof}  By Result \ref{f-conjunctive}, for every conjunctive formula $\theta(X,Y,Z)$ and all countable
sets $A,B,C\subseteq\cu K$ we have
$$\l\theta(A,B,C)\rr=\bigcap\{\l\psi(A,B,C)\rr\colon\psi\in\cu A(\theta)\}.$$
By definition, $\l\psi(A,B,C)\rr\in\cu F$ for every first order formula $\psi$.  Since $\cu F$ is a $\sigma$-algebra,
it follows that $\l\theta[A,B,C)\rr\in\cu F$ for every conjunctive formula $\theta$.  Since $\varphi$ is Borel-conjunctive, we have
$$\l A\ind[I]_C B\rr=\l\varphi(A,B,C)\rr\in\cu F.$$
Because the reduction of $\l\psi(A,B,C)\rr$ belongs to $\fo_\BB(ABC)$ for each first order formula $\psi$, and $\mu$ is $\sigma$-additive,
we see from Result \ref{f-separable} that the reduction of $\l\varphi(A,B,C)\rr$ belongs to $\dcl_\BB(ABC)$.
\end{proof}

\subsection{Pointwise Algebraic Independence}

%%  Sept 2014:  In this section, measurability now comes later.

In this section we will show that the relation $\ind[a\omega] \ \ $ has various independence properties.
Since $\ind[a]$ over $\cu M$ has monotonicity, $\ind[a\omega] \ \ $ over  $\cu N$ exists.

\begin{df} We call a first order formula $\theta(u,X)$ \emph{algebraical (with bound $n$)} if
$$T\models(\forall X)(\exists^{\le n} u)\theta(u,X).$$
If $\theta(u,X)$ is algebraical and $A$ is indexed by $X$, we say that $\theta(u,A)$ is algebraical over $A$.
\end{df}

Note that in $\cu M$, we have $b\in\acl(A)$ if and only if $b$ satisfies some algebraical formula over $A$ in $\cu M$.

\begin{cor}  \label{c-aa-omega-easy}  The relation
$\ind[a\omega] \ \ $  over $\cu N$ satisfies invariance, monotonicity, transitivity, normality, finite character, symmetry, the countable union property,
and pointwise anti-reflexivity. Moreover, $\ind[a\omega] \ $ has base monotonicity if and only if $\ind[a]$ has base monotonicity in $\cu M$
(and thus if and only if the lattice of algebraically closed sets in $\cu M$ is modular).
\end{cor}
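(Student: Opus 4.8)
The plan is to obtain every listed property of $\ind[a\omega]$ over $\cu N$ by transferring the corresponding property of $\ind[a]$ over $\cu M$ through the machinery of Section~\ref{s-pointwise}. By the classical fact recalled just before Proposition~\ref{p-alg-indep} — that in first order logic $\ind[a]$ satisfies every axiom for a strict independence relation except possibly base monotonicity — the relation $\ind[a]$ over $\cu M$ has invariance, monotonicity, transitivity, normality, finite character, symmetry, and anti-reflexivity. Two further facts about $\ind[a]$ over $\cu M$ are needed and are not literally axioms. First, $\ind[a]$ over $\cu M$ is countably based; this is the first order case of Proposition~\ref{p-a-cbased}. Second, $\ind[a]$ over $\cu M$ has the countable union property: if $A,B,C$ are countable, $C=\bigcup_n C_n$ with $C_n\subseteq C_{n+1}$ and $A\ind[a]_{C_n}B$ for all $n$, then by finite character of $\acl$ any $x\in\acl(AC)\cap\acl(BC)$ already lies in $\acl(AC_n)\cap\acl(BC_n)=\acl(C_n)\subseteq\acl(C)$ for $n$ large enough, so $A\ind[a]_CB$.

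With these in hand I would read off the properties of $\ind[a\omega]$ directly. Monotonicity of $\ind[a\omega]$ is part of Proposition~\ref{p-omega-cbased}(1). Invariance, transitivity, normality, symmetry, and the countable union property of $\ind[a\omega]$ follow from Proposition~\ref{p-omega-cbased}(2) applied to the corresponding properties of $\ind[a]$ over $\cu M$ just collected. Pointwise anti-reflexivity of $\ind[a\omega]$ follows from Proposition~\ref{p-omega-cbased}(4), since $\ind[a]$ over $\cu M$ is anti-reflexive. Finite character of $\ind[a\omega]$ follows from Proposition~\ref{p-pointwise-finitechar}, whose hypotheses on $\ind[a]$ over $\cu M$ — countably based, finite character, and the countable union property — are exactly what was established in the previous paragraph.

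It remains to treat base monotonicity. If $\ind[a]$ has base monotonicity over $\cu M$, then $\ind[a\omega]$ has base monotonicity by Proposition~\ref{p-omega-cbased}(2); conversely, since $\ind[a]$ over $\cu M$ has invariance, Proposition~\ref{p-omega-cbased}(3) shows that base monotonicity of $\ind[a\omega]$ over $\cu N$ forces base monotonicity of $\ind[a]$ over $\cu M$. Finally, the equivalence of base monotonicity of $\ind[a]$ over $\cu M$ with modularity of the lattice of algebraically closed sets of $\cu M$ is the first order statement of Proposition~1.5 of [Ad2] recalled in the discussion following Question~\ref{q-a-full-existence}. There is no real obstacle here: the corollary is pure bookkeeping once Propositions~\ref{p-omega-cbased}, \ref{p-pointwise-finitechar}, and \ref{p-a-cbased} are available, the only step requiring an argument rather than a citation being the (routine) countable union property of $\ind[a]$ over $\cu M$ handled above.
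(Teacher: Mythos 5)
Your proposal is correct and follows essentially the same route as the paper: the paper's proof simply lists the relevant properties of $\ind[a]$ over $\cu M$ (including the countable union property, which it asserts without proof) and then cites Propositions \ref{p-omega-cbased} and \ref{p-pointwise-finitechar}. You additionally supply the short verification of the countable union property via finite character of $\acl$, make explicit the countably-based hypothesis needed for Proposition \ref{p-pointwise-finitechar}, and spell out the base-monotonicity biconditional via parts (2) and (3) of Proposition \ref{p-omega-cbased} — all details the paper leaves implicit, and all handled correctly.
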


\begin{proof}  The relation $\ind[a]$ over $\cu M$ satisfies invariance, monotonicity, transitivity, normality, finite character, symmetry,
the countable union property, and anti-reflexivity.  Now apply
Proposition \ref{p-omega-cbased}, and Proposition \ref{p-pointwise-finitechar} for finite character.
\end{proof}

\begin{prop} \label{p-ABB}
For all small $A, B$ in $\cu K$, we have  $A\ind[a\omega]_B \ B$.
\end{prop}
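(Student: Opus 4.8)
The plan is to reduce the statement to the countable case via the fact that $\ind[a\omega]$ is countably based, and then observe that the countable case is immediate.

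The first step is to record the trivial identity underlying everything: for any sets $X,Y$ in $\cu M$, and any $Y'\subseteq Y$, one has $X\ind[a]_Y Y'$, since $Y'Y=Y\subseteq XY$ gives $\acl^{\cu M}(Y'Y)=\acl^{\cu M}(Y)\subseteq\acl^{\cu M}(XY)$, hence $\acl^{\cu M}(XY)\cap\acl^{\cu M}(Y'Y)=\acl^{\cu M}(Y)$. Applying this pointwise: for any countable $A',B',D\subseteq\cu K$ with $B'\subseteq D$ and for \emph{every} $\omega\in\Omega$ we get $A'(\omega)\ind[a]_{D(\omega)} B'(\omega)$ in $\cu M$; therefore $\l A'\ind[a]_D B'\rr=\Omega$, and by the defining clause of $\ind[a\omega]$ on countable sets (Definition \ref{d-pointwise}), $A'\ind[a\omega]_D B'$.

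The second step handles arbitrary small $A,B$. By Proposition \ref{p-omega-cbased}(1), $\ind[a\omega]$ is countably based, so to prove $A\ind[a\omega]_B B$ it suffices to verify the defining condition of countable-basedness for the triple $(A,B,B)$: given countable $A'\subseteq A$, $B'\subseteq B$, and $C'\subseteq B$, exhibit a countable $D$ with $C'\subseteq D\subseteq B$ and $A'\ind[a\omega]_D B'$. I would take $D:=C'\cup B'$; then $D$ is countable, lies in $[C',B]$, and contains $B'$, so the first step applies and gives $A'\ind[a\omega]_D B'$, as required.

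I do not expect any genuine obstacle here: the mathematical content is just the identity $\acl(XY)\cap\acl(Y)=\acl(Y)$ transported pointwise in $\cu M$ and then promoted to small sets through countable-basedness. The only point requiring a little care is the bookkeeping in the countable-basedness condition — namely that the base slot is $B$, so the auxiliary countable set $C'$ ranges over subsets of $B$ and the witness $D$ must be squeezed between $C'$ and $B$ — together with the reminder to put $B'$ into $D$ so that the first step is applicable.
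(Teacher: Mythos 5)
Your proof is correct and follows essentially the same route as the paper: both reduce to the countable case via countable-basedness, take the witness $D=B'\cup C'$, and then observe that $A'(\omega)\ind[a]_{D(\omega)} B'(\omega)$ holds for every $\omega$ because $B'(\omega)\subseteq D(\omega)$ (the paper phrases this as $A'(\omega)\ind[a]_{D(\omega)} D(\omega)$ followed by monotonicity, which is the same computation). No gaps.
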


\begin{proof}
The relation $\ind[a\omega] \ \ $ is countably based by definition, so to prove $A\ind[a\omega]_B \ B$ it suffices to show that
$$ (\forall^c A'\subseteq A)(\forall^c B'\subseteq B)(\forall^c C'\subseteq B)(\exists^c D\in[C',B])A'\ind[a\omega]_{D} \ B'.$$
Let $D=B'\cup C'$.  Then $D$ is countable and $D\in[C',B]$.   We show that $A'\ind[a\omega]_{D} \ B'$.

For every $\omega\in\Omega$, we have
$$\acl(A'(\omega)D(\omega))\cap\acl(B'(\omega)D(\omega))=\acl(A'(\omega)D(\omega))\cap\acl(D(\omega))=\acl(D(\omega)),$$
so  $A'(\omega)\ind[a]_{D(\omega)} \ D(\omega)$.  By monotonicity, $A'(\omega)\ind[a]_{D(\omega)} \ B'(\omega)$ for all $\omega$, and hence
$A'\ind[a\omega]_{D} \ B'$ as required.
\end{proof}

\begin{lemma}  \label{l-a-measurable}  The relation $\ind[a]$ on the first order theory $T$ is measurable.
\end{lemma}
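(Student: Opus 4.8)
The plan is to verify the hypothesis of Lemma~\ref{l-conjunctive-measurable} by exhibiting a Borel-conjunctive $L_{\omega_1\omega}$-formula $\varphi(X,Y,Z)$, with $X,Y,Z$ disjoint countably infinite sets of variables, that defines $\ind[a]$ in $\cu M$; measurability of $\ind[a]$ then follows at once (together with the extra conclusion that the reduction of $\l A\ind[a]_C B\rr$ lies in $\dcl_\BB(ABC)$). Since $\acl^{\cu M}(C)\subseteq\acl^{\cu M}(AC)\cap\acl^{\cu M}(BC)$ always holds, for subsets $A,B,C$ of $M$ we have $A\ind[a]_C B$ iff there is no $d$ with $d\in\acl^{\cu M}(AC)$, $d\in\acl^{\cu M}(BC)$, and $d\notin\acl^{\cu M}(C)$. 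The point I will exploit is that, by the remark following the definition of ``algebraical'', membership in an algebraic closure is a \emph{positive} countable disjunction of first order formulas; hence \emph{non}-membership is a countable conjunction of first order formulas, which is a conjunctive formula in the sense of the paper, and conjunctive formulas are closed under finite --- in particular single --- existential quantifiers.

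Concretely, for a (finite or infinite) tuple $W$ of variables drawn from $X\cup Y\cup Z$, let $\alpha_W(u)$ be the countable disjunction $\bigvee\{\theta(u,\vec w)\colon\theta\text{ algebraical and }\vec w\text{ a finite tuple of variables from }W\}$; this is a countable disjunction because $L$ is countable. Under any assignment sending the variables of $W$ to a tuple $\vec e$ in $\cu M$, the formula $\alpha_W$ holds of $d$ exactly when $d\in\acl^{\cu M}(\vec e)$. Write $\alpha_{XZ}(u)$ as a countable disjunction $\bigvee_i\sigma_i(u)$ with each $\sigma_i$ first order, likewise $\alpha_{YZ}(u)=\bigvee_j\tau_j(u)$, and note that $\neg\alpha_Z(u)$ is a countable conjunction of first order formulas, hence conjunctive. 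Pulling the single existential quantifier inside the countable disjunctions (a valid $L_{\omega_1\omega}$-equivalence, using also that a finite conjunction distributes over a countable disjunction), one sees that $A\nind[a]_C B$ is equivalent in $\cu M$ to
$$\bigvee_{i,j}(\exists u)\big[\sigma_i(u)\wedge\tau_j(u)\wedge\neg\alpha_Z(u)\big].$$
Each bracketed formula is a finite conjunction of conjunctive formulas, hence conjunctive, and applying $(\exists u)$ preserves conjunctiveness; thus the displayed formula is a countable disjunction of conjunctive formulas, i.e.\ Borel-conjunctive. Its negation $\varphi(X,Y,Z)$ is then Borel-conjunctive and defines $\ind[a]$ in $\cu M$, and Lemma~\ref{l-conjunctive-measurable} finishes the proof.

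The only delicate point, and the main obstacle, is the handling of quantifiers: Borel-conjunctive formulas are \emph{not} closed under quantification, so the naive formula $(\forall u)[\,(u\in\acl(XZ)\wedge u\in\acl(YZ))\to u\in\acl(Z)\,]$ is not visibly Borel-conjunctive. The remedy is to negate first, turning $\forall u$ into $\exists u$, and then to commute this single quantifier past the countable disjunctions produced by $\alpha_{XZ}$ and $\alpha_{YZ}$ so that it lands inside a conjunctive formula, where quantifiers \emph{are} allowed. The remaining verifications --- that finitely many algebraic-closure memberships and non-memberships together with one first order quantifier can be assembled within the conjunctive/Borel-conjunctive hierarchy, and that all the relevant disjunctions are countable --- are routine bookkeeping.
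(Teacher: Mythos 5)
Your proof is correct and is essentially the paper's own argument: after distributing the existential quantifier past the countable disjunctions, your formula $\bigvee_{i,j}(\exists u)[\sigma_i(u)\wedge\tau_j(u)\wedge\neg\alpha_Z(u)]$ is exactly the Borel-conjunctive formula the paper writes down directly (with $\sigma_i=\varphi_i(u,X,Z)$, $\tau_j=\psi_j(u,Y,Z)$, and $\neg\alpha_Z=\bigwedge_k\neg\chi_k(u,Z)$), and both proofs then conclude via Lemma~\ref{l-conjunctive-measurable}. Your extra care about why the quantifier can be pushed inside the disjunctions is a correct elaboration of a step the paper leaves implicit.
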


\begin{proof}  Let $\varphi_i(u,X,Z)$, $\psi_i(u,Y,Z)$, and $\chi_i(u,Z)$ enumerate all algebraical formulas over the indicated variables.
Then $\ind[a]$ is definable in $\cu M$ by the Borel-conjunctive formula
$$\neg\bigvee_i\bigvee_j(\exists u)[\varphi_i(u,X,Z)\wedge \psi_j(u,Y,Z) \wedge \bigwedge_k \neg\chi_k(u,Z)].$$
Hence by Lemma \ref{l-conjunctive-measurable}, $\ind[a]$ is measurable.
\end{proof}

\begin{thm}  \label{t-aa-omega-existence}
The  relation $\ind[a\omega] \ \ $ over $\cu N$ satisfies extension and  full existence for all countable sets $A, B, \widehat{B}, C$..
\end{thm}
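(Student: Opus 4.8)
The plan is to prove full existence for countable sets (extension will then follow from Remarks \ref{r-fe-vs-ext} together with the basic axioms, which $\ind[a\omega]$ has by Corollary \ref{c-aa-omega-easy}, and from Proposition \ref{p-ABB}, which gives $A\ind[a\omega]_C C$ for all small $A,C$). So fix countable $A,B,C\subseteq\cu K$; we must produce $A'\equiv_C A$ with $A'\ind[a\omega]_C B$, i.e. with $A'(\omega)\ind[a]_{C(\omega)} B(\omega)$ for $\mu$-almost all $\omega$. The idea is to build $A'$ fiberwise: for each $\omega$ we want a tuple $G(\omega)$ with $\tp^{\cu M}(G(\omega)/C(\omega))=\tp^{\cu M}(A(\omega)/C(\omega))$ and $\acl^{\cu M}(G(\omega)C(\omega))\cap\acl^{\cu M}(B(\omega)C(\omega))=\acl^{\cu M}(C(\omega))$. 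Such a $G(\omega)$ exists pointwise because $\ind[a]$ satisfies full existence in $\cu M$ (Proposition \ref{p-alg-indep}). The work is to glue these pointwise choices into an element of $\cu K$ preserving the type over $C$ globally.

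**Key steps.** First, reduce to the case that $A$, $B$, $C$ are finite tuples and $C$ is fixed; for countable sets one exhausts $A=\bigcup_n A_n$ by finite tuples and uses a compactness/saturation argument across the finite stages (combining with $\ind[a\omega]$'s finite character and countable union property from Corollary \ref{c-aa-omega-easy}). For the finite case, I would set up a type $\Gamma(A')$ over $\cu N_{ABC}$ in variables $A'=\{\bo a'\colon \bo a\in A\}$ asserting (i) $\l\theta(\vec{\bo a}',\vec{\bo c})\rr=\l\theta(\vec{\bo a},\vec{\bo c})\rr$ for all $L$-formulas $\theta$ and $\vec{\bo c}\in C^{<\BN}$ (this forces $A'\equiv_C A$ by quantifier elimination, Result \ref{f-qe}), together with (ii) for each pair of algebraical formulas $\varphi(u,X,Z)$, $\psi(u,Y,Z)$ over $A'C$ and $BC$ respectively, and each non-algebraical-over-$C$ witness, a condition forcing $\mu(\l(\exists u)[\varphi(u,A',C)\wedge\psi(u,B,C)\wedge\bigwedge_k\neg\chi_k(u,C)]\rr)=0$, where $\chi_k$ enumerates the algebraical formulas over $C$. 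Concretely, using the Borel-conjunctive definability of $\ind[a]$ from Lemma \ref{l-a-measurable} and Result \ref{f-conjunctive}, one expresses "$A'(\omega)\ind[a]_{C(\omega)}B(\omega)$" as a countable intersection of first-order events, and each such event should have measure $1$; so $\Gamma$ contains the condition $\mu(\l A'\ind[a]_C B\rr)=1$ phrased via these approximations. Then one shows every finite $\Gamma_0\subseteq\Gamma$ is satisfiable in $\cu N_{ABC}$ by producing, via Condition (5) of a neat randomization applied to a suitable $L$-formula, a tuple $G_0\in\cu K^{<\BN}$ that pointwise realizes $\tp^{\cu M}(A(\omega)/C(\omega))$ and avoids the finitely many algebraic clashes with $B(\omega)$ over $C(\omega)$ demanded by $\Gamma_0$; this mirrors the proof of Theorem \ref{t-fe}, Claim 2, but now using pointwise full existence of $\ind[a]$ in $\cu M$ rather than the $\acl=\dcl$ hypothesis. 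Finally, saturation of $\cu N$ gives a realization $A'$ of all of $\Gamma$, and measurability (Lemma \ref{l-a-measurable}) ensures that the countably many measure-one conditions in $\Gamma$ really do force $\mu(\l A'\ind[a]_C B\rr)=1$, i.e. $A'\ind[a\omega]_C B$.

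**Main obstacle.** The delicate point is the finite satisfiability step: given a finite $\Gamma_0$, I must build a single $G_0\in\cu K^{<\BN}$ such that for almost every $\omega$ the tuple $G_0(\omega)$ simultaneously has the right type over $C(\omega)$ and lies outside $\acl^{\cu M}(B(\omega)C(\omega))$ except where forced into $\acl^{\cu M}(C(\omega))$. Doing this uniformly requires knowing that the "bad" fibers — where pointwise full existence in $\cu M$ fails or where the relevant events are non-measurable — form a null set; measurability of $\ind[a]$ (Lemma \ref{l-a-measurable}) and of the relevant $\acl$-membership events (Result \ref{f-pointwisemeasurable}) is exactly what is needed, and then Condition (5) of a neat randomization (fullness) packages the fiberwise choices into a genuine element of $\cu L$. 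Getting the bookkeeping right so that only finitely many algebraical formulas over $A'C$ and $BC$ are involved at each finite stage — and that passing to the full type $\Gamma$ via saturation does not destroy any of the infinitely many measure-one constraints — is where the care is required; this is why the Borel-conjunctive reformulation from Result \ref{f-conjunctive} and Lemma \ref{l-conjunctive-measurable} is the right tool, since it replaces the single $L_{\omega_1\omega}$ condition "$\ind[a]$ holds a.s." with a countable conjunction of honest first-order measure-one conditions that saturation can handle one at a time.
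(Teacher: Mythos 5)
Your overall architecture matches the paper's: prove full existence for countable sets by applying full existence of $\ind[a]$ in $\cu M$ fiber by fiber, glue the fiberwise witnesses into an element of $\cu K$ via fullness, realize a suitable type by saturation, and then deduce extension for countable sets from the proof of Remarks \ref{r-fe-vs-ext}(1). But there is a genuine gap at the central step, namely how the target condition ``$A'(\omega)\ind[a]_{C(\omega)}B(\omega)$ a.s.'' gets encoded as a set of conditions that saturation can realize. You assert that the Borel-conjunctive reformulation turns this into ``a countable conjunction of honest first-order measure-one conditions.'' It does not. Writing $\varphi_i(u,A',C)$, $\psi_j(u,B,C)$, $\chi_k(u,C)$ for the algebraical formulas, the complement of $\l A'\ind[a]_C B\rr$ is a countable \emph{union} over $(i,j)$ of the events $\l(\exists u)[\varphi_i\wedge\psi_j\wedge\bigwedge_k\neg\chi_k]\rr$, and each of these is (by Result \ref{f-conjunctive}) a countable \emph{intersection} of first-order events $\l(\exists u)[\varphi_i\wedge\psi_j\wedge\bigwedge_{k\in K}\neg\chi_k]\rr$ over finite $K$. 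Requiring that intersection to be null is the statement $\inf_K\mu(\cdots)=0$, i.e.\ ``for every $\varepsilon>0$ there \emph{exists} a finite $K$ with $\mu(\cdots)\le\varepsilon$.'' No individual first-order event in the intersection need have small measure, and the correct finite $K$ varies with $\omega$; so this is an existential/disjunctive condition, not a closed condition of the form $\Phi(A')=0$, and saturation does not apply to it. Your type $\Gamma$ as described therefore cannot force $A'\ind[a\omega]_C B$.

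The missing idea is the paper's tree of events. Before choosing $A'$, one uses the pointwise full existence in $\cu M$ together with $\aleph_1$-saturation to show that, for each finite sequence $s\in\BN^n$, the set $\sa E_s$ of fibers $\omega$ admitting a witness $A'_0\equiv_{C(\omega)}A(\omega)$ for which $\forall u[\varphi_i(u,A'_0,C(\omega))\wedge\psi_i(u,B(\omega),C(\omega))\rightarrow\chi_{s(i)}(u,C(\omega))]$ holds for all $i<n$ is measurable, with $\bigcup_{t\in\BN^n}\sa E_t$ conull and $\sa E_s\doteq\bigcup_k\sa E_{sk}$. Disjointifying to $\sa F_s$ and then imposing the conditions $\sa F_s\sqsubseteq\l\forall u[\varphi_i(u,A',C)\wedge\psi_i(u,B,C)\rightarrow\chi_{s(i)}(u,C)]\rr$ yields genuine closed conditions (the $\sa F_s$ are fixed parameters), finitely satisfiable by fullness, whose realization does force $\mu(\l A'\ind[a]_C B\rr)=1$. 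In other words, the partition of $\Omega$ according to \emph{which} algebraical formula over $C$ captures the common solutions must be built into the parameters of the type in advance; your proposal leaves that choice implicit inside a non-closed condition, which is exactly where the argument breaks. (A smaller point: your preliminary reduction to finite $A,B$ via finite character and the countable union property buys nothing for full existence, since one still needs a single global saturation argument to make the finite-stage witnesses cohere and realize $\tp(A/C)$.)
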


\begin{proof}
We first prove  full existence for countable sets.  Let $A, B, C$ be countable subsets of $\cu K$.  By Proposition \ref{p-event-aB}, the relation
$\ind[a\BB] \ \ $ over $\cu N$ has full existence.  Therefore we may assume that $ A\ind[a\BB]_C \  B.$  By Result \ref{f-acl=dcl},
$$\dcl_\BB(AC)\cap\dcl_\BB(BC)=\dcl_\BB(C).$$
Since $\ind[a]$ has full existence in $\cu M$,
for each $\omega\in\Omega$ there exists a set $A'_0\subseteq M$ such that $A'_0\equiv_{C(\omega)} A(\omega)$ and $A'_0\ind[a]_{C(\omega)} \ B(\omega)$
in $\cu M$.

Let $\varphi_i(u,A,C)$, $\psi_i(u,B,C)$, and $\chi_i(u,C)$ be enumerations of all algebraical formulas
over the indicated sets (with repetitions) such that for each pair of algebraical formulas $\varphi(u,A,C)$ and $\psi(u,B,C)$ there
exists an $i$ such that $(\varphi_i,\psi_i)=(\varphi,\psi)$.  Whenever $\omega\in\Omega$, $A'_0\subseteq\cu M$, and $A'_0\ind[a]_{C(\omega)} \ B(\omega)$ in $\cu M$,
for each $i\in \BN$ there exists $j\in\BN$ such that
\begin{equation} \label{eq-ind}
 \cu M \models \forall u [\varphi_i(u,A'_0,C(\omega))\wedge\psi_i(u,B(\omega),C(\omega))\rightarrow\chi_j(u,C(\omega))].
 \end{equation}
Let $\BN^0=\{\emptyset\}$ and $\sa E_\emptyset=\Omega$.
For each $n>0$ and $n$-tuple $s=\langle s(0),\ldots,s(n-1)\rangle$ in $\BN^n$, let $\sa E_s$ be the set of all $\omega\in\Omega$ such that
for some set $A'_0\subseteq\cu M$, $A'_0\equiv_{C(\omega)} B(\omega)$ and (\ref{eq-ind}) holds whenever $i<n$ and $j=s(i)$.

Let $L'$ be the signature formed by adding to $L$ the constant symbols
$$\{ k_a, k_b, k_c \ : \ a\in A, b\in B, c\in C\}.$$
For each $\omega\in\Omega$, $(\cu M,A(\omega),B(\omega),C(\omega))$ will be the $L'$-structure where $k_a, k_b, k_c$ are interpreted by
$a(\omega), b(\omega), c(\omega)$.  Form $L''$ by adding to $L'$ countably many additional constant symbols $\{k'_a \, : \, a\in A\}$ that will be used for elements of
a countable subset $A'_0$ of $\cu M$.

Then for each $n>0$ and $s\in \BN^n$, there is a countable set of sentences
$\Gamma_s$ of $L''$ such that for each $\omega$, $\omega\in \sa E_s$ if and only if $\Gamma_s$ is satisfiable in $(\cu M,A(\omega),B(\omega),C(\omega))$.
Since $\cu M$ is $\aleph_1$-saturated, $\Gamma_s$ is satisfiable if and only if it is finitely satisfiable in $(\cu M,A(\omega),B(\omega),C(\omega))$.
It follows that the set $\sa E_s$ belongs to the $\sigma$-algebra $\cu F$.  Moreover, since $\ind[a]$ has full existence in $\cu M$, for each $n$ and $s\in \BN^n$ we have
$$\Omega\doteq\bigcup\{\sa E_t \colon t\in \BN^n\},\quad  \sa E_s \doteq\bigcup\{ \sa E_{sk}\colon k\in\BN\},$$
where $sk$ is the $(n+1)$-tuple formed by adding $k$ to the end of $s$.  We now cut down the sets $\sa E_s$ to sets $\sa F_s\in\cu F$ such that:
\begin{itemize}
\item[(a)] $\sa F_\emptyset=\Omega$;
\item[(b)] $\sa F_s\subseteq\sa E_s$ whenever $s\in \BN^n$;
\item[(c)] $\sa F_s\cap\sa F_t=\emptyset$ whenever $s,t\in \BN^n$ and $s\ne t$;
\item[(d)] $\sa F_s\doteq\bigcup\{\sa F_{sk}\colon k\in\BN\}$ whenever $s\in\BN^n$.
\end{itemize}
This can be done as follows.  Assuming $\sa F_s$ has been defined for each $s\in\BN^n$. we let
$$ \sa F_{sk}=\sa F_s\cap(\sa E_{sk}\setminus\bigcup_{j<k} \sa F_{sj}).$$
Now let $\theta_i(A,C)$ enumerate all first order sentences with constants for the elements of $AC$.  Let $\Sigma$ and $\Delta$ be the following countable sets of sentences of $(L'')^R$:
$$\Sigma=\{\l \theta_i(A',C)\rr\doteq\l\theta_i(A,C)\rr\colon i\in\BN\}.$$
$$\Delta=\{\sa F_s\sqsubseteq \l\forall u[\varphi_i(u,A',C))\wedge\psi_i(u, B, C))\rightarrow \chi_{s(i)}(u,C))]\rr\colon s\in\BN^{<\BN}, i<|s|\}.$$
It follows from Fullness and conditions (a)--(d) above that $\Sigma\cup\Delta$ is finitely satisfiable in $\cu N_{ABC}$.  Then by saturation, there is a set $A'$
that satisfies $\Sigma\cup\Delta$ in $\cu N_{ABC}$.  Since $A'$ satisfies $\Sigma$, we have $A'\equiv_C A$.
The sentences $\Delta$ guarantee that $A'\ind[a\omega]_C \ B$.

By the proof of Remarks \ref{r-fe-vs-ext} (1) (see the Appendix of [Ad1]), invariance, monotonicity, transitivity, normality,  symmetry,
and full existence for all countable sets implies extension for all countable sets.  Then by the preceding paragraphs and Corollary \ref{c-aa-omega-easy},
$\ind[a\omega] \ \ $ satisfies  extension for all countable sets.
\end{proof}

\begin{question}  Does $\ind[a\omega] \ \ $ satisfy extension for countable $A,B,C$ and small $\widehat{B}$?
\end{question}

\begin{question}  Does $\ind[a\omega] \ \ $ satisfy full existence and/or extension?
\end{question}

We next look for connections between the relations $\ind[a]$ and $\ind[a\omega] \ \ $ over $\cu N$.

\begin{prop}  \label{p-a-vs-omega}
Let $\ind[I]=\ind[a\omega] \ \ \wedge\ind[a\BB] \ $ on $\cu N$.
\noindent\begin{enumerate}
\item $\ind[I] $ is countably based.
\item If $\ind[I] \Rightarrow \ind[a]$, or even if $\ind[I]$
is anti-reflexive, then $T$ has $\acl=\dcl$.
\item If $T$ has $\acl=\dcl$ then $\ind[I] \Rightarrow \ind[a]$.
\end{enumerate}
\end{prop}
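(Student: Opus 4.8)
\emph{Part (1).} This is immediate from Proposition \ref{p-union-pair}. Indeed, $\ind[a\omega]$ is countably based by Definition \ref{d-pointwise} (see also Proposition \ref{p-omega-cbased}(1)) and has the countable union property by Corollary \ref{c-aa-omega-easy}, while $\ind[a\BB]$ is countably based by Proposition \ref{p-a-B-cbased} and has the countable union property by Proposition \ref{p-event-aB}. Hence $\ind[I]=\ind[a\omega]\wedge\ind[a\BB]$ is countably based.

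\emph{Part (2).} Since $\ind[a]$ over $\cu N$ is anti-reflexive (Proposition \ref{p-alg-indep} applied to $T^R$), the hypothesis $\ind[I]\Rightarrow\ind[a]$ already forces $\ind[I]$ to be anti-reflexive, so it is enough to treat the second clause. I would argue by contraposition: suppose $T$ does not have $\acl=\dcl$ and exhibit a failure of anti-reflexivity of $\ind[I]$. By finite character of $\acl$, fix a finite $A_0\subseteq M$ and an element $b\in\acl^{\cu M}(A_0)\setminus\dcl^{\cu M}(A_0)$. First, $\ti b\ind[I]_{\ti{A_0}}\ti b$: since $b$ satisfies an algebraical formula over $A_0$ in $\cu M$, the property of $a\mapsto\ti a$ from Lemma \ref{l-big-embedding} gives $\ti b(\omega)\in\acl^{\cu M}(\ti{A_0}(\omega))$ for almost every $\omega$, so $\ti b\ind[a\omega]_{\ti{A_0}}\ti b$; and since $\cu M$ decides every first order formula on any tuple from $A_0\cup\{b\}$, every member of $\fo_\BB(\ti b\,\ti{A_0})$ reduces to $\top$ or $\bot$, whence $\acl_\BB(\ti b\,\ti{A_0})=\acl_\BB(\ti{A_0})$ by Remark \ref{r-ind-B}(1) and Result \ref{f-separable}, i.e.\ $\ti b\ind[a\BB]_{\ti{A_0}}\ti b$. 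On the other hand $\ti b\notin\acl^{\cu N}(\ti{A_0})$: otherwise $\ti b\in\dcl^{\cu N}(\ti{A_0})$ by Result \ref{f-acl=dcl}, so $\ti b$ is pointwise definable over $\ti{A_0}$ by Result \ref{f-dcl3}; but by Result \ref{f-pointwisemeasurable}, $\l\ti b\in\dcl^{\cu M}(\ti{A_0})\rr$ is the countable union of the events $\l\theta(\ti b,\vec{\ti a})\rr$ over functional $\theta$ and $\vec{\ti a}\in\ti{A_0}^{<\BN}$, and each such event is $\mu$-null because $b\notin\dcl^{\cu M}(A_0)$ forces $\cu M\models\neg\theta(b,\vec a)$. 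Thus $\mu(\l\ti b\in\dcl^{\cu M}(\ti{A_0})\rr)=0$, a contradiction. So $\ti b\ind[I]_{\ti{A_0}}\ti b$ while $\ti b\notin\acl^{\cu N}(\ti{A_0})$, and anti-reflexivity of $\ind[I]$ fails.

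\emph{Part (3).} Assume $T$ has $\acl=\dcl$. Both $\ind[I]$ (by Part (1)) and $\ind[a]$ over $\cu N$ (Proposition \ref{p-a-cbased}) are countably based, so by Lemma \ref{l-cbased}(1) it suffices to prove $\ind[I]\Rightarrow\ind[a]$ for countable $A,B,C$. Recall $A\ind[a]_CB$ over $\cu N$ asserts $\acl(AC)\cap\acl(BC)=\acl(C)$ and $\acl_\BB(AC)\cap\acl_\BB(BC)=\acl_\BB(C)$; the second is literally $A\ind[a\BB]_CB$, so it remains to show (using $\acl=\dcl$ in $\cu N$) that $\dcl(AC)\cap\dcl(BC)\subseteq\dcl(C)$. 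Let $\bo e$ lie in this intersection. Applying Result \ref{f-dcl3} to $\bo e\in\dcl(AC)$ and $\bo e\in\dcl(BC)$ gives $\mu(\l e\in\dcl^{\cu M}(AC)\rr)=\mu(\l e\in\dcl^{\cu M}(BC)\rr)=1$ and $\fo_\BB(\bo e AC)\subseteq\dcl_\BB(AC)$, $\fo_\BB(\bo e BC)\subseteq\dcl_\BB(BC)$. Using $\acl^{\cu M}=\dcl^{\cu M}$ together with $A(\omega)\ind[a]_{C(\omega)}B(\omega)$ for almost every $\omega$ (which is $A\ind[a\omega]_CB$), we get $e(\omega)\in\dcl^{\cu M}(C(\omega))$ for almost every $\omega$, i.e.\ $\bo e$ is pointwise definable over $C$. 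Also $\fo_\BB(\bo e C)\subseteq\fo_\BB(\bo e AC)\cap\fo_\BB(\bo e BC)\subseteq\dcl_\BB(AC)\cap\dcl_\BB(BC)=\acl_\BB(AC)\cap\acl_\BB(BC)=\acl_\BB(C)=\dcl_\BB(C)$, where the penultimate equality is $A\ind[a\BB]_CB$. Hence $\bo e\in\dcl(C)$ by Result \ref{f-dcl3}.

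The main point requiring care is the last step of Part (3): pointwise definability over $C$ by itself does not put $\bo e$ into $\dcl^{\cu N}(C)$, and one must additionally verify the event-definability clause $\fo_\BB(\bo e C)\subseteq\dcl_\BB(C)$ of Result \ref{f-dcl3}; this is precisely where the extra conjunct $\ind[a\BB]$ in the definition of $\ind[I]$ is needed, and it explains why $\ind[a\omega]$ alone does not imply $\ind[a]$. In Part (2) the delicate computation is the verification that $\ti b\notin\acl^{\cu N}(\ti{A_0})$ via the measurability formula of Result \ref{f-pointwisemeasurable}.
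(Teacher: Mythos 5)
Your proof is correct and follows essentially the same route as the paper's: part (1) via Proposition \ref{p-union-pair} with the same supporting results, part (2) by contraposition using a finite witness to the failure of $\acl=\dcl$ (you merely spell out the verifications the paper leaves implicit), and part (3) by reducing to countable sets and splitting Result \ref{f-dcl3} into its pointwise-definability and event-definability clauses, handled by $\ind[a\omega]$ and $\ind[a\BB]$ respectively. No gaps.
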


\begin{proof}
(1) By Propositions \ref{p-union-pair}, \ref{p-event-aB} and \ref{p-a-B-cbased}, and Corollary \ref{c-aa-omega-easy}.

(2) Suppose  that $T$ does not have $\acl=\dcl$.  Then in $\cu M$ there is a finite set $C$ and an element $a\in\acl(C)\setminus\dcl(C)$.
Therefore in $\cu N$ we have $\ti a\ind[a\omega]_{\ti C} \ \ \ti a \ \wedge \ti a\ind[a\BB]_{\ti C} \ \ti a$, but $\ti a\notin\acl(\ti C)$ and $\ti a \nind[a]_{\ti C} \ti a$.

(3)  Suppose $T$ has $\acl=\dcl$, $A\ind[a\omega]_C \ B$, and $A\ind[a\BB]_C \ B$.  To prove $A\ind[a]_C B$, it suffices to show that
$$\dcl(AC)\cap\dcl(BC)\subseteq\dcl(C).$$
By (1), Proposition \ref{p-a-cbased}, and Lemma \ref{l-cbased} (1), we may assume that $A, B, C$ are countable. Let
$$\bo d\in\dcl(AC)\cap\dcl(BC).$$
By Result \ref{f-dcl3}, $\dcl_\BB({\bo d}AC)\subseteq\dcl_\BB(AC)$ and $\dcl_\BB({\bo d}BC)\subseteq\dcl_\BB(BC)$. Since $A\ind[a\BB]_C \ B$, it follows that
$$ \dcl_\BB({\bo d} C)\subseteq \dcl_\BB(AC)\cap\dcl_\BB(BC)\subseteq\dcl_\BB(C).$$
Result \ref{f-dcl3} also gives
$$\bo d\in\dcl^\omega(AC)\cap\dcl^\omega(BC).$$
Since $A\ind[a\omega]_C \ B$ and $T$ has $\acl=\dcl$,
$$d(\omega)\in\acl(AC(\omega))\cap \acl(BC(\omega))=\acl(C(\omega))=\dcl(C(\omega)) \as.$$
Hence $\bo d\in\dcl^\omega(C)$, and by Result \ref{f-dcl3} in the other direction, $\bo d\in\dcl(C)$.
\end{proof}

\begin{prop}  \label{p-pointwise-algebraic-vs-algebraic}

\noindent\begin{enumerate}
\item $\ind[a\omega] \ \Rightarrow \ind[a]$ always fails in $\cu N$.
\item $\ind[a]\Rightarrow\ind[a\omega] \ \ $ holds in $\cu N$ if and only if the models of $T$ are finite.
\end{enumerate}
\end{prop}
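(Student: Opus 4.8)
\emph{Overview.} There are three assertions: $\ind[a\omega]\not\Rightarrow\ind[a]$ always; $\ind[a]\Rightarrow\ind[a\omega]$ when the models of $T$ are finite; and $\ind[a]\not\Rightarrow\ind[a\omega]$ when they are infinite. The first two are short, the third carries the content.

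\emph{Part (1) and the easy direction of (2).} Since $\ind[a]$ over $\cu N$ has an event-sort clause that the $\BK$-sort relation $\ind[a\omega]$ ignores, I would exhibit sets that agree almost everywhere in $\cu M$ but generate a nontrivial event. Using atomlessness of $\cu E$, pick $\sa E$ with $0<\mu(\sa E)<1$, let $1_{\sa E}\in\cu K$ be its characteristic function with respect to $\ti 0,\ti 1$, and set $A=B=\{1_{\sa E}\}$, $C=\{\ti 0,\ti 1\}$. Then $A(\omega)=B(\omega)\subseteq C(\omega)$ for almost all $\omega$, so $A(\omega)\ind[a]_{C(\omega)}B(\omega)$ holds trivially and $A\ind[a\omega]_C B$; but $\sa E\doteq\l 1_{\sa E}=\ti 1\rr\in\fo_\BB(AC)\subseteq\acl_\BB(AC)$ while $\acl_\BB(C)=\{\top,\bot\}$ (Lemma~\ref{l-big-embedding}), so $A\nind[a]_C B$; this uses nothing about $T$. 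For the easy direction of (2): if the models of $T$ are finite then $\cu M$ is finite and $\acl^{\cu M}(X)=M$ for every $X\subseteq M$, so $A(\omega)\ind[a]_{C(\omega)}B(\omega)$ holds for every $\omega$ and all countable $A,B,C$; hence $\ind[a\omega]$ is the total relation on countable — and, being countably based, on all small — sets, and in particular $\ind[a]\Rightarrow\ind[a\omega]$.

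\emph{Part (2), hard direction.} Assume $\cu M$ is infinite; then $\acl^{\cu M}(\emptyset)$ is small while $\cu M$ is large, so there is a non-algebraic $1$-type $q$ over $\emptyset$, and by full existence for algebraic independence in first-order $\cu M$ one can pick $c_0,c_1\models q$ with $\acl^{\cu M}(c_0)\cap\acl^{\cu M}(c_1)=\acl^{\cu M}(\emptyset)$ (so $c_0\neq c_1$). Pick probabilistically independent events $\sa F,\sa G\in\cu E$ of measure $\tfrac12$, and let $\bo a$ agree with $\ti c_0$ on $\sa F$ and with $\ti c_1$ on $\neg\sa F$, and $\bo b$ agree with $\ti c_0$ on $\sa G$ and with $\ti c_1$ on $\neg\sa G$ (Result~\ref{f-glue}). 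The decisive choice is to take the \emph{base} $C:=\{\ti e:e\in\acl^{\cu M}(\emptyset)\}$, which is small and satisfies $\acl(C)=\dcl(C)=C$ and $\acl_\BB(C)=\{\top,\bot\}$ (Remark~\ref{r-new}, Lemma~\ref{l-big-embedding}). Put $A=\{\bo a\}$, $B=\{\bo b\}$. That $A\nind[a\omega]_C B$ is quick: for any countable $D\subseteq C$ we have $D(\omega)\subseteq\acl^{\cu M}(\emptyset)$ almost surely, so $\acl^{\cu M}(D(\omega))=\acl^{\cu M}(\emptyset)$, whereas on $(\sa F\sqcap\sa G)\cup(\neg\sa F\sqcap\neg\sa G)$ (measure $\ge\tfrac12$) we have $\bo a(\omega)=\bo b(\omega)\models q$, so $\bo a(\omega)\ind[a]_{D(\omega)}\bo b(\omega)$ fails there; thus $\mu(\l\bo a(\omega)\ind[a]_{D(\omega)}\bo b(\omega)\rr)\le\tfrac12$ for every such $D$, and the definition of a countably based relation yields $A\nind[a\omega]_C B$.

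\emph{The core computation and the obstacle.} It remains to prove $A\ind[a]_C B$ in $\cu N$, and this is the real work. For the event sort, every $\l\theta(\bo a,\vec{\ti e})\rr$ with $\theta$ an $L$-formula and $\vec e$ from $\acl^{\cu M}(\emptyset)$ is, almost surely, one of $\Omega,\emptyset,\sa F,\neg\sa F$ (Lemma~\ref{l-big-embedding}), so $\acl_\BB(AC)\subseteq\sigma(\{\sa F\})$ and $\acl_\BB(BC)\subseteq\sigma(\{\sa G\})$, whence by independence $\acl_\BB(AC)\cap\acl_\BB(BC)=\{\top,\bot\}=\acl_\BB(C)$. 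For the $\BK$-sort, take $\bo x\in\dcl(AC)\cap\dcl(BC)$ (recall $\acl=\dcl$ in $\cu N$) and apply Result~\ref{f-dcl3}: the event-sort part gives $\fo_\BB(\{\bo x\}\cup C)\subseteq\sigma(\{\sa F\})\cap\sigma(\{\sa G\})=\{\top,\bot\}$, so $\bo x(\omega)$ realizes one fixed type over $\emptyset$ almost surely; the pointwise-definability part gives $\bo x(\omega)\in\acl^{\cu M}(\bo a(\omega))\cap\acl^{\cu M}(\bo b(\omega))$ almost surely, which on $\sa F\sqcap\neg\sa G$ equals $\acl^{\cu M}(\ti c_0(\omega))\cap\acl^{\cu M}(\ti c_1(\omega))=\acl^{\cu M}(\emptyset)$ almost surely; so that fixed type is algebraic, with realization set a fixed $k$-element $\{e^{(1)},\dots,e^{(k)}\}\subseteq\acl^{\cu M}(\emptyset)$ whose images $\ti e^{(j)}$ have pairwise $d_\BK$-distance $1$, forcing $\bo x(\omega)=\ti e^{(j)}(\omega)$ for some $j$ almost surely; feeding the formulas $u=v_j$ (with parameters $\ti e^{(j)}\in C$) back into $\fo_\BB(\{\bo x\}\cup C)\subseteq\{\top,\bot\}$ isolates one $j_0$ with $\bo x=\ti e^{(j_0)}\in C$. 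Hence $\acl(AC)\cap\acl(BC)=C=\acl(C)$, so $A\ind[a]_C B$ while $A\nind[a\omega]_C B$, completing (2). I expect the main obstacle to be exactly this $\BK$-sort computation, and within it two subtleties: (a) because $\acl^{\cu M}(\emptyset)$ may strictly contain $\dcl^{\cu M}(\emptyset)$, the base must be taken to be the image of $\acl^{\cu M}(\emptyset)$, not $\emptyset$ — this is what absorbs the gap between algebraic and definable closure in $\cu M$; and (b) the identity $\acl^{\cu M}(c_0)\cap\acl^{\cu M}(c_1)=\acl^{\cu M}(\emptyset)$ must be transferred so as to hold of $\ti c_0(\omega),\ti c_1(\omega)$ almost surely, which works because, by algebraicity and compactness in $\cu M$, it is captured by a countable set of first-order sentences in $c_0,c_1$ that pass through the embedding $a\mapsto\ti a$ of Lemma~\ref{l-big-embedding}.
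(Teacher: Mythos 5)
Your proof is correct, and for the hard direction of (2) it takes a genuinely different route from the paper's. Parts (1) and the easy direction of (2) essentially coincide with the paper's argument (for (1) the paper exhibits the failure of $\ind[a]$ through the $\BK$-sort clause, namely $1_{\sa D}\in\acl(AC)\setminus\acl(C)$ with $C=\ti 0\ti 1$, where you use the event-sort clause; this is immaterial). For the hard direction of (2) the paper builds an \emph{asymmetric} counterexample: $A=\{\ti a\}$ and $B=\{\bo c,\bo d\}$, where $\bo d$ mixes $\ti a$ and $\ti b$ over an event $\sa E$ of measure $1/2$ and $\bo c=1_{\sa E}$, over the base $\ti Z$ with $Z=\acl^{\cu M}(01)$. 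Keeping one side a ``constant'' random element $\ti a$ makes the verification of $\ti a\ind[a]_{\ti Z}\bo c\bo d$ short: $\acl_\BB(\ti a\ti Z)=\{\top,\bot\}$ disposes of the event-sort clause for free, and any $\bo x\in\dcl(\ti a\ti Z)$ has the form $\ti z$ with $z\in\dcl^{\cu M}(aZ)$ by Remark~\ref{r-new}, which membership in $\dcl(\bo c\bo d\ti Z)$ then forces into $\dcl^{\cu M}(bZ)$ on the $\sa E$-half, so $a\ind[a]_Z b$ in $\cu M$ finishes. Your symmetric construction (two mixtures of $\ti c_0,\ti c_1$ over probabilistically independent events $\sa F,\sa G$) pays for its symmetry with extra work: you must compute $\acl_\BB(AC)\cap\acl_\BB(BC)$ via $\sigma(\{\sa F\})\cap\sigma(\{\sa G\})=\{\top,\bot\}$, and you need the zero--one argument through Result~\ref{f-dcl3} to pin any common definable element down to a constant in $C$. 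Both arguments share the two essential moves you correctly flag as the crux: absorbing the relevant first-order algebraic closure into the base so the $\acl$/$\dcl$ gap in $\cu M$ is invisible, and transferring $\acl^{\cu M}(c_0)\cap\acl^{\cu M}(c_1)=\acl^{\cu M}(\emptyset)$ along $a\mapsto\ti a$ via countably many first-order consequences (your compactness remark in (b) is exactly what is needed, since each pairwise intersection of algebraical solution sets is finite). Your version is somewhat longer but makes the interaction with the event sort explicit; the paper's is shorter because one side of its example carries no event-sort information at all.
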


\begin{proof}  (1)  Let $\sa D\in\cu E$ be an event such that $0<\mu(\sa D)<1$, and let $C=\ti 0 \ti 1$.  Then $1_{\sa D}\in\dcl^\omega(C)\setminus\dcl(C)$.
Hence $1_{\sa D}\ind[a\omega]_C \ 1_{\sa D}$ but $1_{\sa D}\nind[a]_C 1_{\sa D}$, so $\ind[a\omega] \ \Rightarrow \ind[a]$ fails.

(2)  If $\cu M$ is finite, then $\acl^{\cu M}(\emptyset)=M$, so $A\ind[a]_C B$ always holds in $\cu M$.  Therefore $A\ind[a\omega]_C \ B \ $ always holds in $\cu N$,
and hence $\ind[a]\Rightarrow\ind[a\omega] \ \ $ holds in $\cu N$.

For the other direction, assume $\cu M$ is infinite.  By saturation, $\cu M$ has elements $0,1,a,b$ such that
$$ 0\ne 1,\quad a\notin\acl(01), \quad\tp(a/\acl(01))=\tp(b/\acl(01)), \quad a\ind[a]_{01} \, b.$$
We will use the mapping $a\mapsto\ti a$ from Lemma \ref{l-big-embedding}.
To simplify notation, suppose first that $T$ already has a constant symbol for each element of $\acl(01)$.  Then
$\acl(01)=\acl(\emptyset)$ in $\cu M$, so
$$ 0\ne 1, \quad a\notin\acl(\emptyset),\quad\tp(a)=\tp(b), \quad a\ind[a]_{\emptyset} \, b \quad\mbox{ in } \cu M,$$
$$ \mu(\l {\ti 0}\ne {\ti 1}\rr)=1,\quad {\ti a}\notin\dcl(\emptyset),\quad\tp({\ti a})=\tp({\ti b}), \quad {\ti a}\ind[a]_{\emptyset} \, {\ti b} \quad\mbox{ in } \cu N.$$
By Results \ref{f-acl=dcl} and \ref{f-separable}, for each $A\subseteq \ti M$,
$$\acl(A)=\dcl(A)=\cl(\fo(A))=\fo(A)\subseteq \ti M.$$

Let $\sa E\in\cu E$ be an event of measure $\mu(\sa E)=1/2$.
Let $ \bo c=1_{\sa E}$, let $\bo d$ agree with $\ti a$ on $\neg \sa E$ and with $\ti b$ on $\sa E$, and let $\bo e$  agree with $\ti 1$ on $\sa E$
and with $\ti a$ on $\neg \sa E$ (see the figure).
\begin{center}
\setlength{\unitlength}{1.5mm}
\begin{picture}(80,20)
\put(10,0){\line(0,1){20}}
\put(30,0){\line(0,1){20}}
\put(50,0){\line(0,1){20}}
\put(70,0){\line(0,1){20}}
\put(10,0){\line(1,1){20}}
\put(50,0){\line(1,1){20}}
\put(30,20){\line(1,-1){20}}

\put(0,3){\makebox(0,0){$\neg\sa E$}}
\put(0,17){\makebox(0,0){$\sa E$}}
\put(9,10){\makebox(0,0){$0$}}
\put(29,10){\makebox(0,0){$1$}}
\put(49,10){\makebox(0,0){$a$}}
\put(69,10){\makebox(0,0){$b$}}
\put(18,10){\makebox(0,0){$c$}}
\put(42,10){\makebox(0,0){$e$}}
\put(58,10){\makebox(0,0){$d$}}
\end{picture}
\end{center}

\emph{Claim 1}: $\ti a\ind[a]_{\emptyset} \bo{cd} $ in $\cu N$.

\emph{Proof of Claim 1}:  Suppose $ x\in\acl({\ti a})\cap\acl(\bo{cd})$ in $\cu N$.  Then $x\in \dcl(\ti a)$,
so $x=\ti z$ for some $z\in\dcl^{\cu M}(a)$.
Moreover, $x\in\dcl(\bo c\bo d)$, so $x\in\dcl^\omega(\bo c\bo d)$, and hence $x(\omega)\in\dcl^{\cu M}(1b)=\dcl^{\cu M}(b)$ for all $\omega\in\sa E$.
Therefore $z\in\dcl^{\cu M}(b)$.  Since ${\ti a}\ind[a]_{\emptyset} \, {\ti b}$ in $\cu N$, we have
$x\in\acl(\ti a)\cap\acl(\ti b)=\acl(\emptyset)$.
\medskip

\emph{Claim 2}: $\ti a\nind[a\omega]_{\emptyset} \ \bo{cd} $ in $\cu N$.

\emph{Proof of Claim 2}:
For all $\omega\in\neg\sa E$ we have $\ti a(\omega)= a, \bo c(\omega)=0, \bo d(\omega)=a$.  Hence $a\in\acl(\ti a(\omega)\cap \bo c\bo d(\omega))\setminus\acl(\emptyset)$
and $\omega\notin\l \ti a\ind[a]_\emptyset \bo c\bo d\rr$.  Therefore $\mu(\l \ti a\ind[a]_\emptyset\bo c\bo d\rr)\le 1/2$, so $\ti a\nind[a\omega]_{\emptyset} \ \bo{cd} $.

By Claims 1 and 2, $\ind[a]\Rightarrow\ind[a\omega] \ \ $ fails in $\cu N$.

We now turn to the general case where $T$ need not have a constant symbol for each element of $\acl(01)$.
Let $Z=\acl(01)$ in $\cu M$.
Our argument above shows that $\ti a\ind[a]_{\ti Z} \bo{cd}$ but $\ti a\nind[a\omega]_{\ti Z} \ \bo{cd}$ in $\cu N$, so
$\ind[a]\Rightarrow\ind[a\omega] \ \ $ still fails in $\cu N$.
\end{proof}

\subsection{Pointwise $M$-independence}

%%  Sept 2014:  In this section, measurability also comes later.

We now consider the relation $\ind[M \omega] \ \  $ of pointwise $M$-independence.  The relation $\ind[M] \ $ is monotone over $\cu M$, so $\ind[M\omega] \ \ $ over $\cu N$ exists.

\begin{cor} \label{c-M-omega-easyaxioms}
 The relation $\ind[M \omega] \ \ \ $ over $\cu N$ is countably based,  and
satisfies all the axioms for a countable independence relation except perhaps extension and local character.
Moreover, if $\ind[M]$ \ on models of $T$ has symmetry then  $\ind[M \omega]$ \quad over $\cu N$ has symmetry.
\end{cor}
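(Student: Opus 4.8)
The plan is to read off everything from Proposition~\ref{p-omega-cbased}, applied with $\ind[I]=\ind[M]$ over $\cu M$, together with the first-order facts about $\ind[M]$ recalled after Definition~\ref{d-special-ind}. Since $\ind[M]$ over $\cu M$ has monotonicity, Proposition~\ref{p-omega-cbased}(1) tells us at once that $\ind[M\omega]$ over $\cu N$ is countably based and has monotonicity and two-sided countable character; in particular $\ind[M\omega]$ has countable character. This already settles the ``countably based'' assertion and disposes of axioms (2) and (7$'$).

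Next I would use that, in the first-order setting, $\ind[M]$ satisfies all the axioms for a strict independence relation except perhaps local character and extension; in particular $\ind[M]$ over $\cu M$ has invariance, base monotonicity, transitivity, and normality. Feeding each of these into Proposition~\ref{p-omega-cbased}(2), we get that $\ind[M\omega]$ over $\cu N$ likewise has invariance, base monotonicity, transitivity, and normality. Combining with the previous paragraph, $\ind[M\omega]$ satisfies invariance, monotonicity, base monotonicity, transitivity, normality, and countable character, i.e.\ every axiom for a countable independence relation except possibly extension (axiom (6)) and local character (axiom (8)), which is exactly the claim. For the \emph{moreover} clause, if $\ind[M]$ on models of $T$ has symmetry, then the symmetry case of Proposition~\ref{p-omega-cbased}(2) yields symmetry of $\ind[M\omega]$ over $\cu N$.

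There is no genuine obstacle here: all the substance lives in Proposition~\ref{p-omega-cbased} and in Adler's analysis of $\ind[M]$. The only point requiring a bit of care is bookkeeping---verifying that the list of properties recovered is precisely the list of axioms for a countable independence relation with extension and local character removed. (Anti-reflexivity is the separate axiom (9) defining strictness and is not among the axioms for a countable independence relation; in any case Proposition~\ref{p-omega-cbased}(4) would give only that $\ind[M\omega]$ is \emph{pointwise} anti-reflexive, so it is rightly absent from the statement, just as in Corollary~\ref{c-aa-omega-easy}.)
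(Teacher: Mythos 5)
Your proposal is correct and follows essentially the same route as the paper, which simply cites Proposition \ref{p-omega-cbased} together with Corollary \ref{c-summary} (the latter being exactly the packaging of parts (1) and (2) of that proposition for the basic axioms, which you instead invoke directly). The bookkeeping you describe---countable character from two-sided countable character, the basic axioms transferred from Adler's analysis of $\ind[M]$ over $\cu M$, and symmetry via the symmetry case of Proposition \ref{p-omega-cbased}(2)---is precisely what the paper's one-line proof relies on.
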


\begin{proof}  This follows from Proposition \ref{p-omega-cbased} and Corollary \ref{c-summary}.
\end{proof}

\begin{cor}  \label{c-M-omega-implies-a-omega}
$\ind[M\omega] \ \ \ \Rightarrow \ind[a\omega] \ $, and $\ind[M\omega] \ \ \ $ is pointwise anti-reflexive.
\end{cor}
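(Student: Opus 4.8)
The plan is to read both assertions off from results already established, since $\ind[M \omega] \ \ $ is built from $\ind[M]$ over $\cu M$ and all the facts about $\ind[M]$ that we need are already recorded.  Recall that on models of $T$ we have $\ind[M]\Rightarrow\ind[a]$, and that both $\ind[M]$ and $\ind[a]$ are monotone over $\cu M$, so that the pointwise relations $\ind[M \omega] \ \ $ and $\ind[a \omega] \ \ $ over $\cu N$ are defined.  Applying Corollary~\ref{c-pointwise-implies} to the implication $\ind[M]\Rightarrow\ind[a]$ immediately yields $\ind[M \omega] \ \ \Rightarrow\ind[a \omega] \ $, which settles the first claim.

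For pointwise anti-reflexivity I would use that $\ind[M]$ over $\cu M$ is anti-reflexive --- this is part of the package of strict-independence axioms satisfied by $\ind[M]$ recalled from [Ad2] and [EG] --- and then invoke Proposition~\ref{p-omega-cbased}(4), which says that if $\ind[I]$ is anti-reflexive then $\ind[I \omega] \ \ $ is pointwise anti-reflexive; taking $\ind[I]=\ind[M]$ gives that $\ind[M \omega] \ \ $ is pointwise anti-reflexive.  An equally short alternative, which I could present instead, is to combine the first claim with the pointwise anti-reflexivity of $\ind[a \omega] \ \ $ from Corollary~\ref{c-aa-omega-easy}: if $\bo a\ind[M \omega]_C \ \bo a$ then $\bo a\ind[a \omega]_C \ \bo a$, whence $\bo a\in\acl^\omega(C)$.

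I do not anticipate any real obstacle here; the whole argument is bookkeeping, and the only things to check are the hypotheses of Corollary~\ref{c-pointwise-implies} and of Proposition~\ref{p-omega-cbased}(4), namely that $\ind[M]$ is monotone and anti-reflexive over $\cu M$, both of which have already been established.
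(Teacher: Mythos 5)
Your proposal is correct and matches the paper's proof: the first claim is obtained exactly as in the paper by applying Corollary~\ref{c-pointwise-implies} to $\ind[M]\ \Rightarrow\ind[a]$, and your second route for pointwise anti-reflexivity (chaining through $\ind[a\omega]\ \ $ and Corollary~\ref{c-aa-omega-easy}) is the one the paper implicitly uses. Your alternative via Proposition~\ref{p-omega-cbased}(4) and the anti-reflexivity of $\ind[M]\ $ over $\cu M$ is equally valid but not a meaningfully different argument.
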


\begin{proof}  This follows from Corollary \ref{c-pointwise-implies}, and the fact that
$\ind[M] \ \Rightarrow\ind[a]$.
\end{proof}

%%  Sept 2014:  Added lemma

\begin{lemma}  \label{l-M-union}
For each complete first order theory $T$, the relation $\ind[M] \ $ has the countable union property.
\end{lemma}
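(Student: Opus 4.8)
The plan is to unwind the definition of $\ind[M]$ directly, exploiting the finite character of $\acl$ available in first order logic (this is precisely what makes the first order case clean, and what the continuous analogue would lack). Fix countable sets $A,B,C$ with $C=\bigcup_n C_n$, $C_n\subseteq C_{n+1}$, and $A\ind[M]_{C_n}B$ for every $n$. To prove $A\ind[M]_C B$, I fix an arbitrary $D\in[C,\acl(BC)]$ and aim to show $\acl(AD)\cap\acl(BD)\subseteq\acl(D)$; the reverse inclusion is automatic, so this yields $A\ind[a]_D B$, and since $D$ is arbitrary, $A\ind[M]_C B$.

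First I would take $x\in\acl(AD)\cap\acl(BD)$ and, by finite character of $\acl$, fix finite $A_0\subseteq A$, $B_0\subseteq B$, $D_0\subseteq D$ with $x\in\acl(A_0D_0)\cap\acl(B_0D_0)$. The crux is to absorb $D_0$ into a single level $C_n$ of the chain: since $D_0\subseteq D\subseteq\acl(BC)$ is finite, finite character again furnishes finite $B_1\subseteq B$ and $C_1\subseteq C$ with $D_0\subseteq\acl(B_1C_1)$, and because $C_1$ is finite and the $C_n$ increase with union $C$, we get $C_1\subseteq C_n$ for some $n$, whence $D_0\subseteq\acl(B C_n)$. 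This is the one spot where finiteness — rather than mere countability — is genuinely used, and it is the step I expect to be the main (if mild) obstacle; it is also the reason the continuous-logic version is not immediate, since a countable subset of $C=\bigcup_n C_n$ need not sit inside a single $C_n$.

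Now set $E=D_0\cup C_n$, so $C_n\subseteq E\subseteq\acl(BC_n)$ and hence $E\in[C_n,\acl(BC_n)]$. Applying $A\ind[M]_{C_n}B$ gives $A\ind[a]_E B$, i.e. $\acl(AE)\cap\acl(BE)=\acl(E)$. Since $D_0\subseteq E$, we have $x\in\acl(A_0D_0)\subseteq\acl(AE)$ and $x\in\acl(B_0D_0)\subseteq\acl(BE)$, so $x\in\acl(E)$; and since $E=D_0\cup C_n\subseteq D$ (as $D_0\subseteq D$ and $C_n\subseteq C\subseteq D$), we conclude $x\in\acl(E)\subseteq\acl(D)$, as required. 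I would also remark that the argument in fact works verbatim for small $C$, not only countable $C$, and runs parallel to the proof of Lemma \ref{l-thorn-union} for $\thind$.
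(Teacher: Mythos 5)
Your proof is correct and follows essentially the same route as the paper's: both arguments use finite character of $\acl$ to absorb the finitely many parameters witnessing $x\in\acl(AD)\cap\acl(BD)$ and the containment $D_0\subseteq\acl(BC)$ into a single level $C_n$ of the chain, and then apply $A\ind[M]_{C_n}B$ to a set in $[C_n,\acl(BC_n)]$ contained in $D$. The paper packages this by setting $D_n=D\cap\acl(BC_n)$ and noting $D=\bigcup_n D_n$, whereas you build the finite set $E=D_0\cup C_n$ directly, but the mechanism is identical.
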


\begin{proof}  Suppose that $A, B, C$ are countable sets in $\cu M$, $C=\bigcup_n C_n$, and
$C_n\subseteq C_{n+1}$ and $A\ind[M]_{C_n} B$ for each $n$. We must show that $A\ind[M]_C B$.  Let $D\in[C,\acl(BC)]$ and $x\in\acl(AD)\cap\acl(BD)$.
For each $n$, let $D_n=D\cap\acl(BC_n)$.  Then $D_0\subseteq D_1\subseteq\cdots$ and $D=\bigcup_n D_n$, so
$x\in\acl(AD_n)\cap\acl(BD_n)$ for some $n$.  We have $D_n\in[C_n,\acl(BC_n)]$ and $A\ind[M]_{C_n} B$.  Therefore $x\in\acl(D_n)\subseteq\acl(D)$,
so $A\ind[a]_D B$ and $A\ind[M]_C B$.
\end{proof}

\begin{cor}  \label{c-M-omega-finitechar}
The relation $\ind[M\omega] \ \ $ over $\cu N$ has finite character.
\end{cor}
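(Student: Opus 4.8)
The plan is to obtain this as an immediate consequence of Proposition \ref{p-pointwise-finitechar}, which asserts that if a monotone ternary relation $\ind[I]$ over $\cu M$ is countably based and has both finite character and the countable union property, then $\ind[I\omega]$ over $\cu N$ has finite character. So the whole task reduces to checking that $\ind[M]$ over $\cu M$ meets these three hypotheses; that $\ind[M]$ over $\cu M$ is monotone (so that $\ind[M\omega]$ is even defined) has already been noted just before Corollary \ref{c-M-omega-easyaxioms}.

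First, $\ind[M]$ over $\cu M$ is countably based by Proposition \ref{p-M-cbased}, which is proved uniformly for the first order and continuous cases and so applies to the first order big model $\cu M$. Second, since $T$ is a first order theory, $\ind[M]$ over $\cu M$ satisfies all of the axioms for a strict independence relation except perhaps local character and extension, as recalled in the discussion following Definition \ref{d-special-ind}; in particular it has finite character. Third, $\ind[M]$ over $\cu M$ has the countable union property by Lemma \ref{l-M-union}, which is exactly where the genuine combinatorial content of the argument lives.

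Putting these three facts together and feeding them into Proposition \ref{p-pointwise-finitechar} yields that $\ind[M\omega] \ \ $ over $\cu N$ has finite character, which is the desired conclusion. I do not anticipate any real obstacle: the substantive steps (the countable union property for $\ind[M]$, and the passage from finite character plus the countable union property to finite character of the pointwise relation) are already established. The only point requiring a moment's care is to make sure each cited result is invoked for the first order structure $\cu M$ rather than for a continuous theory — but since $\ind[M]$ over $\cu M$ is a first order relation, Proposition \ref{p-M-cbased}, Lemma \ref{l-M-union}, and the first order version of the $\ind[M]$ axioms all apply verbatim.
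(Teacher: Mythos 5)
Your proof is correct and takes essentially the same route as the paper's: one checks that $\ind[M] \ $ over $\cu M$ has finite character (from [Ad2]) and the countable union property (Lemma \ref{l-M-union}) and then applies Proposition \ref{p-pointwise-finitechar}. Your explicit verification of the countably based hypothesis via Proposition \ref{p-M-cbased} is a small point the paper's proof leaves implicit, but otherwise the arguments coincide.
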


\begin{proof}
It is shown in [Ad2] that the relation $\ind[M] \ $ over $\cu M$ has monotonicity and finite character.
$\ind[M] \ $ over $\cu M$ also has the countable union property by Lemma \ref{l-M-union}.  Hence, by Proposition \ref{p-pointwise-finitechar},
$\ind[M\omega] \ \ \ $ has finite character.
\end{proof}

\begin{lemma}  \label{l-M-measurable}
 The relation $\ind[M] \  $ over $\cu M$ is measurable.
\end{lemma}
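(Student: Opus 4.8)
The plan is to exhibit $\ind[M]$ over $\cu M$ as definable by a Borel-conjunctive $L_{\omega_1\omega}$-formula and then invoke Lemma \ref{l-conjunctive-measurable}, just as was done for $\ind[a]$ in Lemma \ref{l-a-measurable}. The feature that makes $\ind[M]$ harder than $\ind[a]$ is that its definition quantifies over all sets $D$ with $C\subseteq D\subseteq\acl(BC)$, and such $D$ need not be finite; so the first step is to replace that quantifier by one over finite tuples.

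Concretely, I would first prove that for all countable sets $A,B,C$ in $\cu M$,
$$A\ind[M]_C B \Leftrightarrow (\forall \vec d\in\acl(BC)^{<\BN})(\forall x)\big[(x\in\acl(AC\vec d)\wedge x\in\acl(BC))\to x\in\acl(C\vec d)\big].$$
The two key observations are that for any $D\in[C,\acl(BC)]$ one has $\acl(BD)=\acl(BC)$ (since $BC\subseteq BD\subseteq\acl(BC)$), so that $A\ind[a]_D B$ is equivalent to $\acl(AD)\cap\acl(BC)\subseteq\acl(D)$, and that algebraic closure has finite character. For the left-to-right direction one applies the definition of $\ind[M]$ with $D=C\cup\{d_1,\dots,d_k\}$; for the converse, given $D\in[C,\acl(BC)]$ and $x\in\acl(AD)\cap\acl(BD)$, one uses finite character to pick a finite $D_0\subseteq D$ with $x\in\acl(AD_0)$, applies the right-hand side with $\vec d$ enumerating $D_0$, and notes that $\acl(C\vec d)\subseteq\acl(D)$, yielding $A\ind[a]_D B$ for every such $D$, i.e.\ $A\ind[M]_C B$.

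Next I would render the negation of the right-hand side as a Borel-conjunctive formula; since the Borel-conjunctive formulas are closed under negation, this suffices. As in Lemma \ref{l-a-measurable}, fix enumerations of all algebraical $L$-formulas over the relevant tuples of variables. The negation of the right-hand side asserts that there is a finite tuple $\vec d$ algebraic over $BC$ and an element $x$ that is algebraic over $AC\vec d$ and over $BC$ but not over $C\vec d$. Algebraicity of $\vec d$ over $BC$, and of $x$ over $AC\vec d$ and over $BC$, are each witnessed by a single algebraical formula, while non-algebraicity of $x$ over $C\vec d$ is expressed by the countable conjunction $\bigwedge_\gamma\neg\gamma$ over all algebraical formulas $\gamma$ whose parameter-variables lie among $Z$ and the components of $\vec d$. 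Thus the negation is a countable disjunction — over the countably many choices of witnessing algebraical formulas and of which variables from the countable sets $X,Y,Z$ they use — of formulas of the form $\exists\vec w\,\exists v\,[\psi\wedge\bigwedge_\gamma\neg\gamma]$ with $\psi$ a finite conjunction of first order formulas. Each disjunct is conjunctive (finitely many existential quantifiers over a finite conjunction of a first order formula with a countable conjunction of first order formulas), hence the disjunction, and therefore its negation — which is $A\ind[M]_C B$ — is Borel-conjunctive. Lemma \ref{l-conjunctive-measurable} then gives that $\ind[M]$ is measurable.

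The main obstacle is the first step: verifying carefully that the quantifier over the possibly infinite sets $D\in[C,\acl(BC)]$ can be traded for a quantifier over finite tuples, so that the resulting condition stays inside the Borel-conjunctive fragment. Once that reduction is established, the remaining translation into a Borel-conjunctive formula is routine bookkeeping with algebraical formulas of exactly the kind already carried out in the proof of Lemma \ref{l-a-measurable}.
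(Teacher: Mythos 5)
Your proposal is correct and takes essentially the same route as the paper: the paper also reduces the quantifier over $D\in[C,\acl(BC)]$ to finite tuples of elements algebraic over $BC$ (via finite character of $\acl$), writes the negation of $A\ind[M]_C B$ as a countable disjunction of conjunctive formulas built from algebraical formulas, and invokes Lemma \ref{l-conjunctive-measurable}. The only cosmetic difference is that you require $x\in\acl(BC)$ where the paper requires $x\in\acl(BC\vec d)$, which are equivalent since $\vec d\subseteq\acl(BC)$, and you make the finite-character reduction explicit where the paper leaves it implicit.
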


\begin{proof}   For each $n\in\BN$, let $\vec v_n$ denote the $n$-tuple of variables $\<v_1,\ldots,v_n\>$.  Let $X,Y,Z$ be countable sets of variables.
Let $\eta_i(u,Y,Z)$ enumerate all algebraical formulas over $Y,Z$.  Let $\varphi_i^n(u,\vec v_n,X,Y,Z)$ enumerate all formulas
of the following form:
$$\bigwedge_{j=1}^n \eta_{i_j}(v_j,Y,Z)\wedge \psi(u,\vec v_n,X,Z)\wedge \chi(u,\vec v_n,Y,Z),$$
where $\psi$ and $\chi$ are also algebraical formulas; here $n$ is allowed to vary.  Let $\theta_k^n(u,\vec v_n,Z)$ enumerate all algebraical
formulas over $\vec v_n,Z$.  Then the relation $\ind[M] \ $ over $\cu M$ is definable by the Borel-conjunctive formula
$$\cu M \models
\neg\bigvee_n\bigvee_i(\exists u)(\exists \vec v_n)\left[\varphi_i^n(u,\vec v_n,X,Y,Z)\wedge \bigwedge_k\neg \theta_k^n(u,\vec v_n,Z))\right].$$
Thus $\ind[M] \ $ is measurable by Lemma \ref{l-conjunctive-measurable}.
\end{proof}

The next result gives a characterization of the relation $\ind[M\omega] \ \ \ $.

\begin{prop}  \label{p-M-omega-char}  $\ind[M\omega] \ \ \ $ is the unique countably based relation such that
 for all countable $A,B,C$,
 $$ A\ind[M \omega]_C \  \ B \Leftrightarrow (\forall^c D\in[C,\acl^\omega(BC)]) A\ind[a \omega]_D \  B.$$
\end{prop}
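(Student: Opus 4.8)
The plan is to establish the displayed equivalence for all countable $A,B,C$; this suffices, since $\ind[M\omega] \ \ $ is countably based (Corollary \ref{c-M-omega-easyaxioms}), so any countably based relation that agrees with it on countable sets must equal it, which gives both the asserted existence and the uniqueness. Throughout I will use that for countable $A,B,C$ one has $A\ind[M\omega]_C \ B$ iff $A(\omega)\ind[M]_{C(\omega)}B(\omega)$ almost surely, together with the pointwise definition of $\ind[M]$ over $\cu M$.

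For the direction $\Rightarrow$, I would assume $A\ind[M\omega]_C \ B$ and take a countable $D$ with $C\subseteq D\subseteq\acl^\omega(BC)$. For each $\bo d\in D$ pick a countable $E_{\bo d}\subseteq BC$ with $\mu(\l d\in\acl^{\cu M}(E_{\bo d})\rr)=1$ (as in Definition \ref{d-pointwise-def}), and set $E=C\cup\bigcup_{\bo d\in D}E_{\bo d}$, a countable subset of $BC$ containing $C$. Then almost surely $C(\omega)\subseteq D(\omega)\subseteq\acl^{\cu M}(E(\omega))\subseteq\acl^{\cu M}(B(\omega)C(\omega))$, so almost surely $D(\omega)\in[C(\omega),\acl^{\cu M}(B(\omega)C(\omega))]$; intersecting with the conull set on which $A(\omega)\ind[M]_{C(\omega)}B(\omega)$ and applying the definition of $\ind[M]$ at $\omega$ yields $A(\omega)\ind[a]_{D(\omega)}B(\omega)$ almost surely, that is $A\ind[a\omega]_D \ B$.

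For $\Leftarrow$ I would argue by contraposition. Suppose $A\nind[M\omega]_C \ B$. By Lemma \ref{l-M-measurable} the event $\sa S=\l A\nind[M]_C B\rr$ lies in $\cu F$ and has positive measure. Using the Borel-conjunctive definition of $\ind[M]$ from the proof of Lemma \ref{l-M-measurable}, $\sa S=\bigcup_{n,i}\l\gamma_{n,i}\rr$, where $\gamma_{n,i}(\omega)$ is the conjunctive statement $(\exists u)(\exists\vec v)\,\delta(u,\vec v,A(\omega),B(\omega),C(\omega))$ and $\delta$ is a countable conjunction of first order formulas expressing that $v_1,\dots,v_n$ are algebraic over $B(\omega)C(\omega)$ via fixed algebraical formulas, that $u$ is algebraic over $A(\omega)C(\omega)\vec v$ and over $B(\omega)C(\omega)\vec v$ via fixed algebraical formulas, and that $u\notin\acl^{\cu M}(C(\omega)\vec v)$. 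Each $\l\gamma_{n,i}\rr$ lies in $\cu F$ by Result \ref{f-conjunctive}, so by countable additivity I can fix $(n,i)$ with $\mu(\l\gamma_{n,i}\rr)>0$; write $\sa S_0=\l\gamma_{n,i}\rr$ and $\delta=\bigwedge_l\delta_l$ with each $\delta_l$ first order and $\delta_{l+1}$ implying $\delta_l$. The crucial step is then to produce $\bo u,\bo v_1,\dots,\bo v_n\in\cu K$ with $\sa S_0\sqsubseteq\l\delta(\bo u,\vec{\bo v},A,B,C)\rr$: I would consider the conditions $\Sigma=\{\,\sa S_0\sqsubseteq\l\delta_l(\bo u,\vec{\bo v},A,B,C)\rr : l\in\BN\,\}$ over the parameters $A\cup B\cup C\cup\{\sa S_0\}$, note that a finite subset of $\Sigma$ reduces to the single condition for the largest index $L$ appearing, and observe that since $\delta$ implies $\delta_L$ the statement $\gamma_{n,i}$ implies $(\exists u)(\exists\vec v)\delta_L$; hence for $\omega\in\sa S_0$ we have $\cu M\models(\exists u\,\vec v)\delta_L$ at $A(\omega),B(\omega),C(\omega)$, and by iterating the perfect witness property (Result \ref{f-perfectwitnesses}(1)) over $u,v_1,\dots,v_n$ there are $\bo u',\vec{\bo v}'\in\cu K$ with $\l\delta_L(\bo u',\vec{\bo v}',A,B,C)\rr\doteq\l(\exists u\,\vec v)\delta_L(A,B,C)\rr$, an event containing $\sa S_0$ up to a null set, so $\bo u',\vec{\bo v}'$ satisfy the finite subset. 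Thus $\Sigma$ is finitely satisfiable, and by saturation of $\cu N$ it is realized by some $\bo u,\vec{\bo v}$; since $\cu F$ is a $\sigma$-algebra, $\sa S_0\sqsubseteq\bigcap_l\l\delta_l(\bo u,\vec{\bo v},A,B,C)\rr=\l\delta(\bo u,\vec{\bo v},A,B,C)\rr$.

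To finish the $\Leftarrow$ direction I would assume $B\ne\emptyset$ (when $B=\emptyset$ both sides of the equivalence hold trivially), fix $\bo b_0\in B$, and use Result \ref{f-glue} to replace each $\bo v_j$ by $\bo v_j'$ agreeing with $\bo v_j$ on $\sa S_0$ and with $\bo b_0$ off $\sa S_0$; then $\bo v_j'\in\acl^\omega(BC)$ (since $v_j'(\omega)\in\acl^{\cu M}(B(\omega)C(\omega))$ almost surely), so $D:=C\cup\{\bo v_1',\dots,\bo v_n'\}$ is a countable member of $[C,\acl^\omega(BC)]$. For almost every $\omega\in\sa S_0$ we have $D(\omega)=C(\omega)\cup\{v_1(\omega),\dots,v_n(\omega)\}$ and $\delta(u(\omega),\vec v(\omega),A(\omega),B(\omega),C(\omega))$ holds in $\cu M$, so $u(\omega)\in\acl^{\cu M}(A(\omega)D(\omega))\cap\acl^{\cu M}(B(\omega)D(\omega))\setminus\acl^{\cu M}(D(\omega))$; hence $A(\omega)\nind[a]_{D(\omega)}B(\omega)$ on a set of positive measure, that is $A\nind[a\omega]_D \ B$, contradicting the right-hand side of the equivalence. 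I expect the main obstacle to be precisely the crucial step above — extracting genuine elements of $\cu K$ that uniformly witness the infinitary formula $\delta$ on $\sa S_0$ — which is handled by approximating $\delta$ by its first order conjuncts, witnessing each one via fullness, and then passing to the limit by saturation of $\cu N$.
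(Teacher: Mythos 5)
Your proof is correct and follows essentially the same route as the paper's: the forward direction is the same pointwise argument, and the backward direction likewise contraposes via the Borel-conjunctive definition of $\ind[M] \ $ from Lemma \ref{l-M-measurable}, extracts a positive-measure event carrying pointwise witnesses, lifts those witnesses to elements of $\cu K$, and forms the countable $D=C\cup\{\vec{\bo v}\}$ that violates the right-hand side. If anything, your handling of the witness-lifting step (first order approximations, fullness on each, then saturation of $\cu N$) is more explicit than the paper's one-line appeal to fullness for the infinitary formula, and your use of Result \ref{f-glue} with $\bo b_0\in B$ is a harmless variant of the paper's $\tau(\vec e)$-conditional trick with $\bo b\in\acl^\omega(BC)$.
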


\begin{proof}
Assume first that $ A\ind[M\omega]_C \  B $.  Suppose $D$ is countable and $D\in[C,\acl^\omega(BC)]$.  Then
$A(\omega)\ind[M]_{C(\omega)} \  B(\omega) \as.$ and $C(\omega)\subseteq D(\omega)\as$.  For each $\bo d\in D$ we have
 $\bo{d}(\omega)\in \acl(B(\omega)C(\omega))\as$.  Since $D$ is countable, it follows that
 $D(\omega)\subseteq\acl(B(\omega)C(\omega)) \as$.  Therefore $A(\omega)\ind[a]_{D(\omega)} B(\omega) \as.$, so $A\ind[a \omega]_D \  B.$

For the other direction, assume that $A\nind[M \omega]_C \ \ B$. If $\acl^\omega(BC)=\emptyset$, then $\mu(\l \acl(BC)=\emptyset\rr)=1$,
so we would trivially have $A\ind[M \omega]_C \ \ B$.  Therefore $\acl^\omega(BC)$ is non-empty, and we may  take $\bo b\in\acl^\omega(BC)$.
Work with the notation from Lemma \ref{l-M-measurable}.  There is $n,i$ such that
$$\sa E:=\bigcap_k\l (\exists u)(\exists \vec v_n)(\varphi_i^n(u,\vec v_n,ABC)\wedge \neg \theta_k^n(u,\vec v_n,C))\rr$$
has positive measure, where $\varphi_i^n(u,\vec v_n,ABC)$ has the form
$$\bigwedge_{j=1}^n \eta_{i_j}(v_j,BC)\wedge \psi(u,\vec v_n,AC)\wedge \chi(u,\vec v_n,BC)$$
and $\eta_{i_j}(v_j,BC)$, $\psi(u,\vec v_n,AC)$, $\chi(u,\vec v_n,BC)$, and $\theta_k^n(u,\vec v_n,C))$ are algebraical.
Take a formula $\tau(\vec z)$ and a tuple $\vec{\bf e}\in \cu K$ such that $\sa E=\l \tau(\vec e)\rr$.  Then
$$\mu(\l (\exists u)(\exists\vec v_n)[(\tau(\vec e)\to (\varphi_i^n(u,\vec v_n,ABC)\wedge \neg \theta_k^n(u,\vec v_n,C))$$
$$\wedge(\neg \tau(\vec e)\to \bigwedge_j v_j=b]\rr)=1.$$
By fullness there is an $n+1$-tuple $(\bo a,\vec {\bo d})$ witnessing the quantifiers $(\exists u)(\exists \vec v_n)$ in the above formula.
Notice that each $d_j\in \acl^\omega(BC)$.  Set
$$D=C\cup\{\vec d_n\}\in [C,\acl^\omega(BC)].$$
Then $D$ is countable and
$$\sa E\sqsubseteq\l a\in [\acl(AD)\cap \acl(BC)]\setminus \acl(D)\rr\sqsubseteq\l A\nind[a]_D B\rr.$$
Therefore $A\nind[a \omega]_D\ B$, and the proof is complete.
\end{proof}

\subsection{Pointwise Dividing Independence}

In this subsection we consider the relation $\ind[d\omega] \ \ $, especially when the first order theory $T$ is simple.
The relation $\ind[d]$ over $\cu M$ has monotonicity, so $\ind[d\omega] \ \ $ exists.

%%  Sept 2014:  Statement is simplified and proof is expanded.

\begin{cor}  \label{c-simple-easyaxioms}
Suppose $T$ is a simple first order theory.  Then the relation $\ind[d\omega] \ $ over $\cu N$
satisfies the basic axioms, and has symmetry and finite character.
\end{cor}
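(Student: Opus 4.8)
The plan is to derive the corollary by combining the known behaviour of dividing independence in simple first order theories with the general machinery of Section~\ref{s-pointwise}. The first step is to record what holds for $\ind[d]$ over $\cu M$ when $T$ is simple: by the classical theory of simple first order theories (equivalently, by applying [Be3] to $T$ viewed as a continuous theory, exactly as in the discussion following Proposition~\ref{p-d-cbased}), $\ind[d]$ over $\cu M$ is an independence relation with countably local character. In particular, $\ind[d]$ over $\cu M$ satisfies the basic axioms, symmetry, monotonicity, and finite character (finite character also by Result~\ref{f-d-indep}(1)); by Lemma~\ref{l-d-union} it has the countable union property; and by Proposition~\ref{p-d-cbased}, applied with $\cu M$ in place of $\cu N$, it is countably based.

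The three assertions of the corollary then follow mechanically from the results of Sections~3 and~\ref{s-pointwise}. The basic axioms for $\ind[d\omega]$ over $\cu N$ follow from Corollary~\ref{c-summary}, since $\ind[d]$ satisfies the basic axioms over $\cu M$. Symmetry of $\ind[d\omega]$ follows from Proposition~\ref{p-omega-cbased}(2), using that $\ind[d]$ over $\cu M$ has monotonicity and symmetry. Finally, finite character of $\ind[d\omega]$ follows from Proposition~\ref{p-pointwise-finitechar}, whose three hypotheses --- that $\ind[d]$ over $\cu M$ is countably based, has finite character, and has the countable union property --- were all established in the first step. (Recall also that $\ind[d\omega]$ exists at all only because $\ind[d]$ over $\cu M$ has monotonicity, as noted just before the statement.)

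I do not anticipate a genuine obstacle: this corollary is essentially a bookkeeping exercise assembling Corollary~\ref{c-summary}, Proposition~\ref{p-omega-cbased}, and Proposition~\ref{p-pointwise-finitechar}. The one point deserving a sentence of care is the invocation of the theory of simple theories, so that the hypotheses of Propositions~\ref{p-d-cbased} and~\ref{p-pointwise-finitechar} are genuinely met --- namely that over the big model $\cu M$ of a simple first order theory the dividing independence relation is an independence relation with countably local character, and hence countably based. This is standard, and is already used implicitly in the remark following Proposition~\ref{p-d-cbased}.
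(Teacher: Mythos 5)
Your proposal is correct and follows essentially the same route as the paper: basic axioms and symmetry via Corollary \ref{c-summary} and Proposition \ref{p-omega-cbased}, and finite character via Proposition \ref{p-d-cbased} (to get that $\ind[d]$ over $\cu M$ is countably based for simple $T$), Lemma \ref{l-d-union}, and Proposition \ref{p-pointwise-finitechar}. Your extra care in verifying that simplicity of $T$ supplies the hypotheses of Proposition \ref{p-d-cbased} is exactly the point the paper handles in the remark following that proposition.
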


\begin{proof}  The basic axioms and symmetry follow from Proposition \ref{p-omega-cbased} and Corollary \ref{c-summary}.
By Proposition \ref{p-d-cbased}, $\ind[d]$ over $\cu M$ is countably based.  Then by Lemma \ref{l-d-union} and Proposition \ref{p-pointwise-finitechar},
$\ind[d\omega] \ \ $ has finite character.
\end{proof}

\begin{prop}  \label{p-ind[d]-meas}
For every complete first order theory $T$, the relation $\ind[d]$ over $\cu M$ is measurable.
\end{prop}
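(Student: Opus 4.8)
The plan is to fit $\ind[d]$ into the framework of Lemma \ref{l-conjunctive-measurable}: I will exhibit a Borel-conjunctive $L_{\omega_1\omega}$-formula that defines $\ind[d]$ in $\cu M$, and then measurability is immediate. Fix countable sets of variables $X,Y,Z$. By the definition of the dividing independence relation, for countable $A,B,C$ indexed by $X,Y,Z$ we have $A\nind[d]_C B$ in $\cu M$ if and only if there are a finite tuple $\vec a$ from $A$, a first order formula $\psi(\vec x,\vec y,\vec z)$, and finite tuples $\vec b$ from $B$, $\vec c$ from $C$ with $\cu M\models\psi(\vec a,\vec b,\vec c)$ and $\psi(\vec x,\vec b,\vec c)$ dividing over $C$. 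Since $X,Y,Z$ are countable and there are countably many first order formulas, the quantifiers ``there are $\vec a,\psi,\vec b,\vec c$'' become a countable disjunction, and $\psi(\vec a,\vec b,\vec c)$ is itself a first order formula; so the whole thing will be Borel-conjunctive as soon as I know that, for each fixed $\psi$ and each pair of finite subtuples $\vec y'\subseteq Y$, $\vec z'\subseteq Z$, the statement ``$\psi(\vec x,\vec y',\vec z')$ divides over $Z$'' is definable by a Borel-conjunctive formula of $(\vec y',\vec z',Z)$. Negating then gives a Borel-conjunctive definition of $\ind[d]$.

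The heart of the argument is therefore a syntactic analysis of dividing. First I would record the following elementary equivalence, for a formula $\psi(\vec x,\vec b,\vec c)$ and a small set $D$ with $\vec c$ a tuple from $D$: $\psi(\vec x,\vec b,\vec c)$ $k$-divides over $D$ if and only if for every $n\in\BN$ there exist $\vec b_1,\ldots,\vec b_n$ with $\vec b_i\equiv_D\vec b$ for all $i$ and $\{\psi(\vec x,\vec b_i,\vec c):i\le n\}$ $k$-inconsistent. The forward direction is immediate from a $D$-indiscernible witnessing sequence. For the converse, a compactness argument using the saturation of $\cu M$ produces an infinite sequence $\langle\vec b_i\rangle_{i<\omega}$ of realizations of $\tp(\vec b/D)$ with $\{\psi(\vec x,\vec b_i,\vec c)\}$ $k$-inconsistent; extracting a $D$-indiscernible sequence in its Ehrenfeucht--Mostowski type over $D$ preserves both ``each term realizes $\tp(\vec b/D)$'' and $k$-inconsistency, which is exactly what $k$-dividing asks for; and finally $\psi(\vec x,\vec b,\vec c)$ divides over $D$ iff it $k$-divides over $D$ for some $k$.

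Then comes the syntactic bookkeeping, which is routine once set up correctly. The condition $\vec b_i\equiv_D\vec b$ can be written as $\bigwedge_{\rho,\vec d}(\neg\rho(\vec b_i,\vec d)\vee\rho(\vec b,\vec d))$, with $\rho$ ranging over first order formulas and $\vec d$ over finite subtuples of $D$; since the negation of a first order formula is first order, this is a countable conjunction of first order formulas, hence conjunctive. The condition that $\{\psi(\vec x,\vec b_i,\vec c):i\le n\}$ be $k$-inconsistent is the finite conjunction, over $k$-element $S\subseteq\{1,\ldots,n\}$, of the first order formulas $\neg\exists\vec x\bigwedge_{i\in S}\psi(\vec x,\vec b_i,\vec c)$. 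Hence, for each $n$ and $k$, applying the finite block of quantifiers $\exists\vec b_1\cdots\vec b_n$ to the conjunction of these two conditions yields a conjunctive formula $\Theta_{n,k}$ in the free variables $(\vec y',\vec z',Z)$; the countable conjunction $\bigwedge_n\Theta_{n,k}$ is again conjunctive; and ``$\psi(\vec x,\vec y',\vec z')$ divides over $Z$'', which by the equivalence above equals $\bigvee_k\bigwedge_n\Theta_{n,k}$, is Borel-conjunctive. Assembling this with the countable disjunction from the first paragraph gives a Borel-conjunctive definition of $\nind[d]$, and its negation a Borel-conjunctive definition of $\ind[d]$; Lemma \ref{l-conjunctive-measurable} then yields measurability. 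The only genuinely non-mechanical step is the compactness/indiscernible-extraction equivalence for $k$-dividing; the remaining work is checking closure properties of the classes of conjunctive and Borel-conjunctive formulas, the one slightly delicate point being that $\vec b_i\equiv_D\vec b$ must be rendered with negations pushed inside first order subformulas so that it stays conjunctive rather than merely Borel-conjunctive (which would spoil the inner quantifier).
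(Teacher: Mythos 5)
Your proposal is correct and follows essentially the same route as the paper: both exhibit a Borel-conjunctive formula $\bigvee_k\bigwedge_n(\exists\vec y^{\,0},\ldots,\vec y^{\,n-1})[\cdots]$ expressing dividing (the paper's $\div_\varphi$), assemble it into a countable disjunction over tuples and formulas, and invoke Lemma \ref{l-conjunctive-measurable}. The only difference is that you spell out the compactness/indiscernible-extraction equivalence justifying that formula, which the paper leaves implicit.
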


\begin{proof}
Let $\varphi(\vec x,\vec y,Z)$ be a first order formula, where
$\vec x,\vec y$ are tuples of variables, and $Z$ is a countable set of variables.
For each tuple $\vec b$ and countable set $C$ in the big first order model $\cu M$, $\varphi(\vec x, \vec b, C)$ divides over $C$
if and only if  $(\cu M,\vec b, C)$ satisfies the following Borel-conjunctive formula $\div_{\varphi}(\vec y,Z)$:
$$ \bigvee_k\bigwedge_{n\ge k}(\exists \vec y^{\, 0},\ldots,\vec y^{\, n-1})
\left[\bigwedge_{j<n}\vec y^{\, j}\equiv_Z \vec y
\wedge \bigwedge_{I\subset n, |I|=k}\neg(\exists\vec x)\bigwedge_{i\in I} \varphi(\vec x,\vec y^{\, i},Z)\right].$$
Therefore $\ind[d]$ is definable in $\cu M$ by the Borel-conjunctive formula
$$ \neg\bigvee_{\vec x\in X^{<\BN}}\bigvee_{\vec y\in Y^{<\BN}}\bigvee_\varphi (\varphi(\vec x,\vec y,Z)\wedge \div_\varphi(\vec y,Z)),$$
where $X,Y,Z$ are used to index $A,B,C$.  So by Lemma \ref{l-conjunctive-measurable},  $\ind[d]$ is measurable.
$\ind[d]$ also has monotonicity by Result \ref{f-d-indep}, so $\ind[d\omega] \ $ exists.
\end{proof}

\begin{prop}  \label{p-simple-existence}
Suppose that $T$ is simple.  Then for any small $A,B$ and countable $C$, there is $A'\equiv_C A$ such that $A'\ind[d\omega]_C \ B  $.
\end{prop}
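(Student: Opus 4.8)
The plan is to reduce to the case that $A$ and $B$ are both countable, and then to prove that case by adapting, with the necessary changes, the argument in the proof of Theorem~\ref{t-aa-omega-existence}.

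\emph{The reduction.} Since $\ind[d\omega]$ is countably based (Proposition~\ref{p-omega-cbased}) and, as $T$ is simple, has finite character (Corollary~\ref{c-simple-easyaxioms}), while $\ind[d]$ over $\cu M$ has finite character and is measurable (Proposition~\ref{p-ind[d]-meas}), and since $C$ is countable, a routine argument shows that for any $A'$ one has $A'\ind[d\omega]_C \ B$ as soon as $\vec{\bo a}'\ind[d\omega]_C \ \vec{\bo b}$ holds for every finite tuple $\vec{\bo a}'$ from $A'$ and every finite tuple $\vec{\bo b}$ from $B$; the key point is that for countable $B_0\subseteq B$ one has $\l\vec{\bo a}'\ind[d]_C B_0\rr=\bigcap\{\l\vec{\bo a}'\ind[d]_C\vec{\bo b}\rr:\vec{\bo b}\in B_0^{<\BN}\}$ by the pointwise finite character of $\ind[d]$, so this event has measure $1$ exactly when each of its countably many terms does. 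By Proposition~\ref{p-ind[d]-meas} and Lemma~\ref{l-conjunctive-measurable}, for finite $\vec{\bo a}'$ and $\vec{\bo b}$ the assertion ``$\vec a'(\omega)\ind[d]_{C(\omega)}\vec b(\omega) \ \as.$'' amounts to a countable family of continuous conditions on $\vec{\bo a}'$ over $\vec{\bo b} C$ (the conditions $\mu(\sa G)=1$ for the events $\sa G$ built from the finite approximations of the Borel-conjunctive formula defining $\ind[d]$). Thus, writing $\bar x=(x_{\bo a})_{\bo a\in A}$, let $\Pi(\bar x)$ be the partial type over $BC$ consisting of $\tp(A/C)$ together with, for each finite subtuple $\vec x$ of $\bar x$ and each finite tuple $\vec{\bo b}$ from $B$, these conditions; any realization $A'$ of $\Pi$ in $\cu N$ satisfies $A'\equiv_C A$ and $A'\ind[d\omega]_C \ B$. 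A finite subset of $\Pi$ mentions only a finite $A_0\subseteq A$, a finite $B_0\subseteq B$, and (as $C$ is countable) countably many parameters; using the countable case (established below) there is $A_0'\equiv_C A_0$ with $A_0'\ind[d\omega]_C \ B_0$, and, by monotonicity of $\ind[d]$ over $\cu M$, $A_0'$ satisfies that finite subset. Since $|\Pi|<\upsilon$ and $\cu N$ is $\upsilon$-saturated, $\Pi$ is realized, completing the reduction.

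\emph{The countable case.} Assume $A$ and $B$ countable. Since $T$ is simple, $\ind[d]=\ind[f]$ over $\cu M$ is an independence relation (by the discussion following Proposition~\ref{p-d-cbased}) and hence has full existence (Remarks~\ref{r-fe-vs-ext}); so for a.e.\ $\omega$ there is $A'_0\subseteq M$ with $A'_0\equiv_{C(\omega)}A(\omega)$ and $A'_0\ind[d]_{C(\omega)}B(\omega)$. We extract such a choice measurably, following the proof of Theorem~\ref{t-aa-omega-existence}. Using the Borel-conjunctive formula $\div_\varphi$ of Proposition~\ref{p-ind[d]-meas}, for a first order formula $\varphi$ and a finite tuple $\vec{\bo b}$ from $B$ the statement ``$\varphi(\vec x,\vec{\bo b},C)$ does not divide over $C$'' is equivalent to ``for every $k$ there is $n\ge k$ with $\cu M\models\Xi_{\varphi,k,n}(\vec{\bo b},C)$'', where $\Xi_{\varphi,k,n}$ is the negation of the $n$-th conjunct of the $k$-th disjunct of $\div_\varphi$; $\Xi_{\varphi,k,n}$ is again Borel-conjunctive, so the events $\l\Xi_{\varphi,k,n}(\vec{\bo b},C)\rr$ lie in $\cu F$ by Lemma~\ref{l-conjunctive-measurable}. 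Enumerate, with each item occurring cofinally, all quadruples $(\varphi_m,\vec{\bo a}_m,\vec{\bo b}_m,k_m)$ with $\varphi_m$ a first order formula, $\vec{\bo a}_m$ a finite tuple from $A$, $\vec{\bo b}_m$ a finite tuple from $B$, and $k_m\in\BN$; write $\vec{\bo a}_m'$ for the tuple from the sought set $A'=\{\bo a':\bo a\in A\}$ indexed like $\vec{\bo a}_m$. For $s\in\BN^{<\BN}$ let $\sa E_s$ be the set of $\omega$ for which there is $A'_0\subseteq M$ with $A'_0\equiv_{C(\omega)}A(\omega)$, $A'_0\ind[d]_{C(\omega)}B(\omega)$, and, for each $i<|s|$, either $\cu M\not\models\varphi_i(\vec a^0_i(\omega),\vec b_i(\omega),C(\omega))$ or $\cu M\models\Xi_{\varphi_i,k_i,s(i)}(\vec b_i(\omega),C(\omega))$ (where $\vec a^0_i$ is the tuple of $A'_0$ indexed like $\vec{\bo a}_i$). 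As in Theorem~\ref{t-aa-omega-existence}, $\aleph_1$-saturation of $\cu M$ (``satisfiable iff finitely satisfiable'') together with measurability of the relevant Borel-conjunctive events gives $\sa E_s\in\cu F$, and full existence of $\ind[d]$ over $\cu M$ gives $\Omega\doteq\bigcup\{\sa E_t:t\in\BN^n\}$ and $\sa E_s\doteq\bigcup\{\sa E_{sk}:k\in\BN\}$ for all $n$ and $s\in\BN^n$ (here $sk$ denotes $s$ with $k$ appended, as in Theorem~\ref{t-aa-omega-existence}). Cut the $\sa E_s$ down to disjoint events $\sa F_s\in\cu F$ with $\sa F_\emptyset=\Omega$, $\sa F_s\subseteq\sa E_s$, $\sa F_s\cap\sa F_t=\emptyset$ for distinct $s,t\in\BN^n$, and $\sa F_s\doteq\bigcup\{\sa F_{sk}:k\in\BN\}$, exactly as in Theorem~\ref{t-aa-omega-existence}. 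Finally let $\Sigma$ be the set of conditions expressing $A'\equiv_C A$, and let
$$\Delta=\bigl\{\,\sa F_s\sqsubseteq\l\,\neg\varphi_i(\vec{\bo a}_i',\vec{\bo b}_i,C)\vee\Xi_{\varphi_i,k_i,s(i)}(\vec{\bo b}_i,C)\,\rr\ :\ s\in\BN^{<\BN},\ i<|s|\,\bigr\},$$
each condition of $\Delta$ being expressed, when necessary, via the first order approximations furnished by Lemma~\ref{l-conjunctive-measurable}. By Fullness, using on each relevant $\sa F_s$ a witness $A'_0$ provided by $\sa F_s\subseteq\sa E_s$ and gluing across finitely many disjoint $\sa F_s$ via Result~\ref{f-glue}, every finite subset of $\Sigma\cup\Delta$ is satisfied in $\cu N_{ABC}$; hence by saturation there is $A'$ realizing $\Sigma\cup\Delta$. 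Then $A'\equiv_C A$, and for a.e.\ $\omega$ the events $\sa F_s$ containing $\omega$ form a branch of the tree along which, for every $m$ with $\cu M\models\varphi_m(\vec a_m'(\omega),\vec b_m(\omega),C(\omega))$, we have $\cu M\models\Xi_{\varphi_m,k_m,n}(\vec b_m(\omega),C(\omega))$ for some $n\ge k_m$; as every $k$ occurs among the $k_m$, it follows that each such $\varphi_m(\vec x,\vec b_m(\omega),C(\omega))$ does not divide over $C(\omega)$, whence $A'(\omega)\ind[d]_{C(\omega)}B(\omega) \ \as.$, i.e.\ $A'\ind[d\omega]_C \ B$.

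I expect the main obstacle to be precisely this tree construction. The extra difficulty compared with the algebraic case of Theorem~\ref{t-aa-omega-existence} is that ``$\varphi$ does not divide over $C$'' is a nested $\forall k\,\exists n$ statement whose matrix $\Xi_{\varphi,k,n}$ itself carries quantifiers and the type conditions $\equiv_C$, so care is needed both to check that the events $\l\Xi_{\varphi,k,n}(\cdot,C)\rr$ genuinely lie in $\cu F$ (they do, by Lemma~\ref{l-conjunctive-measurable}) and to see that the coordinates along a single branch certify non-dividing for \emph{all} $k$ simultaneously; the disjointification of the tree and the fullness-plus-saturation endgame then go through verbatim as in Theorem~\ref{t-aa-omega-existence}.
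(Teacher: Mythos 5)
Your proof is correct in substance, but it takes a genuinely different and considerably heavier route than the paper's for the core step. The paper's proof is a single compactness argument: it puts into $\Gamma(X)$ the type conditions $\l\theta(X,C)\rr\doteq\l\theta(A,C)\rr$ together with the conditions $\l\div_\varphi(B_\alpha,C)\rr\sqsubseteq\l\neg\varphi(X_\alpha,B_\alpha,C)\rr$ for all finite $A_\alpha\subseteq A$, $B_\alpha\subseteq B$ and all $\varphi$; finite satisfiability follows from pointwise full existence of $\ind[d]$ (using that forking equals dividing in a simple theory), fullness and gluing, and the verification that a realization $A'$ works is the observation that $\l A''\nind[d]_C B''\rr$ is covered by the countably many null events $\l\div_\varphi(B_\alpha,C)\rr\cap\l\varphi(A'_\alpha,B_\alpha,C)\rr$. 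The key point you miss is that whether $\varphi(\vec x,\vec b(\omega),C(\omega))$ divides over $C(\omega)$ is a property of $(\vec b,C)$ alone, already packaged as the fixed measurable event $\l\div_\varphi(\vec b,C)\rr$ of Proposition \ref{p-ind[d]-meas}; so one can forbid $A'$ from satisfying $\varphi$ on that event directly, and no tree of witnesses is needed. The tree in Theorem \ref{t-aa-omega-existence} is forced there because the witness index $j$ in the implication $\varphi_i\wedge\psi_i\to\chi_j$ depends jointly on $\omega$ and on the sought $A'$; here your coordinates $s(i)$ (witnesses for the $\exists n$ in ``does not divide'') depend only on $(\vec b,C)$, so the whole tree collapses. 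Your version does go through, but it buys you nothing and costs you real work that you gloss over: because your $\sa E_s$ requires the witness $A'_0$ to satisfy $A'_0\ind[d]_{C(\omega)}B(\omega)$, the theory whose satisfiability defines $\sa E_s$ is $\omega$-dependent (which first-order conditions $\neg\varphi(\vec a^{\,0},\vec b,C)$ are imposed depends on which dividing events contain $\omega$), so the ``satisfiable iff finitely satisfiable'' reduction needs an extra case analysis over finite families of dividing events before membership in $\cu F$ is established; you also need to arrange $s(i)\ge k_i$ for the coordinates to make sense. Your preliminary reduction to countable $A,B$ is fine and is essentially the same compactness step the paper performs in one pass over all finite subtuples of the small sets $A$ and $B$.
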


\begin{proof}
Let $(A_\alpha,B_\alpha)$ enumerate all pairs of finite subsets of $A$ and $B$.  Let $X$ be a set of variables corresponding to the elements of $A$,
and let $X_\alpha$ correspond to $A_\alpha$.  From the proof of Proposition \ref{p-ind[d]-meas},
for each $\alpha$ and first order formula $\varphi(X_\alpha,B_\alpha,C)$, we have $\l \div_{\varphi}(B_\alpha,C)\rr\in\cu F$.
We show that the following set of statements $\Gamma(X)$ in satisfiable in $\cu N$:
\begin{itemize}
\item $\l \theta(X,C)\rr\doteq\l \theta(A,C)\rr$ for each first order formula $\theta$;
\item $ \l \div_{\varphi}(B_\alpha,C)\rr\sqsubseteq\l \neg \varphi(X_\alpha,B_\alpha,C)\rr$ for each $\alpha$ and $\varphi$.
\end{itemize}

Since $T$ is simple, forking coincides with dividing, so for each $\omega\in\Omega$, every finite disjunction of formulas that divide over $C(\omega)$ also divides over $C(\omega)$.
Moreover,  $\ind[d]$ on models of $T$ satisfies existence. It follows from fullness that each finite subset of $\Gamma$ is satisfiable in $\cu N$.
By saturation, $\Gamma(X)$ is satisfied in $\cu N$ by some set $A'$.  Then $A'\equiv_C A$.
Let $A''\subseteq A'$ and $B''\subseteq B$ be countable.  Since $C$ is countable, to show $A'\ind[d\omega]_C \ B$
it suffices to show that $A''\ind[d\omega]_C B'' \ $.  The proof of Proposition \ref{p-ind[d]-meas} shows that
$$\l A''(\omega)\nind[d]_{C(\omega)}B''(\omega)\rr \sqsubseteq \bigcup_\alpha \bigcup_\varphi (  \l \div_{\varphi}(B_\alpha,C)\rr\cap \l\varphi(A'_\alpha,B_\alpha,C)\rr),$$
where $\alpha$ is such that
$A_\alpha\subseteq A''$ and $B_\alpha\subseteq B''$.  This union has measure $0$ because $A'$ satisfies $\Gamma(X)$, so $A''\ind[d\omega]_C B'' \ $.
\end{proof}

\begin{cor}  \label{c-simple-existence}
If $T$ is simple, then $\ind[d\omega] \ \ $ satisfies full existence when $C$ is countable, and satisfies extension
when $B, C$ are countable.
\end{cor}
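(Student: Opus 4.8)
The plan is to derive both clauses from Proposition~\ref{p-simple-existence} together with the structural properties of $\ind[d\omega]$ recorded in Corollary~\ref{c-simple-easyaxioms}. The full existence clause is essentially a restatement of Proposition~\ref{p-simple-existence}: read with $A,B$ arbitrary small and $C$ countable, that proposition produces for each such triple an $A'\equiv_C A$ with $A'\ind[d\omega]_C B$, which is precisely axiom~(10) with the base restricted to countable sets. So all of the content lies in the extension clause, and the approach there is to carry out the classical derivation of extension from full existence (the argument behind Remarks~\ref{r-fe-vs-ext}(1), from the Appendix of [Ad1]), keeping track of which sets it forces to be countable.

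In detail, suppose $B,C$ are countable, $A\ind[d\omega]_C B$, and $\hat B\supseteq B$ is small; we seek $A'\equiv_{BC}A$ with $A'\ind[d\omega]_C\hat B$. First I would replace $\hat B$ by $\hat B\cup C$: enlarging the target in this way is harmless, since monotonicity of $\ind[d\omega]$ (Corollary~\ref{c-simple-easyaxioms}) recovers the original statement, so we may assume $C\subseteq\hat B$. Next, apply full existence over the base $BC$ --- legitimate by Proposition~\ref{p-simple-existence} because $BC$ is countable --- to obtain $A''\equiv_{BC}A$ with $A''\ind[d\omega]_{BC}\hat B$. By invariance, $A''\ind[d\omega]_C B$, and hence $BC\ind[d\omega]_C A''$ by symmetry and normality. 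Also, symmetry applied to $A''\ind[d\omega]_{BC}\hat B$ gives $\hat B\ind[d\omega]_{BC}A''$. Since $BC\in[C,\hat B]$, transitivity of $\ind[d\omega]$ yields $\hat B\ind[d\omega]_C A''$, and one further use of symmetry gives $A''\ind[d\omega]_C\hat B$; take $A'=A''$.

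I do not expect a real obstacle here, only careful bookkeeping of cardinalities. The classical derivation invokes full existence exactly once, and with base $BC$, so the hypothesis that $B$ and $C$ are countable is exactly what makes Proposition~\ref{p-simple-existence} applicable; meanwhile invariance, monotonicity, transitivity, normality, and symmetry --- the only other ingredients used --- hold for arbitrary small sets by Corollary~\ref{c-simple-easyaxioms}, so $\hat B$ may be any small set. Since full existence of $\ind[d\omega]$ over an arbitrary small base is not known, this method will not allow us to weaken the countability hypothesis on $B$ in the extension clause, which is why the statement is phrased as it is.
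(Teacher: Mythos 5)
Your proposal is correct and takes essentially the same route as the paper: the paper likewise reads off the full existence clause from Proposition~\ref{p-simple-existence} and then invokes the Adler derivation of extension from invariance, monotonicity, transitivity, normality, symmetry, and full existence over the base $BC$, which is countable exactly when $B$ and $C$ are. The only difference is that you write out the Adler argument explicitly (correctly, including the reduction to $C\subseteq\hat B$ and the bookkeeping of which instance of full existence is used), whereas the paper simply cites it.
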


\begin{proof} Full existence when $C$ is countable is a re-statement of Proposition \ref{p-simple-existence}.
The proof of Remarks \ref{r-fe-vs-ext} (1) (see the Appendix of [Ad1]), shows that invariance, monotonicity, transitivity, normality,  symmetry,
and full existence when $C$ is countable implies extension when $B,C$ are countable.
\end{proof}

The next lemma holds in general, and has an application to the case that $T$ is stable.

\begin{lemma}  \label{l-ind[d]-pointwise}
Suppose $A,B,C\subseteq\cu K$ are countable and $A\ind[d]_C B$ in $\cu N$.  Then $A\ind[d\omega]_C \ B$.
\end{lemma}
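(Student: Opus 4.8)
The plan is to show that if $A \ind[d]_C B$ holds in $\cu N$ for countable $A,B,C$, then the pointwise dividing independence $A \ind[d\omega]_C B$ holds, i.e. $A(\omega) \ind[d]_{C(\omega)} B(\omega)$ for almost all $\omega$. I would argue by contraposition: assume $A \nind[d\omega]_C B$ and deduce $A \nind[d]_C B$ in $\cu N$. Since $A,B,C$ are countable, the failure of pointwise independence means that the set $\l A \nind[d]_C B\rr$ has positive measure; by the measurability of $\ind[d]$ (Proposition \ref{p-ind[d]-meas}) and its proof, this set is contained in a countable union $\bigcup_{\vec a, \vec b, \varphi}(\l \varphi(\vec a, \vec b, C)\rr \cap \l \div_\varphi(\vec b, C)\rr)$ over tuples $\vec a \in A^{<\BN}$, $\vec b \in B^{<\BN}$, and first order formulas $\varphi$. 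By countable additivity, one of these pieces, say $\sa E := \l \varphi(\vec a, \vec b, C)\rr \cap \l \div_\varphi(\vec b, C)\rr$, has positive measure $\mu(\sa E) = r > 0$.

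The key step is then to manufacture, from this positive-measure event, a continuous $L^R$-formula witnessing that $A \nind[d]_C B$ in $\cu N$. The natural candidate is something like $\Psi(\vec x, B, C) := \mu(\l \varphi(\vec x, \vec b, C)\rr \sqcap \sa E) $ — or rather a formula whose zero-set on $\vec x = \vec a$ picks out the event $\sa E$ — together with the observation that on $\sa E$ the first order formula $\varphi(\vec x, \vec b, C)$ divides over $C$ pointwise. Concretely, I expect to produce a $C$-indiscernible sequence $\langle \vec b^{\,i}\rangle_{i \in \BN}$ in $\cu N$ (of sort $\BK$) with $\vec b^{\,0} \equiv_C \vec b$, such that restricted to $\sa E$ the realizations are pointwise $C(\omega)$-indiscernible and pointwise $k$-inconsistent for the formula $\varphi$; outside $\sa E$ we can take the sequence to be constant. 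Using fullness and Result \ref{f-glue} (gluing) one builds such a sequence so that $\{\Psi(\vec x, \vec b^{\,i}, C) = 0 : i \in \BN\}$ forces $\varphi(\vec x, \vec b^{\,i}, C)$ to hold on $\sa E$ for every $i$ simultaneously, contradicting pointwise $k$-inconsistency on the positive-measure set $\sa E$. Hence $\Psi(\vec x, B, C)$ divides over $C$ in $\cu N$ while $\Psi(\vec a, B, C) = 0$, so $A \nind[d]_C B$ in $\cu N$.

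The main obstacle I anticipate is the bookkeeping needed to pass from a "pointwise dividing almost everywhere on $\sa E$" statement to an honest dividing formula in the continuous structure $\cu N$: one must carefully choose the continuous formula $\Psi$ so that its zero-set genuinely encodes "membership in $\sa E$ plus $\varphi$ holds", and one must verify that the $C$-indiscernible sequence of parameter-tuples built via fullness is actually $C$-indiscernible in the continuous sense (not merely pointwise almost everywhere), which requires the sequence to be constant off $\sa E$ and to have matching measures. This is exactly the kind of argument already carried out in the proof of Proposition \ref{p-d-downward} and in the proof of Lemma \ref{l-B-stable-implies}, so I would model the construction closely on those, invoking Lemma \ref{l-big-embedding}, fullness (Result \ref{f-perfectwitnesses}), and saturation as needed. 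Once the dividing formula is in hand the conclusion $A \nind[d]_C B$ is immediate from the definition of $\ind[d]$, completing the contrapositive.
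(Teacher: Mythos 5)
Your overall strategy matches the paper's: argue the contrapositive, use measurability of $\ind[d]$ to isolate a positive-measure event on which a single first order formula $\varphi(\vec x,\vec b,C)$ divides pointwise, and then build a $C$-indiscernible sequence in $\cu N$ via Ramsey, fullness, and gluing (constant off the bad event) to witness dividing of a continuous formula. The gap is precisely at the step you flag as the ``main obstacle'': you never produce a legitimate continuous $L^R$-formula with parameters in $B\cup C$ whose dividing is actually witnessed, and the candidate you write down fails. First, $\mu(\l\varphi(\vec x,\vec b,C)\rr\sqcap\sa E)$ evaluates to $\mu(\sa E)=r>0$ at $\vec x=\vec a$, not to $0$; and any corrected version such as $r\dotminus\mu(\sa E\sqcap\l\varphi(\vec x,\vec b,C)\rr)$ uses $\sa E$ as a parameter. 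But $\sa E$ is not available as a parameter: as you define it, it even involves the tuple $\vec a\in A^{<\BN}$, and in any case $\l\div_\varphi(\vec b,C)\rr$ is only $L_{\omega_1\omega}$-definable, hence lies in $\dcl_\BB(BC)$ but is not of the form $\l\theta(\vec b,\vec c)\rr$ for any first order $\theta$, so it cannot appear in a continuous formula $\Phi(\vec x,B,C)$.

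Second, once $\sa E$ is replaced by something expressible, your proposed contradiction collapses: with any measure-threshold formula, $\Phi(\vec d,\vec b^{\,i},C)=0$ only says that $\varphi(\vec d,\vec b^{\,i},C)$ holds on \emph{some} set of measure at least $r$, a possibly different set for each $i$, so you cannot intersect over all $i$ and invoke pointwise $k$-inconsistency at a single $\omega$. The paper resolves both problems quantitatively: it fixes the inconsistency bound $k$ (so that $\div^k_\varphi$ is conjunctive), uses Result \ref{f-conjunctive} to choose a first order approximation $\theta_{m(k)}(\vec b,C)$ with $\mu(\l\theta_{m(k)}(\vec b,C)\wedge\neg\div^k_\varphi(\vec b,C)\rr)\le r/2$, sets $\Phi=r\dotminus\mu(\l\varphi(\vec x,\vec b,C)\wedge\theta_{m(k)}(\vec b,C)\rr)$, and then applies the measure-theoretic pigeonhole (Lemma 7.5 of [EG]) to the $k$-wise null-intersecting events $\sa D_i=\l\varphi(\vec d,\vec b_i,C)\wedge\div^k_\varphi(\vec b,C)\rr$ to find some $i$ with $\mu(\sa D_i)<r/2$, whence $\Phi(\vec d,\vec b_i,C)>0$. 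This approximation-plus-pigeonhole argument is the real content of the proof and is absent from your proposal.
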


\begin{proof}
We will use the notation from the proof of Proposition \ref{p-ind[d]-meas}.  Suppose that $A\nind[d\omega]_C \ B$.
Then the set $\sa E=\l A\ind[d]_C B\rr$ belongs to $\cu F$,  and $\mu(\sa E)<1$.  Hence there are a first-order formula $\varphi(\vec x,\vec y,Z)$,
and tuples $\vec {\bo a}\in A^{<\BN}, \vec {\bo b}\in B^{<\BN}$ such that
$$\mu(\l\varphi(\vec a,\vec b,C)\wedge\div_\varphi(\vec b,C)\rr)>0.$$
For each $k\in\BN$, let $\div^k_\varphi(\vec y,Z)$ be the part of $\div_\varphi(\vec y,Z)$ after the initial $\bigvee_k$.
Then $\div^k_\varphi(\vec y,Z)$ is a conjunctive formula,
$\l \div^k_\varphi(\vec b,C)\rr\in\cu F$, and  $\l \div_\varphi(\vec b,C)\rr=\bigcup_k\l \div^k_\varphi(\vec b,C)\rr$,
so we may find a $k\in\BN$ such that
$$r:=\mu(\l\varphi(\vec a,\vec b,C)\wedge \div^k_\varphi(\vec b,C)\rr)>0.$$
By Result \ref{f-conjunctive}, there is a countable set $\{\theta_m(\vec y,Z)\colon m\in\BN\}$  of first order formulas
closed under finite conjunction such that
$$ \cu M\models(\forall\vec y,Z)\left[\div^k_\varphi(\vec y,Z)\leftrightarrow\bigwedge_m \theta_m(\vec y,Z)\right].$$
Therefore
$$ \l\div^k_\varphi(\vec b, C)\rr=\bigcap_m \l\theta_m(\vec b,C)\rr,$$
so there exist $m(k)\in\BN$ such that
$$\mu(\l\theta_{m(k)}(\vec b,C)\wedge\neg\div^k_\varphi(\vec b,C)\rr)\le r/2.$$
Now let $\Phi(\vec x,\vec{\bo b},C)$ be the continuous formula
$$r\dotminus\mu(\l\varphi(\vec x,\vec b,C)\wedge\theta_{m(k)}(\vec b,C)\rr).$$
We have
$$\mu(\l\varphi(\vec a,\vec b,C)\wedge\theta_{m(k)}(\vec b,C)\rr)\ge r,$$
so $\Phi(\vec {\bo a},\vec {\bo b},C)=0$.

\emph{Claim.} $\Phi(\vec x,\vec {\bo b},C)$ divides over $C$ in $\cu N$.

\emph{Proof of Claim:}  Using Ramsey's theorem and the fact that $\cu M$ is saturated, one can show that for each $\omega\in \l\div^k_\varphi(\vec b,C)\rr$,
there is a  sequence $\<\vec {b'}_i\>_{i\in\BN}$ in $\cu M$ such that:
\begin{enumerate}
\item $\<\vec {b'}_i\>_{i\in\BN}$ is $C(\omega)$-indiscernible,
\item $\vec{b'}_0\equiv_{C(\omega)} \vec b(\omega)$, and
\item $\cu M\models\neg(\exists\vec x)\bigwedge_{i<k}\varphi(\vec x,\vec{b'}_i,C(\omega))$.
\end{enumerate}
By $\omega_1$-saturation for $\cu N$ and fullness, there is a sequence $\<\vec{\bo b}''_i\>_{i\in\BN}$ in $\cu K$ such that for all $\omega\in \l\div^k_\varphi(\vec b,C)\rr$,
conditions (1)--(3) above hold when $\vec b'_i=\vec b''_i(\omega)$.
By Result \ref{f-glue}, for each $i\in\BN$ there is a $\vec{\bo b}_i$ that agrees with $\vec{\bo b}'_i$ on $\l\div^k_\varphi(\vec b,C)\rr$, and agrees with $\vec{\bo b}$ elsewhere.  Then
$\<\vec{\bo b}_i\>_{i\in\BN}$ is $C$-indiscernible, and $\vec{\bo b_0}\equiv_C\vec{\bo b}$ in $\cu N$.
Consider a tuple $\vec{\bo d}\in\cu K^{|\vec x|}$, and for each $i\in\BN$ let
$$\sa D_i=\l\varphi(\vec d,\vec b_i,C)\wedge\div^k_\varphi(\vec b,C)\rr.$$
By conditions (1) and (3) above,
for all distinct elements $i_1,\ldots,i_k$ of $\BN$, we have $\mu(\bigcap_{j=1}^k \sa D_{i_j})=0$.
Therefore, by Lemma 7.5 of [EG], there exists $i\in\BN$ such that
$\mu(\sa D_i)<r/2$.  By (1) and (2) above,
$\l\div^k_\varphi(\vec b,C)\rr=\l\div^k_\varphi(\vec b_i,C)\rr$, so
$$\sa D_i=\l\varphi(\vec d,\vec b_i,C)\wedge\div^k_\varphi(\vec b_i,C)\rr.$$
Hence
$$\mu(\l\varphi(\vec d,\vec b_i,C)\wedge\theta_{m(k)}(\vec b_i,C)\rr)\le$$
$$\mu(\sa D_i)+\mu(\l(\theta_{m(k)}(\vec b_i,C)\wedge\neg\div^k_\varphi(\vec b_i,C)\rr)
<r/2+r/2=r,$$
and thus $\Phi(\vec{\bo d},\vec{\bo b}_i,C)>0.$
This proves the Claim.

It follows that $\vec{\bo a}\nind[d]_C\vec{\bo b}$ in $\cu N$, so by monotonicity, $A\nind[d]_C B$ in $\cu N$.
\end{proof}

\begin{prop}  \label{p-f-vs-omega}  Suppose $T$ is stable and let $\ind[I]=\ind[f\omega] \ \ \wedge\ind[f\BB] \ $ on $\cu N$.
\begin{enumerate}
\item $\ind[I]$ is countably based.
\item $\ind[f]\Rightarrow \ind[I]$.
\item $\ind[I] $ and $\ind[f\omega] \ \ $ are independence relations with countably local character.
\item $\ind[I] = \ind[f]$ if and only if $T$ has $\acl=\dcl$.
\end{enumerate}
\end{prop}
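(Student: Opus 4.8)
The strategy is to reduce each of the four clauses to facts already established for the building blocks $\ind[f]$, $\ind[f\BB]$, $\ind[a\omega]$, and $\ind[a\BB]$, using throughout that $T^R$ is stable (Result \ref{f-TR-stable}), that $\ind[f]=\ind[d]$ over both $\cu M$ and $\cu N$ (Result \ref{f-d-indep}, for $T$ and for $T^R$), and hence that $\ind[f\omega]=\ind[d\omega]$. For (1), I would verify the hypotheses of Proposition \ref{p-union-pair} for $\ind[f\omega]$ and $\ind[f\BB]$: each is countably based ($\ind[f\omega]$ by definition, $\ind[f\BB]$ by Proposition \ref{p-B-stable-independence}), and each has the countable union property ($\ind[f\omega]$ by Proposition \ref{p-omega-cbased}(2), since $\ind[f]=\ind[d]$ over $\cu M$ has it by Lemma \ref{l-d-union}; $\ind[f\BB]$ by Lemma \ref{l-fB-basic}); Proposition \ref{p-union-pair} then gives that $\ind[I]$ is countably based. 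For (2), Lemma \ref{l-B-stable-implies} already gives $\ind[f]\Rightarrow\ind[f\BB]$, so it remains to prove $\ind[f]\Rightarrow\ind[f\omega]$; both relations are countably based ($\ind[f]$ over $\cu N$ by Proposition \ref{p-f-cbased} applied to the stable theory $T^R$, $\ind[f\omega]$ by definition), so by Lemma \ref{l-cbased}(1) it suffices to check the implication for countable $A,B,C$, where $A\ind[f]_C B$ in $\cu N$ gives $A\ind[d]_C B$ in $\cu N$, hence $A\ind[d\omega]_C B$ by Lemma \ref{l-ind[d]-pointwise}, i.e.\ $A\ind[f\omega]_C B$.

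For (3) I first treat $\ind[f\omega]$. The relation $\ind[f]=\ind[d]$ over $\cu M$ has the basic axioms, symmetry, finite character, is countably based, and has the countable union property, so Propositions \ref{p-omega-cbased} and \ref{p-pointwise-finitechar} yield the basic axioms, symmetry, and finite character for $\ind[f\omega]$. The relation $\ind[f]$ over $\cu N$, being the unique strict independence relation of the stable theory $T^R$, has base monotonicity, local character, and extension, hence $A\ind[f]_C C$ always (Remarks \ref{r-fe-vs-ext}(2)) and therefore full existence (Remarks \ref{r-fe-vs-ext}(3)), and it has countably local character by Result \ref{f-stable-indep}; by part (2) and Remark \ref{r-weaker}, $\ind[f\omega]$ inherits full existence, local character, and countably local character, and full existence together with the basic axioms and symmetry yields extension (Remarks \ref{r-fe-vs-ext}(1)). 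Thus $\ind[f\omega]$ is an independence relation with countably local character. Since $\ind[f\BB]$ is one as well (Proposition \ref{p-B-stable-independence}), the conjunction $\ind[I]$ inherits the basic axioms, symmetry, and finite character directly, has countable character by part (1), and by part (2) and Remark \ref{r-weaker} has full existence, local character, and countably local character, hence extension; so $\ind[I]$ is an independence relation with countably local character.

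For (4): if $T$ has $\acl=\dcl$, then $\ind[f]\Rightarrow\ind[I]$ by (2), and conversely $\ind[f\omega]\Rightarrow\ind[a\omega]$ (Corollary \ref{c-pointwise-implies}, using $\ind[f]\Rightarrow\ind[a]$ over $\cu M$, which holds since $\ind[f]\Rightarrow\thind$ over $\cu M$ by Result \ref{f-weakest}(2) and $\thind\Rightarrow\ind[M]\Rightarrow\ind[a]$) and $\ind[f\BB]\Rightarrow\ind[a\BB]$ (Lemma \ref{l-fB-basic}), so $\ind[I]\Rightarrow\ind[a\omega]\wedge\ind[a\BB]\Rightarrow\ind[a]$ by Proposition \ref{p-a-vs-omega}(3); hence $\ind[I]$ is anti-reflexive, so by (3) it is a strict independence relation over $\cu N$, and the uniqueness of $\ind[f]$ for $T^R$ (Result \ref{f-stable-indep}) forces $\ind[I]=\ind[f]$. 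For the converse, if $\ind[I]=\ind[f]$ then $\ind[I]$ is anti-reflexive; assuming $T$ does not have $\acl=\dcl$, as in the proof of Proposition \ref{p-a-vs-omega}(2) I would pick a finite $C$ and $a\in\acl^{\cu M}(C)\setminus\dcl^{\cu M}(C)$, so that $\ti a\notin\acl(\ti C)$ in $\cu N$, and derive a contradiction by showing $\ti a\ind[I]_{\ti C}\ti a$: one has $\ti a\ind[f\omega]_{\ti C}\ti a$ because for almost all $\omega$, $\ti a(\omega)\in\acl^{\cu M}(\ti C(\omega))$ and, $T$ being stable, $b\in\acl^{\cu M}(D)$ implies $b\ind[f]_D b$ (an indiscernible sequence inside an algebraic set is constant); and $\ti a\ind[f\BB]_{\ti C}\ti a$ because every event of $\fo_\BB(\ti a\ti C)$ has measure $0$ or $1$, so $\acl_\BB(\ti a\ti C)=\{\top,\bot\}=\acl_\BB(\ti C)$, which forces probabilistic self-independence in $(\cu E,\mu)$.

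The main obstacle is part (4): assembling the anti-reflexivity bookkeeping correctly. The forward direction rests on the nontrivial implication $\ind[a\omega]\wedge\ind[a\BB]\Rightarrow\ind[a]$ of Proposition \ref{p-a-vs-omega}(3), and the backward direction requires the observation that in a stable theory the reflexive instances of forking and of algebraic independence coincide, both in $\cu M$ and in the event algebra $(\cu E,\mu)$; this is what allows the counterexample in the proof of Proposition \ref{p-a-vs-omega}(2) to be transported from $\ind[a\omega]\wedge\ind[a\BB]$ to $\ind[I]$.
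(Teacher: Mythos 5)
Your proof is correct and follows essentially the same route as the paper's: Proposition \ref{p-union-pair} for (1), Lemmas \ref{l-cbased}(1), \ref{l-ind[d]-pointwise} and \ref{l-B-stable-implies} for (2), transfer of axioms plus Remark \ref{r-weaker} for (3), and an anti-reflexivity argument modelled on Proposition \ref{p-a-vs-omega} together with the uniqueness of $\ind[f]$ for the stable theory $T^R$ in (4). You in fact supply slightly more justification than the paper does for the backward direction of (4) (why $\ti a\ind[f\omega]_{\ti C}\ti a$ and $\ti a\ind[f\BB]_{\ti C}\ti a$ hold when $a\in\acl^{\cu M}(C)$), and that reasoning is sound.
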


\begin{proof}  Since $T$ is simple, $\ind[f]=\ind[d]$ on both $\cu M$ and $\cu N$.  By Proposition \ref{p-d-cbased} and Lemma \ref{l-d-union},
over both $\cu M$ and $\cu N$, $\ind[f]$ is a countably
based independence relation with countably local character and the countable union property.

(1): $\ind[f\omega] \ \ $ and $\ind[f\BB] \ \ $ are each countably based and have the countable union property by Propositions
\ref{p-d-cbased} and \ref{p-B-stable-independence}, and Lemmas \ref{l-d-union} and \ref{l-fB-basic}.  Then $\ind[I]$ is countably based by
Proposition \ref{p-union-pair}.

(2): By (1) and Lemmas  \ref{l-cbased} (1) and \ref{l-ind[d]-pointwise}, we have $\ind[f]\Rightarrow\ind[f\omega] \ \ $.
By Lemma \ref{l-B-stable-implies}, $\ind[f]\Rightarrow\ind[f\BB] \ \ $.

(3)  The result for $\ind[f\omega] \ \ $
follows from  (2), Lemma \ref{l-cbased} (2), Proposition \ref{p-preserve-finitechar}, and Remark \ref{r-weaker}.  Proposition \ref{p-B-stable-independence} gives the corresponding
result for $\ind[f\BB] \ \ $.  It then follows easily that $\ind[I]$ satisfies the basic axioms and finite character.  By Remark \ref{r-weaker},
$\ind[I]$ also satisfies extension, local character, and countably local character.

(4)  The proof is  similar to that of Proposition \ref{p-a-vs-omega}.
Suppose $T$ has $\acl=\dcl$.  We show that $\ind[I]$ is anti-reflexive.  Let $\bo a\ind[I]_C \bo a$.  Since $\ind[f]\Rightarrow\ind[a]$ in $\cu M$, we have
$\ind[f\omega] \ \ \Rightarrow \ind[a\omega] \ \ $,
so $\bo a\ind[a\omega]_C \ \bo a$ and $\bo a\in\acl^\omega(C)=\dcl^\omega(C)$.  Also, $\ind[f\BB] \ \Rightarrow \ind[a\BB] \ $, and thus
$\bo a\ind[a\BB]_C \ \bo a$
and $\fo_\BB(\bo a C)\subseteq \acl_\BB(\bo a C)\subseteq\acl_\BB(C)$.  By Result \ref{f-acl=dcl}, $\acl_\BB(C)=\dcl_\BB(C)$.  Therefore by Result \ref{f-dcl3},
$\bo a\in\dcl(C)=\acl(C)$.  This, along with (3), shows that $\ind[I]$ is the strict independence relation on $\cu N$, so $\ind[I]=\ind[f]$.

Now suppose that $T$ does not have $\acl=\dcl$.  Take a finite set $C$ and an element $a$ in $\cu M$ such that $a\in\acl(C)\setminus\dcl(C)$.
Then in $\cu N$ we have $\ti a\ind[f\omega]_{\ti C} \ \ti a \wedge \ti a\ind[f\BB]_{\ti C} \ \ti a$ but $\ti a\notin\dcl(\ti C)=\acl(\ti C)$,
Thus $\ti a\ind[I]_{\ti C} \ti a$ but $\ti a\nind[f]_{\ti C} \ti a$, and $\ind[I]\ne\ind[f]$.
\end{proof}

\subsection{Pointwise Thorn Independence}

%%  Sept 2014:  Easy results added at beginning of this secton.

This subsection concerns the pointwise thorn independence relation $\ind[\th\omega] \  $, especially when $T$ is real rosy.
As usual, we note that $\thind$ over  $\cu M$ has monotonicity, so $\ind[\th\omega] \ \ $ over $\cu N$ exists.

\begin{cor} \label{c-th-omega-easyaxioms}
 The relation $\ind[\th \omega] \ \ $ over $\cu N$ is countably based, and
satisfies all the axioms for a countable independence relation except perhaps extension and local character.
Moreover, if $T$ is real rosy then $\ind[\th \omega] \ \ $ has symmetry and finite character.
\end{cor}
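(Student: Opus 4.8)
The plan is to obtain every clause from the transfer results of Section~\ref{s-pointwise} together with the facts about $\thind$ over the big model $\cu M$ of the first order theory $T$ recalled in Section~2. Since $\thind$ over $\cu M$ has monotonicity, $\ind[\th\omega]$ over $\cu N$ exists (Definition~\ref{d-pointwise}), and by Proposition~\ref{p-omega-cbased}(1) it is automatically countably based and has monotonicity and two-sided countable character; in particular it has countable character. In the first order setting $\thind$ over $\cu M$ satisfies all the axioms for a strict independence relation except possibly local character, hence in particular the basic axioms, so by Corollary~\ref{c-summary} the relation $\ind[\th\omega]$ over $\cu N$ satisfies the basic axioms as well. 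Together with countable character, this is exactly ``all the axioms for a countable independence relation except perhaps extension and local character,'' which is the first assertion.

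Now assume $T$ is real rosy. Then $\thind$ over $\cu M$ also has local character, so it is a (strict) independence relation over $\cu M$ in the sense of Adler and is therefore symmetric by the first order form of Theorem~2.5 of [Ad2]; Proposition~\ref{p-omega-cbased}(2) then transfers symmetry to $\ind[\th\omega]$ over $\cu N$. For finite character I would apply Proposition~\ref{p-pointwise-finitechar}, whose hypotheses on $\thind$ over $\cu M$ are monotonicity, finite character, the countable union property, and countable-basedness. Monotonicity and finite character are among the first order facts recalled in Section~2, the countable union property is Lemma~\ref{l-thorn-union}, and countable-basedness is Proposition~\ref{p-rosy-thorn-cbased}: beyond real rosiness that proposition requires $\thind$ over $\cu M$ to have countably local character, which itself follows from real rosiness because in a countable signature the local character of $\thind$, once it holds, is always witnessed by parameter sets of size $\le|A|+\aleph_0$, so that $\thind$ has local character with bound $(|A|+\aleph_0)^+$ and Remark~\ref{r-countably-localchar}(1) applies. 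Then Proposition~\ref{p-pointwise-finitechar} yields finite character of $\ind[\th\omega]$.

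The step I expect to be the main obstacle is this last one: establishing that $\thind$ over $\cu M$ is countably based in the real rosy case. Every other step is a purely formal application of the abstract-independence machinery of Sections~2--3 and the transfer lemmas of Section~\ref{s-pointwise}, whereas here one must use the finer behaviour of thorn-forking in first order logic---namely that its local-character bound is governed by $|T|$---in order to upgrade ``real rosy'' (local character with some bound) to ``countably local character'' and hence, via Proposition~\ref{p-rosy-thorn-cbased}, to countable-basedness. (One could instead observe that the proof of Proposition~\ref{p-pointwise-finitechar} really uses only that $\ind[\th\omega]$, and not $\thind$ over $\cu M$, is countably based, which would make the finite-character clause hold for every first order $T$; but presenting it via Proposition~\ref{p-rosy-thorn-cbased} keeps the argument within the already-stated results.)
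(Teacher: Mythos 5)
Your proof is correct and follows essentially the same route as the paper: basic axioms via Corollary \ref{c-summary}, symmetry via Proposition \ref{p-omega-cbased}, and finite character via Lemma \ref{l-thorn-union}, Proposition \ref{p-rosy-thorn-cbased}, and Proposition \ref{p-pointwise-finitechar}. The one place you go beyond the paper's (very terse) proof is in noticing that Proposition \ref{p-rosy-thorn-cbased} formally requires countably local character of $\thind$ over $\cu M$ in addition to real rosiness; the paper cites that proposition without comment, whereas you supply the standard first order fact that in a countable language local character of $\thind$ is always witnessed with bound $(|A|+\aleph_0)^+$, so Remark \ref{r-countably-localchar}(1) closes the gap. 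Your parenthetical alternative is also sound: the written proof of Proposition \ref{p-pointwise-finitechar} only uses that $\ind[\th\omega]$ (not $\thind$ over $\cu M$) is countably based, together with monotonicity, finite character, and the countable union property of $\thind$ over $\cu M$, so the finite character clause could be obtained without invoking Proposition \ref{p-rosy-thorn-cbased} at all.
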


\begin{proof}  The basic axioms follow from Corollary \ref{c-summary}.   Symmetry follows from Proposition \ref{p-omega-cbased}.
Finite character follows from Lemma \ref{l-thorn-union} and Propositions
\ref{p-rosy-thorn-cbased} and \ref{p-pointwise-finitechar}.
\end{proof}

\begin{cor}  \label{c-th-omega-implies-a-omega}
$\ind[\th\omega] \ \ \ \Rightarrow \ind[M\omega] \ \ \ $, and $\ind[\th\omega] \ \ \ $ is pointwise anti-reflexive.
\end{cor}

\begin{proof}  This follows from Corollary \ref{c-pointwise-implies}, and the facts that
$\thind \ \Rightarrow\ind[M] \ $ and $\ind[M] \ \Rightarrow\ind[a]$.
\end{proof}

\begin{prop}  \label{p-thind-meas}
For every complete first order theory $T$, the relation $\thind$ on models of $T$ is measurable.
\end{prop}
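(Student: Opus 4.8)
The plan is to follow the route already used for $\ind[a]$, $\ind[M]$ and $\ind[d]$ in Lemmas \ref{l-a-measurable} and \ref{l-M-measurable} and Proposition \ref{p-ind[d]-meas}: exhibit a Borel-conjunctive $L_{\omega_1\omega}$-formula defining $\thind$ in the big model $\cu M$ and then quote Lemma \ref{l-conjunctive-measurable}. The starting point is Result \ref{f-thorn-forks}: for countable sets $A,B,C$ indexed by countable sets of variables $X,Y,Z$, we have $A\thind_C B$ in $\cu M$ if and only if for every $\vec a\in X^{<\BN}$, every $\vec b\in(Y\cup Z)^{<\BN}$, and every first order formula $\varphi(\vec x,\vec y)$, it is \emph{not} the case that both $\cu M\models\varphi(\vec a,\vec b)$ and $\varphi(\vec x,\vec b)$ $\th$-forks over $C$. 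That outer quantification ranges over a countable set, so it suffices to produce, for each $\varphi$, a Borel-conjunctive formula $\thfork_\varphi(\vec y,Z)$ expressing ``$\varphi(\vec x,\vec b)$ $\th$-forks over $C$'' (with $\vec y$ in the role of $\vec b$); then
$$\neg\bigvee_{\vec a\in X^{<\BN}}\ \bigvee_{\vec b\in(Y\cup Z)^{<\BN}}\ \bigvee_{\varphi}\ \bigl[\varphi(\vec a,\vec b)\wedge\thfork_\varphi(\vec b,Z)\bigr]$$
is Borel-conjunctive and defines $\thind$ in $\cu M$.

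To build $\thfork_\varphi$ I would first, for each first order formula $\psi(\vec x,\vec y)$, write a formula $\thdiv_\psi(\vec y,Z)$ saying ``$\psi(\vec x,\vec b)$ $\th$-divides over $C$''. Unwinding Onshuus's definition recalled before Result \ref{f-thorn-forks}, this is the formula $\div_\psi$ of Proposition \ref{p-ind[d]-meas} with the base relativized from $C$ to $C\vec e$ for a fresh finite tuple $\vec e$ that is existentially quantified: one prepends $\bigvee_m\bigvee_k(\exists\,\vec e^{\,(m)})$, where $\vec e^{\,(m)}$ is a tuple of $m$ new variables, replaces each clause ``$\vec y^{\,j}\equiv_Z\vec y$'' by ``$\vec y^{\,j}\equiv_{Z\vec e}\vec y$'' (still a countable conjunction of first order formulas), and conjoins the algebraicity clause ``$\vec y\not\subseteq\acl(C\vec e)$'', i.e.\ $\bigvee_{\ell<|\vec y|}\bigwedge_{\chi\text{ algebraical}}\neg\chi(y_\ell,Z,\vec e)$. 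Then ``$\varphi(\vec x,\vec b)$ $\th$-forks over $C$'' means $\varphi(\vec x,\vec b)$ implies a finite disjunction of formulas $\psi_i(\vec x,\vec d_i)$ each $\th$-dividing over $C$, which gives
$$\thfork_\varphi(\vec y,Z):=\bigvee_{r\ge1}\ \bigvee_{(\psi_1,\ldots,\psi_r)}\ \bigvee_{(\ell_1,\ldots,\ell_r)}\ (\exists\,\vec d_1^{\,(\ell_1)}\cdots\vec d_r^{\,(\ell_r)})\Bigl[(\forall\vec x)\bigl(\varphi(\vec x,\vec y)\to\bigvee_{i=1}^r\psi_i(\vec x,\vec d_i)\bigr)\wedge\bigwedge_{i=1}^r\thdiv_{\psi_i}(\vec d_i,Z)\Bigr],$$
where the $\vec d_i^{\,(\ell_i)}$ are fresh tuples of variables.

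The one genuinely fiddly point — and the main thing to get right — is the bookkeeping that keeps the construction inside the Borel-conjunctive class: a conjunctive formula may carry finite quantifiers, but a Borel-conjunctive formula is obtained from conjunctive ones using only negation and (finite and countable) conjunctions and disjunctions, so no quantifier may sit above a countable Boolean connective. One handles this by distributing countable disjunctions outward. Inside the scope of $(\exists\,\vec e^{\,(m)})$ in $\thdiv_\psi$ everything is conjunctive (countable conjunctions of first order formulas, finite quantifiers, the finite-disjunction algebraicity clause), so $\thdiv_\psi$ is a countable disjunction of conjunctive formulas, hence Borel-conjunctive; writing each $\thdiv_{\psi_i}$ as $\bigvee_s\Delta^i_s$ with $\Delta^i_s$ conjunctive and using $\bigwedge_{i=1}^r\bigvee_s\Delta^i_s=\bigvee_{(s_1,\ldots,s_r)}\bigwedge_{i=1}^r\Delta^i_{s_i}$ turns $\bigwedge_i\thdiv_{\psi_i}(\vec d_i,Z)$ into a countable disjunction of conjunctive formulas, which one then pulls out past the first order formula $(\forall\vec x)(\cdots)$ and past $(\exists\,\vec d_1\cdots\vec d_r)$, showing $\thfork_\varphi$ is again a countable disjunction of conjunctive formulas. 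Granting this, the displayed formula defining $\thind$ is Borel-conjunctive, and since $\cu M$ is saturated, hence $\aleph_1$-saturated, Lemma \ref{l-conjunctive-measurable} applies and yields that $\thind$ is measurable. (Along the way one also checks that the constructed formula is genuinely equivalent to $\thind$ in $\cu M$; this amounts to re-running Onshuus's definitions exactly as in the verification of $\div_\varphi$ in Proposition \ref{p-ind[d]-meas}.)
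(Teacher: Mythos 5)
Your proposal is correct and follows the same overall route as the paper: reduce via Result \ref{f-thorn-forks} to exhibiting a Borel-conjunctive definition of $\thind$ in $\cu M$, built from formulas $\thdiv_\psi$ (the $\div_\psi$ of Proposition \ref{p-ind[d]-meas} relativized to $Z\vec e$ with $\vec e$ existentially quantified and a non-algebraicity clause adjoined) and $\thfork_\varphi$, and then quote Lemma \ref{l-conjunctive-measurable}. The one substantive difference is in $\thfork_\varphi$: the paper takes the $\th$-dividing disjuncts to be formulas $\psi_{i_j}(\vec x,\vec y,Z)$ whose parameters are drawn from $\vec y\cup Z$ itself, so no new quantifiers appear above the countable disjunctions hidden in $\thdiv_{\psi_{i_j}}$, whereas you existentially quantify fresh parameter tuples $\vec d_i$ and then must distribute the countable disjunctions outward past $(\exists\vec d_1\cdots\vec d_r)$ and the finite conjunction, via $\bigwedge_i\bigvee_s\Delta^i_s=\bigvee_{\vec s}\bigwedge_i\Delta^i_{s_i}$ and $(\exists\vec d)\bigvee_s\Theta_s\equiv\bigvee_s(\exists\vec d)\Theta_s$. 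That bookkeeping is valid and keeps $\thfork_\varphi$ a countable disjunction of conjunctive formulas; your version hews more closely to the literal definition of $\th$-forking (which places no restriction on the parameters of the dividing formulas), at the cost of the extra distribution step. One small correction of phrasing: it is not true that ``no quantifier may sit above a countable Boolean connective'' in a Borel-conjunctive formula --- conjunctive formulas explicitly permit finite quantifiers above \emph{countable conjunctions}, and both your $\thdiv_\psi$ and the paper's rely on exactly this (e.g.\ $(\exists\vec e)\bigwedge_{n\ge k}(\cdots)$); the only genuine obstruction is a quantifier above a countable \emph{disjunction}, which is precisely what your distribution argument removes. This slip does not affect the correctness of the construction.
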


\begin{proof}   First consider a first order formula $\psi(\vec x,\vec y,Z)$, where $\vec x, \vec y$ are tuples of variables
and $Z$ is a  countable set of variables.  For each $m$ let $\vec u_m=(u_0,\ldots,u_{m-1})$.
For each tuple $\vec b$ and countable set $C$ in the big first order model $\cu M$, $\psi(\vec x, \vec b,C)$ $\th$-divides over $C$
if and only if for some $k,m\in\BN$, $(\cu M,\vec b,C)$ satisfies the following formula $\thdiv_{\psi}(\vec y,Z)$:
$$\bigvee_k\bigvee_m (\exists \vec u_m)\bigwedge_{n\ge k}(\exists \vec y^{\, 0},\ldots,\vec y^{\, n-1})$$
$$\left[\bigwedge_{j<n}\vec y^{\, j}\equiv_{Z\vec u_m}\vec y\wedge\neg(\vec y\subseteq\acl(\vec u_m Z))\wedge
 \bigwedge_{I\subset n,|I|=k}\neg(\exists\vec x)
\bigwedge_{i\in I} \psi(\vec x,\vec y^{\, i},Z)\right].$$
We note that $\neg(y\in\acl(\vec u_m Z))$ is expressed by the conjunctive formula
$$ \bigwedge\left\{\neg\chi(y,\vec u_m,Z)\colon \chi \mbox{ algebraical}\right\},$$
so the formula  $\thdiv_{\psi}(\vec y,Z)$ is Borel-conjunctive, and is even a countable disjunction of  conjunctive formulas.

Now arrange all the first order formulas with the indicated variables
in a countable list $\<\psi_i(\vec x,\vec y,Z)\>_{i\in\BN}$.  Then a first order formula $\varphi(\vec x,\vec b,C)$ $\th$-forks over $C$
if and only if $(\cu M,\vec b,C)$ satisfies the following Borel-conjunctive formula $\thfork_\varphi(\vec y,Z)$:
$$ \bigvee_\ell\bigvee_{i_0}\cdots\bigvee_{i_\ell}
\left[\bigwedge_{j\le\ell}\thdiv_{\psi_{i_j}}(\vec y,Z)\wedge(\forall \vec x)\left[\varphi(\vec x,\vec y,Z)\rightarrow\bigvee_{j\le \ell}\psi_{i_j}(\vec x,\vec y,Z)\right]
\right].$$
By Result \ref{f-thorn-forks}, $\thind$ is definable in $\cu M$ by the Borel-conjunctive formula
$$ \neg\bigvee_{\vec x\in X^{<\BN}}\bigvee_{\vec y\in Y^{<\BN}}\bigvee_\varphi (\varphi(\vec x,\vec y,Z)\wedge \thfork_\varphi(\vec y,Z)),$$
where $X,Y,Z$ are used to index $A,B,C$.  So by Lemma \ref{l-conjunctive-measurable},  $\thind$ is measurable.
\end{proof}

The proof of Proposition \ref{p-thind-meas} gives the following

\begin{cor} For every complete first order theory $T$, the relation $\thind$ over models of $T$ is definable by the negation of a countable
disjunction of conjunctive formulas.
\end{cor}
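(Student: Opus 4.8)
The plan is to reread the Borel-conjunctive formula constructed in the proof of Proposition~\ref{p-thind-meas} that defines $\thind$ in $\cu M$ and to verify that everything appearing under its outermost negation is, up to logical equivalence, a countable disjunction of conjunctive formulas; the corollary is then immediate.

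First I would isolate the closure properties of the class $\mathcal{D}$ of countable disjunctions of conjunctive $L_{\omega_1\omega}$-formulas. Recall that conjunctive formulas are closed under countable conjunctions, finite conjunctions, finite disjunctions, and finite blocks of quantifiers, but not under countable disjunctions. Two infinitary distributive laws then give everything I need: (i) a finite conjunction of members of $\mathcal{D}$ lies in $\mathcal{D}$, since $\bigwedge_{j\le\ell}\bigvee_{k\in\BN}\theta_{j,k}$ is logically equivalent to $\bigvee_{f}\bigwedge_{j\le\ell}\theta_{j,f(j)}$, where $f$ ranges over the \emph{countably many} functions $\{0,\ldots,\ell\}\to\BN$ and each $\bigwedge_{j\le\ell}\theta_{j,f(j)}$ is a finite conjunction of conjunctive formulas, hence conjunctive; and (ii) a countable disjunction of members of $\mathcal{D}$ lies in $\mathcal{D}$, by flattening. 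From (i) it follows in particular that if $\sigma$ is conjunctive and $\Theta\in\mathcal{D}$, then $\sigma\wedge\Theta\in\mathcal{D}$.

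Next I would trace through the construction of Proposition~\ref{p-thind-meas}. As observed there, each $\thdiv_{\psi}(\vec y,Z)$ is a countable disjunction of conjunctive formulas, i.e.\ lies in $\mathcal{D}$. Hence $\bigwedge_{j\le\ell}\thdiv_{\psi_{i_j}}(\vec y,Z)\in\mathcal{D}$ by (i), and conjoining it with the first order (hence conjunctive) formula $(\forall\vec x)[\varphi(\vec x,\vec y,Z)\rightarrow\bigvee_{j\le\ell}\psi_{i_j}(\vec x,\vec y,Z)]$ again yields a member of $\mathcal{D}$. Taking the countable disjunction over $\ell$ and $i_0,\ldots,i_\ell$ and invoking (ii) shows $\thfork_{\varphi}(\vec y,Z)\in\mathcal{D}$; then $\varphi(\vec x,\vec y,Z)\wedge\thfork_{\varphi}(\vec y,Z)\in\mathcal{D}$, and finally the countable disjunction over $\vec x\in X^{<\BN}$, $\vec y\in Y^{<\BN}$ and first order $\varphi$ lies in $\mathcal{D}$ by (ii) once more. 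By Result~\ref{f-thorn-forks} this last formula defines $A\nthind_C B$ in $\cu M$, so $\thind$ is defined by its negation, which is exactly the negation of a countable disjunction of conjunctive formulas.

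The content here is entirely bookkeeping; the one step that deserves a moment of care is the countable distributive law in (i), where one must check that distributing a finite conjunction across countably many disjuncts produces a disjunction indexed by a \emph{countable} set of choice functions, so that the result is still a legitimate $L_{\omega_1\omega}$-formula of the required form. Everything else is a direct reading of the formula displayed in the proof of Proposition~\ref{p-thind-meas}.
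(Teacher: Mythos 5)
Your proposal is correct and follows exactly the route the paper intends: the paper's "proof" of this corollary is just the observation that the displayed formula in the proof of Proposition \ref{p-thind-meas} defining $A\nthind_C B$ is, after distributing, a countable disjunction of conjunctive formulas. Your explicit verification of the two closure properties of the class $\mathcal{D}$ (in particular that the finite-conjunction-over-countable-disjunction distributive law only produces countably many choice functions) is precisely the bookkeeping the paper leaves implicit.
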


\begin{prop}  \label{p-thorn-existence}
Suppose that $T$ is real rosy.  Then for any small $A,B$ and countable $C$, there is $A'\equiv_C A$ such that $A'\ind[\th\omega]_C \ B  $.
\end{prop}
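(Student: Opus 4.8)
The plan is to adapt, essentially verbatim, the proof of Proposition~\ref{p-simple-existence}, with $\th$-forking in place of dividing. In that proof three ingredients were used: the measurability of $\ind[d]$ together with the explicit Borel-conjunctive description of $\l\div_\varphi(B_\alpha,C)\rr$; the fact that $\ind[d]$ has full existence on models of $T$; and the fact that in a simple theory a finite disjunction of dividing formulas again divides. For $\thind$ the analogue of the first ingredient is supplied by the proof of Proposition~\ref{p-thind-meas}, which exhibits, for each first order $\varphi$, a Borel-conjunctive formula $\thfork_\varphi(\vec y,Z)$ with $\l\thfork_\varphi(B_\alpha,C)\rr\in\cu F$ and such that $\varphi(\vec x,\vec b,C)$ $\th$-forks over $C$ iff $(\cu M,\vec b,C)\models\thfork_\varphi(\vec y,Z)$. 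The third ingredient is immediate here: a $\th$-forking formula is by definition one that implies a finite disjunction of $\th$-dividing formulas, so a finite disjunction of $\th$-forking formulas again $\th$-forks. The second ingredient is where the real rosiness hypothesis is used: by Result~\ref{f-weakest}(1), $\thind$ over $\cu M$ is a strict countable independence relation, hence has base monotonicity and local character, hence satisfies $A\thind_C C$ for all small $A,C$ by Remarks~\ref{r-fe-vs-ext}(2); since $\thind$ always has extension in the first order setting, Remarks~\ref{r-fe-vs-ext}(3) then gives that $\thind$ has full existence on models of $T$. Finally, Result~\ref{f-thorn-forks} will be used in place of simplicity to translate the abstract relation $\thind_{C(\omega)}$ into the statement that no $\th$-forking formula over $C(\omega)$ lies in the relevant type.

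With these ingredients I carry out the saturation argument. Enumerate all pairs $(A_\alpha,B_\alpha)$ of finite subsets of $A$ and $B$; let $X$ be a set of variables indexed by $A$, with $X_\alpha$ the subtuple corresponding to $A_\alpha$, and write $A'_\alpha$ for the corresponding subset of the eventual witness $A'$. In $\cu N_{ABC}$ consider the set $\Gamma(X)$ consisting of the conditions $\l\theta(X,C)\rr\doteq\l\theta(A,C)\rr$ for every first order formula $\theta$, together with the conditions $\l\thfork_\varphi(B_\alpha,C)\rr\sqsubseteq\l\neg\varphi(X_\alpha,B_\alpha,C)\rr$ for every $\alpha$ and every first order formula $\varphi$. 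To see that $\Gamma(X)$ is finitely satisfiable in $\cu N_{ABC}$, fix a finite subfamily; it refers to finitely many $\theta$'s and finitely many pairs $(\alpha,\varphi)$. For each $\omega\in\Omega$, full existence for $\thind$ on $\cu M$ yields a set $A'_0\subseteq M$ with $A'_0\equiv_{C(\omega)}A(\omega)$ and $A'_0\thind_{C(\omega)}B(\omega)$; by Result~\ref{f-thorn-forks}, no formula true of a tuple from $A'_0$ over $B(\omega)C(\omega)$ can $\th$-fork over $C(\omega)$, so whenever $\omega\in\l\thfork_\varphi(B_\alpha,C)\rr$ we have $\cu M\models\neg\varphi((A'_0)_\alpha,B_\alpha(\omega),C(\omega))$; thus $A'_0$ meets the pointwise requirements at $\omega$. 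Encoding these finitely many first order requirements by a single existential $L$-formula and invoking fullness (Condition~(5) for a neat randomization) produces a set satisfying the chosen finite subfamily. By saturation $\Gamma(X)$ is realized by some $A'$ in $\cu N$, and by quantifier elimination the first family of conditions gives $A'\equiv_C A$.

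It remains to check $A'\ind[\th\omega]_CB$. Since $\ind[\th\omega]$ has two-sided countable character (Proposition~\ref{p-omega-cbased}(1)) and $C$ is countable, it is enough to verify $A''\ind[\th\omega]_CB''$, i.e.\ $\mu\big(\l A''(\omega)\thind_{C(\omega)}B''(\omega)\rr\big)=1$, for all countable $A''\subseteq A'$ and $B''\subseteq B$ (the event lies in $\cu F$ by Proposition~\ref{p-thind-meas}). The definability of $\thind$ over $\cu M$ established in the proof of Proposition~\ref{p-thind-meas} gives
$$\l A''(\omega)\nthind_{C(\omega)}B''(\omega)\rr=\bigcup_\alpha\bigcup_\varphi\big(\l\thfork_\varphi(B_\alpha,C)\rr\cap\l\varphi(A'_\alpha,B_\alpha,C)\rr\big),$$
the union being over those $\alpha$ with $A'_\alpha\subseteq A''$ and $B_\alpha\subseteq B''$ and over all first order $\varphi$. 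Each set in the union is $\mu$-null because $A'$ satisfies the second family of conditions of $\Gamma(X)$, so the left-hand event is null, as required. The main obstacle is the finite-satisfiability step: one must confirm that at each $\omega$ a single tuple can simultaneously realize $\tp^{\cu M}(A(\omega)/C(\omega))$ and avoid all the relevant $\th$-forking formulas over $C(\omega)$, and that these pointwise choices glue, via fullness, to an element of $\cu N$ — this is precisely where full existence for $\thind$ on $\cu M$ (hence the real rosiness hypothesis) and the combinatorial reformulation of $\thind$ in Result~\ref{f-thorn-forks} enter; everything else is a routine transcription of the proof of Proposition~\ref{p-simple-existence}.
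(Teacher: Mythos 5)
Your proposal is correct and follows essentially the same route as the paper, which likewise reduces the statement to the argument of Proposition~\ref{p-simple-existence} with $\thfork_\varphi$ in place of $\div_\varphi$, using the measurability from Proposition~\ref{p-thind-meas}, Result~\ref{f-thorn-forks}, and the closure of $\th$-forking under finite disjunctions. Your explicit derivation of full existence for $\thind$ over $\cu M$ from real rosiness via Remarks~\ref{r-fe-vs-ext} is a detail the paper leaves implicit, but it is the intended justification.
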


\begin{proof}  We argue as in the proof of Proposition \ref{p-simple-existence}.
Let $A_\alpha,B_\alpha,X_\alpha$, and $X$ be as in that proof.  From the proof of Proposition \ref{p-thind-meas},
for each $\alpha$  and first order formula $\varphi(X_\alpha,B_\alpha,C)$, we have $\l \thfork_{\varphi}(B_\alpha,C)\rr\in\cu F$.
We show that the following set of statements $\Gamma(X)$ in satisfiable in $\cu N$:
\begin{itemize}
\item $\l \theta(X,C)\rr\doteq\l \theta(A,C)\rr$ for each first order formula $\theta$;
\item $ \l \thfork_{\varphi}(B_\alpha,C)\rr\sqsubseteq\l \neg \varphi(X_\alpha,B_\alpha,C)\rr$ for each $\alpha$ and $\varphi$.
\end{itemize}
By definition, any finite disjunction of formulas that $\th$-forks over $C$ again $\th$-forks over $C$.
To complete the proof we argue exactly as in the proof of Proposition \ref{p-simple-existence}, but with
$\thind$, $\ind[\th\omega] \ $, and $\thfork_\varphi$ in place of $\ind[d]$, $\ind[d\omega] \ $, and $\div_\varphi$.
\end{proof}

\begin{cor}  \label{c-thorn-existence}
If $T$ is real rosy, then $\ind[\th\omega] \ \ $ satisfies full existence when $C$ is countable, and satisfies extension
when $B, C$ are countable.
\end{cor}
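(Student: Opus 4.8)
The plan is to follow the proof of Corollary~\ref{c-simple-existence} essentially verbatim, with the simple-theory ingredients replaced by their rosy counterparts: $\thind$ in place of $\ind[d]$, $\ind[\th\omega]$ in place of $\ind[d\omega]$, and $\thfork_\varphi$ in place of $\div_\varphi$. For the first assertion, full existence when $C$ is countable, there is nothing to prove beyond quoting Proposition~\ref{p-thorn-existence}, which is exactly that statement: for small $A,B$ and countable $C$ it produces $A'\equiv_C A$ with $A'\ind[\th\omega]_C B$.

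For the extension clause I would first record the structural properties of $\ind[\th\omega]$ that feed Adler's derivation of extension from full existence. By Corollary~\ref{c-th-omega-easyaxioms} the relation $\ind[\th\omega]$ over $\cu N$ satisfies the basic axioms --- in particular invariance, monotonicity, transitivity, and normality --- and, since $T$ is real rosy, it is symmetric. Together with the full existence just obtained, these are precisely the hypotheses of the implication established in the proof of Remarks~\ref{r-fe-vs-ext}~(1) (the argument in the Appendix of [Ad1]): invariance, monotonicity, transitivity, normality, symmetry, and full existence yield extension. That argument only ever applies full existence over the given base, so, exactly as in the proof of Corollary~\ref{c-simple-existence}, feeding it full existence for countable $C$ yields extension whenever $B$ and $C$ are countable.

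The only point that genuinely needs attention is the verification that this Adler-style derivation stays within the countable fragment in the required way --- that it never invokes full existence over an uncountable base and that the witness $A'$ can be taken with $A'\equiv_{BC}A$ for countable $BC$. This is the same observation already used (without modification) in Corollary~\ref{c-simple-existence}, so no new work is needed; the substantive content of the corollary is carried entirely by Proposition~\ref{p-thorn-existence}, whose proof in turn parallels Proposition~\ref{p-simple-existence}, using the Borel-conjunctive formula $\thfork_\varphi$ from the proof of Proposition~\ref{p-thind-meas} together with fullness and saturation of $\cu N$.
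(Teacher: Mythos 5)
Your proof is correct and follows exactly the route the paper takes: the full-existence clause is a restatement of Proposition~\ref{p-thorn-existence}, and the extension clause follows from the Adler-style argument of Remarks~\ref{r-fe-vs-ext}~(1) applied with the basic axioms and symmetry from Corollary~\ref{c-th-omega-easyaxioms}, just as in Corollary~\ref{c-simple-existence}. The paper's proof is literally ``Like the proof of Corollary~\ref{c-simple-existence},'' so your write-up is a faithful (and more explicit) version of the intended argument.
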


\begin{proof} Like the proof of Corollary \ref{c-simple-existence}.
\end{proof}

\section*{References}

%\noindent
%{\large\scshape References}

\vspace{2mm}

[Ad1]  Hans Adler. Explanation of Independence.  PH. D. Thesis, Freiburg, AxXiv:0511616 (2005).

[Ad2]  Hans Adler.  A Geometric Introduction to Forking and Thorn-forking.  J. Math. Logic 9 (2009), 1-21.

[Ad3]  Hans Adler.  Thorn Forking as Local Forking.  Journal of Mathematical Logic 9 (2009), 21-38.

[AGK]  Uri Andrews, Isaac Goldbring, and H. Jerome Keisler.  Definability in Randomizations.
To appear, Annals of Pure and Applied Logic.  Available online at www.math.wisc.edu/$\sim$Keisler.

[AK]  Uri Andrews and H. Jerome Keisler.  Separable Randomizations of Models.  To appear, Journal of Symbolic Logic.  Available online at
www.math.wisc.edu/$\sim$Keisler.

[Be1]  Ita\"i{} Ben Yaacov. Schrodinger's Cat.  Israel J. Math. 153 (2006),  157-191.

[Be2]  Ita\"i{} Ben Yaacov.  On Theories of Random Variables.  Israel J. Math 194 (2013), 957-1012.

[Be3]  Ita\"i{} Ben Yaacov.  Simplicity in Compact Abstract Theories.  Journal of Mathematical Logic 3 (2003), 163-191.

[BBHU]  Ita\"i{} Ben Yaacov, Alexander Berenstein,
C. Ward Henson and Alexander Usvyatsov. Model Theory for Metric Structures.  In Model Theory with Applications to Algebra and Analysis, vol. 2,
London Math. Society Lecture Note Series, vol. 350 (2008), 315-427.

[BK] Ita\"i{} Ben Yaacov and H. Jerome Keisler. Randomizations of Models as Metric Structures.
Confluentes Mathematici 1 (2009), pp. 197-223.

[BU] Ita\"i{} Ben Yaacov and Alexander Usvyatsov. Continuous first order logic and local stability. Transactions of the American
Mathematical Society 362 (2010), no. 10, 5213-5259.

[CK]  C.C.Chang and H. Jerome Keisler.  Model Theory.  Dover 2012.

[EG]  Clifton Ealy and Isaac Goldbring.  Thorn-Forking in Continuous Logic.  Journal of Symbolic Logic
77 (2012), 63-93.

%[Go1]  Isaac Goldbring.  Definable Functions in Urysohn's Metric Space.  To appear, Illinois Journal of Mathematics.
%
%[Go2]  Isaac Goldbring.  An Approximate Herbrand's Theorem and Definable Functions in Metric Structures.
%Math. Logic Quarterly 50 (2012), 208-216.
%
%[Go3] Isaac Goldbring.  Definable Operators on Hilbert Spaces.  Notre Dame Journal of Formal Logic 53 (2012), 193-201.

[GL]  Isaac Goldbring and Vinicius Lopes.  Pseudofinite and Pseudocompact Metric Structures.  To appear, Notre Dame Journal of Formal Logic.
Available online at www.homepages.math.uic.edu/$\sim$isaac.

[GIL]  Rami Grossberg, Jose Iovino, and Oliver Lessmann, A Primer of Simple Theories, Archive for Ma6th. Logic 41 (2002), 541-580.

[Ke1] H. Jerome Keisler.  Randomizing a Model.  Advances in Math 143 (1999),
124-158.

[Ke2] Finite Approximations of Infinitely Long Formulas, Theory of Models,
Amsterdam 1965, pp. 158-169.

[On]  Alf Onshuus, Properties and Consequences of Thorn Independence. J. Symbolic Logic 71 (2006), 1-21.

\end{document}